\newtheorem{theorem}{Theorem}[section]
\newtheorem*{remark}{Remark}
\newtheorem{proposition}[theorem]{Proposition}
\newtheorem{lemma}[theorem]{Lemma}
\newtheorem{conjecture}[theorem]{Conjecture}
\newtheorem{claim}[theorem]{Claim}
\newtheorem{corollary}[theorem]{Corollary}
\newtheorem{definition}[theorem]{Definition}
\newcommand{\N}{\ensuremath{\mathbb{N}}}
\newcommand{\R}{\ensuremath{\mathbb{R}}}
\newcommand{\Z}{\ensuremath{\mathbb{Z}}}
\newcommand{\PSD}{\ensuremath{\mathbb{S}_+}}
 \newcommand{\eps}{\varepsilon} 
\renewcommand{\epsilon}{\varepsilon} 
\newcommand{\calV}{\mathcal{V}}
\newcommand{\eqdef}{\mathbin{\stackrel{\rm def}{=}}}
\renewcommand{\vec}[1]{\ensuremath{\mathbf{#1}}}
\newcommand{\problem}[1]{\mbox{#1}\xspace}
\newcommand{\poly}{\mathrm{poly}}
\DeclareMathOperator*{\E}{\mathbb{E}}
\DeclareMathOperator*{\tr}{\mathrm{tr}}
\newcommand{\GapSPP}{\textrm{GapSPP}}
\newcommand{\GapCRP}{\textrm{GapCRP}}
\newcommand{\DSGS}[2]{\ensuremath{#1\text{-}\problem{DSGS}^{#2}}}
\newcommand{\da}{\downarrow}
\newcommand{\setminuszero}{\setminus \set{\vec{0}}}
\newcommand{\NP}{\textsf{NP}}
\newcommand{\coNP}{\textsf{coNP}}
\newcommand{\AM}{\textsf{AM}}
\newcommand{\coAM}{\textsf{coAM}}
\newcommand{\Ball}{B}
\newcommand{\T}{\mathsf{T}}
\newcommand{\pr}[2]{\langle{#1, #2}\rangle}
\def\imod#1{\allowbreak\mkern4mu({\operator@font mod}\,\,#1)}
\DeclareMathOperator{\ran}{im}
\newcommand{\lat}{\mathcal{L}}
\newcommand{\gs}[1]{\ensuremath{\widetilde{#1}}}
\DeclareMathOperator{\vol}{vol}
\DeclareMathOperator{\dist}{dist}
\DeclarePairedDelimiter\inner{\langle}{\rangle}
\DeclarePairedDelimiter\abs{\lvert}{\rvert}
\DeclarePairedDelimiter\set{\{}{\}}
\DeclarePairedDelimiter\parens{(}{)}
\DeclarePairedDelimiter\bracks{[}{]}
\DeclarePairedDelimiter\floor{\lfloor}{\rfloor}
\DeclarePairedDelimiter\ceil{\lceil}{\rceil}
\DeclarePairedDelimiter\length{\lVert}{\rVert}
\newif\ifnotes\notesfalse
\definecolor{mygrey}{gray}{0.50}
\newcommand{\notename}[2]{{\textcolor{mygrey}{\footnotesize{\bf (#1:} {#2}{\bf ) }}}}
\newcommand{\noteswarning}{{\begin{center} {\Large WARNING: NOTES ON}\end{center}}}
\newcommand{\notename}[2]{{}}
\newcommand{\noteswarning}{{}}
\newcommand{\onote}[1]{{\notename{Oded}{#1}}}
\newcommand{\dnote}[1]{{\notename{Daniel}{#1}}}
\begin{document}

\title{Towards Strong Reverse Minkowski-type Inequalities for Lattices}
\author{
Daniel Dadush\thanks{Centrum Wiskunde \& Informatica, Amsterdam. Supported by the NWO Veni grant 639.071.510. \texttt{dadush@cwi.nl}}
\and
Oded Regev\thanks{Courant Institute of Mathematical Sciences, New York
 University. Supported by the Simons Collaboration on Algorithms and Geometry and by the National Science Foundation (NSF) under Grant No.~CCF-1320188. Any opinions, findings, and conclusions or recommendations expressed in this material are those of the authors and do not necessarily reflect the views of the NSF.}
}
\date{}
\maketitle

\noteswarning

%

\begin{abstract}
We present a natural reverse Minkowski-type inequality for lattices, which gives
\emph{upper bounds} on the number of lattice points in a Euclidean ball in terms
of sublattice determinants, and conjecture its optimal form. 
The conjecture exhibits a surprising wealth of connections to various areas
in mathematics and computer science, including a conjecture motivated 
by integer programming by Kannan and Lov{\'a}sz (Annals of Math.\ 1988),
a question from additive combinatorics asked by Green,
a question on Brownian motions asked by Saloff-Coste (Colloq.\ Math.\ 2010),
a theorem by Milman and Pisier from convex geometry (Ann.\ Probab.\ 1987), 
worst-case to average-case reductions in lattice-based cryptography,  
and more. We present these connections, provide evidence for the conjecture,
and discuss possible approaches towards a proof. Our main technical 
contribution is in proving that our conjecture implies the $\ell_2$ case of
the Kannan and Lov{\'a}sz conjecture. The proof relies on a novel 
convex relaxation for the covering radius, and a rounding procedure
for based on ``uncrossing'' lattice subspaces. 
\end{abstract}


\tableofcontents
\newpage


\section{Introduction}

A lattice $\lat \subset \R^n$ is defined as the set of all
integer linear combinations of $n$ linearly independent vectors $B =
(\vec{b}_1,\dots,\vec{b}_n)$ in $\R^n$, where we call $B$ a basis for $\lat$.
The determinant of $\lat$ is defined as $\det(\lat) = |\det(B)|$, which is
invariant to the choice of basis for $\lat$. The determinant measures 
the ``global density'' of the lattice. More precisely, $\det(\lat)^{-1}$ 
is the asymptotic number of lattice points per unit volume.

One of the earliest and most important results in this area is Minkowski's
(First) Theorem from 1889, which guarantees the existence of lattice points in
any large enough symmetric convex body (see
Theorem~\ref{thm:mink-first} for the precise statement). In particular, it
implies that any lattice $\lat$ with $\det(\lat) \le 1$ must contain
exponentially in $n$ many points of Euclidean norm at most $\sqrt{n}$.
Informally, Minkowski's theorem can be described as saying that ``global density
implies local density''.  

The starting point of our investigation is an attempt to \emph{reverse} this implication. 
Assume $\lat$ has local density, i.e., exponentially many points of norm at most $\sqrt{n}$.
Can we conclude that it must also have global density, i.e., $\det(\lat) \le 1$? 
A moment's thought reveals that we cannot hope for such a strong conclusion. 
Indeed, consider the lattice $\lat$ generated by the basis $\set{\vec e_1,\ldots, \vec e_{n-1}, M \vec e_n}$ 
where $M$ is arbitrarily large. Then $\det(\lat)=M$ yet $\lat$ has exponentially many
points of norm at most $\sqrt{n}$. 

A natural conjecture would therefore say that local density implies global 
density \emph{in a subspace}, or in other words, that there is a (not necessarily full-rank) sublattice with low determinant. This is the essence of our main conjecture. 

To make this more precise, consider the lattice $\Z^n$. 
Obviously, all its sublattices have determinant at least $1$ (as the determinant
of any integer matrix is integer). 
Moreover, it is not difficult to see that the number of integer points in a ball
of radius $r$ is approximately $n^{r^2}$ for $r \ll \sqrt{n}$.
Our main conjecture basically says that 
\emph{$\Z^n$ has the highest local density among
all lattices whose sublattices all have determinant at least $1$.} See Conjecture~\ref{con:maineta} for the formal statement, which also makes the connection
to the so-called smoothing parameter explicit. 

\begin{conjecture}\label{con:mainetaintro}(Main conjecture, informal)
Let $\lat \subset \R^n$ be an $n$-dimensional lattice whose sublattices all have
determinant at least $1$. Then, for any $r>0$, the number of lattice points of Euclidean
norm at most $r$ is at most $\exp( \poly\log n \cdot r^2)$. 
\end{conjecture}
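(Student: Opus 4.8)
We do not settle Conjecture~\ref{con:mainetaintro}; the following is the line of attack we would pursue.

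The natural first move is to trade the awkward quantity ``number of lattice points in a ball'' for a Gaussian mass, which has far better algebraic structure --- it factors over cosets and transforms cleanly under Poisson summation. Write $\rho_s(\lat) = \sum_{\vec x\in\lat} e^{-\pi\|\vec x\|^2/s^2}$. Since every $\vec x\in\lat$ with $\|\vec x\|\le r$ contributes at least $e^{-\pi r^2/s^2}$ to this sum, the number of such points is at most $e^{\pi r^2/s^2}\,\rho_s(\lat)$. Hence Conjecture~\ref{con:mainetaintro} is equivalent (and its conjectured optimal form nearly so) to the statement that, under the hypothesis that every sublattice of $\lat$ has determinant at least $1$, there is a scale $s_0 = 1/\sqrt{\poly\log n}$ with $\rho_{s_0}(\lat)\le 2$; in smoothing-parameter language this is exactly Conjecture~\ref{con:maineta}. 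The benchmark to keep in mind is $\lat=\Z^n$, for which $\rho_s(\Z^n)=\rho_s(\Z)^n$ is $O(1)$ precisely when $s=O(1/\sqrt{\log n})$, which is why $\Z^n$ ought to be extremal.

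The plan is to induct on $\rank \lat$ via a structural dichotomy. Fix the target scale $s_0$. Either $\rho_{s_0}(\lat)\le 2$ already, or the Gaussian mass is concentrated on cosets of some proper nonzero sublattice $\lat'$ lying in a thin tube around $\spn\lat'$, i.e.\ most of $\rho_{s_0}(\lat)=\sum_{\vec c\in\lat/\lat'}\rho_{s_0}(\lat'+\vec c)$ comes from cosets whose component orthogonal to $\spn\lat'$ has norm $O(s_0)$. The hypothesis is \emph{hereditary} --- every sublattice of $\lat'$ is a sublattice of $\lat$ --- so $\lat'$ again satisfies it and has smaller rank, and the inductive hypothesis controls $\rho_{s_0}(\lat')$. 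The delicate direction is the quotient $\bar\lat=\lat/\lat'$ in $\R^n/\spn\lat'$: it need \emph{not} satisfy the hypothesis, since the tower formula $\det\lat''=\det\lat'\cdot\det(\lat''/\lat')$ only gives $\det(\lat''/\lat')\ge 1/\det\lat'$ for $\lat''\supseteq\lat'$, and $\det\lat'$ may be far larger than $1$. This forces a \emph{canonical} choice of $\lat'$: take the first step of the Stuhler/Grayson canonical filtration $\{0\}=\lat_0\subsetneq\cdots\subsetneq\lat_m=\lat$, whose existence rests on the submodularity of $W\mapsto\log\det(\lat\cap W)$ and along which every successive quotient $\lat_i/\lat_{i-1}$ is semistable of nonnegative slope --- the latter being precisely the hypothesis restated, namely that the canonical polygon passes through the origin and has all slopes $\ge 0$.

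Concretely the steps are: (i) build the canonical filtration and verify the relevant (sub/super)modularity and the ``nonnegative slopes'' reformulation of the hypothesis; (ii) prove the \emph{semistable base case} --- a rank-$k$ semistable lattice with all sublattice determinants $\ge 1$ has $\rho_s=O(1)$ at $s=1/\sqrt{C\log k}$ --- which one would hope to obtain from a theta-function extremality statement (morally: a suitably scaled $\Z^k$ maximizes $\rho_s$ among such lattices), in the spirit of the Milman--Pisier type results and of what is known about covering radii of semistable lattices, though not implied by them; and (iii) glue the slices, bounding the contribution of the thin-tube cosets between consecutive $\lat_{i-1}\subset\lat_i$ and --- crucially --- amortizing the logarithmic losses \emph{by scale} rather than by filtration step. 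The main obstacle, and the reason the conjecture is still open, lies in (ii) together with the amortization in (iii): even for a semistable lattice with $\lambda_1\ge 1$ it is not known how to bound $\rho_s$ at scale $1/\sqrt{\log k}$ by an absolute constant, and a naive recursion through the filtration that pays a constant factor per step yields only $\exp(n^{\Omega(1)}\cdot r^2)$, far short of $\exp(\poly\log n\cdot r^2)$. A genuine proof will need a global argument for the semistable case together with an amortization in which only the number of distinct scales, which is $O(\log n)$, enters the exponent; anything weaker recovers only the known reverse-Minkowski-type bounds rather than the full conjecture.
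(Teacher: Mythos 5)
You are proposing a line of attack on a statement the paper deliberately leaves open: Conjecture~\ref{con:mainetaintro} is the paper's \emph{main conjecture}, and the authors offer no proof, only reformulations, evidence, and downstream implications. Your first move---replacing the point count by the Gaussian mass and noting that the statement is, up to constants, exactly Conjecture~\ref{con:maineta}---is precisely the paper's own formalization (Lemma~\ref{lem:pointcountingeta}), so that part tracks. Everything after that is genuinely different from anything in the paper, because the paper does not attempt a direct proof at all: the closest it comes is the observation in Related Work that stable lattices are relevant and the citation to Shapira--Weiss, together with the continuous Milman--Pisier analogue in Section~\ref{sec:pisier} and the direct-sum and random-lattice sanity checks in Section~\ref{sec:sanity}. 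Your canonical-filtration (Stuhler/Grayson) induction is therefore additional content rather than a recovery of the authors' argument, and the structural point you make---that the quotient $\lat/\lat'$ does \emph{not} inherit the ``all sublattice determinants $\geq 1$'' hypothesis, which forces the canonical choice of $\lat'$ via submodularity of $W \mapsto \log\det(\lat\cap W)$---is correct and is indeed the reason a naive induction on rank fails. You also correctly diagnose that a constant loss per filtration step gives only $\exp(n^{\Omega(1)} r^2)$ and that the number of distinct scales, $O(\log n)$, is what the amortization must be charged to.

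Two cautions, since you are candid that this is a sketch. First, your ``dichotomy'' (either $\rho_{s_0}(\lat)\le 2$ or the mass concentrates in a thin tube around a sublattice) is stated as if it were a known structural alternative, but it is essentially a reformulation of the thing to be proved: showing that large $\rho_{s_0}$ forces a dense sublattice is exactly the reverse-Minkowski content, so as written the induction threatens to be circular unless that step is extracted as a clean lemma with its own proof at each rank. Second, your semistable base case (ii)---bounding $\rho_s$ at $s \sim 1/\sqrt{\log k}$ by $O(1)$ for semistable lattices of nonnegative slope---is not implied by the Milman--Pisier volume-number theorem; as the paper notes (Section~\ref{sec:pisier}), the continuous relaxation replaces lattice-subspace determinants by section volumes, and there is a real gap between $\vol_d(\calV\cap W)$ and $\det(\lat\cap W)$, so Milman--Pisier gives evidence for, but not a reduction of, the semistable theta-extremality you need. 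You have correctly located (ii) and (iii) as the open hard steps; the point is just that neither is within reach of the tools the paper assembles.
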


While one can consider other variants of the conjecture 
(see the stronger variant in Section~\ref{sec:morepreciserevminko}
and the weaker variants in Section~\ref{sec:variants}), 
the main conjecture is the most appealing, as described next.

\paragraph{Connections.}
The conjecture exhibits a surprising wealth of connections to various areas
in mathematics, ranging from additive combinatorics to convex geometry
and to heat diffusion on manifolds. We mention some of those next. 
\begin{itemize}
\item
The Kannan-Lov{\'a}sz (KL) conjecture~\cite{KL88} is a nearly tight 
characterization of the covering radius of a lattice with respect to a given
convex body in terms of projection volumes and determinants. 
The main technical contribution of our paper, appearing in Section~\ref{sec:klmain}, 
is to show that the $\ell_2$ version of the KL conjecture~\cite{KL88} 
is implied up to poly-logarithmic factors by the main conjecture.  
This implication is quite nontrivial, and a proof overview will be given below. 
Very briefly, we rely on a novel convex relaxation for the covering radius 
and a rounding strategy for the corresponding dual program to extract the 
relevant subspace. 
We remark that it is possible that our main conjecture in fact implies
the \emph{general} KL conjecture. See the discussion at the end of Section~\ref{sec:klorigin}.
\item
In 2007, it was suggested by Green~\cite{GreenBlog} that a certain 
strong variant of the polynomial Freiman-Ruzsa conjecture over the integers
is false. In work in progress of the second-named author with Lovett~\cite{LovettRegev16},
we prove that this is indeed the case assuming the main conjecture.\footnote{We remark
that for this it would suffice to prove the conclusion in Conjecture~\ref{con:mainetaintro} 
for $r=\sqrt{n}$.} 
No unconditional proof of this is currently known.
\item
The following question was considered by Saloff-Coste (see, e.g.,~\cite{SaloffCoste94,SaloffCoste04,BendikovSaloffCoste03} and especially~\cite[Problem 11]{SaloffCoste10}).
For a lattice $\lat \subset \R^n$, consider the Brownian motion on the flat torus $\R^n/\lat$ starting
from the origin. Its density at any time $t>0$ is a Gaussian of standard deviation $\sqrt{t}$ 
reduced modulo $\lat$. Saloff-Coste asked whether its mixing time in the total variation (or equivalently, $L_1$) 
sense is approximately the same as that in the (easier to analyze) $L_2$ sense. 
This is equivalent to asking whether the smoothing parameter of a lattice
is approximately the same as the ``$L_1$ smoothing parameter.''
In Section~\ref{sec:mixingtime} we provide more background and show that such a result follows from our main conjecture. 
We remark that this seems to be the weakest implication of the main conjecture that
is already an interesting question in its own right. 
\item 
In Section~\ref{sec:complexity} we describe some connections to computational complexity.
We show that our main conjecture implies a very tight reduction from the problem
of approximation the smoothing parameter to the problem of sampling points from
a given coset of a lattice according to a Gaussian (or even subgaussian) distribution. 
Combined with known reductions, this shows that the hardness of 
SIS, a central average-case cryptographic problem, can be based on the worst-case hardness 
of approximation the smoothing parameter to within $\tilde{O}(\sqrt{n})$. 
We also mention an easy implication to the complexity of the problem of 
approximating the covering radius. 
\end{itemize}

\paragraph{Evidence for the conjecture.}
First, as we show in Section~\ref{sec:sanity}, the conjecture passes 
some basic sanity checks. For instance, it is true for natural families of lattices 
including ``rectangular'' lattices (or more generally, direct sum of lattices for which
the conjecture holds) as well as random lattices. We also show there
that a weaker bound on the number of lattice points is true (namely, with
$\poly \log n$ replaced by $\sqrt{n}$).

Second, as we describe in Section~\ref{sec:pisier}, 
the conjecture has a certain ``continuous relaxation'' which 
talks about volumes of sections instead of determinants 
of lattice subspaces. Somewhat surprisingly, that 
relaxation is known to be true and follows from a 
celebrated theorem by Milman and Pisier. 
This is the most non-trivial implication of the main 
conjecture that we know is true.

Finally, we believe that there are several approaches for proving
the main conjecture itself that are worth exploring in more detail,
including one based on additive combinatorics. 
We also tried exploring whether there are natural weaker variants of the conjecture
that might be easier to prove. Those variants are described in Section~\ref{sec:variants},
and some are quite natural in their own right. The variant described in 
Section~\ref{sec:musmoothtight} is particularly appealing, and we believe
that it can be attacked using a Fourier analytic approach. 

\paragraph{Related work.}
There has been considerable work on the problem of bounding or counting 
the number of lattice points in a ball, perhaps the earliest
reference being Gauss's circle problem. Most of the work in this area, however,
considers ``large'' convex bodies and shows that for such bodies 
the number of lattice points is very close to what one would expect 
by a volume heuristic. As such, this seems too coarse to capture the 
subspace structure highlighted by the main conjecture. 
See, e.g.,~\cite{BentkusG97} and references therein for the kind of
results proved in this area since the early 20th century. 

Also related is the work on \emph{stable lattices} (sometimes known as ``semistable''). These are lattices of determinant $1$ whose sublattices all have determinant at least $1$. 
Properties of stable lattices have been studied since the 1970s in 
connection with algebra, topology, and geometry. See~\cite{ShapiraW16}
and references therein. In particular, Shapira and Weiss~\cite{ShapiraW16} consider
a strong quantitative variant of the $\ell_2$ version of the KL conjecture, and
show that it implies the so-called Minkowski conjecture. 
It remains to be seen how exactly our main 
conjecture connects to the work on stable lattices. 

\paragraph{Proof overview of the main technical theorem.}
Our main technical theorem shows how to derive the $\ell_2$ case of the
Kannan-Lov{\'a}sz (KL) conjecture from our main conjecture. 
To explain the KL conjecture, recall that the \emph{covering radius} $\mu$ of 
a lattice is the maximum distance a point in space can be from the lattice.
So for instance $\mu(\Z^n) = \sqrt{n}/2$. An easy lower bound on $\mu(\lat)$ is 
given in terms of the determinant of $\lat$. Namely, since
balls of radius $\mu(\lat)$ centered at all points of $\lat$ cover
$\R^n$, we obtain by volume considerations that 
$\mu(\lat) \gtrsim  \sqrt{n} (\det(\lat))^{1/n}$.
This bound can be far from tight, e.g.,  for the lattice
generated by the basis $\set{\vec e_1,\ldots, \vec e_{n-1}, M \vec e_n}$ 
where $M$ is large. As was the case for the main conjecture,
we can tighten the bound by maximizing over subspaces, namely,
we consider the maximum of $\sqrt{\dim(W)} (\det(\pi_W(\lat)))^{1/\dim(W)}$
over all subspaces $W$ where $\pi_W$ denotes the projection on $W$. 
Since $\mu(\lat) \ge \mu(\pi_W(\lat))$ this is clearly a lower bound on $\mu(\lat)$. 
The $\ell_2$ KL conjecture says that this is nearly tight, i.e., that we can also
\emph{upper bound} $\mu(\lat)$ by the same maximum up to polylogarithmic terms. 

As should be obvious by now, the $\ell_2$ KL conjecture has a similar flavor 
to our main conjecture (and even more so when comparing the formal definitions, see Conjecture~\ref{con:maineta} and Conjecture~\ref{conj:kl-l2}). 
The difference is that our conjecture tries to capture
the number of points in a ball (or to be precise, the smoothing parameter), whereas
$\ell_2$ KL tries to capture the covering radius. 

The first step in our proof is to bound from above the covering radius by a convex
program we call $\mu_{\textnormal{sm}}$.
Intuitively and informally, the convex program tries
to find the ``smallest'' covariance matrix $A$ such that the distribution
obtained by picking a random lattice point and adding a Gaussian random variable
with covariance $A$ is close to uniform over $\R^n$. 
Here by smallest we mean the expected squared norm of a Gaussian with that
covariance, which is simply $\tr(A)$. 
A Gaussian that satisfies this property is said to ``smooth'' $\lat$. 
It is intuitively clear (and not difficult to prove formally) that this program bounds $\mu(\lat)^2$ from above,
since in order for the Gaussian to smooth $\lat$, it
must ``reach'' all points in space, and hence its norm must be at least $\mu(\lat)$. 
To state this program formally, we recall that smoothness has an elegant
equivalent definition (which follows from the Poisson summation formula)
in terms of the a Gaussian sum on dual lattice points, namely
\begin{equation}
\label{eq:smooth-mu-intro}
\mu_{\rm{sm}}(\lat)^2 = \min \set[\Big]{\tr(A) : A \succeq 0, \sum_{\vec{y} \in \lat^*
\setminuszero} e^{-\pi \vec{y}^\T A \vec{y}} \leq 1/2} \text{.}
\end{equation}

At this point the natural thing to do would be to consider the dual
of $\mu_{\textnormal{sm}}$, which is a maximization problem, and use
its optimal solution to find a subspace the projection on which has large determinant, as needed for the $\ell_2$ KL conjecture. 
Unfortunately, the dual program seems difficult to deal with since 
the single constraint in~\eqref{eq:smooth-mu-intro} ``entangles'' all the information about lattice points in a complicated way, and it is not clear how
it would help in identifying such a subspace.

This is were our main conjecture comes in. 
We formulate 
another convex program $\mu_{\textnormal{det}}$ which, assuming the main conjecture, bounds 
$\mu_{\textnormal{sm}}$ from above up to polylogarithmic factors. In more detail, 
that program also tries to find the covariance matrix with smallest trace, 
but its constraints basically say that $A$ should be such that 
in the dual lattice, all sublattice determinants are large
(relative to $A$). By the main conjecture, large
sublattice determinants imply small number of points
in balls, which in turn, implies smoothing.
(In fact, the formal statement of the main conjecture in Conjecture~\ref{con:maineta}
is already stated in terms of a Gaussian sum over lattice points as in~\eqref{eq:smooth-mu-intro}, so there
is no need for this detour through number of points in balls.)
Since under the main conjecture, the constraints in $\mu_{\textnormal{det}}$ imply those in 
$\mu_{\textnormal{sm}}$ (i.e., are weaker),
we obtain that $\mu_{\textnormal{det}}$ bounds $\mu_{\textnormal{sm}}$ from
above up to polylogarithmic factors, as desired. 

Since $\mu_{\textnormal{det}}$ directly puts in a constraint for every subspace, 
the subspace structure comes out explicitly, and we are finally in position take the dual. 
It turns out that the dual of $\mu_{\textnormal{det}}$ has a reasonably nice form,
and a solution to it can be seen as some kind of mixture of various lattice
subspaces. The last and most technically demanding part of the proof is to ``round'' that dual solution, 
i.e., we show how to take an arbitrary mixture of lattice subspaces and extract from
it just one lattice subspace that is nearly as good. There are several steps 
to this proof, the most interesting one being a sort of ``uncrossing inequality,''
showing that if the solution includes two subspaces $V$ and $W$, we can replace them
with the subspaces $V+W$ and $V \cap W$ in a way that does not decrease the goal
function. We now repeat this uncrossing step over and over again. Notice that we make
progress as long as there are two subspaces such that neither is contained in the other. 
Therefore, after sufficiently many iterations, we arrive at a chain, i.e., a sequence
of subspaces $W_1 \subseteq W_2 \subseteq \cdots \subseteq W_m$. Using careful bucketing,
we show that one of these subspaces must be nearly as good as the mixture, and this
completes the proof. 

\paragraph{Outline.}
We include some preliminaries in Section~\ref{sec:prelims}
and state the main conjecture formally in Section~\ref{sec:mainconj}. The remaining
sections are mostly independent. First, in Section~\ref{sec:klmain} we include
our main technical theorem on the $\ell_2$ KL conjecture.  We then discuss in
Section~\ref{sec:variants} some weaker variants of the conjecture, 
and in Section~\ref{sec:sm-cov-body} a unified way to view them all using 
certain universal convex bodies. In
Section~\ref{sec:mixingtime} we discuss the mixing time of Brownian motions. In
Section~\ref{sec:complexity}, we give some implications of our conjectures to
computational complexity. Section~\ref{sec:morepreciserevminko} explores a more
precise type of reverse Minkowski inequality. In Section~\ref{sec:pisier} we
discuss the continuous analogue of our conjecture.  We conclude in
Section~\ref{sec:sanity}, with some basic sanity checks of the conjecture.
Since the proof in
Section~\ref{sec:klmain} is somewhat involved, the reader might prefer to read
some of the following sections first, as most of them are lighter. 

\paragraph{Acknowledgements.} We thank 
Mark Rudelson for pointing us to the Milman-Pisier theorem and
Chris Peikert for suggesting an early version of the weak conjecture and its relation to $L_1$ smoothing. We also thank them and Kai-Min Chung, Shachar Lovett, and Noah Stephens-Davidowitz for useful discussions.


\section{Preliminaries}
\label{sec:prelims}

We write $X \lesssim Y$ to mean that there exists a universal constant
$C>0$ such that $X \leq CY$, and
similarly for $X \gtrsim Y$.

\paragraph{{\bf Basic Concepts.}} Let $\Ball_2^n = \set{\vec{x} \in \R^n:
\|\vec{x}\|_2 \leq 1}$ denote the unit Euclidean ball and $S^{n-1} =
\set{\vec{x} \in \R^n: \|\vec{x}\|_2 = 1} = \partial \Ball_2^n$ denote the unit
sphere in $\R^n$ . For subsets $A,B \subseteq \R^n$, we denote their Minkowski
sum $A+B = \set{a+b: a \in A, b \in B}$. Define ${\rm span}(A)$ to be the
smallest linear subspace containing $A$.

A set $K \subseteq \R^n$ is \emph{convex} if for all $\vec{x}$, $\vec{y} \in K$, $\lambda
\in [0,1]$, we have $\lambda \vec{x} + (1-\lambda)\vec{y} \in K$. $K$ is a \emph{convex body}
if additionally it is compact and has non-empty interior. $K$ is
\emph{symmetric} if $K=-K$. If $K$ is a symmetric convex body, then the functional
$\|\vec{x}\|_K = \min \set{s \geq 0: \vec{x} \in sK}$, $\forall \vec{x} \in
\R^n$, defines a norm on $\R^n$, which we call the norm induced by $K$ or
the $K$-norm.

\paragraph{{\bf Linear Algebra.}} We denote the $n \times n$ identity matrix by
$I_n$, or simply $I$ when the dimension is clear.  For a linear subspace $W
\subseteq \R^n$, define $\pi_W$ as $n \times n$ orthogonal projection matrix
onto $W$. Note that if the
columns of a matrix $O_W$ form an orthornormal basis for $W$, then $\pi_W = O_W
O_W^\T$. We define $W^\perp = \set{x \in \R^n: \pr{\vec{x}}{\vec{w}} = 0,
\forall \vec{w} \in W}$.  For a subspace $V \subseteq \R^n$, we write $W \perp
V$ if $W$ and $V$ are orthogonal subspaces, i.e., if $W \subseteq V^\perp$.  For
a matrix $B \in \R^{n \times d}$, let $\ran(B)$ and ${\rm ker}(B)$ denote the
image (also known as range) and kernel of $B$ respectively. Let $B^\T$ denote
the transpose of $B$. A matrix is symmetric if $B^\T = B$. If $B$ is square,
i.e., $n=d$, define $\tr(B) = \sum_{i \in [n]} B_{ii}$. Define the Moore-Penrose
pseudoinverse $B^+ \in \R^{d \times n}$ of $B$ to be the unique matrix such that
$B B^+ = B^{+\T}B^\T = \pi_{\ran(B)}$, $B^+ B = B^\T B^{+\T} = \pi_{{\rm
ker}(B)^\perp}$. Note that $B^{++} = B$. If $B$ is symmetric, note that $\ran(B) = {\rm ker}(B)^\perp$.
If $B$ is square (i.e.,~$n=d$) and non-singular then $B^+ = B^{-1}$, the
standard matrix inverse.  
Define the operator norm $\|B\| = \max_{\|\vec{x}\|=1}
\|B\vec{x}\|_2$ and Frobenius norm $\|B\|_F = \sqrt{\tr(B^T B)}$.

Given the columns of $B = (\vec{b}_1,\ldots,\vec{b}_d)$, we define its
Gram-Schmidt orthogonalization $(\gs{\vec{b}}_1,\ldots, \gs{\vec{b}}_d)$ by
$\gs{\vec{b}}_i = \pi_i(\vec{b}_i)$ where $\pi_i = \pi_{{\rm
span}(b_1,\dots,b_{i-1})^\perp}$, $i \in [d]$. We shall call $\pi_i$, $i \in
[d]$, the $i^{th}$ Gram-Schmidt projection of $B$. A useful fact is that $B$ is
non-singular iff all the Gram-Schmidt vectors are all non-zero. 

\subsection{Properties of positive semidefinite matrices}

Here we include some basic facts regarding positive semidefinite matrices. 
See, e.g., Bhatia's book~\cite{BhatiaPSDBook} for proofs and further discussion. 

A matrix $X \in \R^{n \times n}$ is positive semidefinite (PSD) if $X$ is
symmetric and $\vec{y}^\T X \vec{y} \geq 0$, $\forall \vec{y} \in \R^n$.
Equivalently, $X$ is PSD $\Leftrightarrow$ $X = B^\T B$ for some matrix $B$
$\Leftrightarrow$ $X$ is symmetric and all its eigenvalues are non-negative. We
note that any PSD matrix $X$ has a unique PSD square root, which we denote
$X^{1/2}$. We denote the cone of $n \times n$ PSD matrices by $\PSD^n$. For
symmetric $X,Y \in \R^{n \times n}$, we shall write $X \preceq Y$ in the Loewner
ordering if $Y-X$ is positive semidefinite. We write
$\lambda_1(X) \geq \lambda_2(X) \geq \dots \geq \lambda_n(X)$ for the eigenvalues 
of $X$ ordered from largest to smallest (note they are all real by
symmetry of $X$).  

Let $W \subseteq \R^n$ be a linear subspace and let $X \in \PSD^n$. 

\begin{definition}[Matrix Ellipsoid]
\label{def:mat-ell}
We define the ellipsoid induced by $X$ to be $E(X) = \{\vec{y} \in \ran(X):
\vec{y}^\T X^+ \vec{y} \leq 1\} = X^{1/2} \Ball_2^n$.
\end{definition}

\begin{definition}[Matrix Projection]
\label{def:mat-proj} 
We define the projection of $X$ to $W$ by $X^{\da W} \eqdef \pi_W X \pi_W$.
Here we have that $E(X^{\da W}) = \pi_W E(X)$, which justifies the term
matrix projection.
\end{definition}

\begin{definition}[Matrix Slice]
\label{def:schur-compl}
We denote $X^{\cap W}$, the slice of $X$ on $W$, to be the unique PSD matrix
satisfying
\[
\vec{y}^\T(X^{\cap W})\vec{y} = \min_{\vec{w} \in W^\perp} (\vec{y}+\vec{w})^\T X
(\vec{y}+\vec{w}), \quad \forall \vec{y} \in \R^n \text{ .}
\]
Here we have that $E(X^{\cap W}) = E(X) \cap W$, which justifies the term matrix
slice. 

If $X = \begin{pmatrix} A & C \\ C^\T & B \end{pmatrix} \in \PSD^n$, $A \in
\R^{k \times k}$, $C \in \R^{k \times (n-k)}$, $B \in \R^{(n-k) \times (n-k)}$,
then the slice $X$ on $W = \R^{k} \times 0^{n-k}$ is the Schur complement of $X$
with respect $B$ (lifted to live in the full space), that is
\[
X^{\cap W} = \begin{pmatrix} A - C B^+ C^\T & 0^{k \times (n-k)}
\\ 0^{(n-k) \times k} & 0^{(n-k) \times (n-k)} \end{pmatrix} \text{ .}  
\]
\end{definition}

\begin{definition}[Projected Determinant] 
\label{def:rest-det}
Define the projected determinant of $X$ on $W$ by
\begin{equation}
\det_W(X) \eqdef \det(O_W^\T X O_W)
\end{equation}
where $O_W$ is any matrix whose columns form an orthonormal basis of $W$. Note
that the projected determinant is invariant to the choice of orthornormal
basis. If $\dim(W) = d$, then we have that 
\[
\vol_d(\pi_W E(X)) = \vol_d(\Ball_2^d) \det_W(X)^{1/2} \text{.}
\]
\end{definition}

\begin{claim}\label{clm:detreverse}
If $B$ is a matrix and $O_V$ is a matrix whose columns form an orthonormal basis of $V = \ran(B)$, then 
\begin{align*}
   \det(B^\T B) = \det(B^\T O_V O_V^\T B) = \det(O_V^\T B B^\T O_V) = \det_V(B B^\T) \; .
\end{align*}
\end{claim}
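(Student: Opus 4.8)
The plan is to peel off the four expressions one equality at a time, using only the defining properties of $O_V$. Write $k = \dim V$ and recall from the Linear Algebra preliminaries that $O_V^\T O_V = I_k$ and $O_V O_V^\T = \pi_V$, the orthogonal projection onto $V = \ran(B)$. Since every column of $B$ lies in $\ran(B) = V$, we have $\pi_V B = B$, hence $O_V O_V^\T B = B$. Substituting this in gives $B^\T O_V O_V^\T B = B^\T B$ as matrices, so the first equality $\det(B^\T B) = \det(B^\T O_V O_V^\T B)$ is immediate.

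For the middle equality, set $M = O_V^\T B$, so that $B^\T O_V O_V^\T B = M^\T M$ and $O_V^\T B B^\T O_V = M M^\T$; I want $\det(M^\T M) = \det(M M^\T)$. Here I would use that the restriction of $O_V^\T$ to $V$ is a linear isometry onto $\R^k$ (because $O_V^\T O_V = I_k$), so $M = O_V^\T B$ has the same rank as $B$, namely $k$. This is the one place where full column rank of $B$ enters: the number of columns of $B$ equals $\dim \ran(B) = k$, so $M$ is a \emph{square} $k \times k$ matrix. Then multiplicativity of the determinant gives $\det(M^\T M) = \det(M^\T)\det(M) = \det(M)^2 = \det(M M^\T)$. (Equivalently, $M^\T M$ and $M M^\T$ always share the same multiset of nonzero eigenvalues, and here $M$ is square and of full rank, so both determinants equal $\det(M)^2$.)

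The last equality $\det(O_V^\T B B^\T O_V) = \det_V(B B^\T)$ requires no work: $B B^\T \in \PSD^n$, and $O_V$ is by hypothesis a matrix whose columns form an orthonormal basis of $V$, so this is literally Definition~\ref{def:rest-det} applied to $X = B B^\T$ and $W = V$ (recall $\det_V(\cdot)$ is independent of the chosen orthonormal basis).

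The only step that needs care is the middle one, which is the usual ``thin matrix'' identity $\det(M^\T M) = \det(M M^\T)$: it is only literally correct once $M$ is square, i.e.\ once $\dim V$ equals the number of columns of $B$. For a lattice basis, or any full-column-rank $B$, this holds automatically; for a general $B$ one should instead read the inner two quantities with the pseudo-determinant, or simply restrict to the full-column-rank case, which is the one used in the sequel. I would state the claim (or its hypotheses) so that this is unambiguous, and I expect this to be the only genuine subtlety in the argument.
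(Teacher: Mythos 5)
Your proof is correct; the paper states Claim~\ref{clm:detreverse} without any accompanying proof, so there is no argument in the text to compare against. Each of your three steps is sound: $O_V O_V^\T B = \pi_V B = B$ gives the first equality as a matrix identity; with $M = O_V^\T B$ the middle equality is $\det(M^\T M) = \det(M)^2 = \det(M M^\T)$ once $M$ is square; and the last equality is Definition~\ref{def:rest-det} read off with $X = BB^\T$, $W = V$.

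Your caveat about the hypothesis is also well taken and is not merely a stylistic point: as stated, the claim is literally false when $B$ is rank-deficient. For instance with $B = \bigl(\begin{smallmatrix}1 & 1\\0 & 0\end{smallmatrix}\bigr)$ one has $\det(B^\T B) = 0$ while $\det_V(BB^\T) = 2$, and what breaks is precisely the middle equality (the first and last hold for any $B$). The only place the paper invokes the claim is to pass from $\det(\lat) = \sqrt{\det(B^\T B)}$ for a lattice basis $B$ to $\det(\lat) = \sqrt{\det_V(BB^\T)}$, and a basis has full column rank by definition, so the intended use is safe. Still, the claim as written is missing a "full column rank" (or "$B$ has linearly independent columns") hypothesis, and you are right that this is the single genuine subtlety in an otherwise routine computation.
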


The following lemma provides some important properties of the matrix projection
and slice.

\begin{lemma} 
\label{lem:slice}
Let $X \in \PSD^n$ and $W \subseteq \ran(X)$. Then
the following holds: 
\begin{enumerate}
\item $(X^{\cap W})^+ = (X^+)^{\da W}$.
\item If $Y \in \R^{n \times n}$ symmetric satisfies $Y \preceq X$ and $\ran(Y) \subseteq W$, then $Y \preceq X^{\cap W}$.
\item For $Y \succeq 0$, $X^{\cap W} + Y^{\cap W} \preceq (X+Y)^{\cap W}$.
\end{enumerate}
\end{lemma}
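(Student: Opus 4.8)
The three claims are all statements about the matrix slice $X^{\cap W}$ and matrix projection $X^{\da W}$, and my plan is to prove each by reducing to a convenient coordinate system. The key simplifying observation is that $X^{\cap W}$ is defined variationally via a minimization over $W^\perp$, and after an orthonormal change of basis we may assume $W = \R^k \times 0^{n-k}$, so that $X^{\cap W}$ becomes (the lift of) the Schur complement $A - C B^+ C^\T$ of $X = \left(\begin{smallmatrix} A & C \\ C^\T & B \end{smallmatrix}\right)$ with respect to $B$, as recorded in Definition~\ref{def:schur-compl}. Since $W \subseteq \ran(X)$, one checks that the minimizing $\vec w \in W^\perp$ always lies in $\ran(X)$, so no degeneracy issues arise and the Schur-complement formula is valid with $B^+$ in place of $B^{-1}$.

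For part (1), I would work in the basis where $W = \R^k \times 0^{n-k}$. Writing the block form of $X$ and of $X^+$, the identity $(X^{\cap W})^+ = (X^+)^{\da W}$ becomes the classical fact that the inverse of the Schur complement of the $(1,1)$-block equals the $(1,1)$-block of the inverse (the ``Schur complement / block-inverse'' duality), extended to the singular case via pseudoinverses. Concretely, $(X^{\cap W})^+$ is the pseudoinverse of the lifted Schur complement, while $(X^+)^{\da W} = \pi_W X^+ \pi_W$ extracts the top-left block of $X^+$; these agree by the standard block-inverse formula. One should double-check that the pseudoinverses behave correctly when $X$ is singular on $\ran(X)^\perp$, but since we restrict to $W \subseteq \ran(X)$ everything happens inside $\ran(X)$ and the formulas reduce to the nonsingular case on that subspace.

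For part (2), suppose $Y = Y^\T$ with $Y \preceq X$ and $\ran(Y) \subseteq W$. For any $\vec y \in \R^n$ and any $\vec w \in W^\perp$ we have $\ran(Y) \subseteq W$, hence $Y \vec w = 0$ and $\vec w^\T Y \vec y = 0$, so $(\vec y + \vec w)^\T Y (\vec y + \vec w) = \vec y^\T Y \vec y$. Combining with $Y \preceq X$,
\[
\vec y^\T Y \vec y = (\vec y + \vec w)^\T Y (\vec y + \vec w) \le (\vec y + \vec w)^\T X (\vec y + \vec w)
\]
for all $\vec w \in W^\perp$; minimizing the right-hand side over $\vec w \in W^\perp$ and using the variational definition of $X^{\cap W}$ gives $\vec y^\T Y \vec y \le \vec y^\T (X^{\cap W}) \vec y$, i.e.\ $Y \preceq X^{\cap W}$.

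For part (3), I would again use the variational characterization directly, which makes it almost immediate. Fix $\vec y \in \R^n$. Let $\vec w_1$ achieve the minimum in the definition of $\vec y^\T (X^{\cap W}) \vec y$ and $\vec w_2$ the minimum in the definition of $\vec y^\T (Y^{\cap W}) \vec y$. Then, since $\vec w_1 + \vec w_2 \in W^\perp$ is a feasible (but not necessarily optimal) choice for the minimization defining $(X+Y)^{\cap W}$, convexity is not even needed — rather I would pick the \emph{same} shift for both: actually the cleanest route is to note that for any $\vec w \in W^\perp$, $\vec y^\T(X^{\cap W})\vec y \le (\vec y+\vec w)^\T X(\vec y+\vec w)$ and $\vec y^\T(Y^{\cap W})\vec y \le (\vec y+\vec w)^\T Y(\vec y+\vec w)$; adding and then taking the infimum over $\vec w$ of the right-hand side yields $\vec y^\T(X^{\cap W})\vec y + \vec y^\T(Y^{\cap W})\vec y \le \vec y^\T((X+Y)^{\cap W})\vec y$. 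Since this holds for all $\vec y$, we get $X^{\cap W} + Y^{\cap W} \preceq (X+Y)^{\cap W}$.

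The main obstacle I anticipate is part (1): keeping the pseudoinverse bookkeeping correct in the singular case. The cleanest fix is to restrict attention to $\ran(X)$ from the start (legitimate since $W \subseteq \ran(X)$), choose an orthonormal basis of $\R^n$ adapted to the decomposition $W \oplus (W^\perp \cap \ran(X)) \oplus \ran(X)^\perp$, and observe that on $\ran(X)$ the matrix $X$ is positive definite, reducing part (1) to the textbook nonsingular block-inverse identity; the $\ran(X)^\perp$ directions contribute zero blocks to both sides. Parts (2) and (3) are purely variational and should go through with no technical difficulty.
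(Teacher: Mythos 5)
Your proofs of parts (2) and (3) are complete and correct, and for part (1) your plan is sound. Since the paper itself states Lemma~\ref{lem:slice} without proof, citing Bhatia's book, there is no in-paper argument to compare against; I will therefore just assess your argument on its own terms.

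The key move you make in part (1) --- choosing an orthonormal basis adapted to the orthogonal decomposition $W \oplus (\ran(X) \cap W^\perp) \oplus \ran(X)^\perp$ --- is exactly the right way to neutralize the pseudoinverse bookkeeping. In this basis $X$ is block-diagonal with a positive definite $\dim(\ran(X))$-block and a zero block; both $X^{\cap W}$ and $(X^+)^{\da W}$ vanish on the last two blocks, and on the $W$-block the identity reduces to the classical Schur-complement / block-inverse duality for nonsingular matrices. The only thing I would tighten is the phrasing: what you invoke is the fact that the $(1,1)$-block of $(X|_{\ran(X)})^{-1}$ equals the inverse of the Schur complement \emph{with respect to the $(2,2)$-block}, i.e.\ $(A - CB^{-1}C^\T)^{-1}$; calling it the ``Schur complement of the $(1,1)$-block'' is a slightly ambiguous shorthand. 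You should also note explicitly that $W \subseteq \ran(X)$ guarantees $\ran(X^{\cap W}) = W = \ran((X^+)^{\da W})$ (so that both pseudoinverses again have range $W$); you have all the ingredients, but this is the point where the hypothesis $W \subseteq \ran(X)$ is used and it is worth flagging.

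Your parts (2) and (3) are fully variational and need no coordinate choice. In part (2) the crucial observation that $Y$ symmetric with $\ran(Y) \subseteq W$ implies $W^\perp \subseteq \ker(Y)$, hence $(\vec y + \vec w)^\T Y (\vec y + \vec w) = \vec y^\T Y \vec y$, is correctly stated. In part (3) your ``cleanest route'' --- fixing a single $\vec w$, adding the two inequalities, and then taking the infimum --- is exactly the superadditivity of infima; the earlier detour through $\vec w_1 + \vec w_2$ can simply be deleted, since it does not lead anywhere and the final argument is already complete.
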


The following lemma shows that the restricted determinant interacts nicely with
the matrix slice and matrix projection.

\begin{lemma}[Matrix Determinant Lemma]
\label{lem:det} 
Let $X \in \PSD^n$, $W_1 \perp W_2$ satisfying $\ran(X) = W_1 + W_2$. Then
\[
\det_{W_1+W_2}(X) = \det_{W_1}(X^{\cap W_1}) \det_{W_2}(X^{\da W_2}) \text{.}
\]
\end{lemma}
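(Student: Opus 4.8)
The plan is to reduce to the case where $X$ is nonsingular (i.e., $\ran(X) = \R^n$) and then to a convenient choice of coordinates. First I would observe that the statement only involves $X$ through its action on $\ran(X) = W_1 + W_2$, and all three determinants are computed with respect to orthonormal bases of subspaces of $\ran(X)$; so without loss of generality we may pass to the subspace $\ran(X)$, i.e., assume $W_1 \oplus W_2 = \R^n$ with $W_1 \perp W_2$ and $X \succ 0$. Choosing an orthonormal basis adapted to the splitting $\R^n = W_1 \oplus W_2$, with $\dim W_1 = k$, we may take $O_{W_1}$ to be the first $k$ standard basis vectors and $O_{W_2}$ the last $n-k$, and write
\[
X = \begin{pmatrix} A & C \\ C^\T & D \end{pmatrix}, \qquad A \in \R^{k\times k},\ D \in \R^{(n-k)\times(n-k)},
\]
both $A$ and $D$ positive definite. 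In these coordinates $\det_{W_1+W_2}(X) = \det X$.

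Next I would identify the two factors on the right-hand side with the two classical Schur-complement determinant formulas. By Definition~\ref{def:schur-compl}, the slice $X^{\cap W_1}$ is (the lift of) the Schur complement $A - CD^{-1}C^\T$, so $\det_{W_1}(X^{\cap W_1}) = \det(A - CD^{-1}C^\T)$. For the projection, $X^{\da W_2} = \pi_{W_2} X \pi_{W_2}$ is (the lift of) the block $D$, so $\det_{W_2}(X^{\da W_2}) = \det D$. Hence the claimed identity becomes
\[
\det X = \det(A - CD^{-1}C^\T)\,\det D,
\]
which is exactly the standard block-determinant formula, provable by the factorization
\[
\begin{pmatrix} A & C \\ C^\T & D \end{pmatrix}
= \begin{pmatrix} I & CD^{-1} \\ 0 & I \end{pmatrix}
\begin{pmatrix} A - CD^{-1}C^\T & 0 \\ 0 & D \end{pmatrix}
\begin{pmatrix} I & 0 \\ D^{-1}C^\T & I \end{pmatrix}
\]
and taking determinants (the outer triangular factors have determinant $1$).

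The only genuine points requiring care — and the main obstacle, such as it is — are the two bookkeeping reductions: (i) checking that passing from $\R^n$ to $\ran(X)$ is legitimate, i.e., that $\det_W$, $X^{\cap W}$, and $X^{\da W}$ for $W \subseteq \ran(X)$ are unchanged when we restrict attention to $\ran(X)$ as the ambient space; and (ii) verifying that under the adapted orthonormal basis the slice really is the lifted Schur complement with respect to $D$ and the projection really is the lifted block $D$. Point (ii) is precisely the content of the block-matrix remarks in Definitions~\ref{def:schur-compl} and~\ref{def:mat-proj}, so it can be cited rather than re-derived; point (i) follows since the defining variational characterization of $X^{\cap W}$ ranges over $W^\perp$, and for $W \subseteq \ran(X)$ the minimizing perturbation automatically lies in $\ran(X) \cap W^\perp$, so the answer does not see the kernel of $X$. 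With these in hand the proof is just the algebraic identity above.
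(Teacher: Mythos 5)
Your proof is correct. The paper does not give its own proof of Lemma~\ref{lem:det}; like the other basic PSD facts in that subsection, it is stated without argument with a pointer to Bhatia's book. The argument you give is the standard one: after restricting the ambient space to $\ran(X)$ (legitimate, since all three quantities depend only on the action of $X$ on $W_1 + W_2$, as you correctly note via the variational characterization of the slice) and choosing an orthonormal basis adapted to $W_1 \oplus W_2$, the identity reduces to $\det X = \det(A - CD^{-1}C^\T)\det D$, which follows from the Aitken block-triangular factorization. This is also consistent with the block-matrix remarks the paper makes in Definitions~\ref{def:mat-proj} and~\ref{def:schur-compl}, and with the same Aitken factorization the paper itself invokes in Eq.~\eqref{eq:aitken}.
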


The following lemma gives the basic concavity properties and dual
characterization of the projected determinant:

\begin{lemma}[Properties of Projected Determinants]
\label{lem:proj-det-props}
Let $X \in \PSD^n$ and $W \subseteq \R^n$ a $d$-dimensional subspace.
Then the following holds:
\begin{enumerate}
\item For $t \in \R_{\geq 0}$, $\det_W(tX) = t^d \det(X)$.
\item $\det_W(X)^{1/d} 
= \inf \set{\frac{1}{d} \tr(XZ): Z \in \PSD^n, \ran(Z) = W, \det_W(Z)=1}$. \\
In particular, $\det_W(X)^{1/d}$ is concave in $X$.
\item If $W \cap \ker(X) = \{\vec 0\}$, then for $\Delta \in \R^{n \times n}$, 
\[
\frac{{\rm d}}{{\rm dt}} \det_W(X+t\Delta)^{1/d} \big|_{t=0} 
=  \frac{1}{d} \tr(\det_W(X)^{1/d}(X^{\da W})^+ \Delta) \text{.} 
\]
\item If $W \cap \ker(X) = \{\vec 0\}$, $\det_W(X) \det_W((X^{\da W})^+) = 1$.
\label{itm:detprod}
\end{enumerate}
\end{lemma}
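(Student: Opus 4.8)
The plan is to translate every item into a statement about the single $d \times d$ positive semidefinite matrix $M \eqdef O_W^\T X O_W$, where $O_W$ is a fixed matrix whose columns form an orthonormal basis of $W$; by definition $\det_W(X) = \det(M)$, and this quantity is basis-independent, so nothing is lost. Item~1 is then immediate: $\det_W(tX) = \det(tM) = t^d\det(M) = t^d\det_W(X)$.

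For Item~2 I would first set up the dictionary between the two sides. Any $Z \in \PSD^n$ with $\ran(Z) = W$ has the form $Z = O_W N O_W^\T$ for $N \eqdef O_W^\T Z O_W \in \PSD^d$, where the condition $\ran(Z) = W$ (rather than a proper subspace of $W$) is exactly that $N$ be nonsingular; moreover $\det_W(Z) = \det(N)$ and $\tr(XZ) = \tr(O_W^\T X O_W N) = \tr(MN)$. So Item~2 becomes the matrix arithmetic--geometric-mean identity $\det(M)^{1/d} = \inf\set{\frac1d\tr(MN) : N \in \PSD^d,\ \det(N) = 1}$. When $M$ is nonsingular this follows by applying scalar AM--GM to the eigenvalues of $M^{1/2}NM^{1/2} \succ 0$: $\frac1d\tr(MN) = \frac1d\tr(M^{1/2}NM^{1/2}) \ge \det(M^{1/2}NM^{1/2})^{1/d} = \det(M)^{1/d}\det(N)^{1/d} = \det(M)^{1/d}$, with equality when $M^{1/2}NM^{1/2}$ is a scalar matrix, realized by the feasible choice $N = \det(M)^{1/d}M^{-1}$. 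When $M$ is singular (so $\det_W(X) = 0$), I would take $N$ supported mostly on $\ker(M)$ and rescaled to determinant $1$, making $\tr(MN) \to 0$, so the infimum is $0$ as well. Concavity of $X \mapsto \det_W(X)^{1/d}$ is then free, being a pointwise infimum of the linear functionals $X \mapsto \frac1d\tr(XZ)$.

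Items~3 and~4 rest on the observation that the hypothesis $W \cap \ker(X) = \{\vec 0\}$ is equivalent to $M$ being nonsingular: if $Mv = \vec 0$ then $\|X^{1/2}O_Wv\|^2 = v^\T Mv = 0$, so $O_Wv \in W \cap \ker(X) = \{\vec 0\}$, forcing $v = \vec 0$, and the converse is clear. Writing $D \eqdef O_W^\T\Delta O_W$, Jacobi's formula together with the chain rule gives $\frac{\mathrm d}{\mathrm dt}\det_W(X+t\Delta)^{1/d}\big|_{t=0} = \frac{\mathrm d}{\mathrm dt}\det(M+tD)^{1/d}\big|_{t=0} = \frac1d\det(M)^{1/d}\tr(M^{-1}D)$. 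It then remains to recognize $M^{-1}$ inside $W$: since $X^{\da W} = \pi_W X\pi_W = O_W M O_W^\T$ and $O_W$ has orthonormal columns, the Moore--Penrose axioms yield $(X^{\da W})^+ = O_W M^{-1} O_W^\T$, hence $\tr(M^{-1}D) = \tr(O_W M^{-1}O_W^\T\Delta) = \tr((X^{\da W})^+\Delta)$, which is Item~3. For Item~4, $\det_W((X^{\da W})^+) = \det(O_W^\T O_W M^{-1}O_W^\T O_W) = \det(M^{-1}) = 1/\det_W(X)$ (alternatively it drops out of Item~3 by taking $\Delta = X$ and invoking Euler's homogeneity relation for the $1$-homogeneous map $\det_W(\cdot)^{1/d}$).

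The only step with genuine content is the variational formula in Item~2; the mild subtleties I expect to need care with are matching the equality case of matrix AM--GM against the constraint $\ran(Z) = W$ being an \emph{equality} of subspaces rather than an inclusion, and separately handling the degenerate case $\det_W(X) = 0$, where the infimum is a limit that is not attained. Everything else is bookkeeping built on the single identity $X^{\da W} = O_W(O_W^\T X O_W)O_W^\T$ together with standard determinant calculus.
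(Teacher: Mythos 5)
Your proof is correct in all four items, and the reduction to the $d\times d$ matrix $M = O_W^\T X O_W$ is clean and self-contained. The paper itself does not supply a proof of this lemma — it defers to Bhatia's book~\cite{BhatiaPSDBook} — so there is no paper argument to compare against; your write-up would serve as a suitable self-contained replacement. Two small points worth stating explicitly if you expand this into a full proof: first, the paper's Item~1 has a typo ($t^d\det(X)$ should read $t^d\det_W(X)$, which is what you prove); second, in the singular case of Item~2 you should verify the trace really tends to zero — with $a_\eps^{d-r}b_\eps^{r}=1$ and $b_\eps\to 0$ one gets $\tr(MN_\eps)=b_\eps\tr(M\restriction_{\ran(M)})\to 0$, so the infimum is $0$ but is not attained, exactly as you flag.
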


\subsection{Probability}

For two probability measures $\mu_1,\mu_2$ on $\Omega$, we define the
\emph{total variation distance} between $\mu_1$ and $\mu_2$ by
$\Delta(\mu_1,\mu_2) = \sup \set{|\mu_1(S)-\mu_2(S)|: S \subseteq \Omega, S
\text{ measurable }}$. If $\mu_1$ is absolutely continuous with respect to
$\mu_2$ then
\[
\Delta(\mu_1,\mu_2) = (1/2) \int_{\Omega}
|(d\mu_1/d\mu_2(\vec{x})-1)|d\mu_2(\vec{x}) \text{.}
\]

\paragraph{{\bf Gaussian distribution.}}
For a positive definite matrix $A \in \PSD^n$ and $\vec{c} \in \R^n$, we define
the Gaussian distribution $N(\vec{c},A)$ with covariance $A$ and mean $\vec{c}$
to be the distribution with density
\begin{equation}
\label{eq:gauss-dist}
\frac{1}{\det(A)^{1/2} \sqrt{2\pi}^n} 
\exp\parens[\Big]{-\frac{1}{2}(\vec{x}-\vec{c})A^{-1}(\vec{x}-\vec{c})}, 
\forall \vec{x} \in \R^n \text{ .} 
\end{equation} 

For $X \sim N(\vec{c},A)$, by construction we have that $\E[X] = \vec{c}$ and
$\E[(X-\vec{c})(X-\vec{c})^\T] = A$.
We shall say $n$-dimensional standard Gaussian to denote $N(0,I_n)$.

\subsection{Lattices}

A $d$-dimensional Euclidean lattice $\lat \subset \R^n$ is defined as the set of
all integer linear combinations of $d$ linearly independent vectors $B =
(\vec{b}_1,\dots,\vec{b}_d)$ in $\R^n$, where we call $B$ a basis for $\lat$.
If $d=n$ we say that the
lattice is full rank.  

The \emph{determinant} of $\lat$ is defined as $\det(\lat) = \sqrt{\det(B^\T B)}$,
which is invariant to the choice of basis for $\lat$. 
Notice that by
Claim~\ref{clm:detreverse} we can also write 
$\det(\lat) = \sqrt{\det_V(B B^\T)}$ where $V = {\rm span}(\lat)$.
The following elementary 
claim shows how the determinant changes under linear transformation.

\begin{claim}\label{clm:detlattransform}
Let $\lat$ be a lattice and $W = {\rm span}(\lat)$.
Then for a linear transformation $A$,
\[
\det(A \lat)
= \det_W(A^\T A)^{1/2} \det(\lat) \text{ .}
\]
\end{claim}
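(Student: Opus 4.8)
The plan is to reduce the statement to the single algebraic identity that for any matrix $B \in \R^{n\times d}$ of full column rank with $\ran(B) = W$ and any $X \in \PSD^n$, one has $\det(B^\T X B) = \det_W(X)\cdot \det(B^\T B)$. Granting this, fix a basis $B = (\vec b_1,\dots,\vec b_d)$ of $\lat$, so that $AB$ is a basis of $A\lat$, and compute
\[
\det(A\lat)^2 = \det\bigl((AB)^\T (AB)\bigr) = \det\bigl(B^\T (A^\T A) B\bigr) = \det_W(A^\T A)\,\det(B^\T B) = \det_W(A^\T A)\,\det(\lat)^2 \text{ ,}
\]
using the identity with $X = A^\T A$ and the definition $\det(\lat) = \sqrt{\det(B^\T B)}$. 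Taking square roots gives the claim.

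To prove the identity, let $O_W$ be a matrix whose columns form an orthonormal basis of $W$, so that $O_W O_W^\T = \pi_W$ and $O_W^\T O_W = I_d$. Since the columns of $B$ lie in $W = \ran(B)$ we have $\pi_W B = B$, hence $B = O_W M$ with $M \eqdef O_W^\T B \in \R^{d\times d}$; as $B$ and $O_W$ both have rank $d$, the matrix $M$ is invertible. Then $B^\T X B = M^\T (O_W^\T X O_W) M$, so by multiplicativity of the determinant and Definition~\ref{def:rest-det},
\[
\det(B^\T X B) = \det(M)^2\,\det(O_W^\T X O_W) = \det(M)^2\,\det_W(X) \text{ ,}
\]
while $\det(B^\T B) = \det(M^\T O_W^\T O_W M) = \det(M)^2$. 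Combining the two displays yields the identity. (Alternatively, one can invoke Claim~\ref{clm:detreverse} in place of this computation.)

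There is no real obstacle here; the only point meriting a word of care is the degenerate case in which $A$ restricted to $W$ is not injective. Then $AB$ no longer has full column rank, $A\lat$ is a lower-dimensional set to which one conventionally assigns determinant $\sqrt{\det((AB)^\T AB)} = 0$, and simultaneously $\det_W(A^\T A) = \det(O_W^\T A^\T A O_W) = 0$, so the displayed formula holds trivially as $0 = 0$. Thus the whole statement is a routine consequence of the factorization $B = O_W(O_W^\T B)$ together with multiplicativity of the determinant.
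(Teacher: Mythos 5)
Your proof is correct and follows essentially the same route as the paper's: factor a basis $B$ of $\lat$ as $B = O_W \bar B$ with $O_W$ an orthonormal basis of $W$ (your $M$ is the paper's $\bar B$), compute $\det(B^\T A^\T A B) = \det(\bar B^\T \bar B)\det(O_W^\T A^\T A\, O_W)$ by multiplicativity of the determinant, and identify the factors as $\det(\lat)^2$ and $\det_W(A^\T A)$. Your extra remark about the degenerate case where $A$ is not injective on $W$ is a sensible addition that the paper leaves implicit.
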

\begin{proof}
Let $B$ be a basis of $\lat$. Write $B = O_W \bar{B}$ where
the columns of $O_W$ are an orthonormal basis of $W$ and $\bar{B}$ is ${\rm dim}(W) \times {\rm dim}(W)$. Then,
\begin{align*}
\det(A \lat)^2 &= 
\det(B^\T A^\T A B) = 
\det(\bar{B}^\T O_W^\T A^\T A O_W \bar{B}) = 
\det(O_W^\T A^\T A O_W)\det(\bar{B}^\T \bar{B}) \\
&= \det_W(A^\T A)\det(B^\T B) = \det_W(A^\T A) \det(\lat)^2 \text{ ,}
\end{align*}
as claimed.
\end{proof}

The dual lattice of $\lat$ is $\lat^* = \set{\vec{y} \in
{\rm span}(\lat): \pr{\vec{y}}{\vec{x}} \in \Z, \forall \vec{x} \in \lat}$. It
is easy to verify that $B(B^\T B)^{-1}$ yields a basis for $\lat^*$ and that
$\det(\lat^*) = 1/\det(\lat)$.  We say that a subspace $W
\subseteq \R^n$ is a \emph{lattice subspace} of $\lat \subset \R^n$ if $W$
admits a basis of vectors in $\lat$.  We will need a few 
important facts about lattice subspaces and projections. Firstly, the projection
$\pi_W(\lat)$ onto a subspace $W \subseteq \R^n$ is a lattice (i.e., discrete) if
and only if $W$ is a lattice subspace of $\lat^*$. Furthermore, $\pi_W(\lat)^* =
\lat^* \cap W$. Secondly, for two lattice subspaces $V,W$ of $\lat$, both the
intersection $V \cap W$ and the sum $V+W$ is a lattice subspace of $\lat$.  

Let $K \subseteq \R^n$ be a symmetric convex body and $\lat \subseteq \R^n$ as
above, be a $d$-dimensional lattice. 

\begin{definition}[Successive Minima] For $i \in [d]$, we define the $i^{th}$
successive minima of $\lat$ with respect to $K$ by 
\label{def:suc-min}
\[
\lambda_i(K,\lat) = \min \set{r \geq 0: \dim(rK \cap \lat) \geq i}
\text{.}
\]
We write $\lambda_i(\lat)$ to denote $\lambda_i(\Ball_2^n,\lat)$.
\end{definition}

\begin{definition}[Voronoi Cell]
\label{def:voronoi-cell}
For a lattice $\lat \subseteq \R^n$, we define the \emph{Voronoi cell} by
\[
\calV(\lat) = \set[\Big]{\vec{x} \in {\rm span}(\lat): \pr{\vec{x}}{\vec{y}} 
\leq \frac{1}{2}\pr{\vec{y}}{\vec{y}} \quad \forall \vec{y} \in \lat \setminuszero}
\text{ .}
\]
In words, $\calV(\lat)$ corresponds to all the points in ${\rm span}(\lat)$ that
are closer to $\vec{0}$ than to any other point of $\lat$. We note that $\calV$
is a centrally symmetric polytope (and hence convex) which tiles ${\rm
span}(\lat)$ with respect to $\lat$.
\end{definition}

\begin{definition}[Covering Radius] The covering radius of $\lat$ with respect
to $K$, for $K$ a (not necessarily symmetric) convex body, is
\[
\mu(K,\lat) = \inf \set{r \geq 0: {\rm span}(\lat) \subseteq \lat + rK} \text{ .}
\]
Note that replacing $K$ by $K \cap {\rm span}(\lat)$ does
not change the covering radius. We write $\mu(\lat)$ to denote
$\mu(\Ball_2^n,\lat)$. 
\end{definition}

\begin{definition}[Metric on the Torus]
\label{def:toric-metric}
For $\vec{x},\vec{y} \in {\rm span}(\lat) / \lat$, define
$\dist(\vec{x},\vec{y}) = \min_{\vec{z} \in \lat+(\vec{y}-\vec{x})}
\|\vec{z}\|_2$. Note that $\dist$ is in fact a metric. Furthermore, $\mu(\lat) =
\max_{\vec{x} \in {\rm span}(\lat)/\lat} \dist(\vec{x},\lat)$, is the diameter
of the torus under this metric.
\end{definition}

The following is an easy and useful fact about the covering radius, already observed in~\cite{GuruswamiMR05}.

\begin{claim}\label{clm:gmrcovering}
For a lattice $\lat \subset \R^n$ and $\vec x$ a uniform point in ${\rm
span}(\lat) /\lat$, 
\begin{align}
\label{eq:gmrclaim}
\Pr[\dist(\vec{x},\lat) \ge \mu(\lat)/2] \ge 1/2 \text{.} 	
\end{align}
\end{claim}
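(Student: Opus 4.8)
The plan is to combine the triangle inequality for the toric metric of Definition~\ref{def:toric-metric} with a symmetrization trick around a deepest hole of the lattice. First I would recall that $\dist(\cdot,\cdot)$ is a genuine metric on the compact torus $\spn(\lat)/\lat$, and that $\mu(\lat) = \max_{\vec{x} \in \spn(\lat)/\lat} \dist(\vec{x},\lat)$, where $\dist(\vec{x},\lat) \eqdef \dist(\vec{x},\vec{0}) = \min_{\vec{z} \in \lat + \vec{x}} \|\vec{z}\|_2$. Since $\dist(\cdot,\vec{0})$ is continuous and the torus is compact, the maximum is attained, so I can fix a ``deep hole'' $\vec{w} \in \spn(\lat)/\lat$ with $\dist(\vec{w},\lat) = \mu(\lat)$.

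Next I would prove the pointwise dichotomy: for every $\vec{x} \in \spn(\lat)/\lat$, at least one of $\dist(\vec{x},\lat) \geq \mu(\lat)/2$ and $\dist(\vec{w}-\vec{x},\lat) \geq \mu(\lat)/2$ holds. Indeed, the triangle inequality gives $\mu(\lat) = \dist(\vec{w},\vec{0}) \leq \dist(\vec{x},\vec{0}) + \dist(\vec{x},\vec{w})$, and since $\lat = -\lat$ the metric is invariant under negation of the displacement, so $\dist(\vec{x},\vec{w}) = \dist(\vec{w}-\vec{x},\vec{0})$. Hence $\dist(\vec{x},\lat) + \dist(\vec{w}-\vec{x},\lat) \geq \mu(\lat)$, from which the dichotomy is immediate.

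Finally I would pass to probabilities. Let $\vec{x}$ be uniform on $\spn(\lat)/\lat$ and let $A$ be the event $\{\dist(\vec{x},\lat) \geq \mu(\lat)/2\}$. The affine involution $\vec{x} \mapsto \vec{w}-\vec{x}$ is measure-preserving on the torus (translation and negation both preserve the uniform measure), so $\Pr[\dist(\vec{w}-\vec{x},\lat) \geq \mu(\lat)/2] = \Pr[A]$. By the dichotomy these two events together cover the whole sample space, so $2\Pr[A] \geq 1$, i.e.\ $\Pr[A] \geq 1/2$, which is exactly~\eqref{eq:gmrclaim}.

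The argument is essentially routine; the only points that need a word of care are the existence of the deep hole (compactness of the torus and continuity of $\dist(\cdot,\vec{0})$) and the measure-preservation of $\vec{x}\mapsto\vec{w}-\vec{x}$. So there is no genuine obstacle here — the only ``trick'' is to symmetrize around $\vec{w}$ rather than trying to control $\dist(\vec{x},\lat)$ directly.
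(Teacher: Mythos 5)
Your proof is correct and uses the same core ingredients as the paper's: the triangle inequality for the toric metric together with invariance of Haar measure on the torus. The paper phrases it as a proof by contradiction with an arbitrary point $\vec{y}$ (using that both $\vec{x}$ and $\vec{x}-\vec{y}$ are uniform, so with positive probability both are close to $\lat$, giving $\dist(\vec{y},\lat)<\mu$); you phrase it directly by fixing a deep hole $\vec{w}$ and symmetrizing via $\vec{x}\mapsto\vec{w}-\vec{x}$. These are the same argument in different clothing, and your direct version is arguably a bit cleaner.
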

\begin{proof}
Assume towards contradiction that~\eqref{eq:gmrclaim} does not hold.  Let $\vec
y$ be an arbitrary point in ${\rm span}(\lat)/\lat$ and $\vec x$ a uniform point
as above.  Since $\vec x - \vec y$ and $\vec x$ are both uniformly distributed
in ${\rm span}(\lat)/\lat$, we have that with positive probability, both
$\dist(\vec y, \vec x) < \mu(\lat)/2$ and $\dist(\vec x, \lat) < \mu(\lat)/2$
hold. But this implies by triangle inequality that $\dist(\vec y, \lat) <
\mu(\lat)$. Since $\vec y$ is arbitrary, this contradicts the definition of
covering radius.  
\end{proof}

We recall
the following fundamental theorem of Minkowski. The goal of the present work is
to find partial converses to this theorem.

\begin{theorem}[Minkowski's First Theorem, {see~\cite[Chapter 2]{GruberL87}}] 
\label{thm:mink-first}
For an $n$-dimensional symmetric convex body $K$ and lattice $\lat$ in $\R^n$, 
\[
|K \cap \lat| \geq \ceil{ 2^{-n} \vol_n(K) / \det(\lat) } \text{ .}
\]
\end{theorem}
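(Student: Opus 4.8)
The plan is to deduce this counting form of Minkowski's theorem from a volumetric pigeonhole argument on the torus $\R^n/\lat$, following the classical Blichfeldt--van der Corput strategy.

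First I would pass to the half-body $S = \tfrac12 K$, so that $\vol_n(S) = 2^{-n}\vol_n(K)$, and set $v = \vol_n(S)/\det(\lat)$; the target inequality is then precisely $|K\cap\lat| \ge \ceil{v}$. Note $v > 0$ since $K$ has nonempty interior, so $\ceil{v} \ge 1$, which is already consistent with the fact that $\vec 0 \in K\cap\lat$ (because $K=-K$ is convex, $\tfrac12\vec x + \tfrac12(-\vec x) = \vec 0 \in K$ for any $\vec x \in K$).

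Next I would introduce the quotient map $\phi:\R^n \to \R^n/\lat$ and the fiber-count function $f(\vec z) = |\phi^{-1}(\vec z)\cap S|$, the number of points of $S$ lying in the coset $\phi^{-1}(\vec z)$. Writing $f$ as a countable sum of indicator functions of lattice translates of $S$ shows it is measurable, and compactness of $K$ makes that sum finite at every point, so $f$ is a nonnegative integer-valued function; Tonelli's theorem over a fundamental domain of $\lat$ (which has volume $\det(\lat)$) gives $\int_{\R^n/\lat} f = \vol_n(S) = v\det(\lat)$. Now suppose for contradiction that every coset of $\lat$ meets $S$ in at most $m := \ceil{v}-1$ points; then $f \le m$ pointwise, so $v\det(\lat) = \int f \le m\det(\lat)$, i.e.\ $v \le \ceil{v}-1$, contradicting the elementary fact $\ceil{v}-1 < v$. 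Hence some coset contains $\ceil{v}$ distinct points $\vec s_0,\dots,\vec s_{\ceil{v}-1}\in S$, all congruent to one another modulo $\lat$.

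Finally, the differences $\vec s_i - \vec s_0$ for $i=0,\dots,\ceil{v}-1$ are $\ceil{v}$ distinct lattice vectors (distinct since the $\vec s_i$ are), and each lies in $K$: from $\vec s_i,\vec s_0\in\tfrac12 K$ we get $2\vec s_i\in K$ and $-2\vec s_0\in -K=K$, so convexity gives $\vec s_i-\vec s_0 = \tfrac12(2\vec s_i)+\tfrac12(-2\vec s_0)\in K$. This exhibits $\ceil{2^{-n}\vol_n(K)/\det(\lat)}$ distinct points of $K\cap\lat$, as claimed. The one place that needs a little care is extracting the \emph{ceiling}, rather than merely ``strictly more than $v$'', from the averaging step; this is exactly what the strict inequality $\ceil{v}-1 < v$, applied to the integer-valued function $f$, buys us. The remaining measure-theoretic points (measurability and pointwise finiteness of $f$, the application of Tonelli) are routine given that $K$ is a convex body, so I do not anticipate any genuine obstacle here.
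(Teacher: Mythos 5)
The paper does not prove this statement---it is quoted as a classical fact with a citation to Gruber and Lekkerkerker---so there is no paper proof to compare against. Your argument is the standard Blichfeldt--van der Corput averaging proof and it is correct: the passage to the half-body $S=\tfrac12 K$, the fiber-count function $f$ with $\int_{\R^n/\lat} f = \vol_n(S)$, the pigeonhole step exploiting that $f$ is integer-valued together with $\ceil{v}-1 < v$, and the final step recovering lattice points via $\vec{s}_i - \vec{s}_0 = \tfrac12(2\vec{s}_i) + \tfrac12(-2\vec{s}_0) \in K$ using symmetry and convexity, are all sound and give exactly the ceiling bound stated.
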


\begin{theorem}[Minkowski's Second Theorem, {see~\cite[Chapter 2]{GruberL87}}] 
\label{thm:mink-sec}
For an $n$-dimensional symmetric convex body $K$ and lattice $\lat$ in $\R^n$, 
\[
\prod_{i=1}^n \lambda_i(K, \lat) \leq \frac{2^n}{\vol_n(K)} \det(\lat) \leq 
n! \prod_{i=1}^n \lambda_i(K, \lat) \text{ .}
\]
Furthermore, if $K = \Ball_2^n$, the $n!$ on the right-hand side can be replaced
by $2^n \vol_n(\Ball_2^n)^{-1} = ((1+o(1)) \frac{2n}{\pi e})^{n/2}$.
\end{theorem}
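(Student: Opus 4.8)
The plan is to prove the two inequalities separately. The right-hand inequality $2^n\det(\lat)\le n!\,\vol_n(K)\prod_i\lambda_i(K,\lat)$, together with its sharpening for $K=\Ball_2^n$, is elementary; the left-hand inequality $\prod_i\lambda_i(K,\lat)\,\vol_n(K)\le 2^n\det(\lat)$ is the substantial one. We may assume $\lat$ has rank $n$, since otherwise $\lambda_n=\infty$. Fix linearly independent $\vec v_1,\dots,\vec v_n\in\lat$ with $\|\vec v_i\|_K=\lambda_i:=\lambda_i(K,\lat)$, chosen greedily so that $\vec v_k$ has minimal $K$-norm among all lattice vectors outside $R_{k-1}:=\spn(\vec v_1,\dots,\vec v_{k-1})$. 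This choice guarantees both that no lattice vector outside $R_{k-1}$ has $K$-norm below $\lambda_k$ and that $R_0\subsetneq R_1\subsetneq\cdots\subsetneq R_n=\R^n$ is a flag of lattice subspaces.

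For the easy direction, each $\vec v_i/\lambda_i$ lies on $\partial K\subseteq K$, so by central symmetry and convexity the cross-polytope $P=\mathrm{conv}\{\pm\vec v_1/\lambda_1,\dots,\pm\vec v_n/\lambda_n\}$ is contained in $K$. Hence $\vol_n(K)\ge\vol_n(P)=\tfrac{2^n\,|\det(\vec v_1,\dots,\vec v_n)|}{n!\,\prod_i\lambda_i}\ge\tfrac{2^n\det(\lat)}{n!\,\prod_i\lambda_i}$, where the last inequality uses that $\vec v_1,\dots,\vec v_n$ generate a full-rank sublattice of $\lat$, whose determinant is a positive integer multiple of $\det(\lat)$. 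Rearranging yields the right-hand inequality. When $K=\Ball_2^n$ we instead invoke Hadamard's inequality, $\det(\lat)\le|\det(\vec v_1,\dots,\vec v_n)|\le\prod_i\|\vec v_i\|_2=\prod_i\lambda_i$, which is precisely the claim with $n!$ replaced by $2^n/\vol_n(\Ball_2^n)$; since $\vol_n(\Ball_2^n)\ge 2^n/n!$ this is indeed a sharpening.

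For the hard direction the strategy is to exhibit a body whose $\lat$-translates have pairwise disjoint interiors and whose volume equals $2^{-n}\vol_n(K)\prod_i\lambda_i$; the standard packing bound (equivalently, the contrapositive of Blichfeldt's theorem) then forces this volume to be at most $\det(\lat)$, which is exactly the asserted inequality. The natural candidate is $\tfrac12 DK$, where $D$ is the linear map determined by $D\vec v_j=\lambda_j\vec v_j$: on the one hand $\vol_n(\tfrac12 DK)=2^{-n}|\det D|\,\vol_n(K)=2^{-n}\prod_i\lambda_i\,\vol_n(K)$, and on the other hand (since $DK$ is a symmetric convex body) disjointness of the translates is equivalent to $\mathrm{int}(DK)$ containing no nonzero lattice vector, i.e.\ to $\|D^{-1}\vec w\|_K\ge 1$ for every $\vec w\in\lat\setminus\{\vec 0\}$. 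Writing $\vec w=\sum_{j\le k}c_j\vec v_j$ with $c_k\ne 0$ gives $D^{-1}\vec w=\lambda_k^{-1}\sum_{j\le k}(\lambda_k/\lambda_j)\,c_j\vec v_j$, so the required bound amounts to showing that this rescaled combination still has $K$-norm at least $\lambda_k$, a statement one hopes to extract from the greedy minimality of $\vec v_1,\dots,\vec v_k$ inside $\lat\cap R_k$.

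I expect this last point to be the main obstacle, and it is the reason the theorem is merely quoted here: $D$ does not preserve $\lat$, the ratios $\lambda_k/\lambda_j$ are generically irrational, and the rescaled combination genuinely needs a careful (``reduced'') choice of minimal vectors to have the claimed $K$-norm. Minkowski's classical resolution — see also Cassels and Gruber--Lekkerkerker — circumvents these issues by proving the volume bound through a limiting procedure: one works with a nearby commensurable system of minima and a flag-adapted (piecewise-linear) rearrangement of $K$, checks the disjointness of translates one layer of the flag at a time using greedy minimality, and then passes to the limit using continuity of $\vol_n$ and of the successive minima in $K$. An alternative is induction on $n$ by slicing along $R_{n-1}$, but the most naive accounting of how $\vol_n(K)$, $\det(\lat)$ and $\lambda_n$ transform points the wrong way, so this route must combine the Fubini/Brunn--Minkowski estimate $\vol_n(K)\le 2H\,\vol_{n-1}(K\cap R_{n-1})$ (with $H$ the half-width of $K$ normal to $R_{n-1}$) with the inductive hypothesis in a considerably more delicate fashion.
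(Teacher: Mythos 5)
The paper states Minkowski's Second Theorem as a cited result and does not prove it, so there is no internal proof to compare against; what follows assesses your sketch on its own terms.

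Your proof of the right-hand inequality is correct and complete: the cross-polytope ${\rm conv}\{\pm\vec v_1/\lambda_1,\dots,\pm\vec v_n/\lambda_n\}$ lies inside $K$ by central symmetry and convexity, its volume is $2^n\,|\det(\vec v_1,\dots,\vec v_n)|/(n!\prod_i\lambda_i)$, and $|\det(\vec v_1,\dots,\vec v_n)|\ge\det(\lat)$ because the $\vec v_i$ span a full-rank sublattice. The Hadamard substitution when $K=\Ball_2^n$, and your note that it sharpens $n!$ because $\vol_n(\Ball_2^n)\ge 2^n/n!$, are both right.

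For the left-hand inequality you do not have a proof, and your diagnosis of why the naive dilation fails is accurate: $\tfrac12 DK$ with $D\vec v_j=\lambda_j\vec v_j$ has the correct volume, but disjointness of its $\lat$-translates reduces to $\bigl\|\sum_{j\le k}(\lambda_k/\lambda_j)c_j\vec v_j\bigr\|_K\ge\lambda_k$ for each lattice relation $\vec w=\sum_{j\le k}c_j\vec v_j$ with $c_k\ne 0$, and greedy minimality of the $\vec v_j$ does not yield this --- the rescaled point is not a lattice vector, and inflating the lower coordinates by factors $\lambda_k/\lambda_j\ge 1$ can in general push the $K$-norm down by cancellation against the top component. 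Your pointer to the classical repair (Davenport's argument with a flag-adapted rearrangement handled layer by layer, plus a limiting step, as in Cassels and Gruber--Lekkerkerker) is the right place to find the missing piece. Since the paper quotes the theorem rather than proving it, proving the easy direction, honestly identifying the obstruction in the hard one, and deferring to the references is the appropriate resolution here.
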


We will need the following bound due to Henk, which bounds the number of lattice
points in any scaling of a symmetric convex body in terms of the successive
minima.

\begin{theorem}[Henk's Bound~\cite{Henk02}]
\label{thm:henk-bnd}
For an $n$-dimensional symmetric convex body $K$ and lattice $\lat$ in $\R^n$, and 
$t \geq 0$,
\[
|tK \cap \lat| \leq 2^{n-1} \floor[\Big]{ 1 + \frac{2t}{\lambda_i(K,\lat)}}
\text{ .}
\] 
\end{theorem}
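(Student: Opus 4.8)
By rescaling it suffices to treat $t = 1$: since $\lambda_i(tK,\lat) = \lambda_i(K,\lat)/t$, the general statement is the $t=1$ statement applied to $tK$. If $\lambda_1(K,\lat) > 1$ then $K$ meets $\lat$ only at the origin and the bound is immediate, so assume $\lambda_1(K,\lat) \le 1$. I would prove
\[
|K \cap \lat| \;\le\; 2^{n-1}\prod_{i=1}^{n} \floor[\Big]{1 + \frac{2}{\lambda_i(K,\lat)}}
\]
by induction on $n$; the base case $n = 1$ is the elementary count $|K \cap \lat| = 2\floor{1/\lambda_1} + 1 \le \floor{2/\lambda_1} + 1 = \floor{1 + 2/\lambda_1}$, which uses $2\floor{x} \le \floor{2x}$.

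For the inductive step I would fix linearly independent lattice vectors $\vec a_1,\dots,\vec a_n \in \lat$ chosen greedily so that $\vec a_i$ is a shortest vector of $\lat$ outside $\spn(\vec a_1,\dots,\vec a_{i-1})$; then $\|\vec a_i\|_K = \lambda_i := \lambda_i(K,\lat)$. Put $H = \spn(\vec a_1,\dots,\vec a_{n-1})$, a lattice subspace, and $\lat' := \lat \cap H$, an $(n-1)$-dimensional lattice. Two facts are immediate: (i) $\lambda_i(K \cap H, \lat') = \lambda_i$ for every $i \le n-1$, since $\vec a_1,\dots,\vec a_{n-1} \in \lat'$ already realize these minima while a shorter independent family in $\lat'$ would be one in $\lat$; and (ii) every $\vec v \in \lat \setminus H$ satisfies $\|\vec v\|_K \ge \lambda_n$, because $\vec a_n$ was chosen as a shortest lattice vector outside $H$. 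Writing $q : \lat \to \lat/\lat' \cong \Z$ for the quotient map, I would split the count over the induced cosets:
\[
|K \cap \lat| \;=\; \sum_{\bar{\vec c}\,\in\, q(\lat)} \bigl|K \cap q^{-1}(\bar{\vec c})\bigr| \;\le\; \#\bigl\{\bar{\vec c} : K \cap q^{-1}(\bar{\vec c}) \neq \emptyset\bigr\} \cdot \max_{\bar{\vec c}} \bigl|K \cap q^{-1}(\bar{\vec c})\bigr|\text{.}
\]

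Two sub-bounds should then close the recursion. For the per-coset count: if $\vec x,\vec y$ lie in $K$ and in a common coset $\vec c + \lat'$, then $\vec x - \vec y \in (K-K)\cap H = 2K \cap H$ by symmetry of $K$, so, anchoring $K \cap (\vec c + \lat')$ at one of its lattice points (when nonempty), it translates into $2K \cap \lat'$; hence $|K \cap q^{-1}(\bar{\vec c})| \le |2K \cap \lat'|$. For the number of occupied cosets: these are distinct points of the rank-one lattice $q(\lat)$, and one wants a one-dimensional width estimate --- the difference of the two extreme representatives lies in $2K$ and in a coset far from the identity, so its $K$-norm (which is at most $2$) should, via (ii), be at least the spread times $\lambda_n$, giving $\#\{\text{occupied cosets}\} \le \floor{1 + 2/\lambda_n}$. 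Granting this and applying the inductive hypothesis to $\lat'$ via (i) (note $\lambda_i(2K\cap H,\lat') = \lambda_i/2$), one reaches
\[
|K \cap \lat| \;\le\; \floor[\Big]{1 + \frac{2}{\lambda_n}}\cdot\bigl|2K \cap \lat'\bigr| \;\le\; \floor[\Big]{1 + \frac{2}{\lambda_n}}\cdot 2^{n-2}\prod_{i=1}^{n-1}\floor[\Big]{1 + \frac{4}{\lambda_i}}\text{,}
\]
and it remains to reconcile this with the target bound.

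That reconciliation is the main obstacle. The naive recursion above loses twice: passing from $K$ to $2K$ turns $2/\lambda_i$ into $4/\lambda_i$, so iterating blows the constant up to roughly $3^n$ instead of $2^{n-1}$; and, more fundamentally, the successive minima are not monotone under projection, so one cannot instead project $\lat$ onto $\vec a_1^\perp$ (the route that would make the per-fibre count clean) and recurse with the same $\lambda_i$'s --- the projected lattice can be far ``finer'' relative to $\pi_{\vec a_1^\perp}(K)$ than $\lambda_n$ would suggest. Henk's argument, which I would follow, sidesteps both issues by counting globally rather than one coordinate at a time: the lattice points of $K$ are sorted by residue modulo a single sublattice adapted to the whole flag $\spn(\vec a_1) \subset \spn(\vec a_1,\vec a_2) \subset \cdots \subset \spn(\vec a_1,\dots,\vec a_n)$, so that within each residue class differences of lattice points are already halved before any width estimate is applied (which removes the body-doubling) while iterated one-dimensional width bounds leave at most $\prod_i\floor{1 + 2/\lambda_i}$ lattice points per class, and the number of classes contributes the single factor $2^{n-1}$. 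An alternative worth attempting would combine Minkowski's Second Theorem (Theorem~\ref{thm:mink-sec}) with a volumetric estimate for $|K \cap \lat|$, but recovering the clean product $\prod_i\floor{1 + 2t/\lambda_i}$ rather than a $\vol_n(K)$-dependent bound seems to genuinely require the combinatorial argument. Getting the bookkeeping to output exactly $2^{n-1}$, and not merely some $C^n$, is where the real care lies.
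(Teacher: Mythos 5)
The paper does not prove this theorem; it cites Henk~\cite{Henk02} for it and uses only the statement. (As an aside, Theorem~\ref{thm:henk-bnd} as displayed is missing the product $\prod_{i=1}^n$ over the successive minima, which you correctly restored.) So there is no in-paper proof to compare against; the question is whether your reconstruction closes, and, as you yourself partly recognize, it does not.

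Beyond the body-doubling you flag, there is a gap in the occupied-coset count. You want $\#\{\bar{\vec c}: K\cap q^{-1}(\bar{\vec c})\neq\emptyset\}\le\floor{1+2/\lambda_n}$ ``via (ii),'' but (ii) only says $\|\vec v\|_K\ge\lambda_n$ for $\vec v\in\lat\setminus H$; bounding the coset spread by $2/\lambda_n$ would require ``$q(\vec v)=s \Rightarrow \|\vec v\|_K\ge |s|\,\lambda_n$,'' which is false. The quantity that actually governs the coset count is the first minimum of the rank-one quotient lattice $\lat/\lat'$ against the quotient body, namely $\tilde\lambda:=\min_{\vec v\in\lat\setminus H}\,\min_{\vec h\in H}\|\vec v+\vec h\|_K$, and the inner minimum over $\vec h\in H$ forces $\tilde\lambda\le\lambda_n$ --- the wrong direction for your bound, with strict inequality entirely possible. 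This is precisely the ``successive minima are not monotone under projection'' obstruction you cite for the route you rejected; it reappears in your fibering because passing to $\lat/\lat'$ \emph{is} a projection. Combined with the $K\mapsto 2K$ rescaling in the per-coset step (which, once the recursion is unwound, doubles the body at every level), the recursion cannot reach $2^{n-1}\prod_{i}\floor{1+2/\lambda_i}$. Your concluding diagnosis is right: the proof needs a single global partition of $K\cap\lat$ by a fixed sublattice adapted to $\vec a_1,\dots,\vec a_n$, never rescaling $K$ and never projecting $\lat$; that is what Henk constructs, and you should consult~\cite{Henk02} for the bookkeeping rather than try to re-derive it.
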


\subsection{The smoothing parameter}

For a positive definite $X \succ 0$ and a countable set $T \subseteq
\R^n$, define
\begin{equation*}
\rho_X(T) = \sum_{\vec{y} \in T} e^{-\pi \vec{y}^\T X^{-1} \vec{y}} \text{.}
\end{equation*}
We extend this to positive semidefinite $X \succeq 0$ by
\begin{equation*}
\rho_X(T) = \sum_{\vec{y} \in T \cap \ran(X)} e^{-\pi \vec{y}^\T X^+ \vec{y}} \text{.}
\end{equation*}
Note that with the above definition, $\rho_X(T)$ is a continuous function
over all of $\PSD^n$.

We remark that the above notation is slightly non-standard, in that we
parametrize the $\rho$ with respect to $X$ and not $X^{1/2}$. This notation will
however be more convenient for us.
For $s>0$, we will often denote $\rho_{s^2 I}$ by $\rho_{s^2}$. 

We define $\eta_{\eps}(\lat)$ the $\eps$-smoothing parameter of $\lat$ as the
unique $s > 0$ satisfying
\[
\rho_{1/s^2}(\lat^*) = \sum_{\vec{y} \in \lat^*} e^{-\pi\|s\vec{y}\|^2} = 1+\eps 
\text{ .}
\]
We will simply say the smoothing parameter of $\lat$ to denote $\eta(\lat)
\eqdef \eta_{1/2}(\lat)$.

We first recall the following fundamental bound on the smoothing parameter,
which is implicit in the work of Banaszczyk~\cite{Bana93} (see for
example~\cite[Lemma 2.17]{cvpp} for a short self-contained proof). 

\begin{theorem}
\label{thm:smoothing-bnd}
For an $n$-dimensional lattice $\lat \subset \R^n$ and $\eps \in (0,1)$,
\[
\frac{\sqrt{\log(2/\epsilon)/\pi}}{\lambda_1(\lat^*)} \leq
\eta_\epsilon(\lat) \leq
\frac{\sqrt{\log((1+\eps)/\eps)/\pi}+\sqrt{n/(2\pi)}}{\lambda_1(\lat^*)}
\text{.}
\]
In particular, for $\eps \geq 2^{-n}$, $\eta_\eps(\lat) \leq \eta_{2^{-n}}(\lat)
\leq \sqrt{n}/\lambda_1(\lat^*)$.
\end{theorem}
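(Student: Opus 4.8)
The plan is to establish the two inequalities separately: the left-hand one is elementary, while the right-hand one is the substantive Banaszczyk-type bound, which I would reduce to Banaszczyk's Gaussian tail inequality. For the lower bound I would first check directly that $s_0 \eqdef \sqrt{\log(2/\eps)/\pi}\,/\,\lambda_1(\lat^*)$ already makes the defining sum too large: if $\vec v \in \lat^*$ is a shortest nonzero dual vector, then $\|\vec v\| = \lambda_1(\lat^*)$ and, keeping only $\vec 0$ and $\pm\vec v$,
\[
\rho_{1/s_0^2}(\lat^*) = \sum_{\vec y \in \lat^*} e^{-\pi s_0^2 \|\vec y\|^2} \ \ge\ 1 + 2 e^{-\pi s_0^2 \lambda_1(\lat^*)^2} = 1 + 2 e^{-\log(2/\eps)} = 1+\eps .
\]
Since $s \mapsto \rho_{1/s^2}(\lat^*) = 1 + \sum_{\vec y \in \lat^* \setminuszero} e^{-\pi s^2 \|\vec y\|^2}$ is strictly decreasing on $s>0$ and equals $1+\eps$ exactly at $s = \eta_\eps(\lat)$, this forces $\eta_\eps(\lat) \ge s_0$.

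For the upper bound, put $a \eqdef \sqrt{\log((1+\eps)/\eps)/\pi}$, $b \eqdef \sqrt{n/(2\pi)}$ and $s^* \eqdef (a+b)/\lambda_1(\lat^*)$; by the same monotonicity it suffices to prove $\rho_{1/(s^*)^2}(\lat^*) \le 1+\eps$. I would rescale to the (full-rank) lattice $L \eqdef s^* \lat^*$, so that $\lambda_1(L) = a+b$, reducing the task to $\rho_1(L) \le 1+\eps$, i.e.\ $\rho_1(L \setminuszero) \le \eps$. The key input is Banaszczyk's tail bound~\cite[Lemma~2.17]{cvpp}: for every $r \ge \sqrt{n/(2\pi)}$,
\[
\rho_1\bigl(L \setminus r \Ball_2^n\bigr) \ \le\ \Bigl(\frac{2\pi r^2}{n}\Bigr)^{n/2} e^{n/2 - \pi r^2}\, \rho_1(L) .
\]
For every $r$ with $b \le r < a+b = \lambda_1(L)$ we have $L \cap r\Ball_2^n = \set{\vec 0}$, hence $\rho_1(L \setminuszero) = \rho_1(L \setminus r\Ball_2^n)$; the prefactor is nonincreasing in $r$ on this range (its logarithmic derivative $n/r - 2\pi r$ is $\le 0$ once $r \ge \sqrt{n/(2\pi)}$), so letting $r \uparrow a+b$ gives $\rho_1(L\setminuszero) \le (1 + a/b)^n e^{-\pi a^2 - 2\pi a b}\, \rho_1(L)$. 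The choice $b = \sqrt{n/(2\pi)}$ is calibrated precisely so that $2\pi a b = na/b \ge n \log(1 + a/b)$, whence $(1+a/b)^n e^{-2\pi a b} \le 1$ and the prefactor is at most $e^{-\pi a^2} = \eps/(1+\eps)$. Thus $\rho_1(L\setminuszero) \le \frac{\eps}{1+\eps}\bigl(1 + \rho_1(L\setminuszero)\bigr)$, which rearranges to $\rho_1(L\setminuszero) \le \eps$. For the final ``in particular'' clause, $\eta_\eps(\lat)$ is nonincreasing in $\eps$, so taking $\eps = 2^{-n}$ above and using $\log(2^n(1+2^{-n})) \le n$ together with $1/\sqrt\pi + 1/\sqrt{2\pi} < 1$ yields $\eta_\eps(\lat) \le \eta_{2^{-n}}(\lat) \le \sqrt n / \lambda_1(\lat^*)$.

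The hard step is Banaszczyk's tail bound itself, which I would quote, or else reprove as follows. Poisson summation yields the shift inequality $\rho_1(\Lambda - \vec w) \le \rho_1(\Lambda)$ for every $\vec w$ (the dual Fourier series has nonnegative coefficients, dominated by the $\vec 0$ term). For a fixed unit vector $\vec u$ and parameters $t, \alpha > 0$, exponential tilting then gives the Chernoff-type estimate
\[
\sum_{\substack{\vec y \in \Lambda \\ \langle \vec u, \vec y\rangle \ge t}} e^{-\pi\|\vec y\|^2} \ \le\ e^{-2\pi\alpha t} \sum_{\vec y \in \Lambda} e^{-\pi\|\vec y\|^2 + 2\pi\alpha\langle\vec u,\vec y\rangle} \ =\ e^{-2\pi\alpha t + \pi\alpha^2}\, \rho_1(\Lambda - \alpha\vec u) \ \le\ e^{-2\pi\alpha t + \pi\alpha^2}\, \rho_1(\Lambda),
\]
and combining such half-space estimates (optimized over $\alpha$) over a family of directions that covers $\set{\vec y : \|\vec y\| \ge r}$ produces a bound of the displayed shape. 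The delicate point — and the reason a crude net of directions does not suffice — is extracting the sharp constant $\sqrt{n/(2\pi)}$; doing so requires averaging the one-dimensional estimate against the uniform measure on the sphere of directions rather than discretizing it, and that is the step I expect to be the main obstacle.
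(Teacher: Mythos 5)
The paper does not prove this theorem itself — it is stated as implicit in Banaszczyk~\cite{Bana93} and cites~\cite[Lemma~2.17]{cvpp} for a self-contained proof — and your derivation (reduce to Banaszczyk's Gaussian tail bound, rescale so $\lambda_1(L)=a+b$, note $L\cap r\Ball_2^n=\{\vec 0\}$ for $r<\lambda_1(L)$, and pick $b=\sqrt{n/(2\pi)}$ so that the Jacobian factor cancels) is exactly the standard route that reference takes, and the main computation checks out.

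Two small points. First, the intermediate inequality you invoke for the final clause, $\log\bigl(2^n(1+2^{-n})\bigr)\le n$, fails at $n=1$ (where $\log 3\approx 1.099>1$); the final conclusion $\eta_{2^{-n}}(\lat)\le\sqrt n/\lambda_1(\lat^*)$ is still true for $n=1$, but you need a slightly sharper numerical check there rather than the clean bound $\frac{1}{\sqrt\pi}+\frac{1}{\sqrt{2\pi}}<1$. Second, your sketch for reproving the tail bound via half-space Chernoff estimates and a covering/averaging over directions is indeed harder than it needs to be, which you already sense; the standard proof is a one-parameter radial Chernoff bound $\rho_1(L\setminus r\Ball_2^n)\le e^{-\pi(1-t)r^2}\rho_1(\sqrt t\,L)$ for $0<t\le1$, combined with the scaling inequality $\rho_1(\sqrt t\,L)\le t^{-n/2}\rho_1(L)$ (itself a direct consequence of Poisson summation, since $\rho_1(L^*/\sqrt t)\le\rho_1(L^*)$), and then optimizing $t=n/(2\pi r^2)$. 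No spherical averaging and no sharp-constant obstruction arises this way.
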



We now list some useful properties of the Gaussian function.  The following lemma
implies that we could choose any constant instead of $1/2$ in the definition of
the smoothing parameter while only affecting it by a constant factor. 
\begin{lemma}
\label{lem:mass-decrease}
Let $\lat \subset \R^n$ be a lattice. Then for any $A \in \PSD^n$, $\lat
\subset \ran(A)$ and $t \geq 1$, the following holds:
\begin{enumerate}
\item~\cite{KaiMinDLP13}: $\rho_{A/t^2}(\lat \setminuszero) \leq \rho_A(\lat \setminuszero)^{t^2}$.
\item $\rho_{A/t^2}(\lat \setminuszero) \leq \rho_A(\lat
\setminuszero)/\floor{t}$.
\end{enumerate}
\end{lemma}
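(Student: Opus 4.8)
The plan is to prove both items by reducing the multi-dimensional statement to a one-dimensional computation along each lattice point, exploiting the fact that $\rho_A$ is a sum of pointwise exponentials and that shrinking the covariance from $A$ to $A/t^2$ replaces each term $e^{-\pi \vec{y}^\T A^+ \vec{y}}$ by its $t^2$-th power, $(e^{-\pi \vec{y}^\T A^+ \vec{y}})^{t^2}$. For item~1, since every term is in $(0,1]$ for $\vec y \neq \vec 0$ and $t \geq 1$, raising to the $t^2$ power only shrinks each term; but we want the stronger statement that the \emph{sum} raised to the $t^2$ power dominates the sum of the $t^2$-th powers. This is exactly superadditivity of $x \mapsto x^{t^2}$ on sequences, or equivalently the elementary inequality $\sum_i a_i^{p} \leq (\sum_i a_i)^{p}$ for $p \geq 1$ and $a_i \geq 0$ (which follows since each $a_i/(\sum_j a_j) \leq 1$ so $(a_i/\sum_j a_j)^p \leq a_i/\sum_j a_j$, and summing over $i$ gives the claim). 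Applying this with $a_{\vec y} = e^{-\pi \vec{y}^\T A^+ \vec{y}}$ over $\vec y \in (\lat \setminuszero)\cap \ran(A)$ yields item~1 immediately; this part is essentially the cited argument of~\cite{KaiMinDLP13}, so I would simply state it and give the one-line justification.

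For item~2, the bound $\rho_{A/t^2}(\lat \setminuszero) \leq \rho_A(\lat \setminuszero)/\floor{t}$ needs a genuinely different, additive argument rather than the convexity trick. The idea I would pursue: let $k = \floor t$, so $k \geq 1$ and $k \leq t$, hence $\rho_{A/t^2}(\lat\setminuszero) \leq \rho_{A/k^2}(\lat \setminuszero)$ because shrinking the covariance only decreases each term (again each term is $\leq 1$ on $\lat \setminuszero$ — here we should be slightly careful and note it suffices to have $t\ge k$ so $A/t^2 \preceq A/k^2$ on $\ran(A)$). So it remains to show $\rho_{A/k^2}(\lat \setminuszero) \le \rho_A(\lat \setminuszero)/k$ for a positive integer $k$. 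For this I would write $\rho_{A/k^2}(\lat\setminuszero) = \sum_{\vec y \in \lat\setminuszero} \big(\rho_A(\{\vec y\})\big)^{k}$ and compare with $\frac1k \rho_A(\lat\setminuszero) = \frac1k\sum_{\vec y} \rho_A(\{\vec y\})$. The cleanest route is to partition $\lat \setminuszero$ into the $k-1$ "scaled shells" together with a remainder, or better: for each primitive direction, group $\vec y, 2\vec y, \dots$; but the slickest formulation is to observe that the map $\vec y \mapsto k\vec y$ is an injection from $\lat\setminuszero$ into $\lat \setminuszero$, and more generally $\vec y \mapsto j \vec y$ for $j = 1, \dots, k$ gives $k$ injections with pairwise-disjoint-enough images after an averaging argument. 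Concretely, I would use: $\sum_{\vec y \in \lat \setminuszero} e^{-\pi k^2 \vec y^\T A^+ \vec y} = \sum_{\vec y} e^{-\pi \vec y^\T A^+ \vec y \cdot k^2}$, and since for $a \in (0,1]$ and integer $k \geq 1$ we have $a^{k} \leq a \cdot a^{k-1} \le \tfrac{1}{k}(a + a^{2} + \cdots + a^{k})\cdot$ — hmm, this is false in general — so instead I would use the substitution argument: $\rho_{A}(\lat \setminuszero) \ge \sum_{j=1}^{k} \rho_A(j \cdot \lat_{\mathrm{prim}})$ where summing the geometric-type contribution shows $\rho_A(\lat\setminuszero) \geq k \cdot \rho_A(k \cdot (\lat \setminuszero)) = k\, \rho_{A/k^2}(\lat\setminuszero)$, using that $j(\lat\setminuszero)$ for $j=1,\dots,k$ are disjoint subsets of $\lat \setminuszero$ each containing $k(\lat\setminuszero)$...

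The main obstacle is precisely making that last containment/disjointness argument airtight: the sets $\{j \vec y : \vec y \in \lat \setminuszero\}$ for $j = 1, \dots, k$ are \emph{not} disjoint, so one cannot naively sum them. The fix I would actually carry out is to note that $k\cdot(\lat \setminuszero) \subseteq j \cdot (\lat \setminuszero)$ whenever $j \mid k$, which is not enough for all $j \le k$; the correct and clean statement is that for each nonzero $\vec y \in \lat$, the element $k\vec y$ lies in $\lat \setminuszero$, and the map $\vec y \mapsto k \vec y$ is injective, giving $\rho_{A/k^2}(\lat \setminuszero) = \rho_A(k(\lat\setminuszero)) \le \rho_A(\lat \setminuszero)$ — only a factor $1$, not $k$. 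To gain the factor $k$, I would instead argue: $\rho_A(\lat\setminuszero) \ge \rho_A(k(\lat\setminuszero))$ trivially, but also, writing each nonzero lattice vector uniquely as $\vec z = m \vec w$ with $\vec w$ primitive and $m \geq 1$, and reorganizing the double sum, one shows $\rho_A(\lat \setminuszero) = \sum_{\vec w \text{ prim}} \sum_{m \geq 1} \rho_A(\{m\vec w\}) \geq \sum_{\vec w}\sum_{m\ge1}\sum_{j=1}^{k}\rho_A(\{jkm\vec w\})/\text{(overcount)}$ — I expect the honest proof to run through the simple inequality $\sum_{m \ge 1} a^{m} \ge k \sum_{m \ge 1} a^{km}$ for $a \in (0,1)$ (true since $\sum_{m\ge1} a^m = \frac{a}{1-a} \geq \frac{k a^k}{1-a^k} = k\sum_{m\ge1}a^{km}$, which holds because $\frac{1-a^k}{1-a} = 1 + a + \cdots + a^{k-1} \ge k a^{k-1} \ge$ wait, $\le k$), applied directionwise with $a = e^{-\pi \vec w^\T A^+ \vec w}$. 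I would verify that elementary inequality carefully — that is the crux — and then sum over primitive directions $\vec w$ to conclude $\rho_A(\lat \setminuszero) \geq k\, \rho_{A/k^2}(\lat \setminuszero)$, which combined with the monotonicity reduction $t \ge \floor t = k$ completes item~2.
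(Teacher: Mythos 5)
Your argument for Part~1 is correct and is the paper's argument (superadditivity of $x\mapsto x^{t^2}$). For Part~2 the skeleton you propose is also exactly the paper's---reduce to $k=\floor{t}$ by monotonicity, decompose $\lat\setminuszero$ into positive integer multiples of primitive vectors, and prove a one-dimensional comparison along each ray---but the one-dimensional inequality you finally land on is the wrong one, and this is a genuine gap. Along a primitive direction $\vec w$, with $a=e^{-\pi\vec w^\T A^+\vec w}\in(0,1)$, the contribution of the multiples $m\vec w$, $m\ge1$, to $\rho_A$ is
\[
\sum_{m\ge1}e^{-\pi m^2\vec w^\T A^+\vec w}=\sum_{m\ge1}a^{m^2},
\]
so the exponent is \emph{quadratic} in $m$, not linear. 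What you need per direction is therefore $\sum_{m\ge1}a^{m^2}\ge k\sum_{m\ge1}a^{(km)^2}$, whereas the inequality you verify via the geometric-series closed form, $\sum_{m\ge1}a^{m}=\frac{a}{1-a}\ge\frac{ka^k}{1-a^k}=k\sum_{m\ge1}a^{km}$, is the linear-exponent statement $\sum_{m\ge1}a^m\ge k\sum_{m\ge1}a^{km}$. That is a different (and incomparable) claim which applies to a different sum; your careful computation of it, while correct, does not bear on the sum you actually have. (For the record, the step you left hanging in that verification is just $1+a+\cdots+a^{k-1}\ge k a^{k-1}$.)

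The repair is easy and in fact more elementary than your geometric-series computation; it is exactly what the paper does. Both your linear-exponent inequality and the needed quadratic one are instances of the general fact that for any nonincreasing nonnegative sequence $(b_m)_{m\ge1}$ one has $\sum_{m\ge1}b_m\ge k\sum_{m\ge1}b_{km}$: regroup the positive integers as $\{kz-s:z\ge1,\ 0\le s\le k-1\}$ and use $b_{kz-s}\ge b_{kz}$ termwise. Applying this with $b_m=e^{-\pi r m^2}$, $r=\vec w^\T A^+\vec w>0$, which is decreasing in $m$, gives $\sum_{m\ge1}e^{-\pi r m^2}\ge k\sum_{m\ge1}e^{-\pi r(km)^2}$. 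Summing over primitive $\vec w$ recovers $\rho_A(\lat\setminuszero)\ge k\,\rho_{A/k^2}(\lat\setminuszero)$, and combined with your correct monotonicity step $\rho_{A/t^2}(\lat\setminuszero)\le\rho_{A/k^2}(\lat\setminuszero)$ this finishes Part~2. So you had the right shape and the right abstract lemma in mind; you simply plugged in a geometric sequence where the Gaussian exponent forces a quadratic one.
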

\begin{proof} 
For the first part, since $t \geq 1$,
\begin{align*}
\rho_{A/t^2}(\lat \setminuszero) 
&= \sum_{\vec{y} \in \lat \setminuszero} e^{-\pi \vec{y}^\T (A/t^2)^+ \vec{y}} 
= \sum_{\vec{y} \in \lat \setminuszero} (e^{-\pi \vec{y}^\T A^+ \vec{y}})^{t^2} \\
&\leq \left(\sum_{\vec{y} \in \lat \setminuszero} e^{-\pi \vec{y}^\T A^+
\vec{y}}\right)^{t^2} = \rho_A(\lat \setminuszero)^{t^2} \text{, }
\end{align*}
as needed.

We now prove the second part. We first factor $t$ into the lattice,
that is
\[
\rho_{A/t^2}(\lat \setminuszero) = \rho_{A}(t \lat \setminuszero)
\leq \rho_{A}(\floor{t} \lat \setminuszero)
\text{.}
\]
Let $k = \floor{t}$. From here, we decompose the lattice sum into sums
over $1$-dimensional sublattices,
\begin{equation}
\label{eq:dec-line1}
\rho_A(k \lat \setminuszero) = \sum_{\substack{\vec{y} \in \lat \setminuszero \\ \vec{y}
\text{ primitive }}} \sum_{z=1}^\infty e^{-\pi (kz)^2 \vec{y}^\T A^+ \vec{y}}
\text{.}
\end{equation}
Given the above formula, it suffices to show the inequality over each line,
namely showing that for any $r > 0$,
\begin{equation}
\label{eq:dec-line2}
k \sum_{z=1}^\infty e^{-\pi (kz)^2 r} \leq \sum_{z=1}^\infty e^{-\pi z^2 r}
\text{.}
\end{equation}
To see this, note that it would imply
\[
k \sum_{\substack{\vec{y} \in \lat \setminuszero \\ \vec{y}
\text{ primitive }}} \sum_{z=1}^\infty e^{-\pi (kz)^2 \vec{y}^\T A^+ \vec{y}}
\leq \sum_{\substack{\vec{y} \in \lat \setminuszero \\ \vec{y}
\text{ primitive }}} \sum_{z=1}^\infty e^{-\pi z^2 \vec{y}^\T A^+ \vec{y}}
= \rho_A(\lat \setminuszero) \text{, }
\]
which combined with~\eqref{eq:dec-line1} proves the claim.
To prove~\eqref{eq:dec-line2}, a direct computation reveals
\[
\sum_{z=1}^\infty e^{-\pi z^2 r}
= \sum_{z=1}^\infty \sum_{s=0}^{k-1} e^{-\pi (kz-s)^2 r}
\leq \sum_{z=1}^ \infty \sum_{s=0}^{k-1} e^{-\pi (kz)^2 r} 
= k \sum_{z=1}^\infty e^{-\pi (kz)^2 r} \text{, }
\]
as needed.
\end{proof}

We recall that for any ``nice enough'' function $f: \R^n \rightarrow \R$ and any
$n$-dimensional lattice $\lat$, the Poisson summation formula gives
\begin{equation}
\label{eq:poisson-summation}
\sum_{\vec{y} \in \lat} f(\vec{y}) = \frac{1}{\det(\lat)} \sum_{\vec{y} \in \lat^*} \hat{f}(\vec{y}) \; ,
\end{equation}
where $\hat{f}(\vec{y}) = \int_{\R^n} e^{-2\pi i \pr{\vec{x}}{\vec{y}}} f(\vec{x}) d\vec{x}$ is the Fourier
transform of $f$.

The next lemma follows directly from the Poisson summation formula. 
\dnote{5/18: It is not entirely obvious why the matrices we get below are
correct. Maybe it could use some justification.} 

\begin{lemma}[Gaussian Mass of Cosets]
\label{lem:coset-mass}
Let $\lat \subset \R^n$ be a $d$-dimensional lattice, $W = {\rm span}(\lat)$,
$t \in W$, and $A \in \PSD^n$ with $W \subseteq \ran(A)$. Then
\begin{equation}
\rho_A(\lat + \vec{t}) = \frac{\det_W(A^{\cap W})^{1/2}}{\det(\lat)} 
\sum_{\vec{y} \in \lat^*} e^{2\pi i
\pr{\vec{t}}{\vec{y}}} e^{-\pi \vec{y}^\T A^{\cap W} \vec{y}} \text{.}
\end{equation}
In particular, $\rho_A(\lat) = \frac{\det_W(A^{\cap W})^{1/2}}{\det(\lat)}
\rho_{(A^{\cap W})^+}(\lat^*)$. Furthermore, if $\rho_{(A^{\cap W})^+}(\lat^*
\setminuszero) \leq \eps$,
$\eps \in (0,1)$, then 
\[
\rho_A(\lat + \vec{t}) \in [1-\eps,1+\eps] \cdot \frac{\det_W(A^{\cap
W})^{1/2}}{\det(\lat)} \text{ .}
\]
\end{lemma}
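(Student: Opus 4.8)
The plan is to reduce the statement to an ordinary full-rank Poisson summation inside the $d$-dimensional span $W$. \emph{First}, I would replace $A$ by its slice $A^{\cap W}$. Since $\lat+\vec t \subseteq W$ and, by Lemma~\ref{lem:slice}(1), $(A^{\cap W})^+ = (A^+)^{\da W} = \pi_W A^+\pi_W$, every $\vec x\in W$ satisfies $\vec x^\T A^+\vec x = \vec x^\T\pi_W A^+\pi_W\vec x = \vec x^\T(A^{\cap W})^+\vec x$; moreover the hypothesis $W\subseteq\ran(A)$ forces $\ran(A^{\cap W}) = W$ (e.g.\ because $E(A^{\cap W}) = E(A)\cap W$ is a full-dimensional ellipsoid in $W$), so $\lat+\vec t$ lies in the range of both $A$ and $A^{\cap W}$ and hence $\rho_A(\lat+\vec t) = \rho_{A^{\cap W}}(\lat+\vec t)$. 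Writing $M\eqdef A^{\cap W}$, it therefore suffices to prove the identity for the PSD matrix $M$ with $\ran(M) = W$ and $\det_W(M) = \det_W(A^{\cap W})$.

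\emph{Next}, I would transport everything into $\R^d$ via an $n\times d$ matrix $O_W$ whose columns form an orthonormal basis of $W$: set $\lat' = O_W^\T\lat$ (a full-rank lattice in $\R^d$ with $\det(\lat') = \det(\lat)$ by Claim~\ref{clm:detlattransform}, and $(\lat')^* = O_W^\T\lat^*$), $\vec t' = O_W^\T\vec t$, and $M' = O_W^\T M O_W$. Since $\ran(M) = W$ we have $M = \pi_W M\pi_W = O_W M' O_W^\T$, so $M'\succ 0$, $(M')^{-1} = O_W^\T M^+ O_W$, and $\det(M') = \det_W(M)$ by Definition~\ref{def:rest-det}. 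Substituting $\vec x = O_W\vec x'$ and $\vec y = O_W\vec y'$ and using $O_W^\T O_W = I_d$ together with $\vec t,\vec y\in W$, all the quadratic forms and the pairing $\pr{\vec t}{\vec y}$ agree with their $\R^d$ counterparts, so the claim reduces to evaluating $\sum_{\vec x'\in\lat'+\vec t'} e^{-\pi\vec x'^\T(M')^{-1}\vec x'}$.

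\emph{Finally}, I would apply the Poisson summation formula~\eqref{eq:poisson-summation} in $\R^d$ to $g(\vec x') = f(\vec x'+\vec t')$ with $f(\vec z) = e^{-\pi\vec z^\T(M')^{-1}\vec z}$; using the self-dual Gaussian transform $\hat f(\vec y) = \det(M')^{1/2}e^{-\pi\vec y^\T M'\vec y}$ and the shift rule $\hat g(\vec y) = e^{2\pi i\pr{\vec t'}{\vec y}}\hat f(\vec y)$, this gives
\[
\sum_{\vec x'\in\lat'+\vec t'} e^{-\pi\vec x'^\T(M')^{-1}\vec x'} = \frac{\det(M')^{1/2}}{\det(\lat')}\sum_{\vec y\in(\lat')^*} e^{2\pi i\pr{\vec t'}{\vec y}}e^{-\pi\vec y^\T M'\vec y}\text{,}
\]
and undoing the substitutions of the previous step yields the displayed formula. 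The ``in particular'' case is $\vec t = \vec 0$ together with $((A^{\cap W})^+)^+ = A^{\cap W}$ and $\ran((A^{\cap W})^+) = W\supseteq\lat^*$, which identify $\sum_{\vec y\in\lat^*}e^{-\pi\vec y^\T A^{\cap W}\vec y}$ with $\rho_{(A^{\cap W})^+}(\lat^*)$; for ``furthermore'' one splits off the $\vec y = \vec 0$ term (equal to $1$), notes that $\rho_A(\lat+\vec t)$ is real and positive, and bounds the remaining sum in modulus by $\sum_{\vec y\in\lat^*\setminuszero}e^{-\pi\vec y^\T A^{\cap W}\vec y} = \rho_{(A^{\cap W})^+}(\lat^*\setminuszero)\le\eps$, so the whole factor lies in $[1-\eps,1+\eps]$.

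The only real difficulty is the linear-algebra bookkeeping of the first two steps — verifying that slicing $A$ to $W$ and then conjugating by $O_W$ preserves every quadratic form, inner product, and determinant as well as the dual lattice — which is exactly the point flagged as not obvious; once that is in place, the Poisson summation step is the standard computation.
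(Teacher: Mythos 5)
Your proof is correct and fills in exactly the computation the paper omits (it only says the lemma ``follows directly from the Poisson summation formula,'' with an internal note flagging the matrix bookkeeping as not obvious). The three steps — replacing $A$ by $A^{\cap W}$ using $(A^{\cap W})^+ = \pi_W A^+ \pi_W$ on $W$, conjugating by an orthonormal basis of $W$ to land in a full-rank problem in $\R^d$, and then invoking Poisson summation with the Gaussian Fourier pair and shift rule — are the intended route, and your verifications that $\det(\lat') = \det(\lat)$, $(\lat')^* = O_W^\T\lat^*$, $(M')^{-1} = O_W^\T M^+ O_W$, and $\det(M') = \det_W(A^{\cap W})$ are precisely the points that needed checking.
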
 

The following claim shows in what sense a Gaussian is ``smooth'' modulo a
lattice. It is a simple extension of~\cite[Lemma 4.1]{MR04} to non-spherical
covariances.

\begin{claim}\label{clm:smoothing}
Let $\lat \subset \R^n$ be a full-rank lattice, and $A \succ 0$ be such that 
$\rho_{A^{-1}}(\lat^* \setminuszero) = \sum_{\vec{y} \in \lat^* \setminuszero} e^{-\pi \vec{y}^\T A \vec{y}} \leq \eps
$
for some $\eps>0$.
Let $X \sim N(0,\frac{1}{2\pi} A)$ be a centered Gaussian random variable in
$\R^n$ with mean $0$ and covariance $\frac{1}{2\pi} A$. 
Then, the total variation distance between $X \bmod \lat$ and the uniform distribution satisfies 
\[
\Delta(X \bmod \lat, U) \leq \eps/2 \; .
\]
\end{claim}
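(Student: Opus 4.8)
The plan is to compute the density of $X \bmod \lat$ at an arbitrary point $\vec x \in \R^n/\lat$, compare it to the uniform density $1/\det(\lat)$, and bound the $L_1$ distance between the two using the formula $\Delta(\mu_1,\mu_2) = \tfrac12 \int |d\mu_1/d\mu_2 - 1|\, d\mu_2$ from the Probability preliminaries. The density of $X \bmod \lat$ at $\vec x$ is obtained by folding the Gaussian density of $N(0,\tfrac{1}{2\pi}A)$, i.e.\ by summing it over the coset $\vec x + \lat$. Since $X \sim N(0,\tfrac{1}{2\pi}A)$ has density proportional to $\exp(-\pi \vec y^\T A^{-1} \vec y)$ (the factor $1/(2\pi)$ in the covariance is chosen precisely so that the Gaussian weight becomes $\rho_{A}(\cdot)$ with our normalization), the folded density at $\vec x$ is exactly $\det(A)^{-1/2}\rho_{A}(\lat + \vec x)$ up to the normalizing constant; the key point is that this equals $\det(A)^{-1/2}\rho_{A}(\lat - \vec x)$ evaluated with the right constant so that it integrates to $1$ over the torus.

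The main step is then to invoke Lemma~\ref{lem:coset-mass} (Gaussian Mass of Cosets) with $W = {\rm span}(\lat) = \R^n$ (full rank), so that $A^{\cap W} = A$ and $\det_W(A^{\cap W}) = \det(A)$. The lemma gives
\[
\rho_A(\lat + \vec t) = \frac{\det(A)^{1/2}}{\det(\lat)} \sum_{\vec y \in \lat^*} e^{2\pi i \pr{\vec t}{\vec y}} e^{-\pi \vec y^\T A \vec y} \text{.}
\]
Combining with the hypothesis $\rho_{A^{-1}}(\lat^* \setminuszero) = \sum_{\vec y \in \lat^* \setminuszero} e^{-\pi \vec y^\T A \vec y} \le \eps$, the last part of Lemma~\ref{lem:coset-mass} already tells us $\rho_A(\lat + \vec t) \in [1-\eps,1+\eps]\cdot \det(A)^{1/2}/\det(\lat)$. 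Translating back, the density $g(\vec x)$ of $X \bmod \lat$ satisfies $g(\vec x) \in [1-\eps,1+\eps]/\det(\lat)$ pointwise, so $|d\mu_1/d\mu_2(\vec x) - 1| = |\det(\lat)\, g(\vec x) - 1| \le \eps$ uniformly. Plugging into the total variation formula with $d\mu_2 = d\vec x/\det(\lat)$ over the fundamental domain yields $\Delta(X \bmod \lat, U) \le \tfrac12 \int \eps \, d\mu_2 = \eps/2$.

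I expect the only real subtlety to be bookkeeping the normalization constants: checking that the $1/(2\pi)$ scaling of the covariance in the statement of $N(0,\tfrac{1}{2\pi}A)$ indeed makes the Gaussian density equal to a multiple of $e^{-\pi \vec y^\T A^{-1} \vec y}$ and hence that the folded density is the correctly normalized torus density matching the $\rho_A(\lat + \vec t)$ from Lemma~\ref{lem:coset-mass}. Once the constants line up, the argument is essentially a one-line application of that lemma, which is exactly why the claim is stated as ``a simple extension of~\cite[Lemma 4.1]{MR04}.'' No new ideas beyond Poisson summation (already packaged in Lemma~\ref{lem:coset-mass}) are needed.
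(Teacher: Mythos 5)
Your proof is correct and follows essentially the same route as the paper's: both reduce to the Fourier expansion of $\rho_A(\lat+\vec t)$ given by Lemma~\ref{lem:coset-mass} and bound the deviation from $1$ by $\rho_{A^{-1}}(\lat^*\setminuszero)\le\eps$ before integrating. The only cosmetic difference is that you invoke the pre-packaged ``in particular'' clause of Lemma~\ref{lem:coset-mass} for the uniform pointwise bound, whereas the paper unfolds that same triangle-inequality step inside the total-variation integral.
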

\begin{proof}
Let $\mu_g$ and $\mu$ be the probability measures corresponding to $X \bmod \lat$ and $U$ respectively. 
By Lemma~\ref{lem:coset-mass}, we have that the statistical distance between
the two distributions is
\begin{align*}
 1/2 \int_{\R^n/\lat} |(d\mu_g/d\mu)(\vec{t})-1| d\mu(\vec{t}) 
 &= 1/2 \int_{\R^n/\lat}
\abs[\Big]{\frac{\det(\lat)}{\det(A)^{1/2}} \rho_{A}(\lat+\vec{t})-1} d\mu(\vec{t}) \\
 &= 1/2 \int_{\R^n/\lat} \abs[\Big]{\sum_{\vec{y} \in \lat^* \setminuszero}
e^{2\pi i \pr{\vec{t}}{\vec{y}}} e^{-\pi\vec{y}^\T A\vec{y}}} d\mu(\vec{t}) \\
 &\leq 1/2 \int_{\R^n/\lat} \rho_{A^+}(\lat^* \setminuszero)
d\mu(\vec{t}) \\
&\leq 1/2 \int_{\R^n/\lat} \eps \, d\mu(\vec{t}) = \eps/2
\text{ .} 
\end{align*}
\end{proof}

\begin{lemma}[\cite{Bana93}]
\label{lem:banacosh}
Let $\lat\subset\R^n$ be a lattice of rank $n$. Then, for all $\vec{t} \in \R^n$, 
$
\rho(\lat+\vec{t}) \geq \rho(\vec{t})\rho(\lat)
$.
\end{lemma}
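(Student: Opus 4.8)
The plan is to prove this by a short direct computation that exploits the central symmetry $\lat=-\lat$ of the lattice; in particular no Poisson summation is needed. Recall that here $\rho(\cdot)=\rho_I(\cdot)$, so $\rho(\vec{t})=e^{-\pi\|\vec{t}\|^2}$ and $\rho(S)=\sum_{\vec{x}\in S}e^{-\pi\|\vec{x}\|^2}$ for a countable set $S$.

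First I would expand the exponent: for each $\vec{x}\in\lat$ we have $\|\vec{x}+\vec{t}\|^2=\|\vec{x}\|^2+2\pr{\vec{x}}{\vec{t}}+\|\vec{t}\|^2$, so
\[
\rho(\lat+\vec{t})=\sum_{\vec{x}\in\lat}e^{-\pi\|\vec{x}+\vec{t}\|^2}
= e^{-\pi\|\vec{t}\|^2}\sum_{\vec{x}\in\lat}e^{-\pi\|\vec{x}\|^2}e^{-2\pi\pr{\vec{x}}{\vec{t}}}
= \rho(\vec{t})\sum_{\vec{x}\in\lat}e^{-\pi\|\vec{x}\|^2}e^{-2\pi\pr{\vec{x}}{\vec{t}}} .
\]
Next, since $\vec{x}\mapsto-\vec{x}$ is a bijection of $\lat$, the last sum is unchanged when $\pr{\vec{x}}{\vec{t}}$ is replaced by $-\pr{\vec{x}}{\vec{t}}$, hence it equals its own symmetrization
\[
\sum_{\vec{x}\in\lat}e^{-\pi\|\vec{x}\|^2}\cdot\tfrac12\bigl(e^{-2\pi\pr{\vec{x}}{\vec{t}}}+e^{2\pi\pr{\vec{x}}{\vec{t}}}\bigr)
=\sum_{\vec{x}\in\lat}e^{-\pi\|\vec{x}\|^2}\cosh\bigl(2\pi\pr{\vec{x}}{\vec{t}}\bigr) .
\]
Finally, using $\cosh(z)\ge 1$ for all real $z$ term by term, this is at least $\sum_{\vec{x}\in\lat}e^{-\pi\|\vec{x}\|^2}=\rho(\lat)$, and combining with the first display yields $\rho(\lat+\vec{t})\ge\rho(\vec{t})\rho(\lat)$, as claimed.

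I do not expect any genuine obstacle here: the only idea is the symmetrization that converts the cross term $e^{-2\pi\pr{\vec{x}}{\vec{t}}}$ into $\cosh(2\pi\pr{\vec{x}}{\vec{t}})\ge 1$ (which is exactly what the label \texttt{banacosh} alludes to), and everything else is bookkeeping; all the rearrangements above are justified by the absolute convergence of the Gaussian lattice sums, which holds since $\lat$ has full rank $n$. An alternative route via the Poisson summation formula~\eqref{eq:poisson-summation}, comparing $\rho(\lat+\vec{t})=\det(\lat)^{-1}\sum_{\vec{y}\in\lat^*}e^{2\pi i\pr{\vec{t}}{\vec{y}}}e^{-\pi\|\vec{y}\|^2}$ with $\rho(\vec{t})\rho(\lat)=\det(\lat)^{-1}\rho(\vec{t})\sum_{\vec{y}\in\lat^*}e^{-\pi\|\vec{y}\|^2}$, is possible but strictly more cumbersome, so I would not take it.
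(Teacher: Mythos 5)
Your proof is correct and is exactly the paper's argument: the paper's one-line proof states $\rho(\lat+\vec{t})=\rho(\vec{t})\sum_{\vec{y}\in\lat}\cosh(2\pi\inner{\vec{y},\vec{t}})\rho(\vec{y})\geq\rho(\vec{t})\rho(\lat)$, which is precisely your expansion-plus-symmetrization followed by $\cosh\ge 1$. You have simply spelled out the intermediate steps the paper leaves implicit.
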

\begin{proof}
\[ \rho(\lat + \vec{t}) =  \rho(\vec{t})\sum_{\vec{y} \in \lat} \cosh(2\pi \inner{\vec{y}, \vec{t}})\rho(\vec{y}) \geq \rho(\vec{t}) \rho(\lat)
\; . \qedhere
\] 
\end{proof}

\subsection{Random lattices}

\onote{give the beast a name?}\dnote{5/18: Siegel's measure on lattices?}
The set of lattices of determinant $1$, which can be identified with the
quotient ${\rm SL}(\R,n)/{\rm SL}(\Z,n)$, has a natural and useful probability measure 
defined on it (see~\cite[Chapter 3]{GruberL87} or~\cite{TerrasBook}). Originally introduced by Siegel~\cite{Siegel45}, it has the following remarkable
properties.

\begin{theorem}[\cite{Siegel45}] 
\label{thm:haar}
Let $\lat$ be an $n$-dimensional lattice distributed as above. Then, the following holds:
\begin{enumerate}
\item For any linear transformation $T$ of determinant $\pm 1$, 
$T \lat $ and $\lat$ are identically distributed.
\item $\lat$ and $\lat^*$ are identically distributed.
\item Let $\R^{n \times d}_{{\rm ind}} = \set{(\vec{x}_1,\dots,\vec{x}_d) \in
(\R^n)^d: \vec{x}_1,\dots,\vec{x}_d \text{ linearly independent}}$. Then, for $d \leq
n-1$ and any measurable subset $A \subseteq \R^{n \times d}_{{\rm ind}}$, we
have that
\[
\E[|\lat^d \cap A|] = \vol_{d n}(A) \text{ .}
\]
\end{enumerate}
\end{theorem}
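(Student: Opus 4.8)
The plan is to realize the measure concretely and reduce everything to uniqueness of the invariant probability measure. Identify the space of unimodular ($\det=1$) lattices in $\R^n$ with $G/\Gamma$, where $G=\mathrm{SL}(n,\R)$ and $\Gamma=\mathrm{SL}(n,\Z)$, via the orbit map $g\Gamma\mapsto g\Z^n$ (the stabilizer of $\Z^n$ in $G$ is exactly $\Gamma$). The Siegel measure $\mu$ is the image of the normalized bi-invariant Haar measure of $G$; it is a probability measure because $G/\Gamma$ has finite Haar volume (a classical fact of reduction theory), and it is the \emph{unique} $G$-invariant probability measure on $G/\Gamma$ since $G$ is unimodular. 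Parts (1) and (2) will be formal consequences of this uniqueness; part (3) is the Siegel--Rogers mean value theorem.

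For part (1): if $h\in G$ then $\lat\mapsto h\lat$ is left translation by $h$ on $G/\Gamma$, hence preserves $\mu$; for an arbitrary linear map $T$ with $|\det T|=1$, $T$ still acts on unimodular lattices, and for $h\in G$ one has $L_h\circ T=T\circ L_{T^{-1}hT}$ with $T^{-1}hT\in G$, so $(L_h)_*(T_*\mu)=T_*\mu$; thus $T_*\mu$ is a $G$-invariant probability measure and equals $\mu$ by uniqueness. For part (2): writing $\lat=g\Z^n$ one computes $\lat^*=(g^\T)^{-1}\Z^n$, so $\lat\mapsto\lat^*$ is the map on $G/\Gamma$ induced by the group automorphism $\sigma(g)=(g^\T)^{-1}$, which carries $\Gamma$ onto itself; hence $\sigma$ descends to $G/\Gamma$, and since $\bar\sigma\circ L_h=L_{\sigma(h)}\circ\bar\sigma$ the pushforward $\sigma_*\mu$ is again $G$-invariant, so $\sigma_*\mu=\mu$ by uniqueness, which is exactly the claim that $\lat^*$ has the same law as $\lat$.

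For part (3), fix $1\le d\le n-1$ and, for measurable $F\ge 0$ supported in $\R^{n\times d}_{\mathrm{ind}}$, define the Siegel transform $\widehat F(\lat)=\sum F(\vec v_1,\dots,\vec v_d)$, the sum running over all linearly independent $d$-tuples in $\lat^d$; the assertion is $\int_{G/\Gamma}\widehat F\,d\mu=\int_{\R^{nd}}F$, which yields the theorem on taking $F=\mathds{1}_A$ and passing to unbounded $A$ by monotone convergence. The functional $F\mapsto\int_{G/\Gamma}\widehat F\,d\mu$ is a positive linear functional on $C_c(\R^{n\times d}_{\mathrm{ind}})$, and by part (1) it is $G$-invariant, since $\widehat{F\circ h}(\lat)=\widehat F(h\lat)$. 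The decisive point, and where the hypothesis $d\le n-1$ first enters, is that $G$ acts \emph{transitively} on $\R^{n\times d}_{\mathrm{ind}}$: any linearly independent $d$-tuple completes to an $\R$-basis, and for $d\le n-1$ there is room to rescale the completion so the resulting matrix has determinant $1$. Since this orbit is co-null in $\R^{nd}$, and Lebesgue measure on $\R^{nd}$ is $G$-invariant (each of the $d$ coordinate blocks contributes Jacobian $\det g=1$) and is the unique invariant measure on the orbit up to a scalar, we conclude $\int_{G/\Gamma}\widehat F\,d\mu=c_d\int_{\R^{nd}}F$ for some $c_d\in[0,\infty]$.

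The remaining, and genuinely nontrivial, task is to show $c_d=1$ --- in particular that it is finite, i.e.\ that $\widehat F\in L^1(G/\Gamma)$ so that the above manipulations are legitimate. This is where the cusp of $G/\Gamma$ must be confronted: lattices with a very short vector carry many short vectors, so $\widehat F$ is unbounded, and one needs the quantitative reduction theory of $G/\Gamma$ to see that its integral is nevertheless finite when $d\le n-1$ (and indeed it diverges for $d=n$, which is why that case is excluded). Granting finiteness, one obtains $c_d=1$ by unfolding $\int_{G/\Gamma}\widehat F\,d\mu$ against a fundamental domain for $\Gamma$: restricting the sum to \emph{primitive} $d$-frames and using transitivity of $\Gamma$ on such frames unfolds the integral to $\big(\zeta(n)\zeta(n-1)\cdots\zeta(n-d+1)\big)^{-1}\int F$, while re-summing over the imprimitive frames (the dilations within each rational $d$-flat) multiplies this back by exactly $\zeta(n)\zeta(n-1)\cdots\zeta(n-d+1)$, a finite product precisely because its last factor $\zeta(n-d+1)$ is finite for $d\le n-1$; this restores $c_d=1$. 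An alternative to the explicit fundamental domain is induction on $n$: a suitable disintegration of $\mu$ over a primitive vector $\vec v_1\in\lat$ relates the $d$-frame statement in $\R^n$ to the $(d-1)$-frame statement in $\R^{\,n-1}$ for the projected lattice $\pi_{(\vec v_1)^{\perp}}(\lat)$, with base case $d=1$ the classical Siegel mean value theorem. I expect this last step --- integrability near the cusp and the evaluation of the constant --- to be the main obstacle; parts (1), (2), and the reduction to $c_d\int F$ in (3) are soft.
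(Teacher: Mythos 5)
The paper does not prove Theorem~\ref{thm:haar} at all; it simply states it with a citation to Siegel~\cite{Siegel45} (and the refinement by \cite{GM03} for the discrete approximation). There is therefore no in-paper proof to compare your sketch against.

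On its own merits your proposal is the standard route and is structurally sound: identifying the space of unimodular lattices with $G/\Gamma$ for $G=\mathrm{SL}(n,\R)$, $\Gamma=\mathrm{SL}(n,\Z)$, and deriving parts (1) and (2) from uniqueness of the $G$-invariant probability measure (for (1) via $L_h\circ T=T\circ L_{T^{-1}hT}$, which correctly covers $\det T=-1$; for (2) via the automorphism $g\mapsto(g^\T)^{-1}$ preserving $\Gamma$). Your reduction of part (3) to a constant, via $G$-transitivity on $\R^{n\times d}_{\mathrm{ind}}$ when $d\le n-1$ and uniqueness of the invariant measure on that orbit, is also correct, and you are honest that the real content is integrability of the Siegel transform near the cusp and the normalization $c_d=1$. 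Two small cautions: (i) uniqueness of the invariant measure on the orbit $G/H$ requires checking that the stabilizer $H$ of a $d$-frame (a parabolic-type subgroup, the semidirect product of $\mathrm{SL}(n-d,\R)$ with $\R^{d(n-d)}$) is unimodular, which holds but deserves a line; (ii) the unfolding constant is more delicately entangled with the choice of Haar normalization than your phrasing suggests --- the clean way to avoid computing any zeta factors is your second suggestion, the induction on $n$ via disintegration over a primitive vector, which is essentially Weil's proof. None of this affects the correctness of the outline.
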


Here we will only need the case $d=1$ of the last item above,
which says that $\E[|(\lat \setminuszero) \cap A|] = \vol_{n}(A)$. 
By integrating this equality over level sets, we get that for any
Riemann integrable function
$f:\R^n \to \R$, 
\begin{align}\label{eq:siegelforfunctions}
\E\bracks[\Big]{\sum_{x \in \lat \setminuszero}f(x)} = \int f(x) dx	\; .
\end{align}

We note that this distribution can be obtained as the ``limit'' of the
following simple discrete distributions. For $p \in \N$ a large prime, sample
$\vec{a} \in (\Z/p\Z)^n$ uniformly at random, and let $\lat = p^{-1/n}
\set{\vec{x} \in \Z^n: \pr{\vec{a}}{\vec{x}} \equiv 0 \imod p}$. Note that
$\lat$ has determinant $1$ as long as $\vec{a} \neq 0$. It was shown by
\cite{GM03}, that as $p \rightarrow \infty$ this distribution converges in a
strong sense to the distribution of random lattices.

\section{The Main Conjecture}
\label{sec:mainconj}

We now formally state the main conjecture. 

\begin{conjecture}\label{con:maineta}(Main conjecture)
Let $C_\eta(n)>0$ be the smallest number such that for any $\lat \subset \R^n$,
\begin{align}\label{eq:finalgoal}
   \eta(\lat) \le C_\eta(n) \max_{W \neq \set{\vec{0}} \textnormal{ lattice subspace
of } \lat^*} (\det(\lat^* \cap W))^{-1/\dim(W)}.
\end{align}
Then $C_\eta(n) \le \poly \log n$.
\end{conjecture}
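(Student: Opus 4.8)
The plan is in two parts: a soft reduction to a clean subclass of lattices, followed by the genuinely hard estimate on that subclass, which is where a new idea is needed.

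\emph{Step 1: reduction to semistable lattices.} Normalize so that the right-hand side of~\eqref{eq:finalgoal} equals $1$; equivalently, every lattice subspace $W$ of $\lat^*$ satisfies $\det(\lat^*\cap W)\ge1$, with equality for some $W$. Let $\{\vec 0\}=\lat^*_0\subsetneq\lat^*_1\subsetneq\cdots\subsetneq\lat^*_m=\lat^*$ be the canonical (Harder--Narasimhan-type) filtration, in which each $\lat^*_j$ is primitive in $\lat^*$ and the successive quotients $\bar\lat_j:=\pi_{(\lat^*_{j-1})^\perp}(\lat^*_j)$ are stable with strictly increasing normalized log-determinants; the normalization forces the first slope to be $0$, so all slopes are $\ge0$. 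Using the coset decomposition of Lemma~\ref{lem:coset-mass} and the elementary fact (a consequence of Poisson summation, cf.\ Lemma~\ref{lem:banacosh}) that the Gaussian mass of a coset of a lattice, once the shift is split into its components parallel and orthogonal to the lattice's span, never exceeds the Gaussian mass of the lattice itself, one gets
\[
\sum_{\vec y\in\lat^*}e^{-\pi s^2\|\vec y\|^2}\ \le\ \prod_{j=1}^{m}\sum_{\vec y\in\bar\lat_j}e^{-\pi s^2\|\vec y\|^2}\qquad\text{for every }s>0,
\]
with the \emph{same} width $s$ on every factor. Rescaling each $\bar\lat_j$ to determinant $1$ --- a contraction, since its slope is $\ge0$ --- yields a lattice that again satisfies ``all sublattices have determinant $\ge1$''. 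Hence, if the conjecture is known for determinant-$1$ lattices of this type, each factor above becomes at most $1+\tfrac1{4n}$ once $s=\poly\log n$, where passing from error $\tfrac12$ to error $\tfrac1{4n}$ at each quotient costs only the extra $\sqrt{\log n}$ factor supplied by Lemma~\ref{lem:mass-decrease}; multiplying the $m\le n$ factors gives $\sum_{\vec y\in\lat^*}e^{-\pi s^2\|\vec y\|^2}\le e^{1/4}<\tfrac32$, i.e.\ $\eta(\lat)\le\poly\log n$. So it suffices to prove the conjecture for \emph{semistable} lattices (determinant $1$, all sublattices of determinant $\ge1$).

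\emph{Step 2: the semistable case.} The target is now the dual statement that a semistable $n$-dimensional lattice $\lat^*$ has $|\lat^*\cap r\Ball_2^n|\le\exp(\poly\log n\cdot r^2)$ for all $r>0$ --- informally, that $\Z^n$ has the largest local density among semistable lattices. Banaszczyk's bound (Theorem~\ref{thm:smoothing-bnd}), together with the fact that the right-hand side of~\eqref{eq:finalgoal} is always at least $1/\lambda_1(\lat^*)$, already gives the weaker $C_\eta(n)\lesssim\sqrt n$. Minkowski's Second Theorem (Theorem~\ref{thm:mink-sec}) combined with semistability sharpens the available data --- all partial products $\prod_{i\le k}\lambda_i(\lat^*)$ are $\ge1$, because the sublattice spanned by the first $k$ minima has determinant at most $\prod_{i\le k}\lambda_i$ and at least $1$ --- but feeding this into a box-type count $\prod_i\max(1,2r/\lambda_i)$ still overshoots the truth by an exponential factor when $r$ is small (already for $\lat^*=\Z^n$). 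My plan would instead be to discretize the continuous analogue of the conjecture from Section~\ref{sec:pisier}, a theorem of Milman and Pisier controlling the $M$-position of a symmetric convex body in terms of the volumes of its central sections: attach to the semistable lattice a suitable ``worst-case'' body --- a smoothing of $\operatorname{conv}(\lat^*\cap R\Ball_2^n)$ for an appropriate $R$, or one of the universal convex bodies of Section~\ref{sec:sm-cov-body} --- whose subspace sections are governed by the sublattice determinants, apply Milman--Pisier to obtain a good $M$-ellipsoid, and transfer the resulting volumetric smoothness back to lattice-point counts by a Gaussian-summation argument.

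\emph{The main obstacle.} The last step of the semistable case is precisely where every current technique stalls: turning control on volumes of sections of a convex body into control on lattice-point counts needs the lattice to be ``smooth at scale $1$'' already, which is the very conclusion being sought, and there is no unconditional way to break this circularity. An additive-combinatorial approach is also suggested by the connection to the polynomial Freiman--Ruzsa conjecture mentioned in the introduction --- one would hope the additive structure of $\lat^*\cap r\Ball_2^n$ forces it into a low-dimensional sublattice incompatible with semistability --- but the quantitative structure theory available is far too lossy, and that same connection (the main conjecture is expected to \emph{refute} a strong integer form of PFR) shows the direction is delicate; as it stands it recovers only the $\exp(\sqrt n\cdot r^2)$ bound. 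In short, the reduction to semistable lattices is within reach, but a genuinely new idea is needed to show that semistability --- a hypothesis only slightly stronger than what Minkowski's theorems already exploit --- forces the sharp polylogarithmic local-density profile of $\Z^n$.
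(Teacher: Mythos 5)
You have correctly read the statement: Conjecture~\ref{con:maineta} is precisely that, a conjecture, and the paper offers no proof of it. The paper states it, motivates it, shows it implies the $\ell_2$ Kannan--Lov\'asz conjecture (Theorem~\ref{thm:mink-to-kl}), and provides evidence (the $O(\sqrt n)$ bound of Theorem~\ref{thm:best-conj-bnds}, closure under direct sums and random lattices in Section~\ref{sec:sanity}, and the Milman--Pisier continuous analogue of Section~\ref{sec:pisier}), but a proof is left open. There is therefore no argument of the paper's to compare yours against; what can be assessed is whether the partial progress you claim is sound and whether the gap you acknowledge is the real one.

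Your Step~1, the reduction to semistable lattices via the Harder--Narasimhan filtration, is correct and goes somewhat beyond what the paper makes explicit. The submultiplicativity $\rho_{1/s^2}(\lat^*)\le\prod_j\rho_{1/s^2}(\bar\lat_j)$ follows by iterating the coset decomposition of Lemma~\ref{lem:coset-mass} together with $\rho(\lat+\vec t)\le\rho(\lat)$ for $\vec t\in\mathrm{span}(\lat)$, which is a consequence of the Fourier expansion in that same lemma (your pointer to Lemma~\ref{lem:banacosh} is imprecise, since that gives a lower bound, but the inequality you actually need is standard). Rescaling each quotient to determinant one is a contraction because the slopes are nonnegative, and contraction only increases the Gaussian mass, so bounding the semistable normalized case suffices. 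The error budget via Lemma~\ref{lem:mass-decrease}, paying an extra $\sqrt{\log n}$ in the scale to push each factor below $1+1/(4n)$ with $m\le n$ quotients giving a product below $3/2$, is correct. This reduction aligns with, and sharpens, the paper's remark in the related-work paragraph that the conjecture should connect to the theory of stable lattices studied by Shapira and Weiss.

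Step~2 is where the proposal stops being a proof, as you acknowledge. The gap you identify is the real one: nothing in the known toolbox turns semistability into the conjectured $e^{\mathrm{polylog}(n)\cdot r^2}$ point count, and your diagnosis of the circularity in a Milman--Pisier-based approach (transferring volumetric smoothness of sections back to discrete point counts already requires the lattice to be smooth at the relevant scale) is exactly the obstruction. Your Step~1 shows that the conjecture is equivalent, up to polylogarithmic losses, to its restriction to semistable lattices of determinant one; it does not narrow the $\mathrm{polylog}(n)$ versus $\sqrt n$ window. The conclusion you reach is the honest one: a genuinely new idea is needed, and neither your argument nor the paper supplies it.
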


Note that by homogeneity, to prove Conjecture~\ref{con:maineta} it suffices to
show that $\eta(\lat) \le \poly \log n$ whenever all sublattices of $\lat^*$
have determinant at least $1$. We now show an equivalence between the smoothing
parameter and a bound on the number of dual lattice points at distance $r$ of
the form $e^{(sr)^2}$. This formalizes the equivalence between the above
conjecture and Conjecture~\ref{con:mainetaintro}.

\begin{lemma}\label{lem:pointcountingeta}
For an $n$-dimensional lattice $\lat$, the following inequality
holds:
\[
\eta(\lat)/\sqrt{3} \leq \max_{r > 0} \frac{\sqrt{\log(|\lat^* \cap
rB_2^n|)/\pi}}{r} \leq \eta(\lat) \text{ .}
\]
\end{lemma}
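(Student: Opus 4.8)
The plan is to relate the Gaussian sum defining $\eta(\lat)$ to the counting function $N(r) \eqdef |\lat^* \cap rB_2^n|$ by bounding each against the other. Throughout set $s \eqdef \eta(\lat)$, so that by definition $\rho_{1/s^2}(\lat^*\setminuszero) = \sum_{\vec y \in \lat^*\setminuszero} e^{-\pi s^2\|\vec y\|^2} = 1/2$, and note that $\phi(u) \eqdef \sum_{\vec y \in \lat^*\setminuszero} e^{-\pi u^2 \|\vec y\|^2}$ is finite and strictly decreasing in $u > 0$; hence to prove $\eta(\lat) \le u$ for a given $u > 0$ it suffices to check $\phi(u) \le 1/2$. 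Write $M \eqdef \max_{r > 0} \sqrt{\log N(r)/\pi}\,/\,r$, which is a well-defined finite positive maximum: $N(r) = 1$ for $r < \lambda_1(\lat^*)$ so the ratio vanishes near $0$, $N(r)$ grows at most polynomially (e.g.\ $N(r)\le(1+2r/\lambda_1(\lat^*))^n$ by a packing argument) so the ratio tends to $0$ as $r\to\infty$, and $N(r) \ge 3$ once $r \ge \lambda_1(\lat^*)$ since $\lat^*$ has rank $n\ge 1$, which forces $M > 0$.

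For the inequality $M \le \eta(\lat)$, I would fix $r > 0$ and truncate the defining sum to the ball of radius $r$: each of the $N(r) - 1$ nonzero points of $\lat^* \cap rB_2^n$ contributes at least $e^{-\pi s^2 r^2}$, so $1/2 = \rho_{1/s^2}(\lat^*\setminuszero) \ge (N(r) - 1)\, e^{-\pi s^2 r^2}$, i.e.\ $N(r) \le 1 + \tfrac12 e^{\pi s^2 r^2}$. If $e^{\pi s^2 r^2} \ge 2$ this already gives $N(r) \le e^{\pi s^2 r^2}$; if $e^{\pi s^2 r^2} < 2$, then $N(r) < 2$, so the positive integer $N(r)$ equals $1$ and $\log N(r) = 0 \le \pi s^2 r^2$ trivially. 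In either case $\log N(r) \le \pi s^2 r^2$, that is $\sqrt{\log N(r)/\pi}/r \le s = \eta(\lat)$; maximizing over $r$ yields $M \le \eta(\lat)$.

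For the inequality $\eta(\lat) \le \sqrt 3\, M$, I would rewrite the Gaussian sum at scale $u = \sqrt 3\, M$ using the layer-cake identity $e^{-3\pi M^2\|\vec y\|^2} = \int_0^\infty 3\pi M^2 e^{-3\pi M^2 t}\,\mathbf{1}[t \ge \|\vec y\|^2]\,dt$, exchanging sum and integral (Tonelli, nonnegative terms) and using $\sum_{\vec y \in \lat^*\setminuszero}\mathbf{1}[t \ge \|\vec y\|^2] = N(\sqrt t) - 1$ together with $N(\sqrt t) \le e^{\pi M^2 t}$ (the definition of $M$):
\[
\phi(\sqrt 3\,M) = \int_0^\infty 3\pi M^2 e^{-3\pi M^2 t}\,(N(\sqrt t) - 1)\,dt \le \int_0^\infty 3\pi M^2\bigl(e^{-2\pi M^2 t} - e^{-3\pi M^2 t}\bigr)\,dt = 3\Bigl(\tfrac12 - \tfrac13\Bigr) = \tfrac12 .
\]
By the monotonicity of $\phi$, this gives $\eta(\lat) \le \sqrt 3\, M$, i.e.\ $\eta(\lat)/\sqrt 3 \le M$. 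Combining the two directions proves the lemma.

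I do not expect a genuine obstacle; the only point requiring care is the first step, where the integrality of $N(r)$ is exactly what lets the additive $1$ in $N(r) \le 1 + \tfrac12 e^{\pi s^2 r^2}$ be absorbed into the exponential with no loss — a cruder estimate $N(r) \le \tfrac32 e^{\pi s^2 r^2}$ would leave an unwanted $\log(3/2)$ term and spoil the clean inequality. The constant $\sqrt 3$ in the lower bound is forced by the method: with scale $u = cM$, $c>1$, the same computation gives $\phi(cM) \le \tfrac{1}{c^2 - 1}$, which equals $1/2$ precisely at $c = \sqrt 3$. Everything else — convergence of Gaussian lattice sums, attainment of the maximum $M$, and the two elementary integrals — is routine.
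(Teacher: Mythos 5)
Your proof is correct and takes essentially the same route as the paper's: the upper bound on $M=\max_{r>0}\sqrt{\log N(r)/\pi}/r$ is obtained by truncating the Gaussian sum to a ball and exploiting integrality of $N(r)$, and the lower bound by the layer-cake identity together with the pointwise bound $N(r)\le e^{\pi M^2 r^2}$, giving the same integral $3(\tfrac12-\tfrac13)=\tfrac12$ (the paper parametrizes by $r$ rather than $t=r^2$ and keeps the $\vec{0}$ term, but these are cosmetic differences).
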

\begin{proof}
By rearranging, note that $\tilde{s} = \max_{r > 0} \frac{\sqrt{\log(|\lat^* \cap
rB_2^n|)/\pi}}{r}$ is simply the minimum number such that $|\lat^* \cap rB_2^n|
\leq e^{\pi(\tilde{s}r)^2}$ for all $r \geq 0$. 

We first show that $\tilde{s} \leq s$, where $s = \eta(\lat)$. In particular, we
must show that $|\lat^* \cap rB_2^n| \leq e^{\pi(sr)^2}$ for all $r \geq 0$. 
By definition, $\rho_{1/s^2}(\lat^* \setminuszero) = 1/2$, and thus we must have
that
\[
1/2 \geq e^{-\pi (sr)^2} (|\lat^* \cap rB_2^n|-1) \text{.}
\]
Rearranging, and using the fact that the cardinality of a set is integer,
\[
|\lat^* \cap rB_2^n| \le \floor{1 + e^{\pi (sr)^2}/2} \le e^{\pi (sr)^2} \text{,}
\]
as desired. 

We now show that $s \leq \sqrt{3} \tilde{s}$. In particular, we show that
$\rho_{1/(3 \tilde{s}^2)}(\lat^*) \leq 3/2$. The following computation derives
the bound,
\begin{align*}
\rho_{1/(3\tilde{s}^2)}(\lat^*) 
&= \sum_{\vec{y} \in \lat^*} e^{-\pi(3\tilde{s}^2) \|\vec{y}\|_2^2} \\
&= \sum_{\vec{y} \in \lat^*} \int_{0}^\infty I[\|\vec{y}\|_2 \leq r]
(6\pi\tilde{s}^2) \cdot r e^{-\pi(3\tilde{s}^2) r^2} dr \\
&= \int_{0}^\infty |\lat^* \cap rB_2^n| 
(6\pi\tilde{s}^2) \cdot r e^{-\pi(3\tilde{s}^2) r^2} dr \\
&\leq \int_{0}^\infty e^{\pi (\tilde{s} r)^2} 
(6\pi\tilde{s}^2) \cdot r e^{-\pi(3\tilde{s}^2) r^2} dr \\
&= \int_{0}^\infty (6\pi\tilde{s}^2) \cdot r e^{-\pi(2\tilde{s}^2) r^2} dr \\
&= -6/4e^{-\pi(2\tilde{s}^2)r^2} \big|_{r=0}^\infty = 3/2 \text{, }
\end{align*}
as needed.
\end{proof}

\section{The Main Conjecture implies the Kannan-Lov{\'a}sz Conjecture}
\label{sec:klmain}

We start by stating the $\ell_2$ KL conjecture. 

\begin{conjecture}[The $\ell_2$ Kannan-Lov{\'a}sz Conjecture]
\label{conj:kl-l2}
Let $C_{KL}(n)>0$ be the smallest number such that for any $\lat \subset \R^n$,
\begin{align}\label{eq:conjkll2}
\mu(\lat) 
\leq 
C_{KL}(n) 
\max_{\substack{W \textnormal{ lattice subspace
of } \lat^* \\ 1 \leq d =
\dim(W) \leq n}} \sqrt{d} \det(\lat^* \cap W)^{-1/d}
\; .
\end{align}
Then $C_{KL}(n) \le \poly \log n$.
\end{conjecture}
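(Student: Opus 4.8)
The plan is to bound $\mu(\lat)$ by a chain of three inequalities, each losing at most a $\poly\log n$ factor, ending at the right-hand side of~\eqref{eq:conjkll2}; the Main Conjecture (with constant $C_\eta(n)\le\poly\log n$) enters only in the middle link. First define the convex program
\[
\mu_{\rm sm}(\lat)^2 \;=\; \min\Bigl\{\tr(A):\; A\in\PSD^n,\; \spn(\lat)\subseteq\ran(A),\; \textstyle\sum_{\vec y\in\lat^*\setminuszero}e^{-\pi\vec y^\T A\vec y}\le\tfrac12\Bigr\}
\]
(the constraint set is convex, being a sublevel set of a sum of log-affine, hence convex, functions of $A$). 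I would show $\mu(\lat)^2\lesssim\mu_{\rm sm}(\lat)^2$: for feasible $A$ put $\vec X\sim N(0,\tfrac1{2\pi}A)$, so by Claim~\ref{clm:smoothing} the law of $\vec X\bmod\lat$ is within total variation $1/4$ of uniform, while by Claim~\ref{clm:gmrcovering} a uniform point is at distance $\ge\mu(\lat)/2$ from $\lat$ with probability $\ge1/2$; hence $\Pr[\|\vec X\|_2\ge\mu(\lat)/2]\ge1/4$, and Markov applied to $\E\|\vec X\|_2^2=\tr(A)/(2\pi)$ yields $\mu(\lat)^2\lesssim\tr(A)$.

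\textbf{Step 2 (the determinant relaxation, via the Main Conjecture).} Next define $\mu_{\rm det}(\lat)^2=\min\tr(A)$ over $A\in\PSD^n$ with $\spn(\lat)\subseteq\ran(A)$, subject to one constraint per nonzero lattice subspace $W$ of $\lat^*$,
\[
\det_W\bigl(A^{\cap W}\bigr)\cdot\det(\lat^*\cap W)^2 \;\ge\; c^{\,\dim W}
\]
for a suitable absolute constant $c$; each such constraint is convex because $A\mapsto A^{\cap W}$ is Loewner-concave (Lemma~\ref{lem:slice}(3)) and $\det_W(\cdot)^{1/\dim W}$ is concave and monotone (Lemma~\ref{lem:proj-det-props}(2)). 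The content of this definition is that, assuming the Main Conjecture, scaling any $\mu_{\rm det}$-feasible $A$ by $C_\eta(n)^2=\poly\log n$ produces a $\mu_{\rm sm}$-feasible matrix: after the change of variables $\vec x\mapsto A^{1/2}\vec x$ the above constraint says precisely that every primitive sublattice of the dual of $A^{-1/2}\lat$ has determinant $\gtrsim1$, so Conjecture~\ref{con:maineta} (in its Gaussian-sum form, converted using Lemma~\ref{lem:coset-mass} and the definition of $\eta$) bounds $\eta(A^{-1/2}\lat)$ by a constant, i.e.\ forces $\sum_{\vec y\in\lat^*\setminuszero}e^{-\pi C_\eta(n)^2\vec y^\T A\vec y/c}\le1/2$. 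Hence $\mu_{\rm sm}(\lat)^2\lesssim\poly\log n\cdot\mu_{\rm det}(\lat)^2$, and everything reduces to bounding $\mu_{\rm det}(\lat)$ by the KL quantity.

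\textbf{Step 3 (duality and rounding).} Since $\mu_{\rm det}$ is convex with a Slater point $A=tI$, strong duality gives $\mu_{\rm det}(\lat)^2$ as the value of a maximization whose feasible solutions are, morally, nonnegative ``fractional mixtures'' $\{(W,\alpha_W)\}$ of lattice subspaces of $\lat^*$ (plus a PSD slack), with objective a weighted combination of $\dim W$ and $-\log\det(\lat^*\cap W)$; a single subspace $W$ contributes $\asymp\dim W\cdot\det(\lat^*\cap W)^{-2/\dim W}$. It therefore suffices to round such a mixture down to one subspace $W^\star$ with $\dim W^\star\,\det(\lat^*\cap W^\star)^{-2/\dim W^\star}\gtrsim(\text{dual objective})/\poly\log n$, which with Steps 1--2 yields $C_{KL}(n)\lesssim C_\eta(n)\cdot\poly\log n\le\poly\log n$. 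The rounding has two parts. \emph{Uncrossing}: if the support contains incomparable $V,W$, replace them (with weight $\min(\alpha_V,\alpha_W)$) by $V\cap W$ and $V+W$, which are again lattice subspaces of $\lat^*$; the objective does not decrease by combining modularity of dimension, submodularity of $\log\det$ of primitive dual sublattices ($\det(\lat^*\cap(V+W))\det(\lat^*\cap(V\cap W))\le\det(\lat^*\cap V)\det(\lat^*\cap W)$, from $(\lat^*\cap V)+(\lat^*\cap W)\subseteq\lat^*\cap(V+W)$ and the standard submodularity of sublattice determinants), and a matching superadditivity for the $A$-dependent ellipsoid terms obtained from the Matrix Determinant Lemma~\ref{lem:det} and Lemma~\ref{lem:slice}(3); since $\sum_W\alpha_W(\dim W)^2$ strictly increases each step, this terminates at a chain $W_1\subseteq\cdots\subseteq W_m$. \emph{Bucketing}: along the chain, a priori bounds (Minkowski's theorems, Theorem~\ref{thm:smoothing-bnd}) confine the relevant ``scales'' to a window of length $\poly\log n$, so partitioning into that many buckets and averaging shows one $W_j$ carries a $1/\poly\log n$ fraction of the objective.

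\textbf{Expected main obstacle.} The hard part is the uncrossing inequality of Step~3 for the ellipsoid (slice) terms: a Brunn--Minkowski-type statement that cross-sections and projections of $E(A)$ behave supermodularly under $\{V,W\}\mapsto\{V\cap W,V+W\}$, together with checking that the induction genuinely terminates and the redistributed weights stay nonnegative. A secondary difficulty is the quantitative calibration in Step~2 --- pinning down the constant $c$ (and any $\poly(\dim W)$ corrections) in the $\mu_{\rm det}$ constraints so that the Main Conjecture bridges $\mu_{\rm det}$-feasibility to $\mu_{\rm sm}$-feasibility with only a $\poly\log n$ loss, which needs the one-dimensional sublattice decomposition of Gaussian sums (cf.\ Lemma~\ref{lem:mass-decrease}) and Lemma~\ref{lem:coset-mass} applied uniformly over all lattice subspaces of $\lat^*$.
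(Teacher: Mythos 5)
Your plan is the paper's proof of Theorem~\ref{thm:mink-to-kl} in essence: the three-step chain $\mu \lesssim \mu_{\rm sm} \lesssim C_\eta(n)\mu_{\rm det}$, then duality and subspace rounding. But there is one genuine error and a couple of gaps where the hard content actually lies.

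The error is in Step~2: your $\mu_{\rm det}$ constraint involves the determinant of the \emph{slice}, $\det_W(A^{\cap W})$, where the right quantity is the \emph{projected} determinant $\det_W(A) = \det(O_W^\T A O_W)$. These differ in general: $\det_W(A^{\cap W}) = 1/\det_W(A^{-1}) \le \det_W(A)$ (section vs.\ projection of the ellipsoid $E(A)$), with equality only when $W$ is $A$-invariant. The change of variables that feeds into the Main Conjecture rescales $\lat^*$ by $A^{1/2}$, and by Claim~\ref{clm:detlattransform} the resulting sublattice determinants are $\det_W(A)^{1/2}\det(\lat^* \cap W)$ — not the slice version — so your constraint does not translate into the "all dual sublattice determinants $\gtrsim 1$" hypothesis as you claim. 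The fix is simply to drop the $\cap W$; concavity of $\det_W(\cdot)^{1/\dim W}$ (Lemma~\ref{lem:proj-det-props}) still gives convexity of the program.

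On Step~3, two points need more than a gesture. (i) Your uncrossing replaces incomparable $V,W$ at weight $\min(\alpha_V,\alpha_W)$, but then the potential $\sum_W \alpha_W (\dim W)^2$ may increase by an arbitrarily small amount, so termination is not guaranteed; the paper first reduces the dual support to $m \le n^2$ (a null combination in the $n^2$-dimensional matrix space), after which an integer-valued potential $\sum_i \dim(W_i)^2$ terminates in $\le mn^2$ steps. (ii) The uncrossing inequality must couple the lattice determinants with the PSD pieces, and you leave the update of those pieces unspecified; the paper takes $X_1' = (X_1+X_2)^{\cap(V\cap W)}/2$, $X_2' = (X_1+X_2)-X_1'$ — preserving $\sum_i X_i$ for dual feasibility — and proves the resulting inequality (Lemma~\ref{lem:uncrossing}) either from a Gaussian correlation-type inequality $\rho_X(\lat)\rho_Y(\lat)\le\rho_{X:Y}(\lat)\rho_{X+Y}(\lat)$ in a degenerate limit, or directly via Schur complements and concavity of $\det^{1/n}$. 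It is not just "submodularity of lattice determinants plus a matching superadditivity," so you are right to flag this as the main obstacle. Finally, the bucketing is not about confining scales to a $\poly\log n$ window (the quantities $\det(\lat^*\cap W)^{-2/\dim W}$ are unbounded); rather one first applies a reverse AM–GM (Lemma~\ref{lem:rev-am-gm}) to pass to a geometric mean over a subset, then on the chain groups indices by dyadic $\dim(W_i')$, exploits $\sum X_i' \preceq \pi_{W'_{\ell_j}}$ to bound the trace in each group, and extracts the single maximizer.
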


We note that the reverse direction is easy to prove, i.e., that the $\max$ in Eq.~\eqref{eq:conjkll2} is $\lesssim \mu(\lat)$. 
We also note that the best known lower bound on $C_{KL}(n)$
is $O(\sqrt{\log n})$, obtained by the lattice generated by the basis
$B=(\vec{e}_1,\vec{e}_2/\sqrt{2},\dots,\vec{e}_n/\sqrt{n})$.
With the above formulation, we can state the main result of this paper as
follows.

\begin{theorem}
\label{thm:mink-to-kl}
The $\ell_2$ Kannan-Lov{\'a}sz conjecture holds with bound $C_{KL}(n) = O(\log n) C_\eta(n)$.
\end{theorem}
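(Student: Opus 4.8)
The plan is to carry out the three–stage strategy sketched above. Introduce the \emph{smoothing program} (cf.~\eqref{eq:smooth-mu-intro})
\[
\mu_{\mathrm{sm}}(\lat)^2 \eqdef \min\set[\Big]{\tr(A) : A \succ 0,\ \sum_{\vec{y} \in \lat^* \setminuszero} e^{-\pi \vec{y}^\T A \vec{y}} \le 1/2}
\]
and first prove the \emph{unconditional} inequality $\mu(\lat)^2 \lesssim \mu_{\mathrm{sm}}(\lat)^2$. This is the easy direction: if $A$ is feasible then, with $X \sim N(0,\tfrac{1}{2\pi}A)$, Claim~\ref{clm:smoothing} gives that $X \bmod \lat$ is within total variation $1/4$ of the uniform distribution; combining this with Claim~\ref{clm:gmrcovering} (a uniform coset representative is at torus distance $\geq \mu(\lat)/2$ with probability $\geq 1/2$, and $\|X\|_2$ dominates the torus distance of $X \bmod \lat$) yields $\Pr[\|X\|_2 \geq \mu(\lat)/2] \geq 1/4$, hence $\tr(A) = 2\pi\, \E\|X\|_2^2 \gtrsim \mu(\lat)^2$; minimizing over feasible $A$ proves the claim.

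Next, introduce the \emph{determinant program}
\[
\mu_{\mathrm{det}}(\lat)^2 \eqdef \min\set[\Big]{\tr(A) : A \succ 0,\ \det_W(A^{\da W})^{1/2} \det(\lat^* \cap W) \geq C_\eta(n)^{\dim W} \text{ for every lattice subspace } W \text{ of } \lat^*}
\]
and show, \emph{assuming the main conjecture}, that $\mu_{\mathrm{sm}}(\lat) \leq \mu_{\mathrm{det}}(\lat)$. The idea is a change of variables: given $A$ feasible for $\mu_{\mathrm{det}}$, set $\M = A^{-1/2}\lat$, so that $\M^* = A^{1/2}\lat^*$, and observe via Claim~\ref{clm:detlattransform} that the constraints say precisely that every rank-$d$ sublattice of $\M^*$ has determinant at least $C_\eta(n)^d$; Conjecture~\ref{con:maineta} then yields $\eta(\M) \leq C_\eta(n) \cdot C_\eta(n)^{-1} = 1$, which unwinds to $\sum_{\vec{y} \in \lat^* \setminuszero} e^{-\pi \vec{y}^\T A \vec{y}} = \sum_{\vec{z} \in \M^* \setminuszero} e^{-\pi\|\vec{z}\|_2^2} \leq 1/2$, i.e.\ $A$ is feasible for $\mu_{\mathrm{sm}}$. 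Each constraint of $\mu_{\mathrm{det}}$ is convex in $A$, since $A \mapsto \det_W(A^{\da W})^{1/d}$ is concave by Lemma~\ref{lem:proj-det-props}; so $\mu_{\mathrm{det}}$ is a convex program carrying one constraint per lattice subspace of $\lat^*$, and the subspace structure is now explicit.

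Now dualize $\mu_{\mathrm{det}}$. Using the gradient formula of Lemma~\ref{lem:proj-det-props}(3) for $\det_W(A^{\da W})^{1/d}$ together with strong duality, a dual optimum is a family of nonnegative weights $(\nu_W)$ on the lattice subspaces of $\lat^*$ whose stationarity condition writes $\pi_{\ran(A)}$ as a $(\nu_W)$-weighted combination of the normalized projected inverse-covariances $\tfrac{1}{d_W}\det_W(A^{\da W})^{1/d_W}(A^{\da W})^+$, and whose objective value is the corresponding combination of the scalars $C_\eta(n)^2 \, d_W \, \det(\lat^* \cap W)^{-2/d_W}$. Thus $\mu_{\mathrm{det}}(\lat)^2$ is some mixture over lattice subspaces $W$ of $C_\eta(n)^2 \, d_W \det(\lat^* \cap W)^{-2/d_W}$, which is exactly $C_\eta(n)^2$ times the square of the Kannan--Lov\'asz quantity of $W$; it remains to replace this mixture by its single best term.

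This rounding step is where I expect the genuine difficulty. I would first establish an \emph{uncrossing inequality}: if $(\nu_W)$ is supported on two incomparable lattice subspaces $V, W$ of $\lat^*$, then shifting weight from $\{V,W\}$ to the lattice subspaces $\{V+W,\ V\cap W\}$ does not decrease the dual value. This should reduce to the matrix determinant lemma (Lemma~\ref{lem:det}) together with the slice/projection monotonicity of Lemma~\ref{lem:slice} — essentially a supermodularity statement for $W \mapsto d_W \det(\lat^* \cap W)^{-2/d_W}$ interacting correctly with the covariance terms. Iterating the uncrossing, with a quantity such as $\sum_W \nu_W d_W^2$ serving as a strictly increasing potential, forces the support onto a chain $W_1 \subseteq W_2 \subseteq \cdots \subseteq W_m$ with $m \leq n$. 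Splitting the $m$-term mixture naively loses a factor $m$; to bring the loss down I would bucket the chain dyadically by dimension and use a determinant inequality along the chain to absorb the terms in each bucket into its top element at a bounded cost, so that the best of the $O(\log n)$ buckets — hence some single lattice subspace $W^\star$ — satisfies $\mu_{\mathrm{det}}(\lat) \lesssim \polylog(n)\, C_\eta(n)\, \sqrt{d_{W^\star}}\, \det(\lat^* \cap W^\star)^{-1/d_{W^\star}}$. Chaining the three stages gives $\mu(\lat) \lesssim \mu_{\mathrm{sm}}(\lat) \leq \mu_{\mathrm{det}}(\lat) \lesssim \polylog(n)\, C_\eta(n)\, \max_W \sqrt{d_W}\, \det(\lat^* \cap W)^{-1/d_W}$, and a careful accounting of the losses in the uncrossing and the bucketing yields the stated $C_{KL}(n) = O(\log n)\, C_\eta(n)$. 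The main obstacle, concretely, is proving the uncrossing inequality with the covariance terms present and arranging the bucketing so that it loses only polylogarithmically, rather than polynomially, in $n$.
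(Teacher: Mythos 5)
Your three-stage plan — bound $\mu$ by the smoothing program $\mu_{\rm sm}$, use the main conjecture to relax to the determinant program $\mu_{\rm det}$, then dualize and round the dual to a single subspace via uncrossing and dyadic bucketing — is exactly the architecture of the paper's proof (Theorems~\ref{thm:smooth-mu-bnd}, \ref{thm:det-mu-bnd}, \ref{thm:det-mu-dual}, \ref{thm:subspace-rounding}); the first two stages as you describe them are essentially correct up to a choice of where to absorb the $C_\eta(n)$ normalization, and the dual you extract from the KKT conditions matches the paper's $\mu^{\rm dual}_{\rm det}$.

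The one place your sketch goes off the rails is the uncrossing step. You assert that replacing an incomparable pair $\{V,W\}$ in the support by $\{V\cap W, V+W\}$ (with suitably reallocated covariance) \emph{does not decrease the dual value}, i.e.\ preserves the sum $\sum_i d_i\,\det_{W_i}(X_i)^{1/d_i}\det(\lat^*\cap W_i)^{-2/d_i}$. What the paper's uncrossing inequality (Lemma~\ref{lem:uncrossing}) actually preserves is the \emph{product} $\prod_i \det_{W_i}(X_i)\det(\lat^*\cap W_i)^{-2}$, while keeping $\sum_i \dim W_i$ fixed. AM--GM gives you the product form as a lower bound on the sum, so the uncrossing step preserves a lower bound on the dual value, not the dual value itself; because the dimensions redistribute from $(d_V,d_W)$ to the more unbalanced $(d_{V\cap W},d_{V+W})$, the sum can genuinely decrease under your operation, and the ``supermodularity of $W\mapsto d_W\det(\lat^*\cap W)^{-2/d_W}$'' you gesture at is not available in the needed form. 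The paper bridges this with a \emph{reverse} AM--GM lemma (Lemma~\ref{lem:rev-am-gm}), which at a cost of $O(\log n)$ replaces the weighted sum by the best weighted geometric mean over a subset; one then uncrosses inside that subset where the product \emph{is} the objective, and applies ordinary AM--GM afterwards to return to sum form. Without this interposed step your iteration is not justified, and it is precisely here (together with the companion step of first restricting to at most $n^2$ active subspaces so that $\log d_{[m]} = O(\log n)$) that the single $O(\log n)$ loss in $C_{KL}(n)$ is incurred. Your dyadic bucketing of the resulting chain by dimension is otherwise the right move, but it relies on the trace constraint $\sum_{i\in S_j}\tr(X_i')\le 2^j$ coming from $\sum X_i'\preceq I$ and the chain structure — not on ``a determinant inequality along the chain'' as you suggest.
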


In Section~\ref{sec:klorigin} we provide some optional background on the KL conjecture. 
The rest of this section is dedicated to the proof of Theorem~\ref{thm:mink-to-kl}.

\subsection{Origin of the Kannan-Lov{\'a}sz Conjecture}
\label{sec:klorigin}

The basic question Kannan and Lov{\'a}sz~\cite{KL88} sought to understand is
this: when can we guarantee that a convex body contains a lattice point?  One of
the most elegant ways to do so is to examine the \emph{covering radius} of the
body with respect to the lattice. Given a convex body $K$ and $n$-dimensional
lattice $\lat$ in $\R^n$, we define the covering radius
\[
\mu(K,\lat) = \inf \set{s \geq 0: \lat + sK = \R^n} \text{ ,}
\]
or equivalently, the minimum scaling $s$ of $K$ for which $sK+\vec{t}$ contains
a point of $\lat$ for every translation $\vec{t} \in \R^n$. Note that if
$\mu(K,\lat) \leq 1$, then $K$ contains a lattice point in \emph{every
translation}.

With this definition, we may specialize the main question to: when is the covering
radius of a convex body smaller than $1$? or more generally, is there a good
alternate ``dual'' characterization of the covering radius? 

Good answers to this question have played a crucial role in the development of
faster algorithms for the Integer Programming
(IP)~\cite{Lenstra83,Kan87,HildebrandK10,DPV11,thesis/D12}. The IP
problem, classically defined as deciding whether a linear system of inequalities
$Ax \leq b$ admits an integer solution, can also be phrased more generally, that
is, given a convex body $K$ and lattice $\lat$ in $\R^n$ decide whether $K \cap
\lat \neq \emptyset$. The fastest algorithms for IP require
$n^{O(n)}$-time~\cite{Kan87,thesis/D12}, and a major open problem is whether
there exists a $2^{O(n)}$-time algorithm for IP.

A first satisfactory duality theorem in this context is Khinchine's Flatness
theorem, which states that either $\mu(K,\lat) \leq 1$ or $K$ has small
\emph{lattice width}, i.e.,~$K$ is ``flat.'' 
Letting the width norm of $K$ be ${\rm width}_K(\vec{z}) = \max_{\vec{x} \in K}
\pr{\vec{z}}{\vec{x}} - \min_{\vec{x} \in K} \pr{\vec{z}}{\vec{x}}$, for
$\vec{z} \in \R^n$, we define the lattice width of $K$ w.r.t. to $\lat$ as 
\[
{\rm width}(K, \lat) = \min_{y \in \lat^* \setminuszero} {\rm
width}_K(\vec{y}) \text{.}
\]
Improving the quantitative estimates
on how ``flat'' $K$ must be has been a focus of much
research~\cite{Khinchine48,Bab86,KL88,LLS90, Bana93,Bana96,Bana99}.
The best current estimate on flatness can be stated as follows.

\begin{theorem}[Khinchine's Flatness Theorem] 
\[
1 \leq \mu(K,\lat) {\rm width}(K,\lat) \leq \tilde{O}(n^{4/3}) \text{ .}
\]
\end{theorem}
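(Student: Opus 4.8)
The plan is to prove the two inequalities separately; the left one is elementary and the right one carries all the weight. \emph{For the lower bound} $1\le\mu(K,\lat)\,\mathrm{width}(K,\lat)$, let $\vec{y}\in\lat^*\setminuszero$ attain the lattice width, so that $w:=\mathrm{width}(K,\lat)=\max_{\vec{x}\in K}\pr{\vec{y}}{\vec{x}}-\min_{\vec{x}\in K}\pr{\vec{y}}{\vec{x}}$, and write $\mu:=\mu(K,\lat)$. For every $\vec{v}\in\lat$ and $\vec{x}\in K$, the value $\pr{\vec{y}}{\vec{v}+\mu\vec{x}}$ lies in $\Z+\mu\cdot[\min_{\vec{x}\in K}\pr{\vec{y}}{\vec{x}},\,\max_{\vec{x}\in K}\pr{\vec{y}}{\vec{x}}]$, a union of closed intervals of length $\mu w$ placed at integer translates. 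If $\mu w<1$ this set omits an open interval, so $\lat+\mu K\neq\R^n$, contradicting the definition of the covering radius; hence $\mu w\ge1$.

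\emph{The Euclidean core of the upper bound} is the case $K=\Ball_2^n$, where $\mathrm{width}(\Ball_2^n,\lat)=2\lambda_1(\lat^*)$, so the assertion becomes Banaszczyk's transference bound $\mu(\lat)\,\lambda_1(\lat^*)\lesssim n$. I would derive this from the Gaussian machinery of Section~\ref{sec:prelims}: by Theorem~\ref{thm:smoothing-bnd}, $\eta(\lat)\lesssim\sqrt{n}/\lambda_1(\lat^*)$; by Claim~\ref{clm:smoothing} a Gaussian of width of order $\eta(\lat)$ smooths $\lat$, i.e.\ its reduction modulo $\lat$ is close to uniform; and for every coset $\lat+\vec{t}$ the mass $\rho_{s^2}(\lat+\vec{t})$ is then bounded below uniformly in $\vec{t}$ (using Lemma~\ref{lem:banacosh}), which by Banaszczyk's tail estimate on Gaussian mass outside a ball forces $\lat+\vec{t}$ to contain a lattice point of norm $\lesssim\sqrt{n}\,\eta(\lat)\lesssim n/\lambda_1(\lat^*)$. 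This already yields the linear-in-$n$ bound with no logarithmic loss.

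\emph{For a general symmetric body} $K$ one cannot pass to an inscribed or circumscribed ellipsoid, since the best such sandwich has ratio $\gtrsim\sqrt{n}$ (e.g.\ for the cube) and would give only $n^{3/2}$. Instead I would run the Gaussian argument intrinsically in the norm $\|\cdot\|_K$. Since $\mu(K,\lat)\,\mathrm{width}(K,\lat)$ is invariant under linear transformations, apply one bringing $K$ into the $\ell$-position; then, by the $\|\cdot\|_K$-version of the smoothing step, if $\lat^*$ has no vector of $K^\circ$-norm below a threshold of order $\sqrt{n}\,M^*(K)$, every coset of $\lat$ has substantial $K$-Gaussian mass and hence contains a lattice point of $K$-norm $\lesssim\sqrt{n}\,M(K)$, the factors $M(K),M^*(K)$ being exactly the cost of converting between Gaussian mass and the $K$- and $K^\circ$-norms. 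The $MM^*$-estimate $M(K)M^*(K)\lesssim\log n$ in the $\ell$-position (Figiel--Tomczak-Jaegermann, with Pisier's $K$-convexity bound) then gives $\mu(K,\lat)\,\mathrm{width}(K,\lat)\lesssim n\log n$ for symmetric $K$. For a not-necessarily-symmetric $K$, normalized so its centroid is at the origin, I would reduce to the symmetric case --- crudely via $K\cap(-K)$, or, to reach the stated exponent, via Rudelson's non-symmetric $MM^*$-estimate, which provides a linear image with $M(K)M^*(K)\lesssim n^{1/3}\polylog n$; feeding this into the same Gaussian computation yields $\mu(K,\lat)\,\mathrm{width}(K,\lat)\lesssim n^{4/3}\polylog n$, which is the claimed bound $\tilde{O}(n^{4/3})$.

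\emph{Main obstacle.} The lower bound and the Euclidean core are routine given Section~\ref{sec:prelims}. The delicate part is the general upper bound: one needs that a lattice whose dual has no short vector in the relevant norm has \emph{every} coset of substantial $K$-Gaussian mass --- a concentration-of-measure statement that is the $\|\cdot\|_K$-analogue of Claim~\ref{clm:smoothing} --- together with the bookkeeping that pins the only losses beyond the unavoidable linear term to the $MM^*$ factor, which is $\polylog n$ for symmetric bodies but only $\tilde{O}(n^{1/3})$ in general. Eliminating this residual $n^{1/3}$, i.e.\ proving flatness is $\tilde{O}(n)$ for every convex body, is exactly what the $\ell_2$ Kannan--Lov{\'a}sz conjecture (Conjecture~\ref{conj:kl-l2}), and hence the main conjecture of this paper, would provide.
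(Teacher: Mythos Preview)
Your approach is essentially the one the paper attributes the theorem to: the paper does not give its own proof but states that the bound is obtained by combining Banaszczyk~\cite{Bana96}, who proved $\mu(K,\lat)\,\mathrm{width}(K,\lat)\le O(n)\cdot MM^*(K)$ via the Gaussian technique you sketch, with Rudelson's $\tilde{O}(n^{1/3})$ bound on the $MM^*$ estimate for general convex bodies~\cite{Rudelson00}. Your elementary argument for the lower bound is correct, and your outline of the upper bound (Euclidean core via Banaszczyk's transference, then the $K$-norm Gaussian argument in $\ell$-position with the $MM^*$ factor, then Rudelson for the non-symmetric case) matches this route.

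One claim in your final paragraph is wrong and should be removed: the $\ell_2$ Kannan--Lov\'asz conjecture (Conjecture~\ref{conj:kl-l2}) would \emph{not} give flatness $\tilde{O}(n)$ for every convex body. That conjecture concerns only the Euclidean ball, for which flatness is already $O(n)$ by~\cite{Bana93}. Even the general KL conjecture (Theorem~\ref{thm:klgeneral} with the conjectured $\polylog n$ right-hand side) does not improve flatness: it replaces the $d=1$ width quantity by a minimum over \emph{all} lattice subspaces, and the paper explicitly notes that for random lattices the $d=1$ term (flatness) is $\Omega(n)$ while the minimum over all $d$ is $O(1)$. The purpose of KL is precisely to circumvent flatness, not to sharpen it.
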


The bound above is derived by combining the work of Banaszczyk~\cite{Bana96},
who showed that expression in the middle can be bounded by $O(n)$ times the so-called
$M$-$M^*$ estimate for $K$, together with the $\tilde{O}(n^{1/3})$ bound on the
$M$-$M^*$ estimate for general convex bodies due to Rudelson~\cite{Rudelson00}.

From the perspective of IP, if $\mu(K,\lat) \leq 1/2$, one can in fact find a
feasible lattice point using an appropriate rounding algorithm, and if
$\mu(K,\lat) \geq 1/2$, the flatness theorem implies that we can decompose $K$
along $\tilde{O}(n^{4/3})$ lattice hyperplanes (i.e.,~slicing along a flatness
direction). Since the rounding step can in fact be implemented in
$2^{O(n)}$-time~\cite{cvp/D12,thesis/D12}, the main bottleneck is in finding
more efficient decompositions. Note that creating $n^{O(1)}$ subproblems per
dimension yields a branching tree of size $n^{O(n)}$, the dominant term in the
complexity of IP algorithms. 

The flatness estimate can in fact be improved to $O(n)$ for
ellipsoids~\cite{Bana93} and $O(n\log n)$ for centrally symmetric convex
bodies~\cite{Bana96}. However, for any convex body, it is 
known (see, e.g.,~\cite[Remark 3.10]{Bana95}\footnote{As stated, the remark is for
symmetric convex bodies. To deal with an asymmetric body $K$, simply replace $K$ by $K-K$.\dnote{Full derivation: $\mu(K,\lat) \lambda_1((K-K)^*, \lat^*) \geq
\lambda_1((K-K),\lat) \lambda_1((K-K)^*,\lat^*)$. From here by Seigel, for a
random lattice $\lat$ of det $1$, $\lambda_1((K-K),\lat)\lambda_1((K-K)^*,\lat) \geq
(\vol_n(K-K)\vol_n(K-K)^*)^{-1/n} = \Omega(n)$ by Santalo's inequality. Should
we add this somewhere?}})  
that there exists a lattice for which the right-hand
side is $\Omega(n)$, in particular a random lattice achieves this with high
probability, and hence the relationship between lattice width and the covering
radius cannot be made sub-polynomial in general. Thus, it is doubtful
whether one can significantly improve on current IP algorithms by decomposing
along hyperplanes. 

To try and circumvent this problem, Kannan and Lov{\'a}sz~\cite{KL88} defined a
multidimensional generalization of flatness, proving the following bounds:

\begin{theorem}[\cite{KL88}]
\label{thm:klgeneral}
\begin{equation}
\label{kl-bnd}
1 \leq \mu(K,\lat) \min_{\substack{W \textnormal{ lattice subspace
of } \lat^* \\ 1 \leq d =
\dim(W) \leq n}} \vol_d(\pi_W(K))^{1/d} \det(\lat^* \cap W)^{1/d} \leq n
\end{equation}
where $\pi_W$ is the orthogonal projection onto $W$. Furthermore, for $K =
\Ball_2^n$, the right-hand side can be improved to $\sqrt{n}$.
\end{theorem}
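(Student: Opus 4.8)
\emph{Lower bound.} The left inequality is elementary. Fix a lattice subspace $W$ of $\lat^*$ with $d = \dim W$. Then $\pi_W(\lat)$ is a lattice in $W$ with $\pi_W(\lat)^* = \lat^*\cap W$, so $\det(\pi_W(\lat)) = \det(\lat^*\cap W)^{-1}$. Applying $\pi_W$ to $\lat + \mu(K,\lat)\,K = \R^n$ gives $\pi_W(\lat) + \mu(K,\lat)\,\pi_W(K) = W$, so translates of $\mu(K,\lat)\,\pi_W(K)$ along $\pi_W(\lat)$ cover $W$; comparing with a fundamental domain, $\mu(K,\lat)^{d}\,\vol_d(\pi_W(K)) \ge \det(\pi_W(\lat)) = \det(\lat^*\cap W)^{-1}$. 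Rearranging, $\mu(K,\lat)\,\vol_d(\pi_W(K))^{1/d}\det(\lat^*\cap W)^{1/d}\ge 1$ for every such $W$, hence also for the minimum over $W$; for $K=\Ball_2^n$, $W=\R^n$ this is just $\mu(\lat)\gtrsim\sqrt n\,\det(\lat)^{1/n}$.

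\emph{Upper bound, constructing a flag.} Here I would treat $K=\Ball_2^n$ first and build a chain of lattice projections greedily. Put $M_0=\lat$; given the $(n-i)$-dimensional lattice $M_i$ inside a subspace $W_{n-i}$ (with $W_n=\R^n$), let $\vec{v}_i\in M_i$ be a shortest vector, set $W_{n-i-1} = W_{n-i}\cap\vec{v}_i^{\perp}$ and $M_{i+1}=\pi_{W_{n-i-1}}(M_i)$. Since each $\vec{v}_i$ is primitive, every $W_j$ is a lattice subspace of $\lat^*$ with $\pi_{W_j}(\lat)=M_{n-j}$, so $\det(\lat^*\cap W_j) = \det(M_{n-j})^{-1}$. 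The $\vec{v}_i$'s lift to a basis of $\lat$ whose Gram--Schmidt lengths $r_{i+1}:=\|\vec{v}_i\|_2$ satisfy $\det(M_i)=\prod_{l>i}r_l$, the coordinate-wise rounding bound $\mu(\lat)^2\le\tfrac14\sum_{i=1}^n r_i^2$, and (by a one-line ``lift and reduce modulo $\vec{v}_i$'' estimate comparing the lift of a shortest vector of $M_{i+1}$ against $\lambda_1(M_i)$) the geometric bound $r_{i+1}\ge\tfrac{\sqrt3}{2}\,r_i$, so the Gram--Schmidt lengths cannot collapse too fast along the flag.

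\emph{Upper bound, extracting one subspace.} Since $\vol_d(\Ball_2^d)^{1/d}=\Theta(1/\sqrt d)$, proving the theorem for $\Ball_2^n$ reduces to finding a level $j$ for which the volumetric quantity $\sqrt j\,\det(M_{n-j})^{1/j}=\sqrt j\,\bigl(\prod_{l>n-j}r_l\bigr)^{1/j}$ attached to $W_j$ is within a factor $O(\sqrt n)$ of $\bigl(\sum_{l=1}^n r_l^2\bigr)^{1/2}$; I would obtain such a $j$ by a bucketing/pigeonhole argument exploiting the geometric lower bound on consecutive $r_l$'s. For a general symmetric convex body one runs the same scheme with $\|\cdot\|_K$ in place of $\|\cdot\|_2$, replacing Pythagoras by the triangle inequality (which is what degrades the factor from $\sqrt n$ to $n$) and invoking Minkowski's second theorem (Theorem~\ref{thm:mink-sec}) to compare $\vol_d(\pi_W(K))^{-1/d}\det(\pi_W(\lat))^{1/d}$ with the covering radius of the projection; an asymmetric $K$ is reduced to the symmetric body $K-K$ at the cost of an absolute constant. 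The hard part will be precisely this last extraction: the greedy vectors \emph{and} the cut-off $j$ must be chosen so that the flag is simultaneously good for the telescoped covering-radius estimate and for the volumetric comparison, while one keeps track of how $\det(\lat)$ and $\vol_n(K)$ factor through each sublattice/quotient decomposition along the flag.
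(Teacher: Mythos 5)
Your lower bound is the standard and correct argument, and the setup of the upper bound is also right: building the HKZ chain $M_i$, identifying $r_{i+1}:=\|\vec{v}_i\|_2$ with the Gram--Schmidt lengths, the identity $\det(M_{n-j})=\prod_{l>n-j}r_l$, Babai's bound $\mu(\lat)^2\le\tfrac14\sum_l r_l^2$, and even the geometric inequality $r_{i+1}\ge\tfrac{\sqrt3}{2}r_i$ (your lift-and-reduce derivation is correct). The gap is in the extraction step for $K=\Ball_2^n$, the only case the paper actually proves (Theorem~\ref{thm:best-conj-bnds}; the general-$K$ statement is cited from~\cite{KL88}). The bucketing/pigeonhole plan you propose, ``exploiting the geometric lower bound on consecutive $r_l$'s,'' is never carried out and, as far as I can see, does not work: that bound only prevents $r_l$ from dropping too fast, so chaining it from $i$ to $n$ gives $r_l\ge(\tfrac{\sqrt3}{2})^{l-i}r_i$, which makes the lower bound on $\bigl(\prod_{l>n-j}r_l\bigr)^{1/j}$ exponentially small and useless. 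The decay bound is a red herring here.

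The missing observation is much simpler and makes the geometric bound unnecessary. Take the single index $i$ maximizing $r_l$; by the HKZ property $r_i=\lambda_1(\pi_i(\lat))$, and now apply Minkowski's first theorem not to $\lat$ but to the \emph{projected} lattice $\pi_i(\lat)$ of dimension $d=n-i+1$: this gives $r_i\le 2\vol_d(\Ball_2^d)^{-1/d}\det(\pi_i(\lat))^{1/d}\le\sqrt{d}\,\det(\pi_i(\lat))^{1/d}$. Combined with $\mu(\lat)\le\tfrac{\sqrt n}{2}r_i$ from Babai and $\det(\pi_i(\lat))=\det(\lat^*\cap W)^{-1}$ where $W$ is the projection subspace, this yields the $\sqrt n$ bound in one step. (This is exactly the proof in Theorem~\ref{thm:best-conj-bnds}.) You do invoke Minkowski for the general-$K$ case, but only the second theorem, and only vaguely; also be careful with your claimed reduction of an asymmetric $K$ to $K-K$ ``at the cost of an absolute constant'': via the centroid one only gets $\mu(K,\lat)\le(n+1)\mu(K-K,\lat)$, so that reduction loses a factor of $n$, not $O(1)$, and by itself would give $O(n^2)$ rather than $n$.
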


We note that the standard flatness theorem corresponds to forcing $d=1$ above.
While not obvious, the above theorem (whose proof is constructive) yields a way
of decomposing an IP along $O(n)^d$ shifts of some $n-d$ dimensional subspace
(note that $d$ is not fixed). This was used in~\cite{thesis/D12} to give the first
$n^{n+o(n)}$ algorithm for IP. 

Currently, there are no known examples for which the right-hand side
of~\eqref{kl-bnd} is larger than $O(\log n)$. This bound is in fact achieved for
$K = {\rm conv}(\vec{0},\vec{e}_1,2\vec{e}_2,\dots,n\vec{e}_n)$ and $\lat = \Z^n$ (where
$\vec{e}_1,\dots,\vec{e}_n$ denote the standard basis). Kannan and Lov{\'a}sz
asked whether this is indeed the worst case, thus we henceforth call an
affirmative answer to this question the Kannan-Lov{\'a}sz conjecture. 
We note that Kannan and Lov{\'a}sz verified this conjecture for $\lat=\Z^n$ and
$K={\rm conv}(\vec{0},a_1\vec{e}_1,\dots,a_n\vec{e}_n)$, $a_i > 0 ~\forall~i
\in [n]$, i.e., with respect to $\Z^n$ and any axis parallel scaling of a
simplex (notice that to prove the conjecture one can restrict to $\lat=\Z^n$,
only allowing $K$ to vary). Encouragingly, the main class of worst-case examples
for Khinchine's flatness theorem, which are derived from the natural ``uniform'' probability 
distribution on lattices are ``easy'' from the perspective of Theorem~\ref{thm:klgeneral}.
In particular, a classical result of Rogers~\cite[Theorem 2]{Rogers1958}, which
gives very precise volumetric estimates on the covering radius of a convex body
with respect to a random lattice, directly shows that for these lattices the
right-hand side can be made $O(1)$ using $W = \R^n$. 

If the conjecture holds and a subspace achieving the conjectured bound can be
computed in at most $O(\log n)^{O(n)}$-time, this would imply an $O(\log
n)^{O(n)}$-time algorithm for IP~\cite{thesis/D12}. Lastly, it was also shown
in~\cite{DM13} that a nearly exact minimizer for~\eqref{kl-bnd} can in fact be
computed. However, the algorithm requires $n^{O(n^2)}$ time.

In this paper we focus on the important special case $K=\Ball_2^n$ of this
conjecture, as in Definition~\ref{conj:kl-l2}. 
This case already seems to
capture many of the main difficulties of the general form of the conjecture.
Moreover, in the study of the geometry of numbers, it has often been the case
that if one can prove the desired estimate for $\ell_2$, then using the
appropriate (often non-trivial) convex geometric tools it can be lifted to
general convex bodies.

We remark that it is entirely possible that our main conjecture in fact implies
the \emph{general} KL conjecture. The main reason our techniques are currently
limited to the $\ell_2$ setting is that the objective we use in the convex
program may become (slightly) non-convex under norms other than $\ell_2$. This
technicality does not however seem like a fundamental problem, and we imagine
that it can be circumvented. As evidence that this should be possible,
we note that the $M$-$M^*$ estimate, from which the bound on Khinchine's Flatness
theorem is derived, is also the solution to a general norm optimization problem over
Gaussian covariances. 

\subsection{Proof outline}
\label{sec:proofoutline}

We now give a high level overview of the proof. 
The first step of the proof, appearing in Section~\ref{sec:smoothmubound}, is to formulate a convex relaxation $\mu_{\textnormal{sm}}$ of the
covering radius $\mu$. This convex relaxation is quite natural, and can be
described as measuring the ``most efficient'' way to smooth a lattice using 
an ellipsoidal Gaussian. We will explore it further in Section~\ref{sec:musmoothtight}.
Next, we use our main conjecture to arrive at a further convex relaxation, 
which we dub $\mu_{\textnormal{det}}$. This is done in Section~\ref{sec:determinantalbound}.
The final and arguably most interesting part of the proof is to round 
the dual formulation of $\mu_{\textnormal{det}}$. This is done in 
Section~\ref{sec:subspacerounding}. 

Since the proof of the theorem is just a combination of theorems appearing
in the following subsections, we already include it here. See below for the 
definitions of the various quantities. 

\begin{proof}[Proof of Theorem~\ref{thm:mink-to-kl}]
Let $\lat \subset \R^n$ be an $n$-dimensional lattice. By
Theorems~\ref{thm:smooth-mu-bnd},~\ref{thm:det-mu-bnd},~\ref{thm:det-mu-dual}
and~\ref{thm:subspace-rounding} below, we have that
\begin{align*}
\mu(\lat) &\leq \sqrt{16/\pi} ~\mu_{{\rm sm}}(\lat) 
           \leq \sqrt{16/\pi} C_\eta(n) ~\mu_{{\rm det}}(\lat)
           = \sqrt{16/\pi} C_\eta(n) ~\mu_{{\rm det}}^{{\rm dual}}(\lat) \\
          &\leq  \sqrt{384/\pi}(\log_2 n + 1)C_\eta(n) 
\max_{\substack{W \textnormal{ lattice subspace of } \lat^* \\ d=\dim(W) \in [n]}} 
\frac{\sqrt{d}}{\det(\lat^* \cap W)^{1/d}} \text{ ,}
\end{align*}
as needed.
\end{proof}

\subsection{Smooth \texorpdfstring{$\mu$}{mu} bound}
\label{sec:smoothmubound}

The first step in the proof is to bound the covering radius by a convex relaxation we
call $\mu_{\rm{sm}}$. The fact that the covering radius is at most $\sqrt{n}$ times 
the smoothing parameter is standard by now (it is already implicit in~\cite{Bana93}), and the theorem can be seen
as a slight extension of this standard fact to non-spherical Gaussians. One 
interesting aspect of the statement, though, is that the resulting minimization problem
is convex. \dnote{mention that this is a different proof, that only goes through
$\ell_1$ smoothing. see my email for an explanation as to why its different.}

\begin{theorem}[Smooth $\mu$ bound]
\label{thm:smooth-mu-bnd}
For an $n$-dimensional lattice $\lat \subset \R^n$, let
\begin{equation}
\label{eq:smooth-mu}
\mu_{\rm{sm}}(\lat)^2 = \min \set[\Big]{\tr(A) : A \succeq 0, \sum_{\vec{y} \in \lat^*
\setminuszero} e^{-\pi \vec{y}^\T A \vec{y}} \leq 1/2} \text{.}
\end{equation}  
Then $\mu(\lat) \leq 4 \pi^{-1/2} \mu_{{\rm sm}}(\lat)$. Furthermore, the
program defining $\mu_{{\rm sm}}(\lat)$ is convex.
\end{theorem}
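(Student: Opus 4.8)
The plan is to establish two things: first, that the minimization problem defining $\mu_{\rm sm}(\lat)^2$ is convex, and second, that $\mu(\lat) \leq 4\pi^{-1/2}\mu_{\rm sm}(\lat)$. For convexity, the objective $\tr(A)$ is linear in $A$ and the domain $\PSD^n$ is a convex cone, so the only issue is the single constraint $\sum_{\vec y \in \lat^* \setminuszero} e^{-\pi \vec y^\T A \vec y} \leq 1/2$. Here I would observe that for a \emph{fixed} nonzero $\vec y$, the map $A \mapsto \vec y^\T A \vec y$ is linear in $A$, hence $A \mapsto e^{-\pi \vec y^\T A \vec y}$ is a convex function of $A$ (composition of the convex decreasing exponential with a linear function, taking the value $e^{-\pi \vec y^\T A^+ \vec y}$ on the boundary of the cone, which is the continuous extension noted in the preliminaries). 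A countable sum of convex functions is convex, so the sublevel set $\{A : \sum_{\vec y} e^{-\pi \vec y^\T A \vec y} \leq 1/2\}$ is convex, and its intersection with $\PSD^n$ is the feasible region. The one subtlety to check is continuity/lower-semicontinuity of $\rho$ on all of $\PSD^n$ so that the minimum is attained; this is exactly the remark made after the definition of $\rho_X$, so I would just invoke it.

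For the covering radius bound, the idea is that a Gaussian that smooths the lattice must ``reach'' every point of space. Let $A$ be feasible for the program. Consider the lattice $\lat' = \sqrt{2}\,\lat$ (scaling so that the smoothing condition becomes exactly the $\eta_{1/2}$-type condition for the right Gaussian), or more directly work with $\lat$ and the Gaussian $X \sim N(0, \tfrac{1}{2\pi}(2A))$ — I would pick the scaling to match Claim~\ref{clm:smoothing}. Applying Claim~\ref{clm:smoothing} (with $2A$ in place of $A$, since its hypothesis wants $\rho_{A^{-1}}(\lat^*\setminuszero)\le \eps$ and we have $\rho_{(2A)^{-1}}(\lat^*\setminuszero) = \sum e^{-\pi \vec y^\T (2A)^{-1}\vec y}$... let me instead just scale: by Lemma~\ref{lem:mass-decrease} part 1, if $\rho_A(\lat^*\setminuszero)\le 1/2$ then $\rho_{A/t^2}(\lat^*\setminuszero)\le 2^{-t^2}$, so for modest $t$ we drive the Gaussian sum well below $1/2$), we get that $X \bmod \lat$ is within total variation distance $< 1/2$ of uniform. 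In particular $\Pr[\dist(X \bmod \lat, \lat) \geq \mu(\lat)/2]$ cannot be close to $1$; but Claim~\ref{clm:gmrcovering} says that for a \emph{uniform} point this probability is $\geq 1/2$, so the TV bound forces $\Pr[\|X'\|_2 \geq \mu(\lat)/2] > 0$ for some translate... — more cleanly: since the total variation distance is less than $1/2$, there must exist a point $\vec t$ in the support of $X \bmod \lat$ (i.e., a suitable coset representative) with $\dist(\vec t, \lat) \geq \mu(\lat)/2 - o(1)$, and such a representative has $\|\vec t\|_2 \geq \mu(\lat)/2$, forcing $\Pr[\|X\|_2 \gtrsim \mu(\lat)]$ to be bounded away from zero, which bounds $\E\|X\|_2^2 = \tr(\tfrac{1}{2\pi}(2A)) \cdot (\text{const})$ from below by $\Omega(\mu(\lat)^2)$. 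Chasing the constants (the factor $\sqrt{2}$ from rescaling, the $\tfrac{1}{2\pi}$ in the covariance, and the constant from the tail argument) should yield $\tr(A) \geq \tfrac{\pi}{16}\mu(\lat)^2$, i.e., $\mu(\lat) \leq 4\pi^{-1/2}\sqrt{\tr(A)}$; minimizing over feasible $A$ gives the claim.

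The main obstacle I anticipate is getting the constant $4\pi^{-1/2}$ exactly right, since it requires carefully combining (a) the rescaling needed to apply Claim~\ref{clm:smoothing} or Lemma~\ref{lem:mass-decrease}, (b) the $\tfrac{1}{2\pi}$-covariance normalization in the definition of $N(0,A)$, and (c) a quantitative version of the statement ``a distribution within TV distance $<1/2$ of uniform on the torus must place noticeable mass on points far from the lattice.'' For (c) the right tool is precisely Claim~\ref{clm:gmrcovering}: the uniform distribution has $\Pr[\dist(\cdot,\lat)\ge \mu/2]\ge 1/2$, so any distribution within TV distance strictly less than $1/2$ of uniform has positive probability of landing at distance $\ge \mu/2$ from $\lat$, hence the Gaussian $X$ (which projects to that distribution mod $\lat$) satisfies $\Pr[\,\exists\ \vec v \in \lat: \|X - \vec v\|_2 \ge \mu(\lat)/2\,] > 0$ — and since $\|X - \vec v\|_2 \ge \dist(X \bmod \lat, \lat)$ we actually want the reverse inequality on norms; the correct phrasing is that the coset of $X$ is at distance $\ge \mu/2$ with positive probability, so $\|X\|_2 \ge \mu/2$ minus a lattice vector... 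The cleanest route, which I would adopt, is: let $\vec t$ be a point achieving $\dist(\vec t, \lat) = \mu(\lat)$ (a deep hole), note $X + \vec t \bmod \lat$ is still within TV $<1/2$ of uniform, apply Claim~\ref{clm:gmrcovering}-style reasoning to conclude $\Pr[\dist(X + \vec t, \lat) \ge \mu(\lat)/2] > 0$, hence with positive probability $\|X + \vec t - \vec v\|_2 \ge \mu(\lat)/2$ for all $\vec v \in \lat$, in particular $\|X + \vec t - \vec v_0\|_2 \ge \mu(\lat)/2$ where $\vec v_0$ is a closest lattice vector to $\vec t$ — but $\|X\|_2 \ge \|X + \vec t - \vec v_0\|_2 - \|\vec t - \vec v_0\|_2$ can be negative, so this needs the triangle inequality used in the other direction together with a union bound over the event that $\|X\|_2$ is small. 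I expect the author handles this with a direct second-moment computation; I would follow the same and simply verify the resulting constant.
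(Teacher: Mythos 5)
Your convexity argument is correct and matches the paper's exactly.

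For the covering-radius bound, you have assembled all the right ingredients — Claim~\ref{clm:smoothing} for the total variation bound $\Delta(X \bmod \lat, U) \le 1/4$ (with no rescaling needed: the feasibility constraint $\sum_{\vec{y}}e^{-\pi \vec{y}^\T A\vec{y}} \le 1/2$ is precisely the hypothesis of Claim~\ref{clm:smoothing} with $\eps=1/2$, so the agonizing over $2A$ versus $A$ and Lemma~\ref{lem:mass-decrease} is a red herring), Claim~\ref{clm:gmrcovering} for the spread of the uniform coset, and the second moment $\E\|X\|_2^2 = \tfrac{1}{2\pi}\tr A$ — but you never close the argument, and the detour through a deep hole $\vec{t}$ creates the very obstacle you then cannot get past. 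The single observation you are missing is the trivial fact that $\dist(X \bmod \lat, \lat) \le \|X\|_2$ because $\vec{0} \in \lat$; you flirt with this (writing $\|X - \vec{v}\|_2 \ge \dist(X \bmod \lat, \lat)$, then noting you ``want the reverse inequality'') but then abandon it and introduce the unnecessary translate. Once you have that observation, the argument closes cleanly in either direction. Your route: TV plus Claim~\ref{clm:gmrcovering} give $\Pr[\dist(X \bmod \lat, \lat) \ge \mu/2] \ge 1/2 - 1/4 = 1/4$, hence $\Pr[\|X\|_2 \ge \mu/2] \ge 1/4$, hence $\tfrac{1}{2\pi}\tr A = \E\|X\|_2^2 \ge \tfrac{1}{4}(\mu/2)^2$; no translate, no triangle inequality, no union bound. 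The paper's route goes the other way: Markov gives $\Pr[\|X\|_2^2 \ge \tfrac{4}{\pi}\tr A] \le 1/8$, hence $\Pr[\dist(X \bmod \lat, \lat) \le \sqrt{4\tr(A)/\pi}\,] \ge 7/8$, hence after subtracting the TV gap $\Pr_U[\dist \le \sqrt{4\tr(A)/\pi}\,] \ge 5/8 > 1/2$, which contradicts Claim~\ref{clm:gmrcovering} unless $\sqrt{4\tr(A)/\pi} \ge \mu/2$. Both give $\mu \le 4\pi^{-1/2}\sqrt{\tr A}$ (yours would in fact give the slightly better $\sqrt{8/\pi}$). So the plan is salvageable and contains the correct ingredients, but as written it does not constitute a proof: the decisive step linking the coset distance to $\|X\|_2$ is never committed to, and your ``cleanest route'' via the deep hole is both unnecessary and, as you yourself notice, broken.
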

\begin{proof}
To see that the program defined by~\eqref{eq:smooth-mu} is convex in $A$, note
first that the objective $\tr(A)$ is linear in $A$. Second, the single
constraint $\sum_{\vec{y} \in \lat^* \setminuszero} e^{-\pi \vec{y}^\T A
\vec{y}} \leq 1/2$ is convex since $e^x$ is convex in $x$ and the functions $-\pi
\vec{y}^\T A \vec{y}$, for $\vec{y} \in \lat^*$, are linear in $A$. 

Let $A$ be any valid solution to~\eqref{eq:smooth-mu}, and let 
$X \sim N(0,\frac{1}{2\pi} A)$. 
By Markov's inequality, 
\begin{equation*}
\Pr\bracks[\Big]{\dist(X,\lat)^2 \geq \frac{4}{\pi} \tr{A}} \leq \Pr\bracks[\Big]{\|X\|_2^2 \geq \frac{4}{\pi} \tr{A}} \leq
\frac{\E[\|X\|_2^2]}{\frac{4}{\pi} \tr{A}} = 
\frac{\frac{1}{2\pi} \tr{A}}{\frac{4}{\pi} \tr{A}} = 1/8 \text{ .}
\end{equation*}
Let $\vec z$ be a uniform point in $\R^n / \lat$. 
By Claim~\ref{clm:smoothing} with $\eps=1/2$, with probability at least $7/8 - 1/4 = 5/8 > 1/2$, 
$\vec z$ is within distance $(\frac{4}{\pi} \tr{A})^{1/2}$ of $\lat$.
Using Claim~\ref{clm:gmrcovering}, we obtain 
$\mu(\lat) \le 2(\frac{4}{\pi} \tr{A})^{1/2}$ as desired. 
\end{proof}

\begin{remark}
Observe that the only property of $A$ used in the proof above is that 
$\Delta(X \bmod \lat, U) \leq 1/4$ where $X \sim N(0,\frac{1}{2\pi} A)$.
As a result, for any such $A$ we have that $\mu(\lat) \le 4 \pi^{-1/2} \sqrt{\tr(A)}$.
\end{remark}

\subsection{Determinantal \texorpdfstring{$\mu$}{mu} bound and its dual}
\label{sec:determinantalbound}

\begin{theorem}[Determinantal $\mu$ bound]
\label{thm:det-mu-bnd}
For an $n$-dimensional lattice $\lat \subset \R^n$, let
\begin{equation}
\label{eq:det-mu}
\mu_{\textnormal{det}}(\lat)^2 = 
\min \set[\Big]{\tr(A) : A \succeq 0, \det_W(A) \geq
\frac{1}{\det(\lat^* \cap W)^2}, \forall~ W \neq \set{\vec{0}}
\textnormal{ lattice subspace of } \lat^*}
 \text{.}
\end{equation}  
Then, the program defining $\mu_{{\rm det}}(\lat)$ is convex, and  
\[ 
\mu_{{\rm det}}(\lat)/2 \leq \mu_{{\rm sm}}(\lat) \leq C_\eta(n) \mu_{{\rm det}}(\lat) \; .
\]
\end{theorem}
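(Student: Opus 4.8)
The plan is to prove the three assertions bundled in the statement --- convexity of the program, the lower bound $\mu_{\rm det}(\lat)/2 \le \mu_{\rm sm}(\lat)$, and the upper bound $\mu_{\rm sm}(\lat) \le C_\eta(n)\,\mu_{\rm det}(\lat)$ --- in that order. The first two are soft; the third is the only place the main conjecture enters, and its mechanism is that after the linear change of variables $\lat \mapsto A^{-1/2}\lat$ the defining constraints of $\mu_{\rm det}$ become \emph{exactly} the hypothesis of Conjecture~\ref{con:maineta} for the dual lattice (``all sublattice determinants are at least $1$''), so the conjectural bound evaluates to $C_\eta(n)\cdot 1$. I do not expect any individual step to be genuinely hard; the only point needing care is this identification, together with the routine bookkeeping with projected determinants via Claim~\ref{clm:detlattransform}. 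In particular no rounding or ``uncrossing'' is needed here --- that enters only later, when passing to the dual of $\mu_{\rm det}$.

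\emph{Convexity.} The objective $\tr(A)$ is linear. For a fixed lattice subspace $W$ of $\lat^*$ with $d=\dim W$, the constraint $\det_W(A)\ge \det(\lat^*\cap W)^{-2}$ is equivalent to $\det_W(A)^{1/d}\ge \det(\lat^*\cap W)^{-2/d}$, a superlevel set of $A\mapsto \det_W(A)^{1/d}$, which is concave on $\PSD^n$ by Lemma~\ref{lem:proj-det-props}(2); hence each such constraint cuts out a convex set. Intersecting these over all lattice subspaces $W\ne\set{\vec 0}$ and with $\PSD^n$ keeps the feasible region convex. It is nonempty --- a large enough multiple of $I$ works, since $\det(\lat^*\cap W)$ is bounded below by a positive constant over all $W$ (e.g.\ via Minkowski's first theorem) --- and the objective is coercive on $\PSD^n$, so the minimum is attained.

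\emph{Lower bound.} I would show that for every $A\succeq 0$ feasible for $\mu_{\rm sm}(\lat)$ and every $\delta>0$, the matrix $4(A+\delta I)$ is feasible for $\mu_{\rm det}(\lat)$; taking the infimum over $A$ and then $\delta\to 0^+$ gives $\mu_{\rm det}(\lat)^2\le 4\mu_{\rm sm}(\lat)^2$. The one real ingredient is a consequence of Poisson summation: for any rank-$d$ lattice $L$ (viewed inside its span), $\sum_{\vec z\in L}e^{-\pi\|\vec z\|_2^2}=\det(L)^{-1}\sum_{\vec z\in L^*}e^{-\pi\|\vec z\|_2^2}\ge \det(L)^{-1}$. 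Fix a lattice subspace $W$ of $\lat^*$, put $M:=\lat^*\cap W$ and $A_\delta:=A+\delta I\succ 0$. Since $M\setminuszero\subseteq \lat^*\setminuszero$ and $A_\delta^{1/2}$ is invertible,
\begin{align*}
\sum_{\vec z\in A_\delta^{1/2}M\setminuszero}e^{-\pi\|\vec z\|_2^2}
&=\sum_{\vec y\in M\setminuszero}e^{-\pi\vec y^\T A_\delta\vec y}
\le \sum_{\vec y\in\lat^*\setminuszero}e^{-\pi\vec y^\T A_\delta\vec y}\\
&\le \sum_{\vec y\in\lat^*\setminuszero}e^{-\pi\vec y^\T A\vec y}\le\tfrac12 ,
\end{align*}
so $\sum_{\vec z\in A_\delta^{1/2}M}e^{-\pi\|\vec z\|_2^2}\le 3/2$ and therefore $\det(A_\delta^{1/2}M)\ge 2/3$. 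By Claim~\ref{clm:detlattransform}, $\det(A_\delta^{1/2}M)=\det_W(A_\delta)^{1/2}\det(\lat^*\cap W)$, and by Lemma~\ref{lem:proj-det-props}(1), $\det_W(4A_\delta)=4^d\det_W(A_\delta)$; combining, $\det_W(4A_\delta)\ge 4^d(2/3)^2\det(\lat^*\cap W)^{-2}\ge \det(\lat^*\cap W)^{-2}$ for every $d\ge 1$, which is precisely the $\mu_{\rm det}$-constraint at $W$. As $W$ was arbitrary, $4A_\delta$ is feasible for $\mu_{\rm det}(\lat)$.

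\emph{Upper bound.} Let $A$ be an optimal solution of $\mu_{\rm det}(\lat)$, so $\tr(A)=\mu_{\rm det}(\lat)^2$; taking $W={\rm span}(\lat^*)=\R^n$ in the constraints forces $\det(A)>0$, hence $A\succ 0$. Apply Conjecture~\ref{con:maineta} to the $n$-dimensional lattice $\Gamma:=A^{-1/2}\lat$, whose dual is $\Gamma^*=A^{1/2}\lat^*$. The lattice subspaces of $\Gamma^*$ are exactly the spaces $A^{1/2}V$ with $V$ a lattice subspace of $\lat^*$, and $\Gamma^*\cap A^{1/2}V=A^{1/2}(\lat^*\cap V)$, so by Claim~\ref{clm:detlattransform}, $\det(\Gamma^*\cap A^{1/2}V)=\det_V(A)^{1/2}\det(\lat^*\cap V)\ge 1$, the inequality being a restatement of feasibility of $A$. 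Hence every term in the maximum on the right-hand side of~\eqref{eq:finalgoal} applied to $\Gamma$ is at most $1$, and the definition of $C_\eta(n)$ gives $\eta(\Gamma)\le C_\eta(n)$. Finally, by the definition of the smoothing parameter, $\sum_{\vec y\in\Gamma^*\setminuszero}e^{-\pi\eta(\Gamma)^2\|\vec y\|_2^2}=1/2$; writing $\vec y=A^{1/2}\vec z$ with $\vec z\in\lat^*$ turns this into $\sum_{\vec z\in\lat^*\setminuszero}e^{-\pi\,\vec z^\T(\eta(\Gamma)^2A)\vec z}\le\tfrac12$, i.e.\ $\eta(\Gamma)^2A$ is feasible for $\mu_{\rm sm}(\lat)$. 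Therefore $\mu_{\rm sm}(\lat)^2\le \eta(\Gamma)^2\tr(A)\le C_\eta(n)^2\,\mu_{\rm det}(\lat)^2$, which finishes the argument.
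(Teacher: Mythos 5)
Your proposal is correct and follows essentially the same route as the paper's proof: convexity from concavity of $\det_W(\cdot)^{1/\dim W}$, the lower bound by showing $4A$ (up to a vanishing perturbation) is $\mu_{\rm det}$-feasible whenever $A$ is $\mu_{\rm sm}$-feasible, and the upper bound by recognizing that the $\mu_{\rm det}$-constraints on $A$ are exactly the hypothesis of Conjecture~\ref{con:maineta} for the lattice $A^{-1/2}\lat$. The one place you deviate is the lower bound, where you apply the bare Poisson summation inequality $\rho(L)\ge\det(L)^{-1}$ to the sublattice $A_\delta^{1/2}(\lat^*\cap W)$ after a $\delta$-regularization to make $A$ invertible, while the paper instead invokes the coset Gaussian-mass lemma (Lemma~\ref{lem:coset-mass}) and works with pseudoinverses and matrix slices; your variant is a bit more elementary and avoids tracking the identities $\det_W((A^+)^{\cap W})=\det_W((A^{\da W})^+)=\det_W(A)^{-1}$, at the price of the (harmless) $\delta\to 0^+$ limit.
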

\begin{proof}
To see that the program is convex, note first the objective $\tr(A)$ is linear
in $A$. Secondly, each constraint $\det_W(A) \geq \det(\lat^* \cap
W)^{-2}$ is convex by concavity of $\det_W(\cdot)^{1/\dim(W)}$ (see
Lemma~\ref{lem:proj-det-props}).

\noindent We now prove the inequalities relating $\mu_{{\rm det}}$ and $\mu_{{\rm sm}}$.

\paragraph{Step 1: $\mu_{{\rm det}}(\lat)/2 \leq \mu_{{\rm sm}}(\lat)$:} Assume that $A$
is a valid solution for $\mu_{{\rm sm}}(\lat)$. We will show that $4A$ is valid
for $\mu_{{\rm det}}(\lat)$, which clearly suffices to prove the claim.  

For any lattice subspace $W$ of $\lat^*$,
\begin{equation}
\label{eq:det-mu-bnd-1}
\rho_{A^+}(\lat^* \cap W) \leq \rho_{A^+}(\lat^*) = 1 + \rho_{A^+}(\lat^*
\setminuszero) \leq 3/2 \leq 2 \text{ .}
\end{equation}
By Lemma~\ref{lem:coset-mass},
\begin{equation}
\label{eq:det-mu-bnd-2}
\begin{split}
\rho_{A^+}(\lat^* \cap W) 
&= \frac{\det_W((A^+)^{\cap W})^{1/2}}{\det(\lat^* \cap W)} 
   \rho_{A^{\da W}}((\lat^* \cap W)^*)  \\
& \geq \frac{\det_W((A^+)^{\cap W})^{1/2}}{\det(\lat^* \cap W)} 
= \frac{\det_W((A^{\da W})^+)^{1/2}}{\det(\lat^* \cap W)} 
= \frac{1}{\det_W(A)^{1/2}\det(\lat^* \cap W)} \; .
\end{split}
\end{equation}
Combining~\eqref{eq:det-mu-bnd-1},\eqref{eq:det-mu-bnd-2} and rearranging, we
get
\[
\det_W(A) \geq \frac{1}{4 \det(\lat^* \cap W)^2} \text{ .}
\]
From here, note that 
\[
\det_W(4A) = 4^{\dim(W)} \det_W(A) \geq 4 \det_W(A) \geq \frac{1}{\det(\lat^*
\cap W)^2} \text{ .}
\]
Since the above holds for all non-zero lattice subspaces $W$ of $\lat^*$, $4A$
is a solution to $\mu_{{\rm det}}(\lat)$ as needed. 

\paragraph{Step 2: $\mu_{{\rm sm}}(\lat) \leq C_{\eta}(n) \mu_{{\rm det}}(\lat)$:}~
Assume that $A$ is a valid solution for $\mu_{{\rm det}}(\lat)$. We will show
that $C_{\eta}(n)^2 A$ is valid for $\mu_{{\rm sm}}(\lat)$, which clearly
suffices to prove the claim. 

To show this, we must prove that 
\[
\rho_{(C_\eta(n)^2 A)^+}(\lat^* \setminuszero) \leq 1/2 \text{ ,}
\]
which is equivalent to asking for $\eta((C_\eta(n) A^{1/2} \lat^*)^*) \leq 1$.
By the definition of $C_\eta(n)$ in~\eqref{eq:finalgoal},
\begin{align*}
\eta((C_\eta(n) A^{1/2} \lat^*)^*) 
&\leq C_\eta(n) \max_{W \neq \set{\vec{0}} \textnormal{ lattice subspace of } A^{1/2} \lat^*} 
                \frac{1}{\det((C_\eta(n) A^{1/2}\lat^*) \cap W)^{1/\dim(W)}} \\
&= \max_{W \neq \set{\vec{0}} \textnormal{ lattice subspace of } A^{1/2} \lat^*} 
                \frac{1}{\det((A^{1/2}\lat^*) \cap W)^{1/\dim(W)}} \\
&= \max_{W \neq \set{\vec{0}} \textnormal{ lattice subspace of } \lat^*} 
                \frac{1}{\det(A^{1/2} (\lat^* \cap W))^{1/\dim(W)}} \\
&= \max_{W \neq \set{\vec{0}} \textnormal{ lattice subspace of } \lat^*} 
                \frac{1}{(\det_W(A)^{1/2}\det(\lat^* \cap W))^{1/\dim(W)}}
    \leq 1 \text{ ,}
\end{align*}
where the last equality follows by Claim~\ref{clm:detlattransform}.
\end{proof}

We now write down the dual of $\mu_{\textnormal{det}}$ and prove that strong
duality holds. The main delicate point is to show that only a finite number of
the constraints of $\mu_{\textnormal{det}}$ are relevant. The rest is a standard
analysis of the KKT conditions. 

\begin{theorem}[Dual of determinantal $\mu$ bound]
\label{thm:det-mu-dual}
For an $n$-dimensional lattice $\lat \subset \R^n$, let
\begin{equation}
\label{eq:det-mu-dual}
\begin{split}
\mu^{{\rm dual}}_{\textnormal{det}}(\lat)^2 
= 
\textit{maximize} ~ &\sum_{i=1}^m \frac{d_i \det_{W_i}(X_i)^{1/d_i}}{\det(\lat^*
\cap W_i)^{2/d_i}} \\ 
\textit{subject to~}
& \sum_{i=1}^m X_i \preceq I \\ 
&W_i \neq \set{\vec{0}} \textnormal{ lattice subspace of } \lat^*, d_i = \dim(W_i), i \in [m] \\
& X_i \succeq 0,~\ran(X_i) = W_i, i \in [m] \\
& m \in \N \text{.}
\end{split}
\end{equation}
Then $\mu_{{\rm det}}(\lat) = \mu^{{\rm dual}}_{{\rm det}}(\lat)$.
\end{theorem}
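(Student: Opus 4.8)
The plan is to derive the dual program from the primal $\mu_{\textnormal{det}}(\lat)^2 = \min\{\tr(A) : A \succeq 0,\ \det_W(A) \geq \det(\lat^*\cap W)^{-2}\ \forall W\}$ by the standard Lagrangian/conic-duality route, but first I must reduce to a \emph{finite} convex program so that strong duality and the KKT conditions apply cleanly. First I would show that in the primal one may restrict attention to finitely many lattice subspaces $W$. The point is that any feasible $A$ of bounded trace — in particular an optimal one, whose trace we may assume is at most some explicit $R$ (e.g.\ via Theorem~\ref{thm:smoothing-bnd} applied through Theorems~\ref{thm:smooth-mu-bnd} and~\ref{thm:det-mu-bnd}, giving $\tr(A) = O(n)\mu(\lat)^2$) — has $\det_W(A) \leq \|A\|^{\dim W} \leq R^{\dim W}$ for every $W$, so the constraint indexed by $W$ is only active when $\det(\lat^*\cap W) \leq R^{\dim(W)/2}$; by Minkowski's second theorem (Theorem~\ref{thm:mink-sec}) and a standard counting argument there are only finitely many primitive sublattices of $\lat^*$ with determinant below any fixed bound in each dimension. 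Hence the feasible region of the primal is cut out by finitely many of the constraints $g_W(A) := \det(\lat^*\cap W)^{-2} - \det_W(A) \leq 0$ once we also add the harmless box constraint $A \preceq R\cdot I$, which does not change the optimum.

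Next I would set up the Lagrangian for this finite convex program. By Lemma~\ref{lem:proj-det-props}(1), $\det_W(\cdot)^{1/\dim W}$ is concave, so each $g_W$ is convex and Slater's condition holds (take $A$ a large multiple of the identity), giving strong duality. Introduce a PSD dual variable $Z \succeq 0$ for $A \preceq R I$ and scalars $\lambda_W \geq 0$ for the subspace constraints; the Lagrangian is $\tr(A) + \sum_W \lambda_W(\det(\lat^*\cap W)^{-2} - \det_W(A)) + \tr(Z(A - RI))$. The KKT stationarity condition in $A$ reads $I - \sum_W \lambda_W \nabla_A \det_W(A) + Z = 0$, and here I would use the gradient formula Lemma~\ref{lem:proj-det-props}(3): $\nabla_A \det_W(A) = \det_W(A)\,(A^{\da W})^+$ (when $W\cap\ker A = \{\vec 0\}$, which holds at optimum for the active subspaces). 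Writing $X_W := \lambda_W \det_W(A)\,(A^{\da W})^+ / $ (a suitable scalar), this $X_W$ is PSD with $\ran(X_W) = W$, and stationarity becomes $\sum_W X_W = I + Z \succeq I$, i.e.\ after rescaling $A \mapsto A/\tr(A)$ to normalize, exactly the constraint $\sum_i X_i \preceq I$ of~\eqref{eq:det-mu-dual}. Then I would plug the KKT relations back into the Lagrangian: complementary slackness forces the active constraints to hold with equality and $\tr(ZA) = R\tr(Z)$ contributes nothing after normalization, so the dual objective collapses to $\sum_W \lambda_W \det(\lat^*\cap W)^{-2}$, and using complementary slackness $\det_W(A) = \det(\lat^*\cap W)^{-2}$ together with the homogeneity identity $\det_W(tX) = t^{\dim W}\det_W(X)$ (Lemma~\ref{lem:proj-det-props}(1)) and part~(4) to rewrite everything in terms of $X_W$, this equals $\sum_W d_W \det_{W}(X_W)^{1/d_W} \det(\lat^*\cap W)^{-2/d_W}$ — matching~\eqref{eq:det-mu-dual}. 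I would also note that any candidate dual $(X_i, W_i)$ can be lifted to a feasible Lagrange configuration, giving the reverse inequality $\mu^{\rm dual}_{\rm det} \leq \mu_{\rm det}$ directly via weak duality (multiply the primal constraint by $X_i$, sum, use $\tr(A X_i) \geq d_i \det_{W_i}(A)^{1/d_i}\det_{W_i}(X_i)^{1/d_i}$ from the arithmetic–geometric / trace–determinant inequality in Lemma~\ref{lem:proj-det-props}(2), and $\sum_i X_i \preceq I$).

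The main obstacle I expect is the finiteness reduction together with the degenerate-rank bookkeeping: one must argue carefully that (a) only finitely many subspace constraints can ever be active, handling the fact that a priori $A$ could be rank-deficient so $\det_W(A)$ could vanish and the gradient formula Lemma~\ref{lem:proj-det-props}(3) would not apply — this is resolved by showing an optimal $A$ must be positive definite on every active $W$, since otherwise the corresponding constraint $\det_W(A) \geq \det(\lat^*\cap W)^{-2} > 0$ is violated — and (b) the PSD box constraint and its multiplier $Z$ do not corrupt the objective, which is where the normalization by $\tr(A)$ and complementary slackness must be invoked exactly right. The weak-duality direction and the concavity/convexity verifications are routine given the lemmas already in hand; the subtlety is entirely in justifying that the supremum over $m \in \N$ and over all lattice subspaces in~\eqref{eq:det-mu-dual} is actually attained by (and equals) the dual of the reduced finite program.
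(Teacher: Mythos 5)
Your overall strategy matches the paper's: bound $\tr(A)$ at the optimum via the smoothing-parameter estimate, argue via Minkowski's second theorem that only finitely many subspace constraints can ever bind, and then read off a dual solution from the first-order optimality condition of the reduced finite convex program using the gradient formula from Lemma~\ref{lem:proj-det-props}. The weak-duality direction is also the same as the paper's. So the plan is structurally correct.

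There is, however, a genuine gap in the way you treat the compactifying constraint, and it is the very point you flag as the likely obstacle. You add $A \preceq RI$ with a PSD multiplier $Z \succeq 0$, and stationarity then gives $\sum_W X_W = I + Z$. Since $Z \succeq 0$ this is $\sum_W X_W \succeq I$, which points the \emph{wrong way} relative to the dual feasibility $\sum_i X_i \preceq I$ in~\eqref{eq:det-mu-dual}. The claimed fix ``after rescaling $A \mapsto A/\tr(A)$ to normalize'' does nothing here: rescaling the primal variable cannot flip a Loewner inequality on the dual variables, and in any case that rescaling does not even leave the (reduced) primal program invariant. What you actually need is $Z = 0$, i.e.\ the auxiliary constraint must be \emph{strictly slack} at the optimum so that complementary slackness kills its multiplier and you land exactly on $\sum_W X_W = I$ (which is then consistent with $\preceq I$). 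This is precisely the care the paper takes: it uses the scalar constraint $\tr(A) \leq 8n\eta^2(\lat)$ with slack (the optimum has $\tr(A) < 8n\eta^2(\lat)$), and additionally shows $\lambda_n(A) \geq \tau > 0$ so the constraint $A \succeq 0$ is also strictly inactive; then in the separation/cone argument (the paper's Claim~\ref{cl:kkt}) only the determinant constraints contribute, which forces $I \in \mathrm{cone}(X_1',\dots,X_m')$. Your proposal needs the analogous argument showing $R$ can be chosen so that $A \preceq RI$ is strictly slack and the multiplier for $A \succeq 0$ vanishes; without it the KKT system does not deliver a feasible dual point.

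A second, smaller issue: if you define $X_W = \lambda_W \det_W(A)\,(A^{\da W})^+$ directly from the Lagrange gradient, then using Lemma~\ref{lem:proj-det-props}(4) and complementary slackness one finds
\[
\frac{d_W \det_{W}(X_W)^{1/d_W}}{\det(\lat^*\cap W)^{2/d_W}} = d_W\, \lambda_W\, \det(\lat^*\cap W)^{-2},
\]
whereas your claimed ``collapsed'' dual objective is $\sum_W \lambda_W \det(\lat^*\cap W)^{-2}$ --- a factor $d_W$ off. This is only a bookkeeping error (the paper avoids it by the cleaner normalization $X_i' = (A^{\da W_i})^+ \det_{W_i}(A)^{1/d_i}$, which makes $\det_{W_i}(X_i')=1$), but it is symptomatic of the fact that the final identification of the dual objective needs to be carried out carefully; the paper's choice of normalization makes the identities $\tr(AX_i) = d_i \det_{W_i}(A)^{1/d_i}$ and then $\tr(A) = \sum_i \tr(AX_i)$ come out cleanly.
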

\begin{proof}~
\paragraph{Step 1: $\mu^{{\rm dual}}_{{\rm det}}(\lat) \leq \mu_{{\rm det}}(\lat)$:}
We first show weak-duality. Let $A$ be any solution to $\mu_{{\rm det}}(\lat)$,
and let $X_1,\dots,X_m \in \PSD^n$ with associated lattice subspaces
$W_1,\dots,W_m$, $d_i = \dim(W_i)$ for $i \in [m]$, be a solution to $\mu_{{\rm
det}}^{{\rm dual}}(\lat)$.

By assumption on $A$, for each $W_i$, $i \in [m]$, we have that
\begin{align*}
\frac{1}{\det(\lat \cap W_i)^{2/d_i}} 
&\leq \det_{W_i}(A)^{1/d_i} 
= \inf \set[\Big]{\frac{1}{d_i} \tr(AZ): Z \in \PSD^n, \ran(Z) = W_i, \det_{W_i}(Z)=1} \\
&\leq \frac{1}{d_i} \tr(A(X_i/\det_{W_i}(X_i)^{1/d_i})) \text{ ,}
\end{align*}
where the first equality follows from Lemma~\ref{lem:proj-det-props}.
Rearranging, we get that 
\[
\tr(A X_i) \geq \frac{d_i \det_{W_i}(X_i)^{1/d_i}}{\det(\lat^* \cap W_i)^{2/d_i}}
\text{.}
\]
Since $\sum_{i=1}^m X_i \preceq I_n$, 
\[
\tr(A) = \tr(A I_n) \geq \sum_{i=1}^m \tr(A X_i)
\geq \sum_{i=1}^m \frac{d_i \det_{W_i}(X_i)^{1/d_i}}{\det(\lat^* \cap
W_i)^{2/d_i}} \text{ .}
\]
Since the above holds for any pair of solutions of $\mu_{{\rm
det}}(\lat)$ and $\mu_{{\rm det}}^{{\rm dual}}(\lat)$, we conclude that $\mu_{{\rm
det}}(\lat) \geq \mu_{{\rm det}}^{{\rm dual}}(\lat)$ as needed.

\paragraph{Step 2: $\mu^{{\rm dual}}_{{\rm det}}(\lat) \geq \mu_{{\rm det}}(\lat)$:}
We now prove strong duality. 

To begin, we show that the solution space of $\mu_{{\rm det}}(\lat)$ can be
restricted to a compact subset of $\PSD^n$. To see this, note that $\eta^2(\lat)
I_n$ is by definition a solution to $\mu_{{\rm sm}}(\lat)$ and
hence by Lemma~\ref{thm:det-mu-bnd}, we have that $4\eta^2(\lat) I_n$ is a
solution to $\mu_{{\rm \det}}(\lat)$. Thus any optimal solution $A^*$ must
satisfy $\tr(A^*) \leq \tr(4\eta^2(\lat) I_n) < 8n \eta^2(\lat)$.\onote{Posterity note: We're taking $2n$ here and not satisfied with $n$
so that this constraint is not tight, as needed later} Since the set $C = \set{A \in
\PSD^n: \tr(A) \leq 8 n \eta^2(\lat)}$ is compact, the claim is proved.

Let $A \in C$ be a solution to $\mu_{{\rm det}}(\lat)$. Let $\lambda_1(A) \geq
\cdots \geq \lambda_n(A)$ denote the eigenvalues of $A$. By assumption on $A$,
\begin{align*}
\frac{1}{\det(\lat^*)^2} \leq \det(A) &= \prod_{i=1}^n \lambda_i(A) \\
& \leq \lambda_n(A) \parens[\big]{\sum_{i=1}^{n-1} \lambda_i(A)/(n-1)}^{n-1}  \\
& \leq \lambda_n(A) \tr(A)^{n-1} \leq \lambda_n(A) (8n \eta^2(\lat))^{n-1} \text{ .}
\end{align*}
Letting $\tau = ((8n \eta^2(\lat))^{n-1} \det(\lat^*)^2)^{-1}$, by the above, we
have that $\lambda_n(A) \geq \tau > 0$. 

Since for any lattice subspace $W \neq \set{\vec{0}}$ of $\lat^*$, we have that
\[
\det_W(A) \geq \lambda_n(A)^{\dim(W)} \geq \tau^{\dim(W)} \; ,
\]
the constraint $\det_W(A) \geq \frac{1}{\det(\lat^* \cap W)^2}$ can only be
tight if $\det(\lat^* \cap W) \leq \tau^{-\dim(W)/2}$.

Given the above, the program 
\begin{equation}
\label{eq:rest-det-prog}
\begin{split}
\textit{minimize~} &\tr(A)  \\
\textit{subject to~} &A \succeq 0 \\
     & \tr(A) \leq 8n \eta^2(\lat) \dnote{5/26: changed 2 to 8} \\
     & \det_W(A) \geq \frac{1}{\det(\lat^* \cap W)^{2}}, 
               ~~ \forall W \neq \set{\vec{0}} \textnormal{ lattice subspace of } \lat^* \\
              & \hspace{8.5em} \text{ satisfying } \det(\lat^* \cap W) \leq
\tau^{-\dim(W)/2} \text{ .}
\end{split}
\end{equation}
is equivalent to $\mu_{{\rm det}}(\lat)^2$.

We now show that the program~\eqref{eq:rest-det-prog} has only a finite number
of constraints. For this purpose, it suffices to show
\[
|\set{M \subseteq \lat: M \text{ sublattice of } \lat, \det(M) \leq
\tau^{-\dim(M)/2}}| < \infty \text{ .}
\] 
Since $\lat$ is discrete, it suffices to show that all such sublattices are
generated by vectors of $\lat$ of bounded norm. Let $M \subseteq \lat$ be a
$d$-dimensional sublattice of $\lat$. By Minkowski's second
Theorem~\ref{thm:mink-sec}, 
\[
\prod_{i=1}^d \lambda_i(M) \leq \frac{2^d}{\vol_d(\Ball_2^d)} \det(M)
\leq \frac{2^d}{\vol_d(\Ball_2^d)} \tau^{-d/2} \text{.}
\]
Rearranging the above,
\[
\lambda_d(M) \leq \frac{2^d}{\vol_d(\Ball_2^d)} \tau^{-d/2} \prod_{i=1}^{d-1}
\lambda_i(M)^{-1}
\leq \frac{2^d}{\vol_d(\Ball_2^d)} \tau^{-d/2} \lambda_1(\lat)^{-d+1} \text{ .}
\]
Let 
\[
\beta = \max_{d\in[n]} \frac{2^d}{\vol_d(\Ball_2^d)} \tau^{-d/2} 
\lambda_1(\lat)^{-d+1} \text{ .}
\]
By the above, any sublattice $M \subseteq \lat$ of determinant at most
$\tau^{-\dim(M)/2}$ has all successive minima bounded by $\beta$.  In
particular, all such sublattices are generated by vectors of length at most
$\sqrt{n} \beta < \infty$ (see for example~\cite[Lemma 7.1]{MG02}), and hence are finite in number.

Let $A$ denote an optimal solution to~\eqref{eq:rest-det-prog}, which exists
since the objective is continuous and the feasible region is non-empty and
compact. Since any optimal solution satisfies $\tr(A) < 8n \eta(\lat^*)^2$ and
$\lambda_n(A) \geq \tau > 0$, the only constraints that can be tight at $A$ are
the subspace determinant constraints. Let $W_1,\dots,W_m$, $d_i = \dim(W_i)$ for
$i \in [m]$, denote the lattice subspaces of $\lat^*$ for which 
\[
\det_{W_i}(A) = \frac{1}{\det(\lat^* \cap W_i)^2} \text{ .}
\]
Note that this list is finite since, as argued previously,
program~\eqref{eq:rest-det-prog} contains only a finite number of determinant
constraints. 

Let $X_i' = (A^{\da W_i})^+ \det_{W_i}(A_i)^{1/d_i}$, for $i \in [m]$. Note that
$X_1',\dots,X_m'$ are well-defined since $A \succ 0$. By construction,
$\ran(X_i') = W_i$, $\det_{W_i}(X_i') = 1$ (by Item~\ref{itm:detprod} of Lemma~\ref{lem:proj-det-props}), and  
\[
\tr(A X_i') = d_i \det_{W_i}(A_i)^{1/d_i} = \frac{d_i}{\det(\lat^* \cap W_i)^{2/d_i}} \text{ .}
\]
Furthermore, by Lemma~\ref{lem:proj-det-props}, $\frac{d}{dt}\det_{W_i}(A +
t\Delta)^{1/d_i}|_{t=0} = \tr(X_i' \Delta)/d_i$ for any $\Delta \in \R^{n \times n}$.

\begin{claim} 
\label{cl:kkt}
$I_n \in {\rm cone}(X_1',\dots,X_m') \eqdef \set{\sum_{i=1}^m a_i X_i': a_i \geq
0, i \in [m]}$.
\end{claim}
\begin{proof}
We first show that ${\rm cone}(X_1',\dots,X_m')$ is pointed, i.e., it does not
contain any 1-dimensional subspaces. If it did, there would exist $a_1,\dots,a_m \geq 0$,
$\sum_{i=1}^m a_i \neq 0$, such that $\sum_{i=1}^m a_i X_i' = 0$. However, no
such combination can exist since $X_1',\dots,X_m' \in \PSD^n \setminuszero$.

Assume now that $I_n \notin {\rm cone}(X_1',\dots,X_m')$. Then, since ${\rm
cone}(X_1',\dots,X_m')$ is a finitely generated pointed cone and $X_i' \neq 0 ~
\forall~i \in [m]$, by the separation theorem\onote{posterity: maybe add a ref}
there exists $\Delta \in \R^{n
\times n}$, $\Delta = \Delta^\T$, such that (a) $\tr(I \Delta) < 0$ and (b)
$\tr(X_i' \Delta) > 0$, $\forall i \in [m]$. But then, for any $t > 0$ small
enough, we claim that $A + t \Delta$ is still feasible
for~\eqref{eq:rest-det-prog}.  Note that this would contradict the optimality of
$A$ since by (a) $\tr(A + t \Delta) < \tr(A)$. To argue feasibility, by
continuity and the fact that~\eqref{eq:rest-det-prog} has a finite number of
constraints, we need only show that $A+t \Delta$ remains feasible on the tight
constraints at $A$.  By condition (b) $\frac{d}{dt} \det_{W_i}(A+t
\Delta)|_{t=0} > 0$, $\forall i \in [m]$. Hence, for all $t > 0$ small enough, 
\[
\det_{W_i}(A+ t \Delta) > \det_{W_i}(A) = \frac{1}{\det(\lat^* \cap W_i)^2},
\forall i \in [m] \text{ ,}
\]
as needed.
\end{proof}

By the above claim, there exists $a_1,\dots,a_m \geq 0$ such that $I_n =
\sum_{i=1}^m a_i X_i'$. Let $X_i = a_i X_i'$, for $i \in [m]$. By construction,
$\det_{W_i}(X_i)^{1/d_i} = a_i$ for all $i \in [m]$. By possibly deleting some
of the $X_i$, we may assume that $X_i \neq 0$, $\forall i \in [m]$.  Note that
$X_1,\dots,X_m$ with associated subspaces $W_1,\dots,W_m$ is a valid
solution to $\mu_{{\rm det}}^{{\rm dual}}(\lat)$. To finish the proof, note that
\[
\mu_{{\rm det}}(\lat)^2 = \tr(A) = \sum_{i=1}^m \tr(AX_i)
= \sum_{i=1}^m \frac{d_i \det_{W_i}(X_i)^{1/d_i}}{\det(\lat^* \cap W_i)^{2/d_i}}
\leq \mu_{{\rm det}}^{{\rm dual}}(\lat)^2 \text{, }
\]
as needed.
\end{proof}

\subsection{Subspace rounding}
\label{sec:subspacerounding}

Here we prove Theorem~\ref{thm:subspace-rounding}, the most involved part of the reduction.
The proof uses a certain ``uncrossing'' inequality, 
Lemma~\ref{lem:uncrossing}, proven in Section~\ref{sec:uncrossing} below. 
We start with what can be seen as a reverse form of the 
inequality of arithmetic and geometric means.

\begin{lemma}[Reverse AM-GM]
\label{lem:rev-am-gm}
Let $a_1,\dots,a_m \geq 0$ and $d_1,\dots,d_m \in \N$. For $S \subseteq [m]$,
define $d_S = \sum_{i \in S} d_i$. Then
\[
\sum_{i=1}^m d_i a_i \leq 
4\ceil{\log_2(2d_{[m]})} \max_{S \subseteq [m]} d_S \parens[\Big]{\prod_{i \in S}
a_i^{d_i}}^{1/d_S} 
\]
\end{lemma}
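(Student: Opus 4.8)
The proof is a dyadic bucketing argument on the \emph{values} $a_i$. Write $a_{\max} = \max_i a_i$; if $a_{\max}=0$ the claim is trivial, so assume $a_{\max}>0$, and set $D = d_{[m]}$ and $N = \ceil{\log_2(2D)}$ (note $N\ge 1$ since $2D\ge 2$). Choosing $S=\{i^*\}$ for $i^*$ an index attaining $a_{\max}$ shows that the right-hand maximum $M := \max_{S\subseteq[m]} d_S (\prod_{i\in S} a_i^{d_i})^{1/d_S}$ satisfies $M \ge d_{i^*} a_{i^*} \ge a_{\max}$; this singleton bound will absorb the ``tail'' of tiny values.

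\textbf{The tail.} Let $T = \{i : 0 < a_i \le a_{\max}/(2D)\}$ (indices with $a_i=0$ contribute nothing and may be ignored). Then
\[
\sum_{i\in T} d_i a_i \;\le\; \frac{a_{\max}}{2D}\sum_{i\in T} d_i \;\le\; \frac{a_{\max}}{2D}\cdot D \;=\; \frac{a_{\max}}{2} \;\le\; \frac{M}{2}\,.
\]

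\textbf{The buckets.} For $j=0,1,\dots,N-1$ define $B_j = \{\,i : a_{\max}/2^{j+1} < a_i \le a_{\max}/2^j\,\}$. Since $2^N \ge 2D$, the sets $B_0,\dots,B_{N-1}$ together with $T$ cover all indices with $a_i>0$, and there are exactly $N$ buckets. Fix $j$ and let $d_{B_j} = \sum_{i\in B_j} d_i$. Every $a_i$ with $i\in B_j$ lies in $(a_{\max}/2^{j+1},\, a_{\max}/2^j]$, so on the one hand $\sum_{i\in B_j} d_i a_i \le d_{B_j}\, a_{\max}/2^j$, and on the other hand $\big(\prod_{i\in B_j} a_i^{d_i}\big)^{1/d_{B_j}} > a_{\max}/2^{j+1}$, whence
\[
\sum_{i\in B_j} d_i a_i \;\le\; d_{B_j}\frac{a_{\max}}{2^j} \;=\; 2\, d_{B_j}\frac{a_{\max}}{2^{j+1}} \;<\; 2\, d_{B_j}\Big(\prod_{i\in B_j} a_i^{d_i}\Big)^{1/d_{B_j}} \;\le\; 2M\,.
\]
Summing over the $N$ buckets and adding the tail bound gives
\[
\sum_{i=1}^m d_i a_i \;=\; \sum_{j=0}^{N-1}\sum_{i\in B_j} d_i a_i \;+\; \sum_{i\in T} d_i a_i \;\le\; 2NM + \tfrac12 M \;\le\; 4N M,
\]
using $N\ge 1$ in the last step. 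This is exactly the claimed bound.

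\textbf{Main obstacle.} The only genuinely delicate point is pinning down the \emph{number} of buckets so it matches $\ceil{\log_2(2 d_{[m]})}$ rather than something depending on $m$ or on the spread of the $a_i$: there may be arbitrarily many indices with extremely small $a_i$, but their total weight is only $d_{[m]}$, so cutting the dyadic scale off at $a_{\max}/(2 d_{[m]})$ makes their aggregate contribution at most $a_{\max}/2$, dominated by the singleton term. Everything else is the routine observation that on each dyadic value-band the arithmetic and geometric means agree up to a factor of $2$.
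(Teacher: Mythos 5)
Your proposal is correct and follows essentially the same route as the paper's proof: a dyadic bucketing of the values $a_i$ with cutoff at $a_{\max}/(2d_{[m]})$, using that the arithmetic and geometric means agree to a factor of $2$ within each band. The only cosmetic difference is the treatment of the tail — the paper shows the sub-threshold indices contribute at most half the total sum and then doubles, whereas you bound the tail directly by the singleton term $M \geq a_{\max}$ — and one small nit: since $a_{\max}/(2d_{[m]})$ is not in general a dyadic threshold, $B_{N-1}$ can overlap $T$, so your central displayed ``$=$'' should be ``$\leq$'' (which is all that is used).
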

\begin{proof}
Let $r = \max_{i \in [m]} a_i$ and $k = \ceil{\log_2(2d_{[m]})}$. For $j \geq
1$, define $S_j = \set{i \in [m]: 2^{-j} r < a_i \leq 2^{-j+1} r }$, noting that
the family $\set{S_j: j \geq 1}$ forms a partition of $\set{i \in [m]: a_i \neq 0}$. 
We first show that
the first $k$ sets represent at least half the total sum:
\[
      \sum_{j=k+1}^\infty \sum_{i \in S_j} d_i a_i 
      \leq 
			r 2^{-k} d_{[m]} 
			\leq r/2 
			\leq (1/2) \sum_{i=1}^m d_i a_i \text{.}
\]
Given the above, by construction of the sets $\{S_j: j \geq 1\}$, we get that
\begin{align*}
\sum_{i=1}^m d_i a_i 
 &\leq 2 \sum_{j=1}^k \sum_{i \in S_j} d_i a_i 
  \leq 2k \max_{j \in [k]} \sum_{i \in S_j} d_i a_i \\
 &\leq 4k \max_{j \in [k]} d_{S_j}(2^{-j} r) 
  \leq 4k \max_{j \in [k]} d_{S_j}\parens[\Big]{\prod_{i \in S_j} a_i^{d_i}}^{1/d_{S_j}} \text{ ,}
\end{align*}
as needed.
\end{proof}

\begin{theorem}[Subspace Rounding]
\label{thm:subspace-rounding}
Let $\lat \subset \R^n$ be an $n$-dimensional lattice. Then
\begin{equation}
\label{eq:sub-rnd}
\mu_{{\rm det}}^{\rm dual}(\lat)^2 \leq 24(\log_2 n + 1)^2 \max_{\substack{W \textnormal{ lattice subspace
of } \lat^* \\ d=\dim(W) \in [n]}} \frac{d}{\det(\lat^* \cap
W)^{2/d}} \text{.}
\end{equation}
\end{theorem}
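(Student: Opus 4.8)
The plan is to carry out the three-stage strategy sketched in Section~\ref{sec:proofoutline}. Fix $\epsilon>0$ and let $(X_i,W_i)_{i=1}^m$ be a feasible solution of $\mu^{{\rm dual}}_{{\rm det}}(\lat)$ with objective at least $\mu^{{\rm dual}}_{{\rm det}}(\lat)^2-\epsilon$ (by strong duality one may in fact take an optimum). I first make the subspaces form a chain. If $W_i=W_j$ for $i\ne j$, replace the two matrices by $X_i+X_j$; this leaves $\sum_i X_i$ unchanged and, by superadditivity of $\det_{W}(\cdot)^{1/\dim W}$ (concavity and degree-$1$ homogeneity, Lemma~\ref{lem:proj-det-props}), does not decrease the objective. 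If $W_i$ and $W_j$ are incomparable, invoke the uncrossing inequality (Lemma~\ref{lem:uncrossing}) to replace them by the lattice subspaces $W_i+W_j$ and $W_i\cap W_j$ carrying suitable PSD matrices $X_i',X_j'$ with $X_i'+X_j'\preceq X_i+X_j$ and with non-decreasing objective. Alternating these moves terminates: $\sum_i\dim(W_i)$ is a nonnegative integer that is preserved by an uncrossing and strictly decreased by a merge, while between merges $\sum_i\dim(W_i)^2$ strictly increases with each uncrossing (since $\dim(W_i+W_j)+\dim(W_i\cap W_j)=\dim W_i+\dim W_j$ and $x\mapsto x^2$ is strictly convex) and is bounded by $n\sum_i\dim(W_i)$. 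Hence we may assume $W_1\subsetneq\dots\subsetneq W_m$ with $d_i=\dim W_i$ strictly increasing (so $m\le n$), $X_i\succeq 0$, $\ran(X_i)=W_i$, $\sum_i X_i\preceq I$, and objective still at least $\mu^{{\rm dual}}_{{\rm det}}(\lat)^2-\epsilon$.

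On a chain I would apply the Reverse AM--GM inequality (Lemma~\ref{lem:rev-am-gm}) with weights $d_i$ and $a_i=\det_{W_i}(X_i)^{1/d_i}/\det(\lat^*\cap W_i)^{2/d_i}$; since the $d_i$ are distinct and at most $n$, $\sum_i d_i\le\binom{n+1}{2}\le n^2$, so the prefactor is $O(\log n)$ and the objective is bounded by $O(\log n)\cdot\max_{S}d_S\bigl(\prod_{i\in S}a_i^{d_i}\bigr)^{1/d_S}$ over sub-chains $S$. For a fixed sub-chain $V_1\subsetneq\dots\subsetneq V_k$ with matrices $Z_1,\dots,Z_k$, write the product as $\prod_j\det_{V_j}(Z_j)\big/\prod_j\det(\lat^*\cap V_j)^2$ and bound the numerator by peeling the chain from the bottom: the matrix determinant lemma (Lemma~\ref{lem:det}) factors $\det_{V_j}(Z_j)$ through $V_1$ and $V_j\cap V_1^\perp$; the slices $Z_j^{\cap V_1}$ satisfy $\sum_j Z_j^{\cap V_1}\preceq\pi_{V_1}$ (Lemma~\ref{lem:slice}), so $\prod_j\det_{V_1}(Z_j^{\cap V_1})\le k^{-k\dim V_1}$ by superadditivity of $\det_{V_1}(\cdot)^{1/\dim V_1}$ together with AM--GM; and the complementary pieces $(I-\pi_{V_1})Z_j(I-\pi_{V_1})$ satisfy $\sum_j (I-\pi_{V_1})Z_j(I-\pi_{V_1})\preceq\pi_{V_k\cap V_1^\perp}$, which lets the same step recurse. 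Carrying the recursion through all $k$ Gram--Schmidt increments $U_l=V_l\cap V_{l-1}^\perp$ yields $\prod_j\det_{V_j}(Z_j)\le\prod_{l=1}^k (k-l+1)^{-(k-l+1)\dim U_l}$, and a short weighted-AM--GM computation then gives $d_S\bigl(\prod_{i\in S}a_i^{d_i}\bigr)^{1/d_S}\le\sum_{l=1}^k\dim U_l\big/\gamma_l^{2/\dim U_l}$, where $\gamma_l=\det(\lat^*\cap V_l)/\det(\lat^*\cap V_{l-1})$. Finally, since $\prod_{m\le l}\gamma_m=\det(\lat^*\cap V_l)$, a second logarithmic-loss argument (another Reverse AM--GM or a dyadic bucketing, now over the $U_l$, whose dimensions total at most $n$) bounds this last sum by $O(\log n)\cdot\max_W\dim W/\det(\lat^*\cap W)^{2/\dim W}$, giving the overall bound $O((\log n)^2)$, which with the constants tracked is the stated $24(\log_2 n+1)^2$.

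The two steps I expect to be genuinely hard are the uncrossing inequality of Lemma~\ref{lem:uncrossing} itself — constructing matrices on $W_i+W_j$ and $W_i\cap W_j$ that simultaneously respect the Loewner constraint and do not lose in the composite functional $X\mapsto\dim W\cdot\det_W(X)^{1/\dim W}/\det(\lat^*\cap W)^{2/\dim W}$, essentially a supermodularity statement combining the matrix-slice/projection calculus of Lemmas~\ref{lem:slice}--\ref{lem:det} with the log-submodularity $\det(\lat^*\cap(W_i+W_j))\det(\lat^*\cap(W_i\cap W_j))\le\det(\lat^*\cap W_i)\det(\lat^*\cap W_j)$ of $W\mapsto\det(\lat^*\cap W)$ — and the quantitative inner bound, where the subtlety is that naive estimates (e.g.\ $\det_{V_j}(Z_j)\le 1$, or bucketing the chain by dimensional scale alone) lose a factor polynomial in $k\le n$. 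Indeed, the feasible solution $Z_j=\tfrac1k\pi_{V_j}$ on nested coordinate subspaces of $\Z^n$ already shows that each sub-chain can legitimately be spread over $\Theta(n)$ nested subspaces of nearly equal dimension, so it is essential that the recursive peeling above extracts the full $1/k$-type shrinkage of the $\det_{V_j}(Z_j)$ that is forced by $\sum_j Z_j\preceq I$; anything coarser does not give a polylogarithmic loss.
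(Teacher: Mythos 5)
Your outline has the right ingredients — Reverse AM--GM, the uncrossing inequality, and a chain analysis — but it applies them in the wrong order, and this is not a cosmetic issue: the order matters precisely because Lemma~\ref{lem:uncrossing} is an inequality for the \emph{product} $\prod_i \det_{W_i}(X_i)/\det(\lat^*\cap W_i)^2$, not for the dual objective $\sum_i d_i\,\det_{W_i}(X_i)^{1/d_i}/\det(\lat^*\cap W_i)^{2/d_i}$. You claim the uncrossing move has ``non-decreasing objective'' so that you can first reduce to a chain and only then invoke Reverse AM--GM. That claim is false: the weighted sum $\sum_i d_i a_i$ can strictly drop under an uncrossing even though the weighted product $\prod_i a_i^{d_i}$ does not. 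Take $n=2$, $\lat^*=\Z^2$, $W_1=\spn(\vec e_1)$, $W_2=\spn(\vec e_1+\vec e_2)$, $X_1=\tfrac12\vec e_1\vec e_1^\T$, $X_2=\tfrac14(\vec e_1+\vec e_2)(\vec e_1+\vec e_2)^\T$; then $X_1+X_2\preceq I$, and the objective is $\tfrac12+\tfrac14=\tfrac34$, while after uncrossing to $W_1\cap W_2=\set{\vec 0}$ and $W_1+W_2=\R^2$ the new objective is $2\cdot\det(X_1+X_2)^{1/2}=1/\sqrt2<3/4$ (the product $1/8$ is preserved, as Lemma~\ref{lem:uncrossing} guarantees). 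Since your chain is obtained by repeated uncrossing, its value is now only a \emph{lower} bound on $\mu^{\rm dual}_{\rm det}(\lat)^2-\eps$, and bounding the chain value from above no longer bounds $\mu^{\rm dual}_{\rm det}(\lat)^2$. The fix is exactly the paper's ordering: apply Lemma~\ref{lem:rev-am-gm} \emph{first} to pass from the sum objective to $d_{S}(\prod_{i\in S}a_i^{d_i})^{1/d_S}$ for some subset $S$ (paying one log factor), then uncross $S$ (this preserves the product, so it is safe), and only then convert back to a sum by ordinary AM--GM on the chain.

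Your Step~3 (chain analysis) is also genuinely different from the paper's and is not fully closed. After the recursive peeling you reach $\sum_l \dim U_l/\gamma_l^{2/\dim U_l}$ and propose a ``second logarithmic-loss argument''; but the dyadic buckets produced by Reverse AM--GM over the index set $\set{l}$ need not be prefixes, and for a non-prefix $S$ the quantity $\prod_{l\in S}\gamma_l$ is not of the form $\det(\lat^*\cap W)$ for any single lattice subspace $W$, so you cannot directly read off a subspace witnessing the maximum. The paper avoids this by bucketing the chain \emph{by dimension scale} $2^{j-1}\le d_i'<2^j$ and bounding $\sum_{i\in S_j}\tr(X_i')\le 2^j$ via $\sum_{i\in S_j}X_i'\preceq\pi_{W'_{\ell_j}}$ (using the chain structure), which produces a single subspace $W_i'$ as witness in each bucket. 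If you want to keep the peeling route, you would need to argue separately that it suffices to take $S$ an interval/prefix, which is not immediate. I'd recommend replacing the peeling with the paper's dimension-dyadic argument, which is both shorter and avoids the subtlety.

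One more small point: you should first reduce the support size $m$ to $O(n^2)$ (the paper does this via a linear-dependence argument among the $X_i$), so that the prefactor from Lemma~\ref{lem:rev-am-gm}, which depends on $\log_2(2d_{[m]})$, is $O(\log n)$ rather than $O(\log m)$.
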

\begin{proof}
Our goal is to show how to find, given any dual solution for \eqref{eq:det-mu-dual},
a subspace of $\lat^*$ having higher value. 
Let $W_1,\dots,W_m$ be lattice subspaces of $\lat^*$ and let $d_i = \dim(W_i) \geq
1$ for $i \in [m]$. Take $X_1,\dots,X_m \in \PSD^n$ such that $\ran(X_i) = W_i$
and $\sum_{i=1}^m X_i \preceq I_n$, corresponding to a valid solution
to~\eqref{eq:det-mu-dual}. Our goal will be to extract a single subspace $W^*$
such that 
\[
\sum_{i=1}^m \frac{d_i \det_{W_i}(X_i)^{1/d_i}}{\det(\lat^* \cap W_i)^{2/d_i}}
\leq 24(\log_2 n + 1)^2 \frac{\dim(W^*)}{\det(\lat^* \cap W^*)^{2/\dim(W^*)}} \text{ .}
\]

\paragraph{Step 1:} Reduction to the case $m \leq n^2$.  
\onote{TODO: Noah suggests to capture these steps in lemmas: 
The statement is something like "For any solution $W_1,\ldots, W_m$, there
exists a solution $W_1',\ldots, W_{m'}'$ with $m' \leq n^2, W_1' \subseteq
\cdots \subseteq W_{n^2}'$,  and [the objective function within a logarithmic
factor]."} \dnote{5/18: agreed. it's worth breaking up}

It suffices to show that for $m > n^2$ we can reduce the support of the solution
by at least $1$ without decreasing the objective value. If $m > n^2$, since the
space of $n \times n$ matrices is $n^2$ dimensional, there must exist a
non-trivial linear dependence $\lambda \in \R^m$, $\lambda \neq 0$, such that
$\sum_{i=1}^m \lambda_i X_i = 0$. 
It follows that there must exist at least one negative $\lambda_i$, since the $X_i$ 
are nonzero positive semidefinite matrices, and otherwise we would not have
$\sum_{i=1}^m \lambda_i X_i = 0$. Similarly there must exist at least one positive $\lambda_j$. 
Given this, we deduce that the set $R = \set{\eps \in \R: \forall i \in [m], 1+\eps \lambda_i \geq
0}$ is a bounded interval and that 
$(1+\eps\lambda_1)X_1,\dots,(1+\eps \lambda_m)X_m$ is a valid dual solution for
any $\eps \in R$. For $\eps \in R$, by homogeneity
\[
\sum_{i=1}^m \frac{d_i \det_{W_i}((1+\eps \lambda_i)X_i)^{1/d_i}}{\det(\lat^*
\cap W_i)^{2/d_i}} = \sum_{i=1}^m (1+\eps \lambda_i) \frac{d_i
\det_{W_i}(X_i)^{1/d_i}}{\det(\lat^* \cap W_i)^{2/d_i}} \text{.}
\]
Hence, the objective is a linear function in $\eps$. In particular, the above
function attains its maximum at one of the end points of $R$, which in turn
corresponds to a dual solution of smaller support.

\paragraph{Step 2:} Reduction to the chain case.

In this step, we show that at the expense of losing a logarithmic factor in the
objective, one can reduce to the case where the subspaces $W_1,\dots,W_m$, $m
\leq n^2$, form a \emph{chain}, that is where $W_1 \subseteq W_2 \subseteq \dots
\subseteq W_m$.  

To begin, we apply Lemma~\ref{lem:rev-am-gm} to the numbers
$\frac{\det_{W_1}(X_1)^{1/d_1}}{\det(\lat^* \cap
W_1)^{2/d_1}},\dots,\frac{\det_{W_m}(X_m)^{1/d_m}}{\det(\lat^* \cap
W_m)^{2/d_m}}$ with multiplicities $d_1,\dots,d_m$. This gives us that
\begin{equation}
\label{eq:det-am-gm}
\begin{split}
\sum_{i=1}^m \frac{d_i \det_{W_i}(X_i)^{1/d_i}}{\det(\lat^* \cap W_i)^{2/d_i}}
&\leq 4\ceil{\log_2(2d_{[m]})} \max_{S \subseteq [m]} d_{S}\left(\prod_{i \in S}
\frac{\det_{W_i}(X_i)}{\det(\lat^* \cap W_i)^2}\right)^{1/d_{S}} \\
&\leq 4\ceil{\log_2(2n^3)}  \max_{S \subseteq [m]} d_{S}\left(\prod_{i \in S}
\frac{\det_{W_i}(X_i)}{\det(\lat^* \cap W_i)^2}\right)^{1/d_{S}} \text{, }
\end{split}
\end{equation}
where the last inequality follows since $d_{[m]} = \sum_{i=1}^m d_i \leq n m
\leq n^3$.

Let $S^* \subseteq [m]$ denote the maximizer on the last line
of~\eqref{eq:det-am-gm}.  Without loss of generality, we may assume that $S^* =
[k]$, for some $k \in [m]$. 

We now show that if $W_1,\dots,W_k$ cannot be rearranged to form a chain, one
can find an updated solution $X_1',\dots,X_k'$ to~\eqref{eq:det-mu-dual} with
associated subspaces $W_1',\dots,W_k'$ such that 
\begin{enumerate}
\item $\prod_{i=1}^k
\frac{\det_{W_i}(X_i)}{\det(\lat^* \cap W_i)^2} \leq \prod_{i=1}^k
\frac{\det_{W'_i}(X'_i)}{\det(\lat^* \cap W'_i)^2}$.
\item $\sum_{i=1}^k \dim(W_i) = \sum_{i=1}^k \dim(W_i')$.
\item $\sum_{i=1}^k \dim(W_i)^2+1 \leq \sum_{i=1}^k \dim(W'_i)^2$. 
\end{enumerate}

Given that the potential in Property 3 is integer valued and ranges from $1$ to
$kn^2$ (since each subspace has dimension at most $n$), the updating process
converges to a chain after at most $kn^2$ iterations.

Assume then that $W_1,\dots,W_k$ cannot be rearranged to form a chain. Then there
exist $i,j \in [k]$, such that $W_i \not\subseteq W_j$ and $W_j \not\subseteq
W_i$. Without loss of generality, we may assume that $i=1,j=2$ and $\dim(W_1)
\leq \dim(W_2)$. We now construct the updated solution as follows: let $W_1' =
W_1 \cap W_2$, $W_2' = W_1+W_2$, $X_1' = (X_1+X_2)^{\cap (W_1 \cap W_2)}/2$,
$X_2' = (X_1+X_2)-(X_1+X_2)^{\cap (W_1 \cap W_2)}/2$, $W_i' = W_i$ and $X_i' =
X_i$ for $i \ge 3$. By construction $X_i' \succeq 0$ and $\sum_{i=1}^k X_i = \sum_{i=1}^k X_i'$, and
thus the updated solution is indeed feasible for~\eqref{eq:det-mu-dual}. Next,
by Lemma~\ref{lem:uncrossing}, we have that
\[
\frac{\det_{W_1}(X_1)}{\det(\lat^* \cap W_1)^2}
\frac{\det_{W_2}(X_2)}{\det(\lat^* \cap W_2)^2}
\leq
\frac{\det_{W_1'}(X_1')}{\det(\lat^* \cap W_1')^2}
\frac{\det_{W_2'}(X_2')}{\det(\lat^* \cap W_2')^2} \text{ , }
\]
and hence Property 1 is satisfied. Since $\dim(W_1') + \dim(W_2') = \dim(W_1 \cap W_2)
+ \dim(W_1 + W_2) = \dim(W_1) + \dim(W_2)$, we also get that Property 2 is satisfied.
Given that $W_1,W_2$ do not form a chain and $\dim(W_1) \leq \dim(W_2)$, we must
have that $\dim(W_1') = \dim(W_1 \cap W_2) < \dim(W_1)$. Therefore $\dim(W_1') =
\dim(W_1 \cap W_2) = \dim(W_1)-a$ and $\dim(W_2') = \dim(W_1+W_2) = \dim(W_2) +
a$ for some $a \geq 1$. We now verify Property 3 via a direct calculation:
\begin{align*}
\dim(W_1')^2 + \dim(W_2')^2 &= (\dim(W_1)-a)^2 + (\dim(W_2)+a)^2 \\
&= \dim(W_1)^2 + \dim(W_2)^2 + 2a(\dim(W_2)-\dim(W_1)) + 2a^2 \\
&\geq \dim(W_1)^2 + \dim(W_2)^2 + 1 \text{, \quad as needed.}
\end{align*}

Let $X_1',\dots,X_k'$ with associated subspaces $W_1' \subseteq W_2' \subseteq
\dots \subseteq W_k'$, $d_i' = \dim(W_i')$ for $i \in [k]$, denote the final
chain solution obtained via the update process. Since $S^* = [k]$, by
equation~\eqref{eq:det-am-gm} and construction of the updated solution, we have
that 
\begin{equation}
\label{eq:red-to-chain}
\begin{split}
\sum_{i=1}^m \frac{d_i \det_{W_i}(X_i)^{1/d_i}}{\det(\lat^* \cap W_i)^{2/d_i}}
&\leq 4\ceil{\log_2(2n^3)} (\sum_{i=1}^k d_i)\left(\prod_{i=1}^k
\frac{\det_{W_i}(X_i)}{\det(\lat^* \cap W_i)^2}\right)^{1/(\sum_{i=1}^k d_i)} \\
&\leq 4\ceil{\log_2(2n^3)} (\sum_{i=1}^k d_i')\left(\prod_{i=1}^k
\frac{\det_{W_i'}(X_i')}{\det(\lat^* \cap W_i')^2}\right)^{1/(\sum_{i=1}^k d_i')} \\
&\leq 4\ceil{\log_2(2n^3)} \sum_{i=1}^k \frac{d_i'
\det_{W_i'}(X_i')^{1/d_i'}}{\det(\lat^* \cap W_i')^{2/d_i'}} \text{, } 
\end{split}
\end{equation} 
where the last inequality follows by the AM-GM inequality. 

\paragraph{Step 3:} The chain case.

Let $X_1',\dots,X_k'$ be the chain solution from the last step with
associated subspaces $W'_1 \subseteq W'_2 \subseteq \dots \subseteq W'_k$, $d'_i =
\dim(W'_i)$ for $i \in [k]$. Let $S_j = \set{i \in [k]: 2^{j-1} \leq d'_i <
2^j}$ for $1 \leq j \leq \floor{\log_2(2n)}$. Noting that $\cup_{1 \leq j \leq
\floor{\log_2(2n)}} S_j = [k]$, we have that
\begin{equation}
\label{eq:det-to-trace}
\begin{split}
\sum_{i=1}^k \frac{d'_i \det_{W_i'}(X'_i)^{1/d'_i}}{\det(\lat^* \cap W'_i)^{2/d_i}}
&\leq \floor{\log_2(2n)} \max_{1 \leq j \leq \floor{\log_2(2n)}} \sum_{i \in
S_j} \frac{d'_i
\det_{W_i'}(X'_i)^{1/d'_i}}{\det(\lat^* \cap W'_i)^{2/d'_i}} \\
&\leq  \floor{\log_2(2n)} \max_{1 \leq j \leq \floor{\log_2(2n)}} \sum_{i \in S_j}
\frac{\tr(X'_i)}{\det(\lat^* \cap W'_i)^{2/d'_i}} \text{, }
\end{split}
\end{equation}
where the last inequality follows by the AM-GM inequality applied to the eigenvalues of $X'_i$. 
For any $j$, $1 \leq j \leq
\floor{\log_2(2n)}$, 
let $\ell_j = \max \set{i: i \in S_j}$,
and notice by the chain property that for all $i \in S_j$ we have
$\ran(X'_i) = W'_i \subseteq W'_{\ell_j}$.
Given this, we have that
\[
\sum_{i \in S_j} X'_i \preceq I \Rightarrow \sum_{i \in S_j} X'_i \preceq
\pi_{W'_{\ell_j}} \text{ .}
\]
Therefore,
\begin{equation}
\label{eq:trace-bnd}
\sum_{i \in S_j} \tr(X'_i) \leq \tr(\pi_{W'_{\ell_j}}) = d'_{\ell_j} \leq 2^j \text{ .}
\end{equation}
Continuing from the last line of~\eqref{eq:det-to-trace}, we have that
\begin{equation}
\label{eq:chain-round}
\begin{split}
\max_{1 \leq j \leq \floor{\log_2(2n)}} \sum_{i \in S_j}
\frac{\tr(X'_i)}{\det(\lat^* \cap W'_i)^{2/d'_i}}
&\leq
\max_{1 \leq j \leq \floor{\log_2(2n)}} \max_{i \in S_j} \left(\frac{1}{\det(\lat^* \cap
W'_i)^{2/d'_i}} \right) \sum_{i \in S_j} \tr(X'_i) \\
&\leq
\max_{1 \leq j \leq \floor{\log_2(2n)}} \max_{i \in S_j} \left(\frac{1}{\det(\lat^* \cap
W'_i)^{2/d'_i}} \right) 2^j \\
&\leq 2
\max_{i \in [k]} \left(\frac{d'_i}{\det(\lat^* \cap
W'_i)^{2/d'_i}} \right) \text{ .}
\end{split}
\end{equation}

Combining~\eqref{eq:red-to-chain},\eqref{eq:det-to-trace},\eqref{eq:chain-round}
we get that
\begin{align*}
\sum_{i=1}^m \frac{d_i \det_{W_i}(X_i)^{1/d_i}}{\det(\lat^* \cap W_i)^{2/d_i}} &\leq (4\ceil{\log_2(2n^3)})(2\floor{\log_2 2n})
\max_{i \in [k]} \frac{d'_i}{\det(\lat^* \cap
W_i')^{2/d_i'}} \\
&\leq 24(\log_2 n + 1)^2 \max_{\substack{W \textnormal{ lattice subspace
of } \lat^* \\ d=\dim(W) \in [n]}} \frac{d}{\det(\lat^* \cap
W)^{2/d}} \text{.}
\end{align*}
The theorem now follows given that the above holds starting from any dual solution.
\end{proof}

\onote{note for posterity: the loss of the $\log$ factor is necessary. 
One can consider a chain of subspaces of dimensions $n, n/2,n/2, n/4,n/4,n/4,n/4, \ldots$ 
already in the middle of~\eqref{eq:red-to-chain} (before the AMGM). By assuming that the lattice
determinants behave like $d^{d/2}$, optimizing the $X_i'$ (each coordinate should be uniform among the subspaces that contain it)
we get an expression like $\Sigma (\prod c_i^{-c_i})^{1/\Sigma} (\prod d_i^{-d_i})^{1/\Sigma}$ (where $c_i$, $i \in [n]$, are the number of subspaces of dimension at least $i$, and $\Sigma$ denotes $\sum d_i = \sum c_i$) which after taking logs 
is precisely the mutual information between the row and the column of a uniform element from the Young diagram; this can be as high as $\log \log n$
in the above Young diagram configuration 
} 

\subsection{The uncrossing inequality}
\label{sec:uncrossing}

In this section we give two proofs of the uncrossing inequality, Lemma~\ref{lem:uncrossing}. We do so
first using an elegant Gaussian inequality,
Proposition~\ref{prop:magicgaussian}, obtaining the desired inequality by a
limit argument. Afterwards, we give a more direct and elementary proof.

For two positive definite matrices $X,Y \succ 0$ define their \emph{parallel sum} $X:Y$ as $(X^{-1}+Y^{-1})^{-1}$. (Twice that quantity 
is known as the harmonic mean; see~\cite[Chapter 4.1]{BhatiaPSDBook}.) This definition is extended to positive semidefinite matrices $X,Y \succeq 0$
as 
\begin{equation}\label{eq:defofparsumofpd}
X:Y := \lim_{\eps \searrow 0} ((X+\eps I)^{-1}+(Y+\eps I)^{-1})^{-1} \; .
\end{equation}
We note that $\ran(X:Y)=\ran(X)\cap \ran(Y)$ (see~\cite[Theorem 5.15]{schurbook}).

\begin{proposition}\label{prop:magicgaussian}
For any lattice $\lat$, and any two positive semidefinite matrices $X,Y \succeq 0$,
\begin{equation}\label{eq:main}
\rho_X(\lat) \rho_Y(\lat)  \le
\rho_{X:Y}(\lat) 
\rho_{X+Y}(\lat) 
\; .
\end{equation}
\end{proposition}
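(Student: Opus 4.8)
The plan is to reduce the proposition to a statement about a single pair of vectors, summed over the lattice, via Poisson summation. First I would reduce to the positive definite case: by the continuity of $\rho_X$ in $X \in \PSD^n$ (noted right after its definition) and the definition~\eqref{eq:defofparsumofpd} of the parallel sum as a limit, it suffices to prove~\eqref{eq:main} for $X, Y \succ 0$, and then take $X + \eps I$, $Y + \eps I$ and let $\eps \searrow 0$; note that in this limit $X:Y \to X:Y$ and $X+Y \to X+Y$ as needed. So assume $X, Y \succ 0$.

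Next, the key algebraic identity: I would verify that for $X, Y \succ 0$,
\[
(X^{-1} + Y^{-1})^{-1} + (X+Y) \text{ relates to } X \text{ and } Y
\]
in the way needed to make a Gaussian integral factor. Concretely, writing the Gaussian $g_X(\vec{t}) = e^{-\pi \vec{t}^\T X^{-1} \vec{t}}$ (so that in the notation of the paper $\rho_X(\lat) = \sum_{\vec{t} \in \lat} g_{X}(\vec{t})$ once we are careful about whether $X$ or $X^{-1}$ sits in the exponent --- here I match the paper's convention $\rho_X(T) = \sum_{\vec y \in T} e^{-\pi \vec y^\T X^{-1}\vec y}$), the crucial fact is the convolution-type identity
\[
g_X(\vec{u})\, g_Y(\vec{v}) = g_{X:Y}\!\parens[\big]{\vec{u}-\vec{v}}\cdot g_{X+Y}\!\parens[\big]{\vec{w}(\vec u,\vec v)}
\]
for an appropriate affine-in-$(\vec u,\vec v)$ point $\vec w$; equivalently, that the quadratic form $\vec u^\T X^{-1}\vec u + \vec v^\T Y^{-1}\vec v$ decomposes as a quadratic form in $\vec u - \vec v$ (with matrix $(X:Y)^{-1} = X^{-1}+Y^{-1}$, by definition of parallel sum) plus a quadratic form in a complementary linear combination whose matrix turns out to be $(X+Y)^{-1}$. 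The cleanest route to this is to compute the Fourier transform: $\widehat{g_X} = \det(X)^{1/2} g_{X^{-1}}$ up to normalization, so that the Fourier transform of $\vec{t} \mapsto g_X(\vec t) g_Y(\vec t)$, i.e.\ of the Gaussian with inverse-covariance $X^{-1}+Y^{-1}$, is a Gaussian with inverse-covariance $(X^{-1}+Y^{-1})^{-1} = X:Y$; and the product $g_X(\vec s) g_Y(\vec s)$ on the dual side corresponds to an inverse-covariance of $X + Y$ after the appropriate substitution. I would then apply the Poisson summation formula~\eqref{eq:poisson-summation} to both sides, so that $\rho_X(\lat)\rho_Y(\lat)$ and $\rho_{X:Y}(\lat)\rho_{X+Y}(\lat)$ each get rewritten as (a constant times) a double sum over $\lat^* \times \lat^*$, or better, I would use the substitution $(\vec y_1, \vec y_2) \mapsto (\vec y_1 - \vec y_2, \vec y_1 + \vec y_2)$ on $\lat \times \lat$, which maps $\lat \times \lat$ injectively into $\lat \times \lat$ with image of index $2^n$, and gives termwise that the left side's summand at $(\vec y_1, \vec y_2)$ equals the right side's summand at the image point --- so the left sum is a \emph{sub-sum} of the right sum, hence $\le$, since all terms are nonnegative.

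The main obstacle I anticipate is getting the change-of-variables and the index-$2^n$ bookkeeping exactly right: one must check that after the substitution the set of achieved index pairs on the right-hand side contains the image of $\lat\times\lat$, that no cancellation occurs (all summands are positive Gaussians, so this is fine), and that the Jacobian/normalization constants $\det(X)^{1/2}$, $\det(Y)^{1/2}$, $\det(X:Y)^{1/2}$, $\det(X+Y)^{1/2}$ produced by Poisson summation cancel --- which they do precisely because $\det(X)\det(Y) = \det(X:Y)\det(X+Y)$ (this is the matrix identity $\det(X)\det(Y) = \det(X^{-1}+Y^{-1})^{-1}\det(X+Y)$, provable by $\det(X+Y) = \det(X)\det(I + X^{-1}Y) = \det(X)\det(Y)\det(Y^{-1}+X^{-1})$). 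Once that identity is in hand, the inequality~\eqref{eq:main} drops out as "a double lattice sum is bounded by a larger double lattice sum of nonnegative terms." The elementary second proof promised in the text presumably avoids Poisson summation entirely by working with the $(\vec y_1 \pm \vec y_2)$ substitution directly on $\lat \times \lat$ together with the quadratic-form identity, and I would in fact present that as the primary argument, using the Gaussian-integral computation only to motivate where the parallel sum and the sum come from.
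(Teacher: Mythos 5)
The central claim in your sketch --- that $g_X(\vec u)\,g_Y(\vec v)$ factors exactly as $g_{X:Y}(\vec u-\vec v)\,g_{X+Y}(\vec w)$ for some linear-in-$(\vec u,\vec v)$ point $\vec w$ that stays in the lattice --- is false once $X\neq Y$, and this is precisely where the real work of the proof lives. Setting $\vec s=\vec u+\vec v$ and $\vec d=\vec u-\vec v$, one computes
\[
\vec u^\T X^{-1}\vec u+\vec v^\T Y^{-1}\vec v=\tfrac14\bigl[\vec s^\T(X^{-1}+Y^{-1})\vec s+2\,\vec s^\T(X^{-1}-Y^{-1})\vec d+\vec d^\T(X^{-1}+Y^{-1})\vec d\bigr],
\]
and the cross-term $\vec s^\T(X^{-1}-Y^{-1})\vec d$ survives unless $X=Y$. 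Completing the square does eliminate it, but the shifted variable that results (e.g.\ $\vec w=\vec v+(X{:}Y)X^{-1}(\vec u-\vec v)$) is not a lattice vector, so you do not get a termwise identity over $\lat\times\lat$ and the ``left sum is a sub-sum of the right sum'' argument collapses. (A sanity check: if it were a sub-sum, the proposition would be an essentially trivial counting statement, which it is not. Also, note you have $X{:}Y$ and $X+Y$ attached to the wrong arguments: $\vec u-\vec v$ carries $(X+Y)^{-1}$, not $(X{:}Y)^{-1}$.)

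This is exactly the gap the paper's proof fills. After the $T=\left(\begin{smallmatrix}I&I\\I&-I\end{smallmatrix}\right)$ change of variables (your $(\vec y_1\pm\vec y_2)$ step), the covariance has an off-diagonal block $I-Y$; block-diagonalizing it via Aitken's identity introduces an extra unimodular map $A$ that, crucially, \emph{shifts} the lattice in the second coordinate by the non-lattice amount $(I-Y)(I+Y)^{-1}\vec z$. The lattice $AT\lat^{\oplus 2}$ then decomposes into cosets $\{2\lat+\vec z-(I-Y)(I+Y)^{-1}\vec z\}$ indexed by $\vec z\in\lat$, and the inequality comes from Lemma~\ref{lem:coset-mass}, which shows that the Gaussian mass of each of these cosets is at most the mass of the unshifted coset $2\lat$. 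That coset-mass comparison is the genuinely non-trivial ingredient, and it has no analogue in your sub-sum argument. (A small side note: the two proofs alluded to in the text are two proofs of the uncrossing inequality, Lemma~\ref{lem:uncrossing}, not of Proposition~\ref{prop:magicgaussian}; the ``elementary'' one bypasses this proposition entirely.)
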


Although not needed in the sequel, we note that this inequality is invariant under taking duals, 
as can be seen from the Poisson summation formula. 
The proof below follows proofs of related inequalities appearing in~\cite{RSD15}.

\begin{proof}
By continuity, it suffices to consider positive definite $X,Y \succ 0$. 
Moreover, it suffices to consider the case $X=I$. I.e., we will prove the inequality
\begin{equation}\label{eq:main2}
\rho_I(\lat) \rho_{Y}(\lat)  \le
\rho_{(I+Y^{-1})^{-1}}(\lat) 
\rho_{I+Y}(\lat) 
\; .
\end{equation}
The general case in \eqref{eq:main} is now obtained from \eqref{eq:main2}
by plugging $X^{-1/2} \lat$ for $\lat$ and $X^{-1/2} Y X^{-1/2}$ for $Y$.

Let $\lat^{\oplus 2} := \lat \oplus \lat$. The left-hand side of \eqref{eq:main2} can be written as
\begin{equation}\label{eq:maindouble}
\rho\left(
\left( \begin{array}{rr}
I & 0 \\
0 & Y
\end{array}\right); \lat^{\oplus 2} \right)
\; ,
\end{equation}
where for convenience here and in the following we write the covariance inside the parenthesis.

Let $T$ be the $2n \times 2n$ matrix
\[
T := \left( \begin{array}{rr}
I & I \\
I & -I
\end{array}\right)
\; .
\]
By applying $T$ to both the lattice and the covariance matrix, we see that
\eqref{eq:maindouble} is equal to
\begin{equation}\label{eq:mainrotated}
\rho\left(
\left( \begin{array}{rr}
I+Y & I-Y \\
I-Y & I+Y
\end{array}\right); 
T\lat^{\oplus 2} \right)
\; .
\end{equation}
The above covariance matrix can be written as 
\begin{equation}\label{eq:aitken}
\left( \begin{array}{rr}
I+Y & I-Y \\
I-Y & I+Y
\end{array}\right) 
=
\left( \begin{array}{cc}
I & 0 \\
(I-Y)(I+Y)^{-1} & I
\end{array}\right) 
\left( \begin{array}{cc}
I+Y & 0 \\
0 & 4(I+Y^{-1})^{-1}
\end{array}\right) 
\left( \begin{array}{cc}
I & (I-Y)(I+Y)^{-1} \\
0 & I
\end{array}\right) 
\; .
\end{equation}
(This can be seen as Aitken's block-diagonalization formula for the Schur complement~\cite{schurbook}.)
Therefore, defining 
\begin{equation*}
A = 
\left( \begin{array}{cc}
I & 0 \\
-(I-Y)(I+Y)^{-1} & I
\end{array}\right) 
\; ,
\end{equation*}
we obtain that \eqref{eq:mainrotated} is equal to 
\begin{equation}\label{eq:maintwisted}
\rho\left(
\left( \begin{array}{cc}
I+Y & 0 \\
0 & 4(I+Y^{-1})^{-1}
\end{array}\right); 
AT\lat^{\oplus 2} \right)
\; .
\end{equation}
We now analyze the lattice $AT\lat^{\oplus 2}$.
For any $(\vec{x}, \vec{y}) \in \lat^{\oplus 2}$, we have
$T (\vec{x}, \vec{y}) = (\vec{z}, \vec{w})$
where $\vec{z} =  \vec{x} + \vec{y}$ and $\vec{w} = \vec{z} - 2 \vec{y}$.
It follows that
\begin{align*}
T \lat^{\oplus 2} &= \{ (\vec{z}, \vec{w}) \in \lat^{2} : \vec{z} \equiv \vec{w} \bmod 2 \lat\} \\
&= 
\bigcup_{\vec{z} \in \lat} \{ \vec{z} \} \times \{2\lat + \vec{z}\} 
\; ,
\end{align*}
where the union is disjoint. Therefore, 
\begin{align*}
A T \lat^{\oplus 2} &= 
\bigcup_{\vec{z} \in \lat} \{ \vec{z} \} \times \{2\lat + \vec{z} - (I-Y)(I+Y)^{-1} \vec{z} \} 
\; .
\end{align*}
As a result, we obtain that~\eqref{eq:maintwisted} is equal to
\begin{align*}
\sum_{\vec{z} \in \lat} 
\rho_{I+Y}(\vec{z}) \rho_{4(I+Y^{-1})^{-1}} ( 2\lat + \vec{z} - (I-Y)(I+Y)^{-1} \vec{z} )
& \le 
\sum_{\vec{z} \in \lat} 
\rho_{I+Y}(\vec{z}) \rho_{4(I+Y^{-1})^{-1}} ( 2\lat ) \\
& = \rho_{I+Y}(\lat) \rho_{(I+Y^{-1})^{-1}} ( \lat ) \; .
\end{align*}
where we used Lemma~\ref{lem:coset-mass} to conclude that the central coset is heaviest. This completes the proof.
\end{proof}

\begin{claim}\label{clm:asymptoticrho}
For any $n$-dimensional lattice $\lat$ and positive definite matrix $X \succ 0$,
\[
\lim_{s \rightarrow \infty} s^{-n/2} \rho_{sX}(\lat) = \frac{\sqrt{\det(X)}}{\det(\lat)} \; .
\]
More generally, if $V$ is a $d$-dimensional lattice subspace of $\lat$ and
$X \succeq 0$ is a positive semidefinite matrix with $\ran(X) = V$,
\[
\lim_{s \rightarrow \infty} s^{-d/2} \rho_{sX}(\lat) = \frac{\sqrt{\det_V(X)}}{\det(\lat \cap V)} \; .
\]
\end{claim}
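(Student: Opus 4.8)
The idea is to pass to the dual lattice via Lemma~\ref{lem:coset-mass} and observe that, after rescaling the covariance by $s$, the resulting Gaussian sum over the dual lattice collapses to its leading ($\vec 0$) term. First I would note that the second (subspace) statement specializes to the first by taking $V = \R^n$, $d = n$, so it suffices to prove the subspace version. Since $\ran(sX) = \ran(X) = V$, the extended definition of $\rho$ gives $\rho_{sX}(\lat) = \sum_{\vec y \in \lat \cap V} e^{-\pi \vec y^\T (sX)^+ \vec y} = \rho_{sX}(\lat \cap V)$, so we may work entirely inside the lattice subspace $V$, where $\lat \cap V$ is a $d$-dimensional lattice with $\spn(\lat \cap V) = V$.

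Next I would apply Lemma~\ref{lem:coset-mass} to the lattice $\lat \cap V$ (whose span is $V$) with covariance $A = sX$ and $\vec t = \vec 0$; the hypothesis $V \subseteq \ran(sX)$ holds. Since $\ran(X) = V$ and $X$ is symmetric PSD we have $\ker(X) = V^\perp$, and then a one-line computation from Definition~\ref{def:schur-compl} shows $(sX)^{\cap V} = sX$. Hence, using $\det_V(sX) = s^d \det_V(X)$ (Lemma~\ref{lem:proj-det-props}, together with Definition~\ref{def:rest-det}) and $(sX)^+ = s^{-1}X^+$,
\[
\rho_{sX}(\lat) \;=\; \rho_{sX}(\lat \cap V) \;=\; \frac{\det_V(sX)^{1/2}}{\det(\lat \cap V)}\,\rho_{(sX)^+}\bigl((\lat \cap V)^*\bigr) \;=\; \frac{s^{d/2}\sqrt{\det_V(X)}}{\det(\lat \cap V)}\,\rho_{s^{-1}X^+}\bigl((\lat \cap V)^*\bigr)\,.
\]

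Finally I would show $\rho_{s^{-1}X^+}\bigl((\lat \cap V)^*\bigr) \to 1$ as $s \to \infty$. Since $\ran((sX)^+) = V \supseteq (\lat\cap V)^*$, writing it out gives $\rho_{s^{-1}X^+}\bigl((\lat\cap V)^*\bigr) = \sum_{\vec y \in (\lat\cap V)^*} e^{-\pi s\, \vec y^\T X \vec y} = 1 + \sum_{\vec y \in (\lat\cap V)^* \setminuszero} e^{-\pi s\, \vec y^\T X \vec y}$. The matrix $X$ is positive definite on $V$ and $(\lat\cap V)^*$ is discrete, so $\delta := \min_{\vec y \in (\lat\cap V)^* \setminuszero} \vec y^\T X \vec y$ is attained and strictly positive; thus for $s \ge 2$ the error term is at most $e^{-\pi(s-1)\delta}\,\rho_{X^+}\bigl((\lat\cap V)^* \setminuszero\bigr)$, which tends to $0$ because $\rho_{X^+}\bigl((\lat\cap V)^*\bigr)$ is a finite Gaussian lattice sum. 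Combining the last two displays yields $\lim_{s\to\infty} s^{-d/2}\rho_{sX}(\lat) = \sqrt{\det_V(X)}/\det(\lat\cap V)$, as claimed. The argument is routine rather than obstructed; the only points that need a little care are the identity $(sX)^{\cap V} = sX$ and the bookkeeping of ranges, pseudoinverses and determinants when invoking Lemma~\ref{lem:coset-mass} inside the subspace $V$.
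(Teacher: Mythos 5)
Your proposal is correct and follows essentially the same route as the paper's proof: apply Lemma~\ref{lem:coset-mass} to switch to the dual lattice, pull out the $s^{d/2}\sqrt{\det_V(X)}/\det(\lat\cap V)$ factor, and observe that the remaining Gaussian sum tends to $1$ as the (dual) covariance vanishes. The only organizational difference is that the paper proves the full-rank case first and says the subspace case "follows easily," whereas you prove the subspace case directly; your explicit verification of the bookkeeping ($(sX)^{\cap V}=sX$, the discreteness argument for $\delta>0$, and the dominated-convergence step) is a useful elaboration of the paper's terse "notice that $(sX)^{-1}$ converges to $0$."
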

\begin{proof}
For the first part of the claim, use Lemma~\ref{lem:coset-mass} to get
\[
\rho_{sX}(\lat) = s^{n/2} \sqrt{\det X} (\det \lat)^{-1} \rho_{(sX)^{-1}}(\lat^*)
\]
and notice that $(sX)^{-1}$ converges to $0$.
The second part of the claim follows easily from the first one by restricting to $V$. 
\end{proof}

By applying Proposition~\ref{prop:magicgaussian} with $sX$ and $sY$ with $s$ going to infinity, and
Claim~\ref{clm:asymptoticrho}, we get the following. 

\begin{corollary}\label{cor:intermediateinequality}
For any lattice $\lat$, lattice subspaces $V, W$, and positive semidefinite $X,Y \succeq 0$ with images $V,W$ respectively,
\[
\frac{\det_V(X)}{\det(\lat \cap V)^2} \cdot 
\frac{\det_W(Y)}{\det(\lat \cap W)^2}
\le
\frac{\det_{V \cap W}(X:Y)}{\det(\lat \cap (V \cap W))^2} \cdot 
\frac{\det_{V+W}(X+Y)}{\det(\lat \cap (V+W))^2} \; .
\]
\end{corollary}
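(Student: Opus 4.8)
The plan is to derive the inequality as a scaling limit of Proposition~\ref{prop:magicgaussian}, as already hinted in the text. \textbf{First}, for a parameter $s>0$ I would apply Proposition~\ref{prop:magicgaussian} to the pair of positive semidefinite matrices $sX$ and $sY$, obtaining
\[
\rho_{sX}(\lat)\,\rho_{sY}(\lat)\;\le\;\rho_{(sX):(sY)}(\lat)\,\rho_{sX+sY}(\lat).
\]
\textbf{Next}, I would simplify the right-hand covariances: trivially $sX+sY=s(X+Y)$, and from the defining limit~\eqref{eq:defofparsumofpd} one reads off the homogeneity $(sX):(sY)=s\,(X:Y)$ (substitute $\eps\mapsto s\eps$ in the limit), so the right-hand side equals $\rho_{s(X:Y)}(\lat)\,\rho_{s(X+Y)}(\lat)$.

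\textbf{Then} comes the range-and-dimension bookkeeping. Recall $V=\ran(X)$ and $W=\ran(Y)$ are lattice subspaces of $\lat$ by hypothesis. Since $X,Y\succeq 0$ we have $\ran(X+Y)=\ran(X)+\ran(Y)=V+W$, and by the remark following~\eqref{eq:defofparsumofpd}, $\ran(X:Y)=\ran(X)\cap\ran(Y)=V\cap W$; both $V+W$ and $V\cap W$ are again lattice subspaces of $\lat$, and the modular law gives $\dim V+\dim W=\dim(V\cap W)+\dim(V+W)$. Now I would multiply the displayed inequality through by $s^{-(\dim V+\dim W)/2}=s^{-(\dim(V\cap W)+\dim(V+W))/2}$ and let $s\to\infty$. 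Invoking the second part of Claim~\ref{clm:asymptoticrho} once for each of the four Gaussian masses — legitimate because $V,W,V\cap W,V+W$ are all lattice subspaces and $X,Y,X:Y,X+Y$ have exactly these ranges — the left-hand side converges to
\[
\frac{\sqrt{\det_V(X)}}{\det(\lat\cap V)}\cdot\frac{\sqrt{\det_W(Y)}}{\det(\lat\cap W)}
\]
and the right-hand side to
\[
\frac{\sqrt{\det_{V\cap W}(X:Y)}}{\det(\lat\cap(V\cap W))}\cdot\frac{\sqrt{\det_{V+W}(X+Y)}}{\det(\lat\cap(V+W))}.
\]
Since a non-strict inequality is preserved in the limit, squaring both sides yields the claim.

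\textbf{On difficulty:} there is no genuinely hard step here — this corollary is essentially a restatement of Proposition~\ref{prop:magicgaussian} "at infinity". The only points deserving care are (i) the two algebraic facts $(sX):(sY)=s(X:Y)$ and $\ran(X:Y)=V\cap W$, both immediate from the cited properties of parallel sums, and (ii) the degenerate situations $V\cap W=\{\vec 0\}$ or $X=0$ (or $Y=0$), where the corresponding determinantal factors collapse to the empty determinant $1$ and the determinant of the trivial lattice $1$; in these cases Claim~\ref{clm:asymptoticrho} still applies verbatim with $d=0$ (the mass tends to $\rho_0(\{\vec 0\})=1$), so no separate argument is needed.
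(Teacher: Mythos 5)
Your proof is correct and takes essentially the same approach as the paper: apply Proposition~\ref{prop:magicgaussian} to $sX,sY$, use the homogeneity of the parallel sum, rescale by $s^{-(\dim V+\dim W)/2}$ (splitting it via the modular law on the right), and pass to the limit with Claim~\ref{clm:asymptoticrho}. The paper states this in one sentence; you have simply filled in the bookkeeping it leaves implicit, and your handling of the degenerate cases (zero-dimensional $V\cap W$, or $X=0$) is also correct under the paper's convention that zero-dimensional determinants equal $1$.
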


We are finally ready to state and prove the uncrossing inequality. Here and below we adopt the 
convention that zero-dimensional determinants (either of a lattice or of a matrix) are $1$. 

\begin{lemma}
\label{lem:uncrossing}
For any lattice $\lat$, lattice subspaces $V, W$, and positive semidefinite $X,Y \succeq 0$ with images $V,W$ respectively,
\[
\frac{\det_V(X)}{\det(\lat \cap V)^2} \cdot 
\frac{\det_W(Y)}{\det(\lat \cap W)^2}
\le
\frac{\det_{V \cap W}\parens[\big]{\parens[\big]{\frac{X+Y}{2}}^{\cap (V \cap W)}}}{\det(\lat \cap (V \cap W))^2} \cdot 
\frac{\det_{V+W}\parens[\big]{X+Y - \parens[\big]{\frac{X+Y}{2}}^{\cap (V \cap W)}}}{\det(\lat \cap (V+W))^2} \; .
\]
\end{lemma}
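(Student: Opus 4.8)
The plan is to deduce Lemma~\ref{lem:uncrossing} from Corollary~\ref{cor:intermediateinequality} by a short matrix-analytic argument, exploiting that the two inequalities have \emph{identical} denominators and differ only in the matrices appearing inside the projected determinants in the numerators. Write $Z = X+Y$, which is PSD with $\ran(Z) = \ran(X)+\ran(Y) = V+W$; set $U = V \cap W$, $d = \dim U$, and $W_2 = (V+W)\cap U^\perp$, so that $V+W = U \oplus W_2$ is an orthogonal decomposition of $\ran(Z)$. (When $U = \set{\vec{0}}$ the lemma reduces verbatim to the corollary under our convention that zero-dimensional determinants equal $1$, so assume $d \geq 1$.) By Corollary~\ref{cor:intermediateinequality}, it then suffices to prove the matrix inequality
\[
\det_U(X:Y)\,\det_{V+W}(Z) \;\leq\; \det_U\!\big(\tfrac12 Z^{\cap U}\big)\,\det_{V+W}\!\big(Z - \tfrac12 Z^{\cap U}\big) \text{ .}
\]

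First I would evaluate the last factor. Since $Z^{\cap U} \preceq Z$ (immediate from Definition~\ref{def:schur-compl}, taking $\vec{w}=\vec{0}$ in the min), the matrix $M \eqdef Z - \tfrac12 Z^{\cap U}$ is PSD with $\ran(M) = V+W$; moreover $\ran(Z^{\cap U}) \subseteq U \perp W_2$ gives $M^{\da W_2} = \pi_{W_2} Z \pi_{W_2} = Z^{\da W_2}$, and writing $Z = \left(\begin{smallmatrix} A & C \\ C^\T & B\end{smallmatrix}\right)$ in an orthonormal basis adapted to $U \oplus W_2$, the Schur-complement description of the slice in Definition~\ref{def:schur-compl} yields $M^{\cap U} = \big(\tfrac12 A + \tfrac12 C B^+ C^\T\big) - C B^+ C^\T = \tfrac12(A - C B^+ C^\T) = \tfrac12 Z^{\cap U}$. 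Applying the Matrix Determinant Lemma (Lemma~\ref{lem:det}) to $Z$ and to $M$ along the splitting $U \oplus W_2$, together with the homogeneity of the projected determinant (Lemma~\ref{lem:proj-det-props}),
\[
\det_{V+W}(M) = \det_U\!\big(\tfrac12 Z^{\cap U}\big)\,\det_{W_2}(Z^{\da W_2}) = 2^{-d}\det_U(Z^{\cap U})\,\det_{W_2}(Z^{\da W_2}) = 2^{-d}\det_{V+W}(Z) \text{ .}
\]
Hence the right-hand side of the desired matrix inequality equals $2^{-2d}\det_U(Z^{\cap U})\,\det_{V+W}(Z)$, and it remains to show $\det_U(X:Y) \leq 2^{-2d}\det_U(Z^{\cap U})$.

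For this I would use the operator arithmetic--harmonic mean inequality (see, e.g.,~\cite[Ch.~4]{BhatiaPSDBook}): for $X,Y \succeq 0$ one has $X:Y \preceq \tfrac14(X+Y) = \tfrac14 Z$ (first for $X,Y \succ 0$ via $X^{-1}+Y^{-1} \succeq 4(X+Y)^{-1}$, then in general through the limiting definition~\eqref{eq:defofparsumofpd}). Since $\ran(X:Y) = \ran(X)\cap\ran(Y) = U \subseteq \ran(Z)$, Lemma~\ref{lem:slice}(2) strengthens this to $X:Y \preceq \big(\tfrac14 Z\big)^{\cap U} = \tfrac14 Z^{\cap U}$; taking projected determinants on $U$ (monotone on PSD matrices) and again using homogeneity gives $\det_U(X:Y) \leq 4^{-d}\det_U(Z^{\cap U}) = 2^{-2d}\det_U(Z^{\cap U})$. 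Chaining everything, Corollary~\ref{cor:intermediateinequality} bounds the left-hand side of Lemma~\ref{lem:uncrossing} by $\det_U(X:Y)\det_{V+W}(Z)$ over the common denominators, which by the above is at most $2^{-2d}\det_U(Z^{\cap U})\det_{V+W}(Z)$ over the same denominators, i.e.\ exactly the right-hand side of Lemma~\ref{lem:uncrossing}. The step I expect to require the most care is the slice/projection bookkeeping for $M$ — in particular the identity $M^{\cap U} = \tfrac12 Z^{\cap U}$ — and checking, via pseudoinverses throughout, that it and the two invocations of the Matrix Determinant Lemma remain valid when $Z$ is rank-deficient on $V+W$ or when $U$ or $W_2$ is trivial.
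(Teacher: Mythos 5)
Your proof is correct and follows essentially the same route as the paper's first proof: both pass through Corollary~\ref{cor:intermediateinequality}, apply the matrix determinant lemma to reduce to comparing $\det_U(X:Y)$ with $\det_U\big((Z/4)^{\cap U}\big)$, and rest on the operator convexity of inversion. Your final step — invoking the operator AM--HM bound $X:Y \preceq \tfrac14 Z$ and then Lemma~\ref{lem:slice}(2) to pass to slices — is a minor (and arguably cleaner) reformulation of the paper's ``take reciprocals'' step, since both amount to the same estimate $\tfrac{X^{-1}+Y^{-1}}{2} \succeq \big(\tfrac{X+Y}{2}\big)^{-1}$, but yours handles the range/pseudoinverse bookkeeping more transparently.
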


\begin{proof}
Using Corollary~\ref{cor:intermediateinequality}, it suffices to prove that
\begin{align}\label{eq:corfinal}
\det_{V \cap W}(X:Y) \cdot 
\det_{V+W}(X+Y)
\le
\det_{V \cap W}\parens[\big]{\parens[\big]{\frac{X+Y}{2}}^{\cap (V \cap W)}}  \cdot 
\det_{V+W}\parens[\big]{X+Y - \parens[\big]{\frac{X+Y}{2}}^{\cap (V \cap W)}} \; .
\end{align}
The matrix determinant formula of Lemma~\ref{lem:det} gives
\begin{align*}
& \det_{V+W}\parens[\big]{X+Y - \parens[\big]{\frac{X+Y}{2}}^{\cap (V \cap W)}} =
\det_{V \cap W}\parens[\big]{\parens[\big]{\frac{X+Y}{2}}^{\cap (V \cap W)}} \cdot
\det_{(V+W)\cap (V\cap W)^\perp}\parens[\big]{(X+Y)^{\downarrow (V+W)\cap (V\cap W)^\perp}}  \\
&\quad =
2^{-\dim(V \cap W)} \det_{V \cap W}\parens[\big]{\parens{X+Y}^{\cap (V \cap W)}} \cdot
\det_{(V+W)\cap (V\cap W)^\perp}\parens[\big]{(X+Y)^{\downarrow (V+W)\cap (V\cap W)^\perp}} \\
&\quad  =2^{-\dim(V \cap W)} \det_{V+W}(X+Y) \; .
\end{align*}
Eq.~\eqref{eq:corfinal} is therefore equivalent to
\[
\det_{V \cap W}(2(X:Y)) 
\le
\det_{V \cap W}\parens[\Big]{\parens[\Big]{\frac{X+Y}{2}}^{\cap (V \cap W)}} \; .
\]
Recalling~\eqref{eq:defofparsumofpd}, we can assume that $X$ and $Y$ are positive definite (and $V$ and $W$ are arbitrary). Taking reciprocals on both sides, the above is equivalent to
\[
\det_{V \cap W}\parens[\Big]{\parens[\Big]{\frac{X^{-1}+Y^{-1}}{2}}^{\downarrow (V \cap W)}}
\ge
\det_{V \cap W}\parens[\Big]{\parens[\Big]{\parens[\Big]{\frac{X+Y}{2}}^{-1}}^{\downarrow (V \cap W)}} \; .
\]
This, in turn, follows from 
\[
\frac{X^{-1}+Y^{-1}}{2}
\ge
\parens[\Big]{\frac{X+Y}{2}}^{-1} \; ,
\]
which is the convexity of the mapping $A \mapsto A^{-1}$ on positive definite matrices
(see, e.g.,~\cite[Eq.~(1.33)]{BhatiaPSDBook}). 
\end{proof}

\subsubsection{The elementary proof}

In this section, we prove the uncrossing inequality more directly. 
We first prove it in Lemma~\ref{lem:ind-uncrossing} when the subspaces are linearly
independent and then derive the general case. We start with the following 
linear-algebraic fact.

\begin{claim}
\label{clm:detformula}
Let $V,W \subseteq \R^n$ be linearly independent subspaces, i.e., 
satisfying $V \cap W = \set{\vec{0}}$, and let $X,Y \succeq 0$ have range $V,W$
respectively. 
Let $T = (O_V,O_W)$ be the $n \times (\dim(V)+\dim(W))$ matrix formed by concatenating the
columns of orthonormal bases $O_V,O_W$ for $V,W$ respectively. 
Then,
\[
 \det_V(X) \det_W(Y) = \det_{V+W}((T^+)^{\T} T^+) \det_{V+W}(X+Y) \; .
\]
\end{claim}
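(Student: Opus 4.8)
The plan is to collapse everything to ordinary determinants of $d\times d$ matrices, where $d=\dim(V)+\dim(W)$. Note first that the hypothesis $V\cap W=\{\vec 0\}$ makes the columns of $O_V$ and $O_W$ jointly linearly independent (a dependence would force a nonzero vector in $V\cap W$), so $T$ has full column rank $d$, $\ran(T)=V+W$, and $T^\T T$ is a positive definite $d\times d$ matrix; in particular $T^+=(T^\T T)^{-1}T^\T$. The first step I would carry out is to put $X$ and $Y$ into a normal form adapted to $T$. Since $X\succeq 0$ is symmetric with $\ran(X)=V$, we have $\ker(X)=V^\perp$, hence $X=\pi_V X\pi_V=O_V\bar X O_V^\T$ with $\bar X:=O_V^\T X O_V$; here $\bar X$ is a positive definite $\dim(V)\times\dim(V)$ matrix with $\det(\bar X)=\det_V(X)$, and similarly $Y=O_W\bar Y O_W^\T$ with $\det(\bar Y)=\det_W(Y)$. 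Concatenating the two blocks yields the clean structural identity $X+Y=T D T^\T$, where $D=\begin{pmatrix}\bar X&0\\0&\bar Y\end{pmatrix}\succ 0$ and $\det(D)=\det_V(X)\det_W(Y)$. This identity is really the heart of the matter.

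Next I would evaluate the two projected determinants on the right-hand side using Claim~\ref{clm:detreverse}, i.e.\ the identity $\det(B^\T B)=\det_{\ran(B)}(BB^\T)$. For the second factor, write $X+Y=(TD^{1/2})(TD^{1/2})^\T$; since $D^{1/2}$ is invertible we have $\ran(TD^{1/2})=\ran(T)=V+W$, so Claim~\ref{clm:detreverse} gives $\det_{V+W}(X+Y)=\det\big(D^{1/2}T^\T T D^{1/2}\big)=\det(D)\det(T^\T T)$. For the first factor, from $T^+=(T^\T T)^{-1}T^\T$ we get $(T^+)^\T T^+ = T(T^\T T)^{-2}T^\T = MM^\T$ with $M:=T(T^\T T)^{-1}$; again $\ran(M)=\ran(T)=V+W$, so Claim~\ref{clm:detreverse} gives $\det_{V+W}\big((T^+)^\T T^+\big)=\det(M^\T M)=\det\big((T^\T T)^{-1}\big)=\det(T^\T T)^{-1}$.

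The final step is just to multiply the two: $\det_{V+W}\big((T^+)^\T T^+\big)\cdot\det_{V+W}(X+Y)=\det(T^\T T)^{-1}\cdot\det(D)\det(T^\T T)=\det(D)=\det_V(X)\det_W(Y)$, which is the asserted identity. The degenerate cases in which $V$ or $W$ equals $\{\vec 0\}$ reduce to the nontrivial one via the convention that zero-dimensional determinants equal $1$.

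I do not expect a genuine obstacle, since the whole argument is linear algebra; the only place requiring care is the bookkeeping of ranges. One must verify at both invocations of Claim~\ref{clm:detreverse} that right-multiplying $T$ by an invertible matrix ($D^{1/2}$ in one case, $(T^\T T)^{-1}$ in the other) leaves the range equal to $V+W$, so that the ambient subspace in the projected determinant is the same on both sides; and one must keep track that $\det_V(X)=\det(\bar X)$ and $\det_W(Y)=\det(\bar Y)$ under the orthonormal frames, which is exactly Definition~\ref{def:rest-det}. The factor $\det(T^\T T)$ — which measures how far the concatenated frame $T$ is from orthonormal — appears with opposite exponents in the two right-hand factors and cancels, which is what makes the identity come out with no correction term.
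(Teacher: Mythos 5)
Your proof is correct. The paper's own argument takes a closely related but distinct route: it evaluates $\det(T^+(X+Y)(T^+)^\T)$ in two ways. In one direction it uses the identities $T^+\pi_V = \begin{pmatrix}O_V^\T\\ 0\end{pmatrix}$ and $T^+\pi_W=\begin{pmatrix}0\\ O_W^\T\end{pmatrix}$ to see that $T^+(X+Y)(T^+)^\T$ is block diagonal with blocks $O_V^\T X O_V$ and $O_W^\T Y O_W$, whence the determinant is $\det_V(X)\det_W(Y)$; in the other it inserts $O_{V+W}O_{V+W}^\T$ twice and factors the determinant as $\det_{V+W}\bigl((T^+)^\T T^+\bigr)\det_{V+W}(X+Y)$, then equates the two answers. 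You instead make the factorization $X+Y=TDT^\T$ with $D=O_V^\T X O_V\oplus O_W^\T Y O_W$ the explicit hinge, and then run Claim~\ref{clm:detreverse} twice (once on $TD^{1/2}$, once on $T(T^\T T)^{-1}$) to turn the projected determinants into ordinary $d\times d$ ones, where the $\det(T^\T T)$ factors cancel. These are dual views of the same structural fact — indeed $T^+(X+Y)(T^+)^\T = D$ — but yours isolates the factorization as the conceptual crux and replaces the paper's ad hoc insertion of orthonormal-basis factors with systematic uses of Claim~\ref{clm:detreverse}. The one extra ingredient you need, and correctly supply, is that $D\succ 0$ (so $D^{1/2}$ is invertible and $\ran(TD^{1/2})=V+W$), which holds because $\ran(X)=V$ forces $O_V^\T X O_V\succ 0$; the paper's route never takes a square root and so sidesteps that observation.
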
 
\begin{proof}
Let $d_1=\dim(V)$ and $d_2=\dim(W)$. Clearly, $T$ has
range $V+W$. Furthermore, since $V,W$ are linearly independent, $T$ is
non-singular. Thus, $T^+ T = I_{d_1+d_2}$. In particular, the following
identities follow,
\begin{equation*}
\begin{split}
T^+ \pi_V = T^+ O_V O_V^\T 
    = \begin{pmatrix} O_V^\T \\ 0^{d_2 \times n} \end{pmatrix} \text{ ,} \quad 
T^+ \pi_W = T^+ O_W O_W^\T 
    = \begin{pmatrix} 0^{d_1 \times n} \\ O_W^\T \end{pmatrix} \text{ .} 
\end{split}
\end{equation*}
Given the above, it is clear that $T^+$ is an isometry when restricted to either
$V$ or $W$, mapping $V$ isometrically into the first $d_1$ coordinates and $W$
isometrically in the last $d_2$.  

Now,
\begin{align*}
\det(T^+ (X+Y) (T^+)^{\T}) 
&= \det(T^+ X (T^+)^{\T} + T^+ Y (T^+)^{\T})  \\
&= \det(T^+ \pi_V X (T^+ \pi_V )^{\T} + T^+ \pi_W Y (T^+ \pi_W )^{\T})  \\
&= \det
\begin{pmatrix}
	O_V^\T X O_V & 0 \\
	0 & O_W^\T Y O_W
\end{pmatrix}	  \\
&= \det_V(X) \det_W(Y) \; .
\end{align*}
On the other hand, letting $O_{V+W}$ denote the $n \times (d_1+d_2)$ matrix whose columns are 
an orthonormal basis for $V+W$, 
\begin{align*}
\det(T^+ (X+Y) (T^+)^{\T}) 
&= \det(T^+ O_{V+W} O_{V+W}^\T (X+Y) O_{V+W} O_{V+W}^\T (T^+)^{\T}) \\
&= \det(O_{V+W}^\T (T^+)^{\T} T^+ O_{V+W})  \det(O_{V+W}^\T (X+Y) O_{V+W})  \\
&=
\det_{V+W}((T^+)^{\T} T^+) \det_{V+W}(X+Y) \; .
\end{align*}
The claim follows. 
\end{proof}

\begin{lemma} 
\label{lem:ind-uncrossing}
Let $V,W$ be lattice subspaces of $\lat \subset \R^n$
satisfying $V \cap W = \set{\vec{0}}$, and let $X,Y \succeq 0$ have range $V,W$
respectively. Then
\[
\frac{\det_V(X)}{\det(\lat \cap V)^2} \frac{\det_W(Y)}{\det(\lat \cap W)^2}
= \frac{\det_{V+W}(X+Y)}{\det((\lat \cap V) + (\lat \cap W))^2}
\leq \frac{\det_{V+W}(X+Y)}{\det((\lat \cap (V+W))^2} \text{ .}
\] 
\end{lemma}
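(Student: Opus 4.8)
The plan is to derive the lemma from Claim~\ref{clm:detformula} together with a parallel computation of the relevant lattice determinants; the statement splits naturally into an exact identity (equality with the middle expression) and a monotonicity comparison of lattice determinants (the final inequality).

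For the identity, I would start from Claim~\ref{clm:detformula}, which gives $\det_V(X)\det_W(Y) = \det_{V+W}((T^+)^{\T}T^+)\,\det_{V+W}(X+Y)$ with $T = (O_V, O_W)$, and then compute the scalar $\det_{V+W}((T^+)^{\T}T^+)$. Since the columns of $T$ lie in $V+W$ and, using $V\cap W = \set{\vec{0}}$, $T$ has full column rank $\dim V + \dim W = \dim(V+W)$, one may write $T = O_{V+W}C$ with $C$ square and invertible; then $T^+ = C^{-1}O_{V+W}^{\T}$, so $O_{V+W}^{\T}(T^+)^{\T}T^+O_{V+W} = (CC^{\T})^{-1}$ and hence $\det_{V+W}((T^+)^{\T}T^+) = \det(C^{\T}C)^{-1} = \det(T^{\T}T)^{-1}$. (Alternatively, one checks $(T^+)^{\T}T^+ = (TT^{\T})^+$, so this also follows from Claim~\ref{clm:detreverse} and Item~\ref{itm:detprod} of Lemma~\ref{lem:proj-det-props}.) In parallel, I would pick bases $B_V, B_W$ of $\lat\cap V$ and $\lat\cap W$ and write $B_V = O_V\bar B_V$, $B_W = O_W\bar B_W$ with $\bar B_V, \bar B_W$ square; since $V\cap W = \set{\vec{0}}$, the concatenation $(B_V, B_W) = T\cdot\mathrm{diag}(\bar B_V, \bar B_W)$ is a basis of the (internal direct sum) lattice $(\lat\cap V)+(\lat\cap W)$, so $\det((\lat\cap V)+(\lat\cap W))^2 = \det(T^{\T}T)\,\det(\lat\cap V)^2\det(\lat\cap W)^2$. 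Combining the three displays, the $\det(T^{\T}T)$ factors cancel and the claimed equality drops out.

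For the final inequality, I would observe that $(\lat\cap V)+(\lat\cap W)$ is a full-rank sublattice of $\lat\cap(V+W)$ (both have rank $\dim(V+W)$), so $\det((\lat\cap V)+(\lat\cap W)) = [\,\lat\cap(V+W) : (\lat\cap V)+(\lat\cap W)\,]\cdot\det(\lat\cap(V+W)) \ge \det(\lat\cap(V+W))$; dividing the nonnegative quantity $\det_{V+W}(X+Y)$ by the squares of these determinants gives the inequality as stated. I do not expect a serious obstacle: the only fiddly point is the bookkeeping between the ``lattice-coordinate'' space $\R^{\dim V + \dim W}$ (where $T^{\T}T$, $C$, and $\mathrm{diag}(\bar B_V,\bar B_W)$ live) and the ambient $\R^n$ (where $\det_{V+W}(\cdot)$ operates), together with the degenerate cases $V = \set{\vec{0}}$ or $W = \set{\vec{0}}$, which are absorbed by the zero-dimensional-determinant convention.
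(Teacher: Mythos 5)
Your proposal is correct and uses essentially the same approach as the paper: both rely on Claim~\ref{clm:detformula} together with the fact that, by $V \cap W = \{\vec 0\}$, the concatenation of bases of $\lat\cap V$ and $\lat\cap W$ is a basis of $(\lat\cap V)+(\lat\cap W)$, so that the factor $\det_{V+W}((T^+)^\T T^+)$ cancels between numerators and denominators. The only cosmetic difference is that the paper applies Claim~\ref{clm:detformula} twice (to numerators and to denominators) so the factor cancels implicitly, while you compute it explicitly as $\det(T^\T T)^{-1}$ and verify directly that the lattice denominator contributes the matching $\det(T^\T T)$.
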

\begin{proof}
For the inequality, note that since $(\lat \cap V) + (\lat \cap W)
\subseteq \lat \cap (V+W)$ and the dimensions match, the determinant of the
second lattice is no larger than the first.

We now prove the first identity. 
Let $B_V,B_W$ denote bases of $\lat \cap V$, $\lat \cap W$ respectively.  
Then 
\[
  \det(\lat \cap V)^2 = \det_V(B_V B_V^\T) \text{ and }
	\det(\lat \cap W)^2 = \det_W(B_W B_W^\T) \; .
\]
Moreover, by linear independence, we see that $B_{V,W} = (B_V,B_W)$ is a basis for $(\lat
\cap V) + (\lat \cap W)$, and therefore, 
\[
  \det((\lat \cap V) + (\lat \cap W))^2 = \det_{V+W}(B_{V,W} B_{V,W}^\T) = \det_{V+W}(B_{V} B_V^\T + B_W B_{W}^\T) \; .
\]
The result now follows by applying Claim~\ref{clm:detformula} twice, once to the numerators and once to the denominators. 
\end{proof}

\begin{lemma}
\label{lem:mat-max}
Let $A \succeq 0$ denote an $n \times n$ matrix with range $W$. Then
\[
\max \set{\det_W(X)\det_W(Y) : X,Y \succeq 0, X+Y = A} = \det_W(A/2)^2 \text{.} 
\]
\end{lemma}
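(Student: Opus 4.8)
The plan is to reduce to the full-rank case and then diagonalize simultaneously. First I would note that everything happens inside $W = \ran(A)$: since $X, Y \succeq 0$ and $X + Y = A$, we automatically have $\ran(X), \ran(Y) \subseteq W$, so we may work in the $d$-dimensional space $W$ with $d = \dim W$, and without loss of generality take $A \succ 0$ on $W$ (otherwise restrict to $\ran(A)$; if $A = 0$ the claim is trivial with both sides $1$ by the zero-dimensional convention). Concretely, fixing an orthonormal basis $O_W$ of $W$, write $\bar A = O_W^\T A O_W \succ 0$, $\bar X = O_W^\T X O_W$, $\bar Y = O_W^\T Y O_W$, so that $\bar X + \bar Y = \bar A$ with $\bar X, \bar Y \succeq 0$ and $\det_W(X) = \det \bar X$, etc. The problem becomes: maximize $\det \bar X \det \bar Y$ over $\bar X, \bar Y \succeq 0$ with $\bar X + \bar Y = \bar A$.

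Next I would substitute $\bar X = \bar A^{1/2} P \bar A^{1/2}$, $\bar Y = \bar A^{1/2}(I - P)\bar A^{1/2}$ where $P$ ranges over symmetric matrices with $0 \preceq P \preceq I$ (this is a bijection onto the feasible set since $\bar A^{1/2}$ is invertible). Then $\det \bar X \det \bar Y = \det(\bar A) \cdot \det(P)\det(I-P)$, so it suffices to show $\det(P)\det(I-P) \leq 1/4^d = \det((1/2)I)^2$ for all symmetric $P$ with $0 \preceq P \preceq I$, with equality at $P = \tfrac12 I$. Diagonalizing $P$ with eigenvalues $p_1, \dots, p_d \in [0,1]$, this is $\prod_{i=1}^d p_i(1-p_i) \leq (1/4)^d$, which is immediate from the scalar inequality $t(1-t) \leq 1/4$ on $[0,1]$, with equality iff every $p_i = 1/2$, i.e. $P = \tfrac12 I$. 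Tracing back, the maximizer is $\bar X = \bar Y = \bar A/2$, giving value $\det(\bar A/2)^2 = \det_W(A/2)^2$, and the supremum is attained.

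I do not expect any real obstacle here; the only mild subtlety is bookkeeping the reduction from the ambient $\R^n$ with a degenerate $A$ down to the nondegenerate $d$-dimensional picture, and making sure the $\det_W$ notation and the zero-dimensional convention are handled cleanly. The substitution $\bar X = \bar A^{1/2} P \bar A^{1/2}$ is the key trick that turns the constrained matrix optimization into the trivial scalar AM–GM-type bound $t(1-t) \le 1/4$.
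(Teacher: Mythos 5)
Your proof is correct and takes a genuinely different route from the paper's. The paper's proof reduces to $W=\R^n$ and then argues in two lines: by scalar AM--GM, $\det(X)\det(Y) \le \bigl(\tfrac12\det(X)^{1/n}+\tfrac12\det(Y)^{1/n}\bigr)^{2n}$, and then by concavity of $\det(\cdot)^{1/n}$ on the PSD cone (invoking Lemma~\ref{lem:proj-det-props}, i.e.\ Minkowski's determinant inequality) this is at most $\det(A/2)^2$. You instead whiten by $\bar A^{1/2}$, reparametrizing the feasible set as $\bar X=\bar A^{1/2}P\bar A^{1/2}$, $\bar Y=\bar A^{1/2}(I-P)\bar A^{1/2}$ with $0\preceq P\preceq I$, which factors the objective as $\det(\bar A)^2\det(P)\det(I-P)$ (note you dropped a square on $\det(\bar A)$ in the intermediate display, though the conclusion you draw from it is the correct one) and then reduces to the scalar inequality $t(1-t)\le 1/4$ after diagonalizing $P$. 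The trade-off is that the paper's argument is shorter but leans on the concavity of $\det^{1/n}$ as a black box, while yours is self-contained modulo the whitening substitution, and as a bonus it identifies the unique maximizer $X=Y=A/2$, which the paper does not state. Both arguments are sound.
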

\begin{proof}
If $A=0$, the statement is trivial, so we may assume $A \neq 0$. Furthermore,
by a change of basis, we can assume that $W = \R^n$. Now taking $X,Y$ as above,
\begin{align*}
\det(X)\det(Y) &= \left(\det(X)^{1/(2n)}\det(Y)^{1/(2n)}\right)^{2n} \\
  &\leq \left(\frac{1}{2} \det(X)^{1/n} + 
              \frac{1}{2} \det(Y)^{1/n}\right)^{2n} 
        \quad \left(\text{ by AM-GM }\right) \\
  &\leq \left(\det(\frac{1}{2} X + \frac{1}{2} Y)^{1/n}\right)^{2n} 
     \quad \left(\text{ by concavity, see Lemma~\ref{lem:proj-det-props} }\right) \\
  &= \det(A/2)^2 \text{ ,}
\end{align*}
as needed.
\end{proof}

\begin{proof}[Alternate proof of Lemma~\ref{lem:uncrossing}]
We will use the
following convenient notation: for a linear subspace $V \subseteq \R^n$ and
any set $A \subseteq \R^n$, we write $A/V$ to denote $\pi_{V^\perp}(A)$. 
Let $R=V \cap W$ and $S = V+W$. Starting from the left-hand side, applying the 
matrix determinant Lemma~\ref{lem:det}, we get
\[
\frac{\det_V(X)}{\det(\lat \cap V)^2} \cdot 
\frac{\det_W(Y)}{\det(\lat \cap W)^2} =
\frac{\det_R(X^{\cap R})\det_{V/R}(X^{\da R^\perp})}{\det(\lat \cap R)^2 \det((\lat
\cap V)/R)^2} \cdot 
\frac{\det_R(Y^{\cap R})\det_{W/R}(Y^{\da R^\perp})}{\det(\lat \cap R)^2 \det((\lat
\cap W)/R)^2} \text{ .}
\]
Since $V/R$ and $W/R$ are linearly independent and $V/R+W/R = S/R$,
applying Lemma~\ref{lem:ind-uncrossing}, recalling that projections are linear,
the right-hand is less than or equal to 
\[
\frac{\det_R(X^{\cap R})\det_R(Y^{\cap R})}{\det(\lat \cap R)^4} \cdot 
\frac{\det_{S/R}((X+Y)^{\da R^\perp})}{\det((\lat \cap S)/R)^2} =
\frac{\det_R(X^{\cap R})\det_R(Y^{\cap R})}{\det(\lat \cap R)^2} \cdot 
\frac{\det_{S/R}((X+Y)^{\da R^\perp})}{\det(\lat \cap S)^2} \text{ .}
\]
By Lemma~\ref{lem:mat-max}, we have that 
\[
\det_R(X^{\cap R})\det_R(Y^{\cap R}) \leq \det_R((X^{\cap R}+Y^{\cap R})/2)^2
\leq \det_R((X+Y)^{\cap R}/2)^2 \text{, }
\]
where the last inequality follows by Lemma~\ref{lem:slice} part 3. Moreover,
one can readily verify from Definition~\ref{def:schur-compl} that 
\[
(X+Y - (X+Y)^{\cap R}/2)^{\cap R} = (X+Y)^{\cap R}/2 ~~\text{ and }~~
(X+Y - (X+Y)^{\cap R}/2)^{\da R^\perp} = (X+Y)^{\da R^\perp} \text{ .}
\]
Putting everything together,
\begin{align*}
\frac{\det_R(X^{\cap R})\det_R(Y^{\cap R})}{\det(\lat \cap R)^2} \cdot 
\frac{\det_{S/R}((X+Y)^{\da R^\perp})}{\det(\lat \cap S)^2} 
&\leq \frac{\det_R((X+Y)^{\cap R}/2)^2}{\det(\lat \cap R)^2} \cdot 
\frac{\det_{S/R}((X+Y)^{\da R^\perp})}{\det(\lat \cap S)^2} \\
&= \frac{\det_R((X+Y)^{\cap R}/2)}{\det(\lat \cap R)^2} \cdot 
\frac{\det_S(X+Y-(X+Y)^{\cap R}/2)}{\det(\lat \cap S)^2} \text{ ,}
\end{align*}
where the last equality follows by the matrix determinant lemma. The lemma thus
follows.
\end{proof}

\section{Weaker variants of the main conjecture}
\label{sec:variants}

\newcommand{\ellipse}{{\scalebox{1}[0.7]{$\circ$}}}

\newcommand{\etarelax}{\eta}

In an effort to make progress on the main conjecture, it is natural to consider weaker forms of it
and hope that they would be easier to prove. 
In this section we describe four such forms, some 
quite natural in their own right, and describe their relationship. 
Those weaker forms are obtained by relaxing the quantity appearing 
in the right-hand side of Eq.~\eqref{eq:finalgoal} in the main conjecture,
which we call here $\etarelax_{\det}$.
The first two involve the quantities $\etarelax_\rho$ and $\etarelax_\mu$ where instead
of asking the sublattice $\lat^* \cap W$ to have small determinant, we ask it
to have large Gaussian mass (or equivalently, many lattice points in a ball) 
in the case of $\etarelax_\rho$, or 
its dual to have large covering radius in the case of $\etarelax_\mu$.
The remaining two quantities, 
$\etarelax^{\ellipse}_\rho$ and $\etarelax^{\ellipse}_\mu$, are
obtained from the previous two by replacing the lattice subspace 
by an arbitrary ellipsoid. This avoids the discreteness inherent
in the set of lattice subspaces, and might be more amenable to a proof. 

\begin{definition}\label{def:weakerquantities}
For a lattice $\lat$ we define the following quantities, where $W$ always ranges over
all lattice subspaces of $\lat^*$ of positive dimension. 
\begin{align*}
\etarelax_{\det}(\lat) &= \max_{W}(\det(\lat^* \cap W))^{-1/\dim(W)}  \\
\etarelax_\rho(\lat) &= \max_{s>0,W} s \cdot (\log \rho_{1/s^2}(\lat^* \cap W) / \dim W)^{1/2} \\
\etarelax_\mu(\lat) &= \max_{W} \mu(\pi_W(\lat)) / \sqrt{\dim W} \\
\etarelax^{\ellipse}_\rho(\lat) &= \sup_{X \succ 0} (\log \rho_{X}(\lat^*) / \tr X)^{1/2} \\
\etarelax^{\ellipse}_\mu(\lat) &= \sup_{R \text{ non-singular}} \mu(R\lat) / (\tr(R^\T R))^{1/2}  =
\sup_{R \text{ non-singular}, \|R\|_F \le 1} \mu(R\lat) 
\end{align*}
\end{definition}

\begin{remark}
It is easy to check from the definitions that all six ``$\eta$-type'' parameters are
positively homogeneous, that is for $\tilde{\eta} \in \set{\eta_{\det},
\eta_\rho, \eta_\mu, \eta^{\ellipse}_\rho, \eta^{\ellipse}_\mu, \eta}$ as
above, we have $\tilde{\eta}(\lambda \lat) = \lambda \tilde{\eta}(\lat)$ for
$\lambda > 0$. We will show a stronger property in 
Lemma~\ref{lem:eta-continuity} below. 
Also, in the definitions of $\etarelax^{\ellipse}_\rho$ and $\etarelax^{\ellipse}_\mu$
we can equivalently take the supremum over all (nonzero) matrices $X,R$,
including singular ones. This holds due to standard limit arguments.
\onote{Daniel's proof: We first show that we can assume that $R\lat$ is
discrete, i.e.~a lattice, without decreasing $\mu(R\lat)$. 
Assume not, then let $W = \cap_{\eps > 0} {\rm span}(R\lat \cap \eps B_2^n)$,
noting that $\dim(W) \geq 1$ by assumption. Now let $\pi$ denote the $n \times
n$ projection matrix onto $W^\perp$. From here, it is easy to check that $\|\pi
R\|_F \leq \|R\|_F$, $\mu(\pi R \lat) = \mu(R \lat)$ (note we can move any
target along $W$ at $0$ cost), and that $\pi R \lat$ is discrete (since
otherwise $W$ would be bigger).}
\end{remark}

\newcommand{\vertle}{\rotatebox{270}{$\le$}}

\begin{theorem}
\label{thm:conj-sandwich}
For any lattice $\lat$ we have the inequalities 
\begin{align*}
\frac{1}{C_\eta(n)} \eta(\lat) \le \etarelax_{\det}(\lat) \lesssim &\etarelax_\rho(\lat) \lesssim \etarelax_\mu(\lat) \\[-1.5em]
& ~~~\vertle  \quad \qquad \vertle \\
&\etarelax^{\ellipse}_\rho(\lat) \lesssim \etarelax^{\ellipse}_\mu(\lat) \lesssim  \eta(\lat) \; .
\end{align*}
\end{theorem}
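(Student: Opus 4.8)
The plan is to prove the chain of inequalities in Theorem~\ref{thm:conj-sandwich} one link at a time, grouping them as: (i) the leftmost inequality $\frac{1}{C_\eta(n)}\eta(\lat) \le \etarelax_{\det}(\lat)$, (ii) the top row $\etarelax_{\det} \lesssim \etarelax_\rho \lesssim \etarelax_\mu$, (iii) the two vertical inequalities $\etarelax_\rho \ge \etarelax^{\ellipse}_\rho$ and $\etarelax_\mu \ge \etarelax^{\ellipse}_\mu$ (the diagram reads top $\ge$ bottom), and (iv) the bottom row $\etarelax^{\ellipse}_\rho \lesssim \etarelax^{\ellipse}_\mu \lesssim \eta(\lat)$.

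For (i): this is immediate from the definition of $C_\eta(n)$ in Conjecture~\ref{con:maineta}, Eq.~\eqref{eq:finalgoal}, since $\max_W (\det(\lat^*\cap W))^{-1/\dim W}$ is exactly $\etarelax_{\det}(\lat)$. For (ii), the inequality $\etarelax_{\det} \lesssim \etarelax_\rho$: fix the maximizing subspace $W$ for $\etarelax_{\det}$, put $d = \dim W$, and choose $s = (\det(\lat^*\cap W))^{-1/d}$; then by Minkowski's Second Theorem applied to $\lat^*\cap W$ (or just by counting points), $\rho_{1/s^2}(\lat^*\cap W) \ge 2^{\Omega(d)}$, so $s(\log\rho_{1/s^2}(\lat^*\cap W)/d)^{1/2} \gtrsim s = \etarelax_{\det}(\lat)$. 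For $\etarelax_\rho \lesssim \etarelax_\mu$: fix $W$ and $s$ achieving (close to) the sup for $\etarelax_\rho$; note $\rho_{1/s^2}(\lat^*\cap W) = \rho_{1/s^2}(\pi_W(\lat)^*)$ since $\pi_W(\lat)^* = \lat^*\cap W$ (recalling $W$ is a lattice subspace of $\lat^*$, so $\pi_W(\lat)$ is a lattice), and relate the Gaussian mass on the dual to the smoothing parameter via Theorem~\ref{thm:smoothing-bnd}, then relate the smoothing parameter to the covering radius via the standard bound $\mu(\pi_W(\lat)) \gtrsim \sqrt{d}/\lambda_1((\pi_W(\lat))^*)$ together with $\eta_\eps \le \sqrt{d}/\lambda_1(\cdot)$; care is needed to get the $\sqrt{\dim W}$ normalization to match, but this is a bookkeeping exercise using $\log\rho_{1/s^2} \lesssim (s/\lambda_1)^2 \cdot d$ type estimates.

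For (iii), both vertical inequalities say that restricting the ellipsoid $X$ (resp. the linear map $R$) to be supported on a lattice subspace can only help. For $\etarelax_\rho \ge \etarelax^{\ellipse}_\rho$: given any $X \succ 0$, I would bound $\rho_X(\lat^*)$ in terms of the sublattice structure — essentially, the Gaussian mass $\rho_X(\lat^*)$ is dominated (up to constants, using Minkowski's Second Theorem / Henk's bound, Theorem~\ref{thm:henk-bnd}, applied to the successive minima of $\lat^*$ in the norm induced by $E(X)$) by $2^{O(d)}$ times $\rho$ restricted to the span of the short dual vectors, which is a lattice subspace $W$; then $\tr X$ dominates the trace of the corresponding restricted ellipsoid. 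The $\etarelax_\mu \ge \etarelax^{\ellipse}_\mu$ inequality is the cleaner analogue, essentially observing (as in the posterity note after the Remark in Definition~\ref{def:weakerquantities}) that one may assume $R\lat$ is a lattice and then the relevant directions lie in a lattice subspace. For (iv): $\etarelax^{\ellipse}_\rho \lesssim \etarelax^{\ellipse}_\mu$ is a continuous/ellipsoidal mirror of the $\etarelax_\rho \lesssim \etarelax_\mu$ step — relate Gaussian mass of $\lat^*$ under $X$ to the covering radius of $X^{1/2}\lat$ scaled appropriately, using Claim~\ref{clm:smoothing} and Theorem~\ref{thm:smoothing-bnd} with $\eps = 2^{-n}$. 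Finally $\etarelax^{\ellipse}_\mu \lesssim \eta(\lat)$: for any non-singular $R$ with $\|R\|_F \le 1$, we have $\mu(R\lat) \le \sqrt{n}\,\eta_{2^{-n}}(R\lat) \le \sqrt{n}\cdot\sqrt{n}/\lambda_1((R\lat)^*)$ by Theorem~\ref{thm:smoothing-bnd}; but one must instead go through $\mu(R\lat) \lesssim \sqrt{\tr(R^\T R A)}$ where $A$ is an optimal smoothing covariance for $\lat$, using the Remark after Theorem~\ref{thm:smooth-mu-bnd} (the only property needed is total variation closeness, which is preserved under the transformation $R$), and then $\tr(R^\T R A) \le \|R\|_F^2 \|A\| \lesssim \tr(A) \lesssim \eta(\lat)^2$ — here I would need $\|A\| \lesssim \tr(A)$ which is false in general, so more carefully one uses that the optimal $A$ for $\mu_{\textnormal{sm}}$ satisfies $\tr(R^\T R A) = \tr(A^{1/2} R^\T R A^{1/2}) \le \|A^{1/2} R^\T R A^{1/2}\|_1$ and controls this by $\tr(A) \cdot \|R\|_F^2$ via the inequality $\tr(BC) \le \tr(B)\|C\|$ applied with $B = A$, $C = R^\T R$ and $\|R^\T R\| \le \|R\|_F^2 \le 1$.

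The main obstacle I expect is getting all the normalization constants and the $\sqrt{\dim W}$ versus $\tr$ factors to line up cleanly across the transitions between the ``$\rho$-flavored'' quantities and the ``$\mu$-flavored'' quantities, particularly in steps (ii) ($\etarelax_\rho \lesssim \etarelax_\mu$) and (iv) ($\etarelax^{\ellipse}_\rho \lesssim \etarelax^{\ellipse}_\mu$): these require converting between ``$2^{\Theta(d)}$ lattice points in a ball of radius $r$'' and ``covering radius $\approx r$'', which is only true up to the loss quantified by Minkowski's Second Theorem and Henk's bound, and one has to be careful that these losses are only constant-factor per dimension and hence absorbed into the $\lesssim$. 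The vertical inequality $\etarelax_\rho \ge \etarelax^{\ellipse}_\rho$ is the other delicate point, since it genuinely requires extracting a lattice subspace from an arbitrary ellipsoid, which is where Minkowski's Second Theorem (or the sublattice generated by vectors up to the relevant successive minimum) does the real work.
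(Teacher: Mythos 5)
Your step (iii) has the direction of the vertical inequalities reversed, and this is the central error. The diagram reads top $\le$ bottom, so the claims are $\etarelax_\rho(\lat) \le \etarelax^{\ellipse}_\rho(\lat)$ and $\etarelax_\mu(\lat) \le \etarelax^{\ellipse}_\mu(\lat)$: the ellipsoidal quantities \emph{majorize} the lattice-subspace ones. This orientation is also structurally forced — with your reading the chain would dead-end at $\etarelax_\mu$ with no arrow into $\eta$ — and it is nearly immediate to prove: for a lattice subspace $W$ of $\lat^*$ and $s>0$, take $X_\eps = s^{-2}\pi_W + \eps\pi_{W^\perp}$ (resp.\ $R_\eps = \pi_W + \eps\pi_{W^\perp}$) and let $\eps\to 0$; then $\tr X_\eps \to s^{-2}\dim W$ and $\rho_{X_\eps}(\lat^*)\to\rho_{1/s^2}(\lat^*\cap W)$ (resp.\ $\|R_\eps\|_F^2 \to \dim W$ and $\mu(R_\eps\lat) \to \mu(\pi_W(\lat))$), exhibiting $\etarelax_\rho$ and $\etarelax_\mu$ as restricted suprema of their ellipsoidal counterparts. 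The reverse inequality $\etarelax^{\ellipse}_\rho \lesssim \etarelax_\rho$ that you try to prove by extracting a lattice subspace from an arbitrary ellipsoid is genuinely open: the corresponding containment of smooth covariance bodies does not appear in Corollary~\ref{cor:known-sandwich}, and the Minkowski/Henk route you sketch would lose at least polynomial factors, not just constants.

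Two further gaps in the other steps. For $\etarelax_\rho \lesssim \etarelax_\mu$, both intermediate bounds you invoke are false: $\mu(\lat') \gtrsim \sqrt{d}/\lambda_1((\lat')^*)$ already fails for $\lat' = \Z^{d-1}\oplus\sqrt{d}\,\Z$ (where $\mu(\lat')\lambda_1((\lat')^*)=\Theta(1)$), and $\log\rho_{1/s^2}(\lat^*\cap W) \lesssim (s/\lambda_1)^2 \dim W$ fails for small $s$ (for $\Z^d$ the left side grows like $d\log(1/s)$ while the right side is $s^2 d$). The clean route is Lemma~\ref{lem:lowerboundonmu}, which via Banaszczyk's $\cosh$ inequality (Lemma~\ref{lem:banacosh}) gives $\mu(\pi_W(\lat)) \ge s\sqrt{\log\rho_{1/s^2}(\lat^*\cap W)/\pi}$ directly, exactly the bound needed. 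For $\etarelax^{\ellipse}_\mu \lesssim \eta$, using the optimal $\mu_{\rm sm}$ covariance $A$ only yields $\mu(R\lat)/\|R\|_F \lesssim \sqrt{\tr(A)} = \mu_{\rm sm}(\lat)$, which can be $\Theta(\sqrt{n})\,\eta(\lat)$; instead apply the Remark after Theorem~\ref{thm:smooth-mu-bnd} with the \emph{spherical} covariance $A = \eta(\lat)^2 I_n$, for which $\tr(R^\T R A) = \eta(\lat)^2\|R\|_F^2$ with no slack. (Your worry that $\|A\|\lesssim\tr(A)$ might fail is unfounded — for $A\succeq 0$ the operator norm never exceeds the trace — but the point becomes moot once one uses the spherical $A$.)
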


\begin{proof}
The seven inequalities are proved as follows.
\begin{itemize}
\item
$\eta(\lat) \le C_\eta(n) \etarelax_{\det}(\lat)$:
This is precisely the definition of $C_\eta$. 
\item
$\etarelax_{\det}(\lat) \lesssim \etarelax_\rho(\lat)$:
By Minkowski's first theorem, Theorem~\ref{thm:mink-first}, for any lattice subspace $W$, $\lat^*$ has $\exp(\dim W)$ many points
in a ball of radius $c \sqrt{\dim W} \cdot \det(\lat^* \cap W)^{1/\dim W}$, where $c>0$ is
a universal constant. Therefore, taking $s = c' \cdot \det(\lat^* \cap W)^{-1/\dim W}$ in the definition of $\etarelax_\rho$ for a 
small enough constant $c'>0$ proves the inequality. 
\item
$\etarelax_\rho(\lat) \lesssim \etarelax_\mu(\lat)$:
Since $(\pi_W(\lat))^* = \lat^* \cap W$, this follows from Lemma~\ref{lem:lowerboundonmu} below.
\item
$\etarelax_\rho(\lat) \le \etarelax^{\ellipse}_\rho(\lat)$:
Let $W$ be any lattice subspace of $\lat^*$ and consider $X = s^{-2} \pi_W + \eps \pi_{W^\perp}$. Then as $\eps$ goes to zero, 
$\tr X$ converges to $s^{-2} \dim W$ and 
$\log \rho_{X}(\lat^*)$ converges to $\log \rho_{1/s^2}(\lat^* \cap W)$.
\item 
$\etarelax_\mu(\lat) \le \etarelax^\ellipse_\mu(\lat)$: This is similar to the previous inequality. 
Let $W$ be any lattice subspace of $\lat^*$ and consider $R = \pi_W + \eps \pi_{W^\perp}$.
Then as $\eps$ goes to zero, 
$\tr(R^\T R)$ converges to $\dim W$ and 
$\mu(R\lat)$ converges to $\mu(\pi_W(\lat))$. 
\item
$\etarelax^{\ellipse}_\rho(\lat) \lesssim \etarelax^{\ellipse}_\mu(\lat)$:
By Lemma~\ref{lem:lowerboundonmu} with $s=1$, for any positive semidefinite $X$, 
\[
\log \rho_X(\lat^*) = 
\log \rho(X^{-1/2} \lat^*) = \log \rho((X^{1/2} \lat)^*) \lesssim \mu(X^{1/2} \lat)^2 \;, 
\]
and so the inequality follows by taking
$R=X^{1/2}$ in the definition of $\etarelax^{\ellipse}_\mu$. 
\item
$\etarelax^{\ellipse}_\mu(\lat) \lesssim  \eta(\lat)$:
Observe that the definition of $\mu_{\rm{sm}}$ in~\eqref{eq:smooth-mu} can be equivalently written as 
\[
\mu_{\rm{sm}}(\lat) = \min_{R} \sqrt{\tr(R^\T R)} \cdot \eta(R^{-1} \lat) \text{.}
\]
Then, using Theorem~\ref{thm:smooth-mu-bnd}, we obtain
\[
 \sup_{R} \frac{\mu(R \lat)}{\sqrt{\tr(R^\T R)}} \lesssim 
 \sup_{R} \frac{\mu_{sm}(R \lat)}{\sqrt{\tr(R^\T R)}} 
\le 
\sup_{R} \frac{\sqrt{\tr(R^\T R)} \cdot \eta(R^{-1} R \lat) }{\sqrt{\tr(R^\T R)}}
= 
\eta(\lat)
\;.
\]

\end{itemize}
\end{proof}

\begin{lemma}\label{lem:lowerboundonmu}
For any lattice $\lat$ and $s>0$,
\begin{align}\label{eq:lowerboundonmu}
\mu(\lat) \ge \max_{s>0} s \cdot \sqrt{\frac{\log \rho(s\lat^*)}{\pi}}
\; .	
\end{align}
\end{lemma}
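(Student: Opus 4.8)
The statement is equivalent to: for every fixed $s>0$, $\rho(s\lat^*)\le e^{\pi\mu(\lat)^2/s^2}$ (take $\log$, divide by $\pi$, take square roots, then the supremum over $s$ — note $\rho(s\lat^*)\ge 1$ thanks to the zero vector, so the square root is well-defined). Both sides only involve $\mathrm{span}(\lat)$, so I may assume $\lat$ is full-rank; and since $(\lat/s)^*=s\lat^*$ and $\mu(\lat/s)=\mu(\lat)/s$, it suffices to handle $s=1$, i.e.\ to prove
\[
\rho(\lat^*)\ \le\ e^{\pi\mu(\lat)^2}.
\]

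\textbf{Step 1: pass to the primal via Poisson summation.} Applying the Poisson summation formula \eqref{eq:poisson-summation} to $f(\vec x)=e^{-\pi\|\vec x\|^2}$, which equals its own Fourier transform, with the lattice $\lat^*$ in place of $\lat$ (so its dual is $\lat$), gives $\rho(\lat^*)=\det(\lat)\,\rho(\lat)$. Hence the goal becomes
\[
\det(\lat)\,\rho(\lat)\ \le\ e^{\pi\mu(\lat)^2}.
\]

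\textbf{Step 2: average Banaszczyk's coset bound over the torus.} By Lemma~\ref{lem:banacosh}, for every $\vec t\in\R^n$ we have $\rho(\lat+\vec t)\ge \rho(\vec t)\rho(\lat)=e^{-\pi\|\vec t\|^2}\rho(\lat)$. Since $\rho(\lat+\vec t)$ depends only on $\vec t\bmod\lat$, I may replace $\vec t$ by $\vec t-\vec x_0$ where $\vec x_0\in\lat$ is a closest lattice point, so that $\|\vec t-\vec x_0\|=\dist(\vec t,\lat)$; this yields $\rho(\lat+\vec t)\ge e^{-\pi\dist(\vec t,\lat)^2}\rho(\lat)\ge e^{-\pi\mu(\lat)^2}\rho(\lat)$ for all $\vec t$. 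Now integrate this inequality over a fundamental domain $\mathcal F$ of $\lat$ (e.g.\ the Voronoi cell $\calV(\lat)$, which has volume $\det(\lat)$): the right-hand side integrates to $\det(\lat)\,e^{-\pi\mu(\lat)^2}\rho(\lat)$, while by Tonelli and the tiling property of $\mathcal F$ the left-hand side equals $\int_{\R^n}e^{-\pi\|\vec z\|^2}\,d\vec z=1$. Therefore $1\ge \det(\lat)\,e^{-\pi\mu(\lat)^2}\rho(\lat)$, which is exactly the inequality from Step 1, and the lemma follows after undoing the normalization and scaling.

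\textbf{Main difficulty.} There is no deep obstacle here: once one thinks of averaging Banaszczyk's lower bound $\rho(\lat+\vec t)\ge e^{-\pi\dist(\vec t,\lat)^2}\rho(\lat)$ over the torus, the argument is three lines. The only point requiring care is the duality/scaling bookkeeping — it is the identity $\rho(\lat^*)=\det(\lat)\rho(\lat)$ that converts the statement (which is about the \emph{dual} lattice $s\lat^*$) into one about $\rho(\lat)$ and $\det(\lat)$, and one must be careful to apply it in the direction that makes $\mu(\lat)$ — not $\mu(\lat^*)$ — appear; replacing $\lat$ by $\lat/s$ at the outset is the clean way to keep this straight.
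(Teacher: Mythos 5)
Your proof is correct and follows essentially the same route as the paper: reduce to $s=1$ by scaling, average the Banaszczyk coset bound $\rho(\lat+\vec t)\ge\rho(\dist(\vec t,\lat))\rho(\lat)$ of Lemma~\ref{lem:banacosh} over a fundamental domain, bound $\dist(\vec t,\lat)\le\mu(\lat)$, and convert $\det(\lat)\rho(\lat)$ into $\rho(\lat^*)$ via Poisson summation. The paper merely organizes the inequality chain in the opposite direction, starting from $1=\int_{\R^n}\rho(\vec x)\,d\vec x$ and applying Poisson at the end rather than up front, but the content is identical.
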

\begin{proof}
Below we will prove that
\begin{align}\label{eq:mulowerbound}
\mu(\lat) \ge \sqrt{\frac{\log \rho(\lat^*)}{\pi}} \text{ .}
\end{align}
Noting that $\mu(\lat/s) = \mu(\lat)/s$ and $(\lat/s)^* = s \lat^*$, the lemma
follows by considering all scalings. Using Lemma~\ref{lem:banacosh},
\begin{align*}
1 = \int_{\R^n} \rho(\vec x) d \vec x &= 
\int_{\R^n/\lat} \rho(\lat + \vec x) d \vec x \\
&\ge
\rho(\lat) \int_{\R^n/\lat} \rho(\dist(\vec x,\lat)) d \vec x \\
& \ge \rho(\lat) \det(\lat) \rho(\mu(\lat)) =
\rho(\lat^*) e^{-\pi \mu(\lat)^2 } \; ,
\end{align*}
which implies~\eqref{eq:mulowerbound} by rearranging.
\end{proof}

One might wonder if the inequality in Lemma~\ref{lem:lowerboundonmu}
also holds in reverse, say up to polylogarithmic factors. It turns out that this
is an easy consequence of the $\ell_2$ KL conjecture, and we may therefore
refer to it as \emph{the weak-KL conjecture}. Intuitively, it shows that the covering radius
can be characterized up to polylogarithmic factors by the norm distribution of points
in the dual lattice. To make this more precise, we show in Lemma~\ref{lem:mu-lb-pc} below 
that the maximum in~\eqref{eq:lowerboundonmu} has a simple
equivalent point counting formulation. In particular, we show that it is in
essence the minimum $s > 0$ for which the number of points at any radius $r$ is
bounded by a function of the form $e^{sr}$. We note the interesting similarity
with the point counting formulation of the smoothing parameter in Lemma~\ref{lem:pointcountingeta}, which gives a bound of the form $e^{(sr)^2}$.

\paragraph{Weaker forms of the main conjecture.}
By replacing the quantity $\etarelax_{\det}$ appearing in the main
conjecture with one of other four quantities appearing in Definition~\ref{def:weakerquantities},
we get weaker forms of the main conjecture.
Some of those are quite natural. 
For instance, by using $\etarelax_\mu$, we obtain a conjecture 
that can be interpreted as saying that if $\lat$ is not smooth, 
then there is a certificate for that in the form of a projection where 
the Gaussian ``does not reach'' the covering radius of the projected lattice. 
Also, the conjecture obtained from $\etarelax_\rho^\ellipse$ is quite appealing
as both sides of the inequality only involve the Gaussian mass.
Finally, by using $\etarelax^{\ellipse}_\mu(\lat)$ we obtain the weakest
form of the main conjecture. It turns out that this weakest form is
sufficient for the applications in Section~\ref{sec:mixingtime} and Section~\ref{sec:gapspp},
and we therefore define it explicitly. 

\begin{conjecture}\label{con:covweaketa}(Weak conjecture)
Let $C^{(\mu,\ellipse)}_\eta(n)>0$ be the smallest number such that for 
any $n$-dimensional lattice $\lat \subset \R^n$,
\begin{align}\label{eq:covweaketagoal}
   \eta(\lat) \le 
	 C^{(\mu,\ellipse)}_\eta(n) \etarelax^{\ellipse}_\mu(\lat) \text{ .}
\end{align}
Then $C^{(\mu,\ellipse)}_\eta(n) \le \poly \log n$.
\end{conjecture}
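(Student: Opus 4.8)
The conditional half is immediate: assuming the main conjecture, the weak conjecture follows from the sandwich of Theorem~\ref{thm:conj-sandwich}. Chaining $\eta(\lat)\le C_\eta(n)\,\eta_{\det}(\lat)$ with $\eta_{\det}(\lat)\lesssim\eta_\rho(\lat)\le\eta^{\ellipse}_\rho(\lat)\lesssim\eta^{\ellipse}_\mu(\lat)$ gives $C^{(\mu,\ellipse)}_\eta(n)\lesssim C_\eta(n)\le\poly\log n$. More usefully for an \emph{unconditional} attack, the last link $\eta^{\ellipse}_\rho(\lat)\lesssim\eta^{\ellipse}_\mu(\lat)$ shows it suffices to prove the (formally stronger, but cleaner) $\eta^{\ellipse}_\rho$-variant
\[
\eta(\lat)\ \lesssim\ \poly\log n\cdot\eta^{\ellipse}_\rho(\lat)
\]
in which both sides are pure Gaussian sums. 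The same reduction also drops out from the covering-radius side via Lemma~\ref{lem:lowerboundonmu}: for nonsingular $R$ and $X=R^{\T}R$ (so $\|R\|_F^2=\tr X$ and $(R\lat)^*=R^{-\T}\lat^*$) one has $\mu(R\lat)/\|R\|_F\ge(\log\rho_X(\lat^*)/(\pi\,\tr X))^{1/2}$, so a good $X$ for $\eta^{\ellipse}_\rho$ automatically furnishes a good $R$ for $\eta^{\ellipse}_\mu$. Either way, the task becomes: for every $\lat$, normalized by homogeneity so that $\eta(\lat)=1$ (i.e.\ $\rho_I(\lat^*\setminuszero)=1/2$), exhibit $X\succ0$ with $\log\rho_X(\lat^*)\gtrsim\tr(X)/\poly\log n$.

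The plan for producing this $X$ is to import the continuous machinery of Section~\ref{sec:pisier}. By Poisson summation (Lemma~\ref{lem:coset-mass}, applied to the lattice $\lat^*$), $\rho_X(\lat^*)=\sqrt{\det X}\,\det(\lat)\,\rho_{X^{-1}}(\lat)\ge\sqrt{\det X}\,\det(\lat)$, so $\log\rho_X(\lat^*)$ dominates a log-volume of the ellipsoid $E(X)$ --- exactly the type of quantity the Milman--Pisier theorem controls, and the natural source of the $\poly\log n$ loss in the continuous relaxation. Concretely, I would re-run the convex-relaxation-and-rounding scheme of the proof of Theorem~\ref{thm:mink-to-kl}, but with the covariance now a \emph{free} variable: optimize over $X$ (equivalently, over positions $X^{1/2}\lat$) so as to balance the Gaussian-mass functional against its dual, in the manner of the $M$--$M^*$ estimate --- note, as remarked at the end of Section~\ref{sec:klorigin}, that the $M$--$M^*$ estimate is itself the value of a Gaussian-covariance optimization, so it should slot in naturally --- and then ``uncross'' to extract a single distinguished position.

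The hard part, and the reason this is still a conjecture, is the residual discreteness. Even though $X$ ranges over a continuum, $\rho_X(\lat^*)$ still sees the lattice: it can be dominated by a single short primitive dual vector together with its integer multiples, a one-dimensional contribution that is invisible to the volumetric bound $\sqrt{\det X}\,\det(\lat)$ and to any purely convex-geometric estimate. This is precisely the obstruction to deducing the main conjecture itself from its (true) continuous relaxation, and the extra freedom to choose the ambient metric in $\eta^{\ellipse}_\mu$ does not obviously dissolve it. I would expect genuine progress to require either a new way of accounting for low-dimensional sub-sums inside a continuous estimate, or a Fourier-analytic argument of the kind floated for the variant in Section~\ref{sec:musmoothtight}; as a first sanity check, the whole plan should be carried out for rectangular and for random lattices (Section~\ref{sec:sanity}), where every quantity involved is explicitly computable.
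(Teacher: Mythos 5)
The statement you were asked about is a \emph{conjecture}, not a theorem, and you correctly treat it as such: the paper does not prove it, and neither do you. Your conditional derivation is exactly the paper's: chaining $\eta(\lat)\le C_\eta(n)\,\etarelax_{\det}(\lat)$ from Conjecture~\ref{con:maineta} through the inequalities of Theorem~\ref{thm:conj-sandwich} down to $\etarelax^{\ellipse}_\mu(\lat)$ gives $C^{(\mu,\ellipse)}_\eta(n)\lesssim C_\eta(n)$, so the weak conjecture holds if the main one does. Your subsequent observations are also consistent with the paper: the $\etarelax^{\ellipse}_\rho$-variant being formally stronger yet cleaner, the Poisson-summation lower bound $\rho_X(\lat^*)\ge\sqrt{\det X}\,\det(\lat)$ that ties the problem to volumes and hence to the Milman--Pisier machinery of Section~\ref{sec:pisier}, the $M$--$M^*$ analogy from the end of Section~\ref{sec:klorigin}, and the discreteness obstruction (a single short primitive dual vector dominating $\rho_X(\lat^*)$ while being invisible to volumetric bounds) that the paper itself flags as the barrier between the (true) continuous relaxation and the discrete conjecture. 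You also correctly point to the Fourier-analytic approach floated for the equivalent smooth-$\mu$-tightness formulation of Section~\ref{sec:musmoothtight}.

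One refinement worth noting that the paper provides and your sketch does not: Section~\ref{sec:musmoothtight} proves the weak conjecture is \emph{equivalent} (up to constants) to $C^{(s\mu)}(n)\le\poly\log n$, i.e.~to the statement that the Gaussian smoothing program $\mu_{\rm sm}$ is a tight upper bound on $\mu$. That equivalence is established via the containment-factor duality of Section~\ref{sec:sm-cov-body} and the observation that the average-$\mu$ body ${\rm sm}_{\eta^{\ellipse}_{\bar\mu}}$ has an explicitly computable optimal covariance, $\E_{\vec x\leftarrow\calV(R\lat)}[\vec x\vec x^\T]$. This gives a sharper and more structured target than the generic ``optimize $X$ and uncross'' plan you sketch, and is probably the first thing to internalize before attempting an unconditional attack.
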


\noindent
In Section~\ref{sec:kl-sandwich} we will show that the
weak conjecture combined with the $\ell_2$-KL conjecture imply 
the main conjecture. \dnote{6/21: maybe more specific about the equivalence?} 


\paragraph{Point counting formulation.}
Here we prove the claim made after Lemma~\ref{lem:lowerboundonmu}.

\begin{lemma}
\label{lem:mu-lb-pc}
Let $\lat \subset \R^n$ be an $n$-dimensional lattice. Then
\[
2 \sqrt{\pi} e^{3/2} \max_{r > 0}
 \frac{\log(|\lat \cap r\Ball_2^n|)}{2\pi r} \geq \max_{s > 0} s \sqrt{\frac{\log \rho(s\lat)}{\pi}} \geq \max_{r > 0}
 \frac{\log(|\lat \cap r\Ball_2^n|)}{2\pi r} \text{.}
\]
\end{lemma}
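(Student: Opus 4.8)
The plan is to write $N(r)=|\lat\cap r\Ball_2^n|$ and $\tilde s=\max_{r>0}\frac{\log N(r)}{2\pi r}$, so that $N(r)\le e^{2\pi\tilde s r}$ for every $r>0$ (the maximum defining $\tilde s$ is attained and finite since $N(r)=1$ for small $r$ and $\log N(r)/r\to 0$ as $r\to\infty$). All three quantities in the statement are homogeneous of degree $-1$ in $\lat$, using $|\lambda\lat\cap r\Ball_2^n|=N(r/\lambda)$ and $\rho(s\cdot\lambda\lat)=\rho((s\lambda)\lat)$, so I would normalize $\tilde s=1$, i.e.\ $N(r)\le e^{2\pi r}$ for all $r$, and then prove
\[
\max_{r>0}\tfrac{\log N(r)}{2\pi r}\ \le\ \max_{s>0}s\sqrt{\tfrac{\log\rho(s\lat)}{\pi}}\ \le\ 2\sqrt{\pi}\,e^{3/2}\,.
\]

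For the right (lower) inequality — the easy half — I would, for each $r$ with $N(r)\ge2$, take $s_r=\sqrt{\log N(r)/(2\pi r^2)}$; keeping only the $N(r)$ terms of $\rho(s_r\lat)$ coming from lattice vectors of norm at most $r$ gives $\rho(s_r\lat)\ge N(r)e^{-\pi s_r^2r^2}=N(r)e^{-\frac12\log N(r)}=\sqrt{N(r)}$, whence $\tfrac{s_r^2}{\pi}\log\rho(s_r\lat)\ge\big(\tfrac{\log N(r)}{2\pi r}\big)^2$. Taking the maximum over all admissible $r$ finishes this direction.

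For the left (upper) inequality I would start from the layer-cake identities used already in the proof of Lemma~\ref{lem:pointcountingeta}: $\rho(s\lat)=\int_0^\infty 2\pi s^2 r\,e^{-\pi s^2r^2}N(r)\,dr$ and $\rho(s\lat)-1=\int_0^\infty 2\pi s^2 r\,e^{-\pi s^2r^2}(N(r)-1)\,dr$. Substituting $N(r)\le e^{2\pi r}$ and completing the square in the exponent ($-\pi s^2r^2+2\pi r=-\pi s^2(r-s^{-2})^2+\pi s^{-2}$), routine Gaussian integrals give $\rho(s\lat)\le 1+\tfrac{2\pi}{s}e^{\pi/s^2}$; substituting instead $N(r)-1\le e^{2\pi r}-1\le 2\pi r\,e^{2\pi r}$ gives $\rho(s\lat)-1\le e^{\pi/s^2}\big(\tfrac{2\pi}{s}+\tfrac{4\pi^2}{s^3}\big)$. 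I would then introduce the threshold $s_1$ defined by $\rho(s_1\lat)=3/2$, which exists and is unique since $s\mapsto\rho(s\lat)-1$ is continuous, strictly decreasing, and runs from $+\infty$ to $0$. Evaluating the second estimate at $s=s_1$ gives $\tfrac12\le e^{\pi/s_1^2}(\tfrac{2\pi}{s_1}+\tfrac{4\pi^2}{s_1^3})$, and since the right side tends to $0$ as $s_1\to\infty$ this forces $s_1\le C_0$ for an absolute constant $C_0$ (a one-line computation shows $C_0<14$, i.e.\ $s_1\le14\tilde s$ before normalizing). Now I would split on $s$. For $s\ge s_1$, Lemma~\ref{lem:mass-decrease}(1), applied with covariance $s_1^{-2}I$ and $t=s/s_1\ge1$, gives $\rho(s\lat)-1\le(\rho(s_1\lat)-1)^{(s/s_1)^2}=2^{-(s/s_1)^2}$, so using $\log(1+x)\le x$ and $\max_{u\ge0}u\,2^{-u}=(e\ln2)^{-1}$ (with $u=(s/s_1)^2$),
\[
\tfrac{s^2}{\pi}\log\rho(s\lat)\ \le\ \tfrac{s^2}{\pi}\big(\rho(s\lat)-1\big)\ \le\ \tfrac{s^2}{\pi}\,2^{-(s/s_1)^2}\ \le\ \tfrac{s_1^2}{\pi e\ln2}\,.
\]
For $0<s\le s_1$, the bound $\rho(s\lat)\le1+\tfrac{2\pi}{s}e^{\pi/s^2}$ gives $\tfrac{s^2}{\pi}\log\rho(s\lat)\le\tfrac{s^2}{\pi}\log\big(1+\tfrac{2\pi}{s}e^{\pi/s^2}\big)$, whose right-hand side is continuous on $(0,s_1]$ and has a finite limit as $s\to0^+$, hence is bounded on $(0,s_1]$ by a constant depending only on $s_1$. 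Taking the larger of the two regime bounds and tracking the (generous) numerical constants gives $\tfrac{s^2}{\pi}\log\rho(s\lat)\le 4\pi e^3=(2\sqrt\pi e^{3/2})^2$ for every $s>0$; un-normalizing $\tilde s$ yields the claim.

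The main obstacle is the large-$s$ regime. Any bound that uses only $N(r)\le e^{2\pi\tilde s r}$ tolerates nonzero lattice vectors of arbitrarily small norm, so it can only give $\rho(s\lat)\le(\text{constant}>1)$, which makes $s^2\log\rho(s\lat)$ diverge as $s\to\infty$ — even though the quantity being bounded tends to $0$ there. The remedy is to pass to $\rho(s\lat)-1$ and invoke the submultiplicativity in Lemma~\ref{lem:mass-decrease}(1), which upgrades a single sub-unit value $\rho(s_1\lat)-1<1$ into genuine Gaussian-type decay $2^{-(s/s_1)^2}$; the one point that still needs care is that the threshold $s_1$ is $O(\tilde s)$, which is exactly what the square-completed estimate for $\rho(s\lat)-1$ provides. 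Everything else is standard Gaussian integral estimates and constant bookkeeping.
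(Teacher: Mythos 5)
Your proof takes essentially the same route as the paper's: the lower bound by plugging in the optimal $s_r^2 = \log N(r)/(2\pi r^2)$ and restricting the Gaussian sum to lattice points of norm at most $r$, and the upper bound via the layer-cake identity, completing the square to get $\rho(s\lat)\le 1 + (2\pi\alpha/s)e^{\pi(\alpha/s)^2}$, then splitting on $s$ and using the submultiplicativity of $\rho(\cdot\setminuszero)$ (Lemma~\ref{lem:mass-decrease}, part 1) in the large-$s$ regime. The paper's version is slightly cleaner: it uses the fixed threshold $s = 2\pi e^2\alpha$, at which the layer-cake bound already gives $\rho-1\le 1/e$ directly, so your auxiliary estimate $\rho(s\lat)-1 \le e^{\pi/s^2}(2\pi/s+4\pi^2/s^3)$ and the intermediate bound $s_1 \le 14\alpha$ become unnecessary, and the paper carries out the small-$s$ numerics explicitly to hit $2\sqrt\pi e^{3/2}$ exactly rather than leaving the (admittedly generous) final constant check to the reader.
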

\begin{proof}
We first prove the second inequality. Fixing $r > 0$, we have
that
\begin{align*}
\max_{s > 0} \frac{1}{\pi} s^2 \log \rho(s\lat) 
&\geq \max_{s > 0} \frac{1}{\pi} s^2\log \rho(s(\lat \cap r\Ball_2^n)) \\
&\geq \max_{s > 0} \frac{1}{\pi} s^2(\log(|\lat \cap r\Ball_2^n|) - \pi s^2 r^2) \\
&= \max_{t > 0} \frac{1}{\pi} (t\log(|\lat \cap r\Ball_2^n|) - \pi t^2 r^2) \\
 &= \frac{\log(|\lat \cap r\Ball_2^n|)^2}{4\pi^2 r^2} \text{, } 
\end{align*}
where the last equality follows by setting $t=\log(|\lat \cap
r\Ball_2^n|)/(2\pi r^2)$. 

We now prove the first inequality. First note that the expression 
\[
\max_{r > 0} \frac{\log(|\lat \cap r\Ball_2^n|)}{2\pi r} \text{ ,}
\]
is simply the minimum number $\alpha > 0$ such that $|\lat \cap r\Ball_2^n|
\leq e^{2\pi \alpha r} ~~ \forall r \geq 0$.

Now fix $s > 0$. From here,
\begin{align*}
\rho(s\lat) &= \sum_{\vec{y} \in \lat} e^{-\pi\|s\vec{y}\|^2_2} \\
    &= \int_0^\infty 
     |\lat \cap r\Ball_2^n| \cdot 2\pi s^2 r \cdot e^{-\pi s^2 r^2} dr \\
    &\leq \int_0^\infty 
      e^{2\pi\alpha r} \cdot 2\pi s^2 r \cdot e^{-\pi s^2 r^2} dr \\
    &= -e^{2\pi \alpha r} e^{-\pi s^2 r^2}\big|_{r=0}^\infty + 
          \int_0^\infty 2\pi\alpha e^{2\pi \alpha r} e^{-\pi s^2 r^2} dr \\
    &= 1 + e^{\pi(\alpha/s)^2} \int_0^\infty 
            2\pi\alpha e^{-\pi (s r - \alpha/s)^2} dr \\
    &\leq 1 + e^{\pi(\alpha/s)^2} \int_{-\infty}^\infty 
              2\pi\alpha e^{-\pi (s r)^2} dr \\
    &= 1 + (2\pi \alpha) e^{\pi(\alpha/s)^2}/s \text{ .}
\end{align*}

We will distinguish two cases for $s$.

\paragraph{Case 1: $s \geq 2\pi e^2 \alpha $.} Given the above, if $s \geq 2\pi e^2 \alpha$,
we clearly have that $\rho(s \lat) \leq 1+1/e$, and hence $\rho(s \lat
\setminuszero) \leq 1/e$. Parametrizing $s = 2\pi e^2 \alpha t$ 
for $t \geq 1$, by Lemma~\ref{lem:mass-decrease}, we have that
\begin{align*}
s \sqrt{ \frac{\log \rho(s \lat)}{\pi} } 
&= 2\pi e^2 \alpha  \cdot t \sqrt{\frac{\log(1 + \rho_{1/t^2}(2\pi e^2 \alpha \cdot \lat
\setminuszero))}{\pi}} \\
&\leq 2\sqrt{\pi} e^2 \alpha  \cdot t e^{-t^2/2} \leq 2\sqrt{\pi}e^{3/2}\alpha \text{,}
\end{align*}
noting that the above is maximized at $t=1$.

\paragraph{Case 2: $s \leq 2 \pi e^2 \alpha $.} In this case, parametrizing $s =
2\pi e^2 \alpha t$ for $t \in [0,1]$, we have
\begin{align*}
s \sqrt{ \frac{\log \rho(s\lat)}{\pi} }
&\leq s \sqrt{\frac{\log(1 + (2\pi \alpha) e^{\pi (\alpha/s)^2}/s)}{\pi}} \\
&= 2 \sqrt{\pi} e^2 \alpha \cdot t \cdot \sqrt{\log(1 + e^{\pi (2\pi e^2 t)^{-2}}/(te^2))} \\
&\leq 2 \sqrt{\pi} e^2 \alpha \cdot t \cdot 
         \sqrt{\log((e^2+1)e^{\pi (2\pi e^2 t)^{-2}}/(te^2))} \\
&\leq 2 \sqrt{\pi} e^2 \alpha 
         \sqrt{t^2 \log(1+1/e^2) + \pi (2\pi e^2)^{-2} - (t^2/2) \log t^2} \\
&\leq 2 \sqrt{\pi} e^2 \alpha 
         \sqrt{\log(1+1/e^2) + \pi (2\pi e^2)^{-2} + 1/(2e)} \\
&\leq 2 \sqrt{\pi} e^{3/2} \alpha \text{ .} 
\end{align*}
\end{proof}

\paragraph{Continuity of the $\eta$ parameters.}
We end this section by showing basic continuity properties of the $\eta$-type parameters.
\begin{lemma}
\label{lem:eta-continuity}
Let $\tilde{\eta} \in \set{\eta_{\det}, \eta_\rho, \eta_\mu,
\eta^{\ellipse}_\rho, \eta^{\ellipse}_\mu, \eta}$. For $\lat \subset \R^n$ an
$n$-dimensional lattice, for an invertible transformation $T \in \R^{n \times n}$,
\[
\|T^{-1}\|^{-1}\tilde{\eta}(\lat) \leq \tilde{\eta}(T\lat) \leq \|T\|
\tilde{\eta}(\lat) \text{ .}
\] 
In particular, $\tilde{\eta}(\lat) = \tilde{\eta}(T\lat)$ if $T$ is an
orthogonal transformation. Furthermore, the map $T \mapsto
\tilde{\eta}(T\lat)$, with domain equal to the space of $n \times n$ invertible
matrices, is continuous.
\end{lemma}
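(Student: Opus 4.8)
The plan is to prove the two-sided bound $\|T^{-1}\|^{-1}\tilde\eta(\lat)\le\tilde\eta(T\lat)\le\|T\|\,\tilde\eta(\lat)$ first, and then deduce the orthogonal-invariance and continuity statements as easy corollaries. By symmetry, it suffices to prove only the upper bound $\tilde\eta(T\lat)\le\|T\|\,\tilde\eta(\lat)$: applying it with $T^{-1}$ in place of $T$ and $T\lat$ in place of $\lat$ gives $\tilde\eta(\lat)\le\|T^{-1}\|\,\tilde\eta(T\lat)$, which rearranges to the lower bound. Moreover, since all six parameters are positively homogeneous (as noted in the Remark after Definition~\ref{def:weakerquantities}), and since $\|T\|=\|(T/\|T\|)\|\cdot\|T\|$, it is enough to prove that $\tilde\eta(T\lat)\le\tilde\eta(\lat)$ whenever $\|T\|\le 1$, i.e.\ whenever $T^\T T\preceq I$. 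So the core claim is: \emph{if $\|T\|\le 1$ then $\tilde\eta(T\lat)\le\tilde\eta(\lat)$ for each of the six parameters.}

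I would verify this core claim parameter by parameter, the common thread being that a contraction can only shrink things. For $\eta^{\ellipse}_\mu$: any $R$ with $\|R\|_F\le 1$ witnessing a value for $T\lat$ gives $RT$ with $\|RT\|_F\le\|R\|_F\|T\|\le 1$ witnessing the same value $\mu(RT\lat)$ for $\lat$, so $\eta^{\ellipse}_\mu(T\lat)\le\eta^{\ellipse}_\mu(\lat)$. For $\eta^{\ellipse}_\rho$: given $X\succ 0$, we have $\rho_X((T\lat)^*)=\rho_X((T^{-\T})(\lat^*))=\rho_{T^{-1}XT^{-\T}}(\lat^*)$ after the usual change of variables $\vec y\mapsto T^{-\T}\vec y$ inside the exponent (since $(T^{-\T}\vec y)^\T X^{-1}(T^{-\T}\vec y)=\vec y^\T (T^{-1}XT^{-\T})^{-1}\vec y$); and $\tr(T^{-1}XT^{-\T})=\tr(X\,T^{-\T}T^{-1})\ge\tr(X)$ because $T^{-\T}T^{-1}=(TT^\T)^{-1}\succeq I$ when $\|T\|\le1$. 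Hence the ratio $\log\rho/\tr$ can only decrease, giving $\eta^{\ellipse}_\rho(T\lat)\le\eta^{\ellipse}_\rho(\lat)$. The remaining four parameters $\eta_{\det},\eta_\rho,\eta_\mu,\eta$ range over \emph{lattice subspaces} rather than arbitrary ellipsoids, so the change-of-variables argument does not directly transport a subspace $W$ for $T\lat$ back to a subspace for $\lat$ — instead I would route through the already-established chain of inequalities in Theorem~\ref{thm:conj-sandwich} together with the two ellipsoidal cases. Concretely, $\eta^{\ellipse}_\mu$ already dominates $\eta_\mu$, and one checks directly that $\eta(\lat)=\mu_{\rm sm}$-type quantities transform correctly (indeed $\eta$ itself: $\rho_{1/s^2}((T\lat)^*)=\rho_{1/s^2}(T^{-\T}\lat^*)$ and a contraction $\|T\|\le 1$ means $T^{-\T}$ expands, which only increases the Gaussian sum at fixed $s$, forcing the smoothing parameter to decrease). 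For $\eta_\mu$ and $\eta_\rho$ one can argue more carefully that a lattice subspace of $(T\lat)^*$ pulls back appropriately, or simply invoke that the sandwich inequalities of Theorem~\ref{thm:conj-sandwich} are all within universal constants and homogeneous — but since the Lemma asks for the \emph{exact} operator-norm bound, I would give the direct argument for each: for $\eta_\mu$, if $W$ is a lattice subspace of $(T\lat)^*=T^{-\T}\lat^*$, then $T^\T W$ is a lattice subspace of $\lat^*$, and one relates $\mu(\pi_W(T\lat))$ to $\mu(\pi_{T^\T W}(\lat))$ via the contraction, noting $\dim$ is preserved; similarly for $\eta_\rho$ and $\eta_{\det}$ using $\det(T^{-\T}(\lat^*\cap T^\T W))$ and Claim~\ref{clm:detlattransform}.

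Finally, orthogonal invariance is immediate: if $T$ is orthogonal then $\|T\|=\|T^{-1}\|=1$, so the two-sided bound forces equality. For continuity of $T\mapsto\tilde\eta(T\lat)$ on the open set of invertible matrices: given invertible $T_0$ and $T$ near it, write $T=ST_0$ with $S=TT_0^{-1}$ close to $I$; then by the established bound applied with base lattice $T_0\lat$, $\|S^{-1}\|^{-1}\tilde\eta(T_0\lat)\le\tilde\eta(T\lat)\le\|S\|\,\tilde\eta(T_0\lat)$, and as $T\to T_0$ we have $S\to I$, so $\|S\|\to1$ and $\|S^{-1}\|\to1$, squeezing $\tilde\eta(T\lat)\to\tilde\eta(T_0\lat)$. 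The main obstacle I anticipate is the bookkeeping in the non-ellipsoidal cases $\eta_\mu,\eta_\rho,\eta_{\det}$: one must check that the map $W\mapsto T^\T W$ genuinely bijects lattice subspaces of $(T\lat)^*$ with lattice subspaces of $\lat^*$ preserving dimension, and track the determinant/covering-radius distortion precisely enough to land the clean $\|T\|$ constant rather than something lossy — this is routine linear algebra combined with Claim~\ref{clm:detlattransform}, but it is where care is needed.
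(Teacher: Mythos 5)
Your proposal matches the paper's proof essentially step for step: prove the upper bound $\tilde{\eta}(T\lat)\le\|T\|\,\tilde{\eta}(\lat)$ by checking each of the six parameters directly, deduce the lower bound by substituting $T^{-1}$ and rearranging, and obtain orthogonal invariance and continuity as corollaries via the squeeze $\|S^{-1}\|^{-1}\le\tilde{\eta}(T\lat)/\tilde{\eta}(T_0\lat)\le\|S\|$ with $S=TT_0^{-1}\to I$. Your extra normalization to the case $\|T\|\le1$ is a harmless cosmetic simplification, and the parameter-by-parameter checks you sketch --- change of variables in $\rho$ and $\mu$, the dimension-preserving correspondence between lattice subspaces of $(T\lat)^*$ and of $\lat^*$, and the determinant distortion controlled by $(T^\T T)^{-1}\succeq\|T\|^{-2}I$ together with Claim~\ref{clm:detlattransform} --- are exactly the ones the paper carries out.
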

\begin{proof}
We first note that it suffices to prove $\tilde{\eta}(T\lat) \leq \|T\|
\tilde{\eta}(\lat)$. To recover the lower bound, simply rearrange the inequality
$\tilde{\eta}(\lat) = \tilde{\eta}(T^{-1}T\lat) \leq \|T^{-1}\|
\tilde{\eta}(T\lat)$. In what follows, $W$ will always range over lattice
subspaces of $\lat^*$.

\begin{enumerate}
\item[$\eta_{\det}$:]~ 
$\eta_{\det}(T\lat) = \max_{W}
\det(T^{-\T}(\lat^* \cap W))^{-1/\dim(W)}$.

Let $B_W$ denote a basis of $\lat^* \cap W$. Then $\det(T^{-\T}(\lat^* \cap
W))^2 = \det(B_W^\T (T^\T T)^{-1} B_W)$. Since 
\[
\|T\|^{-2} I \preceq (T^\T T)^{-1} \text{ ,}
\]
we get that
\[
\|T\|^{-2\dim(W)} \det(B_W^\T B_W) \leq \det(B_W^\T (T^\T T)^{-1} B_W) \text{ .}
\]
In particular,
\[
\det(T^{-\T}(\lat^* \cap W))^{-1/\dim(W)}
\leq  \|T\| \det(\lat^* \cap W)^{-1/\dim(W)} \text{, }
\]
and hence $\eta_{\det}(T\lat) \leq \|T\| \eta_{\det}(\lat)$, as needed.

\item[$\eta_\rho$:] Since $\|T\|^{-1}\|\vec{x}\| \leq \|T^{-\T}\vec{x}\|$,
we have that
\begin{align*}
\eta_\rho(T\lat) &= \max_{W,s \geq 0}
s\sqrt{\frac{\log \rho_{1/s^2}(T^{-\T}(\lat^* \cap
W))}{\dim(W)}} 
\leq \max_{W,s \geq 0}
s\sqrt{\frac{\log \rho_{1/s^2}(\|T\|^{-1}(\lat^* \cap
W))}{\dim(W)}} \\
&= \max_{W,s \geq 0}
\|T\| s\sqrt{\frac{\log \rho_{1/s^2}(\lat^* \cap
W)}{\dim(W)}} = \|T\| \eta_\rho(\lat) \text{, }
\end{align*}
as needed.

\item[$\eta_\mu$:] 
Let $W$ be a lattice subspace of $\lat^*$ and $T^{-\T}W$ be the corresponding
lattice subspace of $(T\lat)^* = T^{-\T} \lat^*$. It suffices to show that
$\mu(\pi_{T^{-\T} W} (T \lat)) \le \|T\| \mu(\pi_W(\lat))$. 
Let $V$ be the orthogonal complement of $W$, and notice that $TV$ is then
the orthogonal complement of $T^{-\T}W$. Take any $\vec{x} \in \R^n$. 
By definition, there exist $\vec{y} \in \lat$ and $\vec{z} \in V$
such that 
\[
\| T^{-1} \vec{x} - (\vec{y}+\vec{z})\|_2 \le \mu(\pi_W(\lat)) \; . 
\]
Therefore, 
\[
\| \vec{x} - (T\vec{y}+T\vec{z})\|_2 \le \|T\| \mu(\pi_W(\lat)) \; . 
\]
But $T \vec{y} \in T \lat$ and $T \vec{z} \in TV$, the orthogonal complement 
of $T^{-\T}W$, as needed. 

\item[$\eta^\ellipse_\rho$:] 
\begin{align*}
\eta^{\ellipse}_{\rho}(T\lat)^2 
&= \max_{X \succ 0} \frac{\rho_X(T^{-\T} \lat^*)}{\tr(X)} 
= \max_{X \succ 0} \frac{\rho_{T^\T X T}(\lat^*)}{\tr(X)} 
= \max_{X \succ 0} \frac{\rho_{X}(\lat^*)}{\tr(X(T^\T T)^{-1})} \\
&\leq \|T\|^2 \max_{X \succ 0} \frac{\rho_{X}(\lat^*)}{\tr(X)} = \|T\|^2
\eta^\ellipse_\rho(\lat)^2 \text{ .}
\end{align*}

\item[$\eta^\ellipse_\mu$:]
\begin{align*}
\eta^{\ellipse}_{\mu}(T\lat)^2
&= \max_{R \text{ non-singular }} \frac{\mu(R T\lat)^2}{\tr(R^\T R)} 
= \max_{R \text{ non-singular }} \frac{\mu(R\lat)^2}{\tr(R^\T R (T^\T
T)^{-1})} \\
&\leq \|T\|^2 \max_{R \text{ non-singular }} \frac{\mu(R\lat)^2}{\tr(R^\T R)} =
\|T\|^2 \eta^{\ellipse}_{\mu}(\lat)^2 \text{. }
\end{align*} 
\item[$\eta$:] 
\begin{align*}
\eta(T\lat) &= \min \set{s \geq 0: \rho_{1/s^2}(T^{-\T} \lat^*) \leq 1} \\
&\leq \min \set{s \geq 0: \rho_{1/s^2}(\|T\|^{-1} \lat^*) \leq 1} \\
&= \|T\| \min \set{s \geq 0: \rho_{1/s^2}(\lat^*) \leq 1} 
=\|T\| \eta(\lat) \text{ .}
\end{align*}
\end{enumerate}
For the ``in particular'' part, note that for an orthogonal transformation $T$ we have
$\|T^{-1}\| = \|T\| = 1$, as needed. For the ``furthermore'' part, let $R \in \R^{n
\times n}$ be an invertible matrix. We must show that the map $T \mapsto
\tilde{\eta}(T\lat)$ is continuous at $R$.  Define $U_\delta = \set{(I+\Delta)
R: \Delta \in \R^{n \times n}, \|\Delta\| < \delta}$ for $\delta < 1$.  Here, it
is easy to see that $(U_{1/2^n})_{n=1}^\infty$ forms a convergent sequence of
open neighborhoods around $R$ in the space of invertible matrices.  Continuity
now follows from the first part since
\begin{align*}
(1-\delta) \tilde{\eta}(R\lat) 
&\leq \|(I+\Delta)^{-1}\|^{-1} \tilde{\eta}(R\lat) \\
&\leq \tilde{\eta}((I+\Delta)R\lat) \\
&\leq \|I+\Delta\| \tilde{\eta}(R\lat) \leq (1+\delta) \tilde{\eta}(R\lat) \text{ , }
\end{align*}
whenever $\|\Delta\| < \delta < 1$.
\end{proof}


\subsection{The Kannan-Lov{\'a}sz conjecture and the weak conjecture}
\label{sec:kl-sandwich}

The goal of this section is to show that the $\ell_2$ Kannan-Lov{\'a}sz conjecture is
equivalent to a polylogarithmic bound on the worst case ratio between
$\eta^{\ellipse}_\mu$ and $\eta_{\det}$. 

\begin{definition} 
Let $C^{\det}_{(\mu,\ellipse)}(n)$ denote the smallest number such that
for any $n$-dimensional lattice $\lat \subset \R^n$,
\[
\eta^\ellipse_\mu(\lat) \leq C^{\det}_{(\mu,\ellipse)}(n) \cdot \eta_{\det}(\lat)
\text{ .}
\]
\end{definition}

The main equivalence is given below.

\begin{theorem}
\label{thm:kl-sandwich}
For $n \ge 1$, $C^{\det}_{(\mu,\ellipse)}(n) \leq C_{KL}(n) \lesssim \log n
\cdot C^{\det}_{(\mu,\ellipse)}(n)$.
\end{theorem}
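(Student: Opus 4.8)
The statement has two directions. The easy inequality $C^{\det}_{(\mu,\ellipse)}(n) \leq C_{KL}(n)$ should follow by relating $\eta^\ellipse_\mu$ to $\mu$ directly: for any non-singular $R$ with $\|R\|_F \le 1$, the $\ell_2$ KL conjecture (with constant $C_{KL}(n)$) applied to the lattice $R\lat$ gives $\mu(R\lat) \leq C_{KL}(n) \max_W \sqrt{\dim W}\,\det((R\lat)^* \cap W)^{-1/\dim W}$. The plan is to massage the right-hand side, using $(R\lat)^* = R^{-\T}\lat^*$, Claim~\ref{clm:detlattransform}, and the constraint $\|R\|_F \le 1$ (hence $\det_W(R^\T R)^{1/\dim W} \le \tr(R^\T R)/\dim W \le 1/\dim W$ by AM--GM over the eigenvalues restricted to $W$), to show the whole expression is $\lesssim \max_W \det(\lat^* \cap W)^{-1/\dim W} = \eta_{\det}(\lat)$. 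Taking the supremum over $R$ then yields $\eta^\ellipse_\mu(\lat) \leq C_{KL}(n)\,\eta_{\det}(\lat)$ (up to the universal constants absorbed into $\lesssim$), establishing the first inequality.

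For the harder direction $C_{KL}(n) \lesssim \log n \cdot C^{\det}_{(\mu,\ellipse)}(n)$, the idea is to bound $\mu(\lat)$ through the chain of convex relaxations already developed in Section~\ref{sec:klmain}, but now feeding in the bound $\eta^\ellipse_\mu \lesssim C^{\det}_{(\mu,\ellipse)}(n)\,\eta_{\det}$ in place of (or in addition to) the main conjecture. Concretely, by Theorem~\ref{thm:smooth-mu-bnd} we have $\mu(\lat) \lesssim \mu_{\rm sm}(\lat)$. The plan is to show $\mu_{\rm sm}(\lat) \lesssim C^{\det}_{(\mu,\ellipse)}(n)\,\mu_{\det}(\lat)$: given an optimal solution $A$ for $\mu_{\det}$, the constraints say $\det_W(A) \ge \det(\lat^* \cap W)^{-2}$ for all lattice subspaces $W$ of $\lat^*$, which by Claim~\ref{clm:detlattransform} is exactly the statement that $\eta_{\det}(A^{1/2}\lat)^{-1} \ge 1$, i.e. $\eta_{\det}(A^{1/2}\lat) \le 1$. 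By definition of $C^{\det}_{(\mu,\ellipse)}(n)$ this gives $\eta^\ellipse_\mu(A^{1/2}\lat) \le C^{\det}_{(\mu,\ellipse)}(n)$, and taking $R = I$ inside the supremum defining $\eta^\ellipse_\mu$ shows $\mu(A^{1/2}\lat) \le C^{\det}_{(\mu,\ellipse)}(n)\sqrt{\tr(A)}$. One then checks that $\mu(A^{1/2}\lat)$ controls $\mu(\lat)$ up to the geometry of $A$ — here I would instead argue at the level of the convex program, showing that $C^{\det}_{(\mu,\ellipse)}(n)^2 A$ (suitably scaled) smooths $\lat$ enough to be feasible for $\mu_{\rm sm}$, mirroring Step~2 of the proof of Theorem~\ref{thm:det-mu-bnd} but with $C^{\det}_{(\mu,\ellipse)}(n)$ replacing $C_\eta(n)$. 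Finally, Theorems~\ref{thm:det-mu-dual} and~\ref{thm:subspace-rounding} give $\mu_{\det}(\lat) = \mu_{\det}^{\rm dual}(\lat) \lesssim \log n \cdot \max_W \sqrt{\dim W}\,\det(\lat^* \cap W)^{-1/\dim W}$, and stringing these together produces $\mu(\lat) \lesssim \log n \cdot C^{\det}_{(\mu,\ellipse)}(n) \max_W \sqrt{\dim W}\det(\lat^*\cap W)^{-1/\dim W}$, i.e. $C_{KL}(n) \lesssim \log n \cdot C^{\det}_{(\mu,\ellipse)}(n)$.

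The main obstacle I anticipate is the step $\mu_{\rm sm}(\lat) \lesssim C^{\det}_{(\mu,\ellipse)}(n)\,\mu_{\det}(\lat)$: unlike in Theorem~\ref{thm:det-mu-bnd}, where the hypothesis was a clean bound on the \emph{smoothing parameter} $\eta$ that plugs directly into the single Gaussian-sum constraint of $\mu_{\rm sm}$, here the hypothesis is phrased in terms of the \emph{covering radius} quantity $\eta^\ellipse_\mu$, so I need to convert a covering-radius statement about every transformation $R(A^{1/2}\lat)$ back into a statement that $A$ (up to constants) satisfies $\rho_{(\cdot A)^+}(\lat^* \setminuszero) \le 1/2$. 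The bridge is Theorem~\ref{thm:smooth-mu-bnd}'s remark together with the observation that $\mu_{\rm sm}(\lat) = \min_R \sqrt{\tr(R^\T R)}\,\eta(R^{-1}\lat)$ (used in the proof of Theorem~\ref{thm:conj-sandwich}) and the relation between $\eta^\ellipse_\mu$ and $\mu_{\rm sm}$ established there — but one must be careful that $\eta^\ellipse_\mu$ bounds $\mu_{\rm sm}$ only after the detour through Lemma~\ref{lem:lowerboundonmu} and Lemma~\ref{lem:pointcountingeta}, which could cost an extra constant (not a $\log$) factor. I expect the bookkeeping of these constants, rather than any conceptual difficulty, to be where care is needed; the logarithmic loss comes solely and unavoidably from the subspace-rounding step.
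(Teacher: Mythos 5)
Your plan for the first inequality $C^{\det}_{(\mu,\ellipse)}(n) \leq C_{KL}(n)$ is essentially the paper's argument (applying $\ell_2$-KL to $R\lat$, pulling out $\det_W(R^\T R)$ via Claim~\ref{clm:detlattransform}, and using AM--GM on the eigenvalues of $O_W^\T R^\T R O_W$), and it is correct.

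For the harder direction, your plan has a genuine gap that is not a matter of constant bookkeeping. You want to show $\mu_{\rm sm}(\lat) \lesssim C^{\det}_{(\mu,\ellipse)}(n)\,\mu_{\det}(\lat)$ by taking an optimal $A$ for $\mu_{\det}$ and arguing that a suitable multiple of $A$ is feasible for $\mu_{\rm sm}$. Feasibility for $\mu_{\rm sm}$ means $\eta(A^{-1/2}\lat)\lesssim C^{\det}_{(\mu,\ellipse)}(n)$ (note the sign: the constraints of $\mu_{\det}$ translate to $\eta_{\det}(A^{-1/2}\lat)\leq 1$, not $\eta_{\det}(A^{1/2}\lat)\leq 1$). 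But all the definition of $C^{\det}_{(\mu,\ellipse)}(n)$ gives you is $\eta^\ellipse_\mu(A^{-1/2}\lat)\leq C^{\det}_{(\mu,\ellipse)}(n)$, and by Theorem~\ref{thm:conj-sandwich} the inequality $\eta^\ellipse_\mu \lesssim \eta$ points the \emph{wrong} way: an upper bound on $\eta^\ellipse_\mu$ tells you nothing about $\eta$. Upgrading it to a bound on $\eta$ is precisely the content of the weak conjecture (Conjecture~\ref{con:covweaketa}, via Section~\ref{sec:musmoothtight}), which you cannot assume here because Theorem~\ref{thm:kl-sandwich} is unconditional. So the ``bridge'' you describe through Lemmas~\ref{lem:lowerboundonmu} and \ref{lem:pointcountingeta} does not exist; those lemmas give lower bounds on $\mu$ in terms of Gaussian mass, not the reverse.

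The paper's fix is to bypass $\mu_{\rm sm}$ entirely. Instead of $\mu(\lat)\lesssim\mu_{\rm sm}(\lat)$ it uses the (tighter, and trivially valid) bound
\[
\mu(\lat)^2 \leq \min\set{\tr(X): X\succeq 0,\ \eta^\ellipse_\mu(X^{-1/2}\lat)\leq 1},
\]
which follows immediately by taking $R=I$ inside the definition of $\eta^\ellipse_\mu(X^{-1/2}\lat)$. Now the feasibility constraint is already a covering-radius statement, so $\eta_{\det}(A^{-1/2}\lat)\leq 1$ and the definition of $C^{\det}_{(\mu,\ellipse)}(n)$ immediately make $C^{\det}_{(\mu,\ellipse)}(n)^2 A$ feasible, with no conversion to a smoothing-parameter statement required. (Equivalently and most directly: from $\eta^\ellipse_\mu(A^{-1/2}\lat)\leq C^{\det}_{(\mu,\ellipse)}(n)$, choose $R=A^{1/2}$ in the supremum to get $\mu(\lat)/\sqrt{\tr A}\leq C^{\det}_{(\mu,\ellipse)}(n)$, i.e.\ $\mu(\lat)\leq C^{\det}_{(\mu,\ellipse)}(n)\,\mu_{\det}(\lat)$.) The rest of the proof -- invoking Theorems~\ref{thm:det-mu-dual} and \ref{thm:subspace-rounding} to bound $\mu_{\det}(\lat)$ by $O(\log n)$ times the KL quantity -- is as you anticipated.
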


 \dnote{6/21: when you're done with section 6, maybe just find a place to
remind people that KL can now equivalently be thought of as a containment
factor. Just a sentence would be enough I think. }
The first inequality immediately implies that the weak conjecture 
together with the $\ell_2$-KL conjecture imply the main conjecture.
The second inequality is a sharpened version of our main result, 
Theorem~\ref{thm:mink-to-kl}, since 
$C^{\det}_{(\mu,\ellipse)}(n) \lesssim C_\eta(n)$ by Theorem~\ref{thm:conj-sandwich}.

\begin{proof}
We prove $C^{\det}_{(\mu,\ellipse)}(n) \leq C_{KL}(n)$. Let $\lat \subset \R^n$
be an $n$-dimensional lattice. Let $R \in \R^{n \times n}$ be a non-singular
matrix. Noting that $(R\lat)^* = R^{-\T} \lat^*$, by the definition of
$C_{KL}(n)$, there exists a lattice subspace $W$ of $\lat^*$ such that
\begin{align*}
C_{KL}(n)^{-1} \mu(R\lat) 
&\leq \frac{\sqrt{\dim(W)}}{\det(R^{-\T}(\lat^* \cap W))^{1/\dim(W)}} \\
&=
\det_W(R^\T R)^{1/(2\dim(W))} \cdot \frac{\sqrt{\dim(W)}}{\det(\lat^* \cap W)^{1/\dim(W)}} \\
&\leq
 \frac{\sqrt{\tr(R^\T R \pi_W)}}{\det(\lat^* \cap W)^{1/\dim(W)}} 
 \leq \frac{\sqrt{\tr(R^\T R)}}{\det(\lat^* \cap W)^{1/\dim(W)}} \text{, }
\end{align*}
where the first equality is by Claim~\ref{clm:detlattransform} and the second
inequality follows from the AM-GM inequality. By rearranging, we get
$\mu(R\lat)/\|R\|_F \leq C_{KL}(n)\cdot 1/\det(\lat^* \cap W)^{1/\dim(W)}$.
Thus, by maximizing over $R$ we get $\eta^{\ellipse}_\mu(\lat) \leq
C_{KL}(n) \cdot \eta_{\det}(\lat)$ as needed.

The proof of the second inequality $C_{KL}(n) \lesssim \log n \cdot C^{\det}_{(\mu,\ellipse)}(n)$
is very similar to the proof of Theorem~\ref{thm:mink-to-kl} in Section~\ref{sec:proofoutline}. 
The only change is to replace $\mu_{\rm sm}$ with a tighter bound on $\mu$. Namely, 
for any $n$-dimensional lattice $\lat \subset \R^n$ we have,
\begin{align*}
\mu(\lat)^2 
&\leq 
\min \set{\tr(X): X \succeq 0, ~  \forall R \text{ non-singular }, \tr(X R^\T R) \ge \mu(R \lat)^2} \\
& = 
\min \set{\tr(X): X \succeq 0, ~  \forall R \text{ non-singular }, \tr(R^\T R) \ge \mu(R X^{-1/2} \lat)^2} \\
& = 
\min \set{\tr(X): X \succeq 0, ~ \eta^\ellipse_\mu(X^{-1/2} \lat) \le 1 }  \\
& \le 
\min \set{\tr(X): X \succeq 0, ~ \eta_{\rm det}(X^{-1/2} \lat) \le 1/C^{\det}_{(\mu,\ellipse)}(n) } \\
& = 
C^{\det}_{(\mu,\ellipse)}(n)^2 \min \set{\tr(X): X \succeq 0, ~ \eta_{\rm
det}(X^{-1/2} \lat) \le 1 } \quad (\text{by homogeneity of $\eta_{\det}$}) 
\\
& = 
C^{\det}_{(\mu,\ellipse)}(n)^2 \mu_{\rm det}(\lat)^2 \; ,
\end{align*}
where the first inequality is immediate (take $R=I$),\footnote{%
In fact, as we shall see in Section~\ref{sec:sm-cov-body}, this inequality is an equality up to a universal constant.
} 
and the last equality is by the
definition of $\mu_{\textnormal{det}}(\lat)$ in~\eqref{eq:det-mu} and
Claim~\ref{clm:detlattransform}. We now
complete the proof using Theorems~\ref{thm:det-mu-dual}
and~\ref{thm:subspace-rounding}, exactly as in the proof of
Theorem~\ref{thm:mink-to-kl}.
\end{proof}

\section{The smooth covariance bodies and the covering radius}
\label{sec:sm-cov-body}

In this section, we explain how the main and weak conjectures can be
equivalently reformulated in terms of \emph{containment factors} of a single
family of convex bodies. We note that this viewpoint was already implicitly used
in Theorem~\ref{thm:mink-to-kl} (the reduction of the $\ell_2$-KL conjecture to
the main conjecture), 
and thus the goal here is to make this
viewpoint more explicit. 
Going further, we shall also develop the tools needed to give an equivalent
characterization of the weak conjecture in terms of the tightness of the 
smooth-$\mu$ bound (see Section~\ref{sec:musmoothtight}).

\begin{definition}[Smooth Covariance Bodies]
\label{def:sm-cov}
Let $\tilde{\eta} \in \set{\eta_{\det}, \eta_\rho, \eta_\mu,
\eta^{\ellipse}_\rho, \eta^{\ellipse}_\mu, \eta}$ as in
Definition~\ref{def:weakerquantities}. For an $n$-dimensional lattice $\lat
\subset \R^n$, define its $\tilde{\eta}$-smooth covariance body
\[
{\rm sm}_{\tilde{\eta}}(\lat) = \set{X \in \PSD^n: X \succ 0, \tilde{\eta}(X^{-1/2} \lat)
\leq 1} \text{ .}
\]
For the case, $\tilde{\eta} = \eta$, we shall simply write ${\rm sm}(\lat)$ for
${\rm sm}_\eta(\lat)$.
\end{definition}

As mentioned above, our first goal is to give an alternate characterization of
the many lattice conjectures in terms of containment factors of the smooth
covariance bodies, which we define presently: 

\begin{definition}[Containment Factor] 
\label{def:cont-factor}
For $A \subseteq \R^n$, we say that $A$ is closed under scaling up if $\lambda
\cdot A \subseteq A$ for all $\lambda \geq 1$. For closed sets $A,B \subseteq
\R^n \setminuszero$ that are closed under scaling up, we define the \emph{containment
factor} of $A$ with respect to $B$ by 
\[
c(A,B) = \inf \set{s \geq 0: s \cdot A \subseteq B} \text{ .}
\]
Since $B$ is closed and does not contain the origin, note that $c(A,B) > 0$.
Furthermore, since $B$ is closed under scaling up, $\lambda \cdot A \subseteq B$
for all $\lambda \geq c(A,B)$. The analogous statements holds for $c(B,A)$. If
both $c(A,B)$ and $c(B,A) < \infty$, we see that
\[
c(A,B) A \subseteq B \subseteq \frac{1}{c(B,A)} A \text{ .}
\]

It will also be useful to examine the containment factor with respect to a point
$\vec{x} \in \R^n$ (which is not closed under scaling up) and set $B$ as above,
in which case we write $c(\vec{x},B)$ for $c(\set{\vec{x}},B)$. 
\end{definition}

The next few lemmas will establish the tight relationship between the
containment factors of the smooth covariance bodies with the
conjectures.  

\begin{lemma} 
\label{lem:sandwich-basics}
Let $\tilde{\eta} \in \set{\eta_{\det}, \eta_\rho, \eta_\mu,
\eta^{\ellipse}_\rho, \eta^{\ellipse}_\mu, \eta}$. For $\lat \subset
\R^n$ an $n$-dimensional lattice: 
\begin{enumerate}
\item For $X \in \PSD^n$, $X \succ 0$,
$
c(X, {\rm sm}_{\tilde{\eta}}(\lat)) = 
\tilde{\eta}^2(X^{-1/2}\lat) \text{ .}
$
\item For $T \in \R^{n \times n}$ invertible,
$
{\rm sm}_{\tilde{\eta}}(T\lat) = \set{T X T^\T: X \in {\rm
sm}_{\tilde{\eta}}(\lat)} \text{ .}
$
\item If $\lat = B \Z^n$, then $\tilde{\eta}(\lat)^2 = c((B^\T B)^{-1}, {\rm
sm}_{\tilde{\eta}}(\Z^n))$. 
\end{enumerate}
\end{lemma}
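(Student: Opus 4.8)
The plan is to deduce all three parts from two structural facts about the $\tilde{\eta}$-parameters that are already available: positive homogeneity, $\tilde{\eta}(\lambda\lat)=\lambda\tilde{\eta}(\lat)$ for $\lambda>0$ (the Remark after Definition~\ref{def:weakerquantities}), and invariance under orthogonal transformations (Lemma~\ref{lem:eta-continuity}), together with the polar decomposition of an invertible matrix. For Part~1 I would argue directly from the definitions. Since $(sX)^{-1/2}=s^{-1/2}X^{-1/2}$, homogeneity gives $\tilde{\eta}((sX)^{-1/2}\lat)=s^{-1/2}\tilde{\eta}(X^{-1/2}\lat)$ for every $s>0$, so $sX\in{\rm sm}_{\tilde{\eta}}(\lat)$ if and only if $s\ge\tilde{\eta}(X^{-1/2}\lat)^2$. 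Because $\tilde{\eta}$ of a full-rank lattice is strictly positive, this in particular shows that ${\rm sm}_{\tilde{\eta}}(\lat)$ is closed under scaling up and that the set of admissible $s$ in the definition of $c(X,{\rm sm}_{\tilde{\eta}}(\lat))$ is exactly the ray $[\tilde{\eta}(X^{-1/2}\lat)^2,\infty)$; taking the infimum yields $c(X,{\rm sm}_{\tilde{\eta}}(\lat))=\tilde{\eta}(X^{-1/2}\lat)^2$.

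For Part~2, the map $X\mapsto TXT^\T$ is a bijection of the cone of positive definite matrices onto itself, with inverse $Y\mapsto T^{-1}YT^{-\T}$, so it suffices to check that $X\in{\rm sm}_{\tilde{\eta}}(\lat)$ if and only if $TXT^\T\in{\rm sm}_{\tilde{\eta}}(T\lat)$, i.e.\ that $\tilde{\eta}((TXT^\T)^{-1/2}T\lat)=\tilde{\eta}(X^{-1/2}\lat)$. Writing $Y=TXT^\T$ and computing $(Y^{-1/2}T)^\T(Y^{-1/2}T)=T^\T Y^{-1}T=X^{-1}$, the polar decomposition of the invertible matrix $Y^{-1/2}T$ produces an orthogonal $Q$ with $Y^{-1/2}T=QX^{-1/2}$. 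Invariance of $\tilde{\eta}$ under orthogonal transformations then gives $\tilde{\eta}(Y^{-1/2}T\lat)=\tilde{\eta}(QX^{-1/2}\lat)=\tilde{\eta}(X^{-1/2}\lat)$, which is what we need.

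For Part~3, apply Part~1 with $\Z^n$ in place of $\lat$ and $X=(B^\T B)^{-1}$ to get $c((B^\T B)^{-1},{\rm sm}_{\tilde{\eta}}(\Z^n))=\tilde{\eta}((B^\T B)^{1/2}\Z^n)^2$. Using the polar decomposition $B=Q(B^\T B)^{1/2}$ with $Q$ orthogonal, orthogonal invariance gives $\tilde{\eta}((B^\T B)^{1/2}\Z^n)=\tilde{\eta}(B\Z^n)=\tilde{\eta}(\lat)$, which completes the proof. There is no real obstacle in this lemma; the only points that need care are that homogeneity and orthogonal invariance hold uniformly across all six $\tilde{\eta}$-parameters (supplied precisely by the Remark after Definition~\ref{def:weakerquantities} and by Lemma~\ref{lem:eta-continuity}) and the routine polar-decomposition identifications, so that is the step I would verify most carefully.
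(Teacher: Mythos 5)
Your proposal is correct and follows essentially the same approach as the paper: Parts 1 and 2 both rest on the positive homogeneity of $\tilde{\eta}$ and its invariance under orthogonal transformations (from Lemma~\ref{lem:eta-continuity}), with your polar-decomposition phrasing being the same observation the paper expresses as ``both matrices have the same Gram matrix, hence differ by an orthogonal factor.'' The only mild divergence is in Part~3: the paper chains through Parts~1 and~2 (showing $\alpha^2 I_n \in {\rm sm}_{\tilde{\eta}}(\lat) \Leftrightarrow \alpha^2 (B^\T B)^{-1} \in {\rm sm}_{\tilde{\eta}}(\Z^n)$), whereas you invoke Part~1 alone together with a direct polar decomposition of $B$; both are valid, and yours is arguably a hair more self-contained since it does not rely on the bijectivity claim in Part~2.
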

\begin{proof}
For the first part, we have that for $s \geq 0$
\begin{align*}
sX \in {\rm sm}_{\tilde{\eta}}(\lat) 
&\Leftrightarrow \tilde{\eta}((sX)^{-1/2}\lat) \leq 1 
\Leftrightarrow \tilde{\eta}(X^{-1/2}\lat)/\sqrt{s} \leq 1 \quad \left(\text{ by
homogeneity of $\tilde{\eta}$ } \right) \\
&\Leftrightarrow \tilde{\eta}(X^{-1/2}\lat)^2 \leq s \text{ ,} 
\end{align*}
as needed.

We prove the second part. Note that for $X \succ 0$, the matrices $X^{-1/2}
T^{-1}$ and $(T X T^{\T})^{-1/2}$ are related by an orthogonal transformation
since they both satisfy the relation $B^\T B = (TXT^{\T})^{-1}$. Thus by
Lemma~\ref{lem:eta-continuity}, 
\[
\tilde{\eta}(X^{-1/2}\lat) = \tilde{\eta}(X^{-1/2}T^{-1} T\lat)  
= \tilde{\eta}((T X T^{\T})^{-1/2} T\lat) \text{ .}
\]
In particular, $X \in {\rm sm}_{\tilde{\eta}}(\lat) \Leftrightarrow TXT^\T \in {\rm
sm}_{\tilde{\eta}}(T\lat)$. The statement thus follows.

For the last part, for $\alpha \geq 0$, we have that
\begin{align*}
\tilde{\eta}(\lat) \leq \alpha 
&\Leftrightarrow \alpha^2 I_n \in {\rm sm}_{\tilde{\eta}}(\lat) 
                 \quad \left( \text{ by part 1 } \right) \\
&\Leftrightarrow \alpha^2 I_n \in \set{B X B^T: X \in {\rm sm}_{\tilde{\eta}}(\Z^n)} 
                 \quad \left( \text{ by part 2 } \right) \\
&\Leftrightarrow \alpha^2 (B^\T B)^{-1} \in {\rm sm}_{\tilde{\eta}}(\Z^n) \text{.}
\end{align*}
The result now follows by noting the minimum valid choice for $\alpha$ is
$\tilde{\eta}(\lat)$.
\end{proof}

We now show that the smooth covariance bodies satisfy the basic properties
needed for the containment factors to be meaningfully applied to
them.

\begin{lemma}
\label{lem:smooth-body-basics}
Let $\tilde{\eta} \in \set{\eta_{\det}, \eta_\rho, \eta_\mu,
\eta^{\ellipse}_\rho, \eta^{\ellipse}_\mu, \eta}$. Then for any $n$-dimensional
lattice $\lat \subset \R^n$:
\begin{enumerate}
\item The body ${\rm sm}_{\tilde{\eta}}(\lat)$ is closed, full-dimensional,
closed under scaling up and does not contain the origin.
\item For $X \in {\rm sm}_{\tilde{\eta}}(\lat)$ and $\Delta \in \PSD^n$, 
$X + \Delta \in {\rm sm}_{\tilde{\eta}}(\lat)$.
\item For $X \in \PSD^n$ and $X \succ 0$, 
$c(X,{\rm sm}_{\tilde{\eta}}(\lat)) \lesssim \eta^2(\lat) / \lambda_n(X)$.
\end{enumerate} 
\end{lemma}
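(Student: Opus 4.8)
The plan is to dispatch the three items in order, relying heavily on the homogeneity and continuity established in Lemma~\ref{lem:eta-continuity} together with the first-part characterization $c(X,{\rm sm}_{\tilde\eta}(\lat)) = \tilde\eta^2(X^{-1/2}\lat)$ from Lemma~\ref{lem:sandwich-basics}. For item~1, closedness follows from the continuity of the map $T\mapsto \tilde\eta(T\lat)$ on invertible matrices (Lemma~\ref{lem:eta-continuity}): if $X_k\to X$ with each $X_k\succ 0$, $\tilde\eta(X_k^{-1/2}\lat)\le 1$, and $X\succ 0$, then $X_k^{-1/2}\to X^{-1/2}$ and by continuity $\tilde\eta(X^{-1/2}\lat)\le 1$; one must note that the limit of positive definite matrices inside the set stays positive definite because if $\lambda_n(X)\to 0$ then $\tilde\eta(X_k^{-1/2}\lat)\to\infty$ (by item~3, proved below, or directly since $\eta$ blows up), contradicting the bound $1$, so the set is closed in $\PSD^n$. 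Full-dimensionality: $\tilde\eta(sI\cdot\lat)\to 0$ as $s\to\infty$ (homogeneity: $\tilde\eta(s^{-1}\lat)=s^{-1}\tilde\eta(\lat)$), so some $s_0 I\in {\rm sm}_{\tilde\eta}(\lat)$, and then a whole neighborhood of $s_0 I$ lies in the body by continuity. Closure under scaling up: if $X\in {\rm sm}_{\tilde\eta}(\lat)$ and $\lambda\ge 1$, then $\tilde\eta((\lambda X)^{-1/2}\lat)=\lambda^{-1/2}\tilde\eta(X^{-1/2}\lat)\le \tilde\eta(X^{-1/2}\lat)\le 1$. Not containing the origin: $0\notin \PSD^n$ in the ``$X\succ 0$'' sense we've imposed, but more robustly, as $X\to 0$ the parameter $\tilde\eta(X^{-1/2}\lat)\to\infty$, so no neighborhood of the origin meets the body.

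For item~2, suppose $X\in {\rm sm}_{\tilde\eta}(\lat)$ and $\Delta\in\PSD^n$, so $X+\Delta\succeq X\succ 0$. It suffices to show $\tilde\eta((X+\Delta)^{-1/2}\lat)\le \tilde\eta(X^{-1/2}\lat)$, i.e.\ that $\tilde\eta$ is monotone in the Loewner order of the inverse covariance in the natural sense. Write $T=(X+\Delta)^{-1/2}X^{1/2}$; then $(X+\Delta)^{-1/2}\lat = T\,(X^{-1/2}\lat)$, and $\|T\|=\|(X+\Delta)^{-1/2}X^{1/2}\|\le 1$ since $X\preceq X+\Delta$ implies $(X+\Delta)^{-1/2}X(X+\Delta)^{-1/2}\preceq I$, i.e.\ $T^\T T\preceq I$. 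By Lemma~\ref{lem:eta-continuity}, $\tilde\eta(T\,(X^{-1/2}\lat))\le \|T\|\,\tilde\eta(X^{-1/2}\lat)\le \tilde\eta(X^{-1/2}\lat)\le 1$, as needed.

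For item~3, by Lemma~\ref{lem:sandwich-basics} part~1 we must bound $\tilde\eta^2(X^{-1/2}\lat)$. By Theorem~\ref{thm:conj-sandwich}, among all six parameters $\tilde\eta(\cdot)\lesssim \eta(\cdot)$ (indeed $\eta$ is the largest up to a constant, except for $\eta_{\det}$ which is dominated by $\eta$ up to $C_\eta(n)$ — but here we only claim a bound in terms of $\eta^2$, and every listed parameter satisfies $\tilde\eta(\lat)\lesssim\eta(\lat)$ by the chain of inequalities in Theorem~\ref{thm:conj-sandwich}, reading $\eta_{\det}\lesssim\eta_\rho\lesssim\eta_\mu\lesssim\eta$ along the top and bottom rows). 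So it suffices to show $\eta^2(X^{-1/2}\lat)\lesssim \eta^2(\lat)/\lambda_n(X)$. Apply Lemma~\ref{lem:eta-continuity} with $T=X^{-1/2}$: $\eta(X^{-1/2}\lat)\le \|X^{-1/2}\|\,\eta(\lat) = \lambda_n(X)^{-1/2}\eta(\lat)$, since $\|X^{-1/2}\| = \lambda_{\min}(X)^{-1/2}=\lambda_n(X)^{-1/2}$. Squaring gives $c(X,{\rm sm}_{\tilde\eta}(\lat))=\tilde\eta^2(X^{-1/2}\lat)\lesssim \eta^2(X^{-1/2}\lat)\le \eta^2(\lat)/\lambda_n(X)$.

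The only mildly delicate point — and the one I'd expect to be the main obstacle — is the closedness half of item~1: one must rule out sequences inside the body escaping to the boundary of $\PSD^n$ (where some $X$ becomes singular). This is where item~3 (or the blow-up of $\eta$ as $\lambda_n\to 0$) does the real work, so it may be cleanest to prove item~3 first and then invoke it when proving item~1. Everything else is a direct application of the homogeneity and operator-norm bounds from Lemma~\ref{lem:eta-continuity}.
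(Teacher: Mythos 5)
Your items 2 and 3 are correct and essentially coincide with the paper's proof (item 3 is slightly more direct via Lemma~\ref{lem:eta-continuity} with $T=X^{-1/2}$ rather than combining items~1 and~2, but to the same effect). Full-dimensionality, closure under scaling up, and exclusion of the origin in item~1 are also fine.

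The genuine gap is in the closedness half of item~1, and specifically in the justification you offer for why the limit $X$ must remain positive definite. You correctly identify this as the delicate step, but both patches you propose fail. Item~3 yields the \emph{upper} bound $\tilde\eta^2(X^{-1/2}\lat)\lesssim \eta^2(\lat)/\lambda_n(X)$, which \emph{permits} $\tilde\eta$ to blow up as $\lambda_n(X)\to 0$ but certainly does not \emph{force} it; you would need a matching lower bound, which item~3 does not provide. The alternative ``directly since $\eta$ blows up'' also goes the wrong way: Theorem~\ref{thm:conj-sandwich} gives $\tilde\eta\lesssim\eta$ for every parameter in the list, so $\eta(X_k^{-1/2}\lat)\to\infty$ tells you nothing about $\tilde\eta(X_k^{-1/2}\lat)$ without a reverse inequality, and the only available reverse inequality ($\eta\le C_\eta(n)\eta_{\det}$) is the conjectural one you are not allowed to assume. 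What actually closes the argument — and what the paper uses — is the unconditional inequality $\eta_{\det}\lesssim\tilde\eta$ (reading the chain in Theorem~\ref{thm:conj-sandwich} from the left): if $X_k\to X$ with $\det(X)=0$, then taking $W=\R^n$ in the definition of $\eta_{\det}$ gives
\[
\eta_{\det}(X_k^{-1/2}\lat)\;\ge\;\bigl(\det(X_k)^{1/2}\det(\lat^*)\bigr)^{-1/n}\;\longrightarrow\;\infty,
\]
whence $\tilde\eta(X_k^{-1/2}\lat)\gtrsim\eta_{\det}(X_k^{-1/2}\lat)\to\infty$, contradicting $\tilde\eta(X_k^{-1/2}\lat)\le 1$. (The paper phrases this by introducing the trace-bounded slices $S_D$ and deriving an explicit lower bound $\lambda_n(X)\ge\tau>0$; your sequential formulation works just as well once you insert the correct lower bound.) So the plan is salvageable, but as written the key step rests on two arguments that both point in the wrong direction.
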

\begin{proof}
We prove part $1$. To begin, note that by definition ${\rm
sm}_{\tilde{\eta}}(\lat)$ does not contain the zero matrix, since it only contains
positive definite matrices. We now show that ${\rm sm}_{\tilde{\eta}}(\lat)$ is
closed under scaling up. Take $X \in {\rm sm}_{\tilde{\eta}}(\lat)$. Then for
$\lambda \geq 1$, 
\[
\tilde{\eta}((\lambda X)^{-1/2}(\lat)) = \frac{1}{\sqrt{\lambda}}
\tilde{\eta}(X^{-1/2} \lat) \leq \tilde{\eta}(X^{-1/2}\lat) \leq 1 \text{ ,}
\]
where the first equality follows by positive homogeneity of $\tilde{\eta}$.
Hence $\lambda X \in {\rm sm}_{\tilde{\eta}}(\lat)$ as needed. 

We now show that ${\rm sm}_{\tilde{\eta}}(\lat)$ is full-dimensional. First note
that by Lemma~\ref{lem:sandwich-basics}, $\eta^2(\lat) I_n \in {\rm sm}(\lat)$,
which we note is well defined since $\eta(\lat) \leq \sqrt{n}/\lambda_1(\lat^*)
< \infty$ by Theorem~\ref{thm:smoothing-bnd}. By
Theorem~\ref{thm:conj-sandwich}, note that $\tilde{\eta}((\eta(\lat)^2
I_n)^{-1/2}\lat) = \tilde{\eta}(\lat)/\eta(\lat) \lesssim 1$. Thus, there exists
$C > 0$ such that $Y = C \eta^2(\lat) I_n$ satisfies $\tilde{\eta}(Y^{-1/2}\lat)
< 1/2$. From here, by Lemma~\ref{lem:eta-continuity}, the map $X \rightarrow
\tilde{\eta}(X^{-1/2}\lat)$ is continuous over the positive definite cone. Thus,
there exists an open neighborhood $U$ around $Y$ in the space of symmetric
matrices, such that $\tilde{\eta}(X^{-1/2}\lat) \leq 1$ for all $X \in U$. Thus
$U \subseteq {\rm sm}_{\tilde{\eta}}(\lat)$, and hence ${\rm
sm}_{\tilde{\eta}}(\lat)$ is full-dimensional.

We now show that ${\rm sm}_{\tilde{\eta}}(\lat)$ is closed. For this purpose, it
suffices to show that $S_D = \set{X \in {\rm sm}_{\tilde{\eta}}(\lat): \tr(X) \leq D}$ is closed for any $D > 0$. As argued above, the map $X
\rightarrow \tilde{\eta}(X^{-1/2}\lat)$ is continuous over the set positive
definite matrices. Thus ${\rm sm}_{\tilde{\eta}}(\lat) = \set{X \in \PSD^n: X
\succ 0, \tilde{\eta}(X^{-1/2}\lat) \leq 1}$ is closed under the subspace
topology on $\set{X \in \PSD^n: X \succ 0}$ (as a subspace of the set of
symmetric matrices). We now claim that if $X \in S_D$ then $X \succeq \tau I_n$
for some $\tau > 0$. Assuming this, $S_D$ is also closed under the subspace
topology on $\set{X \in \PSD^n: X \succeq \tau I_n, \tr(X) \leq D}$. Since the
latter set is a closed subset of symmetric matrices, we have that $S_D$ is also
a closed subset of symmetric matrices, as needed. It thus suffices to prove the
claim. Now take $X \in S_D$. By Theorem~\ref{thm:conj-sandwich}, we have
that $\eta_{\det}(X^{-1/2}\lat) \leq C \tilde{\eta}(X^{-1/2}\lat) \leq C$ for
some absolute constant $C > 0$. Let $\lambda_1(X) \geq \dots \geq \lambda_n(X)
\geq 0$ denote the eigenvalues of $X$. Then, in particular, we have that
\begin{align*}
C^{2n}  &\geq \frac{1}{\det((X^{-1/2}\lat)^*)^2} 
     = \det(\lat^*)^{-2} \det(X)^{-1} 
     = \det(\lat^*)^{-2} \prod_{i=1}^n \lambda_i(X)^{-1} \\
     &\geq \det(\lat^*)^{-2} \lambda_n(X)^{-1} (\sum_{i=1}^{n-1}
\lambda_i(X)/(n-1))^{-(n-1)} 
      \geq \det(\lat^*)^{-2} \lambda_n(X)^{-1} D^{-(n-1)} \text{ .}
\end{align*} 
By rearranging, setting $\tau = \frac{1}{C^{2n} \det(\lat^*)^2 D^{n-1}}$, we
have that $\lambda_n(X) \geq \tau$ as needed.

For part 2, let $Y = X + \Delta$ and note that $Y \succeq X$. By definition of
${\rm sm}_{\tilde{\eta}}(\lat)$ it suffices to show that
$\tilde{\eta}(Y^{-1/2}\lat) \leq \tilde{\eta}(X^{-1/2}\lat)$. Letting $T =
Y^{-1/2}X^{1/2}$, by Lemma~\ref{lem:eta-continuity} we have that
\[
\tilde{\eta}(Y^{-1/2}\lat) = \tilde{\eta}(T X^{-1/2}\lat) \leq \|T\|
\tilde{\eta}(X^{-1/2}\lat) \text{ .}
\]  
Thus it suffices to show that $\|T\| \leq 1$. From here,
\[
\|T\| \leq 1 \Leftrightarrow T^\T T \preceq I \Leftrightarrow X^{1/2} Y^{-1}
X^{1/2} \preceq I \Leftrightarrow Y^{-1} \preceq X^{-1} \Leftrightarrow Y
\succeq X \text{ ,}
\]
as needed.

For part 3, for $X \succ 0$ note that $(C\eta(\lat)^2/\lambda_n(X)) X \succeq
C\eta^2(\lat) I_n$ ($X \succ 0$ assures $\lambda_n(X) > 0$). By the proof of
part 1, $C \eta^2(\lat) I_n \in {\rm sm}_{\tilde{\eta}}(\lat)$ for $C \geq 1$ large
enough, and thus by part 2, $(C\eta(\lat)^2/\lambda_n(X)) X \in {\rm
sm}_{\tilde{\eta}}(\lat)$. Hence $c(X,{\rm sm}_{\tilde{\eta}}(\lat)) \lesssim 
\eta(\lat)^2/\lambda_n(X)$, as needed.
\end{proof}

The following lemma shows that the relationship between the $\eta$-type
parameters can be equivalently characterized in terms of the containment factors
between the associated smooth covariance bodies.

\begin{lemma} 
\label{lem:sandwich-equiv}
Let $\eta_1,\eta_2 \in \set{\eta_{\det}, \eta_\rho, \eta_\mu,
\eta^{\ellipse}_\rho, \eta^{\ellipse}_\mu, \eta}$ and $n \ge 1$. Then
$\eta_1(\lat) \leq C(n) \eta_2(\lat)$ for every $n$-dimensional lattice $\lat
\subset \R^n$ if and only if $c({\rm sm}_{\eta_2}(\Z^n),{\rm
sm}_{\eta_1}(\Z^n)) \leq C(n)^2$. 
\end{lemma}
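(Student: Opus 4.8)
\textbf{Proof plan for Lemma~\ref{lem:sandwich-equiv}.} The plan is to reduce the ``for every lattice'' statement to a statement about the fixed lattice $\Z^n$ by exploiting the $\mathrm{GL}_n(\R)$-covariance of the smooth covariance bodies established in Lemma~\ref{lem:sandwich-basics}, part 2, together with the identity in part 3 of that lemma. The key point is that every $n$-dimensional lattice can be written as $\lat = B\Z^n$ for some invertible $B$, and that $\tilde\eta(\lat)^2 = c((B^\T B)^{-1}, \mathrm{sm}_{\tilde\eta}(\Z^n))$, so that the ratio $\eta_1(\lat)/\eta_2(\lat)$ is entirely governed by how the two bodies $\mathrm{sm}_{\eta_1}(\Z^n)$ and $\mathrm{sm}_{\eta_2}(\Z^n)$ sit relative to each other along the single ray through the point $(B^\T B)^{-1}$.

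First I would prove the ``only if'' direction. Suppose $\eta_1(\lat) \le C(n)\eta_2(\lat)$ for all $n$-dimensional $\lat$. Fix an arbitrary $X \in \mathrm{sm}_{\eta_2}(\Z^n)$; by definition $X \succ 0$ and $\eta_2(X^{-1/2}\Z^n) \le 1$. Applying the hypothesis to the lattice $\lat = X^{-1/2}\Z^n$ gives $\eta_1(X^{-1/2}\Z^n) \le C(n)$. By part 1 of Lemma~\ref{lem:sandwich-basics} this says exactly that $c(X, \mathrm{sm}_{\eta_1}(\Z^n)) = \eta_1^2(X^{-1/2}\Z^n) \le C(n)^2$, i.e.\ $C(n)^2 X \in \mathrm{sm}_{\eta_1}(\Z^n)$ (using that $\mathrm{sm}_{\eta_1}(\Z^n)$ is closed under scaling up, Lemma~\ref{lem:smooth-body-basics}). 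Since $X$ was an arbitrary element of $\mathrm{sm}_{\eta_2}(\Z^n)$, we conclude $C(n)^2 \cdot \mathrm{sm}_{\eta_2}(\Z^n) \subseteq \mathrm{sm}_{\eta_1}(\Z^n)$, which is precisely $c(\mathrm{sm}_{\eta_2}(\Z^n), \mathrm{sm}_{\eta_1}(\Z^n)) \le C(n)^2$.

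For the ``if'' direction I would run the same chain in reverse. Suppose $c(\mathrm{sm}_{\eta_2}(\Z^n), \mathrm{sm}_{\eta_1}(\Z^n)) \le C(n)^2$, i.e.\ $C(n)^2 \cdot \mathrm{sm}_{\eta_2}(\Z^n) \subseteq \mathrm{sm}_{\eta_1}(\Z^n)$. Let $\lat \subset \R^n$ be any $n$-dimensional lattice and write $\lat = B\Z^n$ with $B$ invertible. Set $X = (B^\T B)^{-1}/\eta_2(\lat)^2$. By part 3 of Lemma~\ref{lem:sandwich-basics}, $\eta_2(\lat)^2 = c((B^\T B)^{-1}, \mathrm{sm}_{\eta_2}(\Z^n))$, which together with the scaling-up closure of $\mathrm{sm}_{\eta_2}(\Z^n)$ gives $X \in \mathrm{sm}_{\eta_2}(\Z^n)$. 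Hence $C(n)^2 X \in \mathrm{sm}_{\eta_1}(\Z^n)$, i.e.\ $c((B^\T B)^{-1}, \mathrm{sm}_{\eta_1}(\Z^n)) \le C(n)^2 \eta_2(\lat)^2$. Applying part 3 of Lemma~\ref{lem:sandwich-basics} once more, this time for $\eta_1$, gives $\eta_1(\lat)^2 = c((B^\T B)^{-1}, \mathrm{sm}_{\eta_1}(\Z^n)) \le C(n)^2 \eta_2(\lat)^2$, so $\eta_1(\lat) \le C(n)\eta_2(\lat)$, as desired.

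The argument is essentially bookkeeping once Lemma~\ref{lem:sandwich-basics} and Lemma~\ref{lem:smooth-body-basics} are in hand; the one point that needs a little care — and the only place I expect any friction — is justifying that membership statements pass cleanly through the ``$c(\cdot,\cdot) \le$ something'' inequalities despite the infimum in the definition of $c$. This is where I would invoke, explicitly, that the bodies are closed and closed under scaling up (Lemma~\ref{lem:smooth-body-basics}, part 1), so that $c(A,B) \le s$ really does imply $s\cdot A \subseteq B$ and conversely, and that $c(X, \mathrm{sm}_{\tilde\eta}(\lat))$ equals $\tilde\eta^2(X^{-1/2}\lat)$ on the nose (part 1 of Lemma~\ref{lem:sandwich-basics}, not merely up to constants). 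With those facts quoted, no estimates are needed and both implications are immediate.
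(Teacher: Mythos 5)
Your approach is essentially the paper's: both arguments reduce the per-lattice inequality $\eta_1 \le C(n)\eta_2$ to a pointwise inequality of containment factors via parts 1--3 of Lemma~\ref{lem:sandwich-basics} and then read off the global containment. The paper compresses the argument into a chain of three equivalences; you unfold it into two separate implications, which is fine. The ``only if'' direction is correct.

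There is, however, a slip in the ``if'' direction. You set $X = (B^\T B)^{-1}/\eta_2(\lat)^2$ and claim that $\eta_2(\lat)^2 = c((B^\T B)^{-1}, \mathrm{sm}_{\eta_2}(\Z^n))$ together with scaling-up closure gives $X \in \mathrm{sm}_{\eta_2}(\Z^n)$. But $c(Y,B)$ is the \emph{smallest} $s$ with $sY \in B$, so the fact that $c((B^\T B)^{-1}, \mathrm{sm}_{\eta_2}(\Z^n)) = \eta_2(\lat)^2$ (plus closedness) puts $\eta_2(\lat)^2 \cdot (B^\T B)^{-1}$ into the body, not $(B^\T B)^{-1}/\eta_2(\lat)^2$. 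With your $X$ as literally written, the membership $C(n)^2 X \in \mathrm{sm}_{\eta_1}(\Z^n)$ would give $c((B^\T B)^{-1}, \mathrm{sm}_{\eta_1}(\Z^n)) \le C(n)^2/\eta_2(\lat)^2$, i.e.\ $\eta_1(\lat)\,\eta_2(\lat) \le C(n)$, which is not the target. The fix is simply to set $X = \eta_2(\lat)^2 (B^\T B)^{-1}$; your subsequent displayed inequality ``$c((B^\T B)^{-1}, \mathrm{sm}_{\eta_1}(\Z^n)) \le C(n)^2 \eta_2(\lat)^2$'' is in fact the one implied by this corrected $X$, so the rest of the argument goes through unchanged.
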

\begin{proof}
By part 3 of Lemma~\ref{lem:sandwich-basics}, 
saying that for every $n$-dimensional lattice $\lat
\subset \R^n$, $\eta_1(\lat) \leq C(n) \eta_2(\lat)$ is equivalent to
saying that for any non-singular $B \in \R^{n \times n}$, 
\[
c((B^\T B)^{-1}, {\rm sm}_{{\eta_1}}(\Z^n)) \le
C(n)^2 
c((B^\T B)^{-1}, {\rm sm}_{{\eta_2}}(\Z^n))
\; ,
\]
which in turn is equivalent to saying that for all $X \succ 0$,
\[
c(X, {\rm sm}_{{\eta_1}}(\Z^n)) \le
C(n)^2 
c(X, {\rm sm}_{{\eta_2}}(\Z^n))
\; .
\]
But the latter is obviously equivalent to 
$c({\rm sm}_{\eta_2}(\Z^n),{\rm
sm}_{\eta_1}(\Z^n)) \leq C(n)^2$.
\end{proof}
\begin{remark}
Observe that for any $n$-dimensional lattice $\lat$, the containment factor
$c({\rm sm}_{\eta_2}(\lat), {\rm sm}_{\eta_1}(\lat))$ is equal to $c({\rm
sm}_{\eta_2}(\Z^n),{\rm sm}_{\eta_1}(\Z^n))$ since the corresponding bodies are
joint linear transformations of each other.
\end{remark}

From the above lemma, we derive the following direct corollary which
re-expresses the main and weak conjecture in terms of containment factors.

\begin{corollary}
\label{cor:equiv-sandwich}
$C_\eta(n)^2 = c({\rm sm}_{\eta_{\det}}(\Z^n),{\rm sm}(\Z^n))$
and $C^{(\mu,\ellipse)}_\eta(n)^2 = c({\rm sm}_{\eta^\ellipse_\mu}(\Z^n), {\rm
sm}(\Z^n))$.
\end{corollary}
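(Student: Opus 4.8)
The plan is to unwind the definitions and invoke Lemma~\ref{lem:sandwich-equiv}; there is no substantive new content, so this should be short. First I would recall that, by the remark immediately following Conjecture~\ref{con:maineta} together with the fact that the maximum on the right-hand side of~\eqref{eq:finalgoal} equals $\eta_{\det}(\lat)$ (Definition~\ref{def:weakerquantities}), the quantity $C_\eta(n)$ is exactly the smallest constant $C$ for which $\eta(\lat) \le C\,\eta_{\det}(\lat)$ holds for every $n$-dimensional lattice $\lat \subset \R^n$. Likewise, by~\eqref{eq:covweaketagoal}, $C^{(\mu,\ellipse)}_\eta(n)$ is the smallest constant $C$ for which $\eta(\lat) \le C\,\eta^{\ellipse}_\mu(\lat)$ holds for every such $\lat$.

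Next I would apply Lemma~\ref{lem:sandwich-equiv} with $\eta_1 = \eta$ and $\eta_2 = \eta_{\det}$. It tells us that, for any candidate value $C \ge 0$, the inequality $\eta(\lat) \le C\,\eta_{\det}(\lat)$ holds for all $n$-dimensional lattices if and only if $c({\rm sm}_{\eta_{\det}}(\Z^n), {\rm sm}_{\eta}(\Z^n)) \le C^2$. Writing $c := c({\rm sm}_{\eta_{\det}}(\Z^n), {\rm sm}(\Z^n))$ (and using ${\rm sm}(\Z^n) = {\rm sm}_\eta(\Z^n)$), this says that $C$ is admissible in the definition of $C_\eta(n)$ precisely when $C \ge \sqrt{c}$; taking the infimum over admissible $C$ gives $C_\eta(n) = \sqrt{c}$, i.e.\ $C_\eta(n)^2 = c({\rm sm}_{\eta_{\det}}(\Z^n), {\rm sm}(\Z^n))$. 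The second identity is obtained identically, now taking $\eta_1 = \eta$ and $\eta_2 = \eta^{\ellipse}_\mu$ in Lemma~\ref{lem:sandwich-equiv}.

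Two minor points would need a sentence of justification: that the relevant containment factor is finite, so that the infimum over admissible $C$ is actually attained and the correspondence between ``smallest $C$'' and ``the value $c$ itself'' is clean --- this follows from Lemma~\ref{lem:smooth-body-basics}, which shows the smooth covariance bodies are closed, full-dimensional, closed under scaling up and avoid the origin, and whose part~3 bounds the containment factors in question; and the elementary translation between ``smallest $C$ satisfying $P(C)$'' and ``smallest $D$ with $D \ge c$'', where here $P(C)$ is equivalent to $C^2 \ge c$. I do not expect any real obstacle: the corollary is pure bookkeeping on top of Lemma~\ref{lem:sandwich-equiv}, whose proof (through part~3 of Lemma~\ref{lem:sandwich-basics}) already performed the one nontrivial reduction, namely from a statement about all $n$-dimensional lattices to a single containment of bodies over $\Z^n$.
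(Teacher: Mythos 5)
Your proposal is correct and matches the paper's (implicit) proof, which simply states that the corollary follows directly from Lemma~\ref{lem:sandwich-equiv}; you have correctly identified the substitutions $\eta_1 = \eta$, $\eta_2 = \eta_{\det}$ (resp.\ $\eta^{\ellipse}_\mu$) and unwound the ``smallest $C$'' quantifier. The extra remarks on finiteness and attainment are harmless elaboration the paper leaves tacit.
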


We now define the following useful notation: we write $A \sqsubseteq B$, for
sets $A,B \subseteq \R^n$ if there exists an absolute constant $C > 0$ such that
$c(A,B) \leq C$. Using this notation, we derive the following immediate
corollary by combining Theorem~\ref{thm:conj-sandwich} and
Lemma~\ref{lem:sandwich-equiv}.

\begin{corollary}
\label{cor:known-sandwich}
\begin{align*}
{\rm sm}(\Z^n) \sqsubseteq & ~{\rm sm}_{\eta_\mu^{\ellipse}}(\Z^n) 
               \sqsubseteq {\rm sm}_{\eta_{\rho}^{\ellipse}}(\Z^n) \\
               &~~\text{\rotatebox[origin=c]{270}{$\subseteq$}}
      \quad\quad\quad\quad~~~~ \text{\rotatebox[origin=c]{270}{$\subseteq$}} \\
               & ~{\rm sm}_{\eta_\mu}(\Z^n) 
               ~\sqsubseteq
               {\rm sm}_{\eta_\rho}(\Z^n) 
               \sqsubseteq {\rm sm}_{\eta_{\det}}(\Z^n) 
               \subseteq {\rm sm}(\Z^n)/C_\eta(n)^2 \text{ .}
\end{align*}
\end{corollary}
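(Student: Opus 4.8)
The plan is to observe that Corollary~\ref{cor:known-sandwich} is a purely formal restatement of Theorem~\ref{thm:conj-sandwich} in the language of containment factors, via the dictionary furnished by Lemma~\ref{lem:sandwich-equiv}. First I would record the elementary fact, immediate from Definition~\ref{def:cont-factor}, that for closed sets $A,B\subseteq\R^n\setminuszero$ that are closed under scaling up one has $c(A,B)\le t\Leftrightarrow tA\subseteq B$; in particular $c(A,B)\le 1\Leftrightarrow A\subseteq B$. Recall also that Lemma~\ref{lem:smooth-body-basics} guarantees each ${\rm sm}_{\tilde\eta}(\Z^n)$ is closed, closed under scaling up, and avoids the origin, so these containment factors are meaningful, and that $A\sqsubseteq B$ abbreviates $c(A,B)=O(1)$.

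Next I would walk through the seven inequalities of Theorem~\ref{thm:conj-sandwich}, each of the form $\eta_1(\lat)\le C(n)\,\eta_2(\lat)$ for all $n$-dimensional lattices $\lat\subset\R^n$, with $\eta_1,\eta_2\in\{\eta_{\det},\eta_\rho,\eta_\mu,\eta^{\ellipse}_\rho,\eta^{\ellipse}_\mu,\eta\}$ and $C(n)$ either an absolute constant — the four ``$\lesssim$'' relations and the two vertical ``$\le$'' relations, the latter with constant exactly $1$ — or $C_\eta(n)$, namely for $\eta(\lat)\le C_\eta(n)\eta_{\det}(\lat)$, which is the definition of $C_\eta(n)$. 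By Lemma~\ref{lem:sandwich-equiv}, each such inequality is equivalent to $c({\rm sm}_{\eta_2}(\Z^n),{\rm sm}_{\eta_1}(\Z^n))\le C(n)^2$. For the six relations with $C(n)=O(1)$ this gives ${\rm sm}_{\eta_2}(\Z^n)\sqsubseteq{\rm sm}_{\eta_1}(\Z^n)$, and in the two vertical cases (where $C(n)=1$) even genuine containment ${\rm sm}_{\eta_2}(\Z^n)\subseteq{\rm sm}_{\eta_1}(\Z^n)$, by the elementary fact above. Concretely: $\eta^{\ellipse}_\mu\lesssim\eta$ and $\eta^{\ellipse}_\rho\lesssim\eta^{\ellipse}_\mu$ give ${\rm sm}(\Z^n)\sqsubseteq{\rm sm}_{\eta^{\ellipse}_\mu}(\Z^n)\sqsubseteq{\rm sm}_{\eta^{\ellipse}_\rho}(\Z^n)$; $\eta_\mu\le\eta^{\ellipse}_\mu$ and $\eta_\rho\le\eta^{\ellipse}_\rho$ give the vertical containments ${\rm sm}_{\eta^{\ellipse}_\mu}(\Z^n)\subseteq{\rm sm}_{\eta_\mu}(\Z^n)$ and ${\rm sm}_{\eta^{\ellipse}_\rho}(\Z^n)\subseteq{\rm sm}_{\eta_\rho}(\Z^n)$; $\eta_\rho\lesssim\eta_\mu$ and $\eta_{\det}\lesssim\eta_\rho$ give ${\rm sm}_{\eta_\mu}(\Z^n)\sqsubseteq{\rm sm}_{\eta_\rho}(\Z^n)\sqsubseteq{\rm sm}_{\eta_{\det}}(\Z^n)$; and $\eta(\lat)\le C_\eta(n)\eta_{\det}(\lat)$ gives $c({\rm sm}_{\eta_{\det}}(\Z^n),{\rm sm}(\Z^n))\le C_\eta(n)^2$, hence $C_\eta(n)^2\,{\rm sm}_{\eta_{\det}}(\Z^n)\subseteq{\rm sm}(\Z^n)$, i.e.\ ${\rm sm}_{\eta_{\det}}(\Z^n)\subseteq{\rm sm}(\Z^n)/C_\eta(n)^2$. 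These seven pieces are precisely the relations in the displayed chain.

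There is essentially no obstacle here; the content is already in Theorem~\ref{thm:conj-sandwich} and Lemma~\ref{lem:sandwich-equiv}, and the corollary only repackages them. The one point requiring care is bookkeeping of directions: the map $\tilde\eta\mapsto{\rm sm}_{\tilde\eta}(\lat)$ is inclusion-reversing — a smaller value of $\tilde\eta$ makes the constraint $\tilde\eta(X^{-1/2}\lat)\le 1$ easier to meet, hence the body larger — so an inequality $\eta_1\lesssim\eta_2$ must translate to ${\rm sm}_{\eta_2}(\Z^n)\sqsubseteq{\rm sm}_{\eta_1}(\Z^n)$, matching the placement of the two bodies in Lemma~\ref{lem:sandwich-equiv}, and likewise the orientation of the two vertical ``$\subseteq$'' symbols and of the final ${\rm sm}_{\eta_{\det}}(\Z^n)\subseteq{\rm sm}(\Z^n)/C_\eta(n)^2$ should be checked against the placement of $C_\eta(n)$.
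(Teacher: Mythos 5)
Your proposal is correct and follows essentially the same route as the paper, which states the corollary as an immediate consequence of Theorem~\ref{thm:conj-sandwich} and Lemma~\ref{lem:sandwich-equiv}; your careful bookkeeping of the inclusion-reversing direction and the $c(A,B)\le 1\Leftrightarrow A\subseteq B$ observation is exactly the unpacking the paper leaves implicit.
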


We now give useful inequality descriptions for the smooth covariance bodies and
show that these bodies -- with the exception of ${\rm sm}_{\eta_\mu}$ -- are in
fact convex.  

\begin{lemma}
\label{lem:sm-body-desc}
Let $\lat \subset \R^n$ be an $n$-dimensional lattice. Then
\begin{enumerate}
\item ${\rm sm}_{\eta_{\det}}(\lat) = \set{X \in \PSD^n: \det_W(X) \geq
\frac{1}{\det(\lat^* \cap W)^2}, \forall W \text{ lattice subspace of } \lat^*}$.
\item ${\rm sm}_{\eta_\rho}(\lat) = \set{X \in \PSD^n: \sum_{\vec{y} \in \lat^* \cap
W} e^{-\pi s \vec{y}^\T X \vec{y}} \leq e^{\dim(W)/s}, \forall  W \text{
lattice subspace of } \lat^* \text{ and } s > 0}$.  
\item ${\rm sm}_{\eta_\rho^\ellipse}(\lat) = \set{X \in \PSD^n: \tr(X Y) \geq
\log
\rho_Y(\lat^*), Y \succ 0}$.
\item ${\rm sm}_{\eta_\mu^\ellipse}(\lat) = \set{X \in \PSD^n: \tr(X R^\T R) \geq
\mu(R \lat)^2, ~\forall R \text{ non-singular }}$.
\item ${\rm sm}(\lat) = \set{X \in \PSD^n: \sum_{\vec{y} \in \lat^*
\setminuszero} e^{-\pi \vec{y}^\T X \vec{y}} \leq 1/2}$.
\end{enumerate}
In particular, the above bodies are all convex.
\end{lemma}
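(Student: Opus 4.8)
The plan is to unfold, for each $\tilde{\eta}\in\set{\eta_{\det},\eta_\rho,\eta_\rho^\ellipse,\eta_\mu^\ellipse,\eta}$, the definition ${\rm sm}_{\tilde{\eta}}(\lat)=\set{X\succ0:\tilde{\eta}(X^{-1/2}\lat)\le1}$ by passing to the lattice $\Lambda:=X^{-1/2}\lat$ and translating $\tilde{\eta}(\Lambda)\le1$ into a constraint on $X$. The three transformation rules needed are: (i) $\Lambda^*=X^{1/2}\lat^*$, and the lattice subspaces of $\Lambda^*$ are exactly the subspaces $X^{1/2}W$ with $W$ a lattice subspace of $\lat^*$, satisfying $\Lambda^*\cap X^{1/2}W=X^{1/2}(\lat^*\cap W)$; (ii) $\det(X^{1/2}(\lat^*\cap W))=\det_W(X)^{1/2}\det(\lat^*\cap W)$, which is Claim~\ref{clm:detlattransform} applied with span equal to $W$; (iii) for $Y\succ0$, $\rho_Y(X^{1/2}M)=\rho_{X^{-1/2}YX^{-1/2}}(M)$ with $\tr Y=\tr(X\cdot X^{-1/2}YX^{-1/2})$, and for invertible $R$ the matrix $RX^{-1/2}$ ranges over all invertible matrices with $\tr((RX^{-1/2})^\T(RX^{-1/2})X)=\tr(R^\T R)$. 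One also checks at the outset that every displayed right-hand side already consists of positive definite matrices, so the domains match: in items~1 and~2 the constraint for $W=\R^n$ (a lattice subspace of the full-rank lattice $\lat^*$) fails when $\det X=0$, and in items~3,4,5 any singular $X$ makes the relevant Gaussian sum over the full-rank lattice $\lat^*$ diverge, since such a sum depends only on the orthogonal projections of lattice points onto $\ran(X)$.

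Carrying this out: For item~5, since $s\mapsto\rho_{1/s^2}(\Lambda^*\setminuszero)=\sum_{\vec{y}\in\Lambda^*\setminuszero}e^{-\pi s^2\|\vec{y}\|^2}$ is strictly decreasing, $\eta(\Lambda)\le1$ is equivalent to $\rho_1(\Lambda^*\setminuszero)\le1/2$, i.e.\ to $\sum_{\vec{y}\in\lat^*\setminuszero}e^{-\pi\vec{y}^\T X\vec{y}}\le1/2$ by (i); this is precisely the feasible region of the program defining $\mu_{\rm sm}$ in Theorem~\ref{thm:smooth-mu-bnd}. For item~1, $\eta_{\det}(\Lambda)\le1$ says $\det(\Lambda^*\cap U)\ge1$ for every lattice subspace $U$ of $\Lambda^*$, which by (i) and (ii) becomes $\det_W(X)^{1/2}\det(\lat^*\cap W)\ge1$ for every lattice subspace $W$ of $\lat^*$; this is the feasible region of $\mu_{\rm det}$ from Theorem~\ref{thm:det-mu-bnd}. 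For item~2, $\eta_\rho(\Lambda)\le1$ says $\log\rho_{1/s^2}(\Lambda^*\cap U)\le\dim(U)/s^2$ for all $s>0$ and lattice subspaces $U$; pulling back via (i) gives $\sum_{\vec{y}\in\lat^*\cap W}e^{-\pi s^2\vec{y}^\T X\vec{y}}\le e^{\dim(W)/s^2}$, and reparametrizing $s^2$ as $s$ yields item~2. For item~3, $\eta_\rho^\ellipse(\Lambda)\le1$ says $\log\rho_Y(\Lambda^*)\le\tr Y$ for all $Y\succ0$; writing $Y=X^{1/2}\tilde Y X^{1/2}$ and using (iii) gives $\log\rho_{\tilde Y}(\lat^*)\le\tr(X\tilde Y)$ for all $\tilde Y\succ0$, which is item~3 after renaming. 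For item~4, $\eta_\mu^\ellipse(\Lambda)\le1$ says $\mu(R\Lambda)^2\le\tr(R^\T R)$ for all invertible $R$; substituting $S=RX^{-1/2}$ and using (iii) gives $\mu(S\lat)^2\le\tr(XS^\T S)$ for all invertible $S$, which is item~4.

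For convexity, each of the five bodies is the intersection of $\PSD^n$ with a family of convex constraints in $X$. In items~2 and~5 the functions $X\mapsto\sum_{\vec{y}}e^{-\pi c\,\vec{y}^\T X\vec{y}}$ are convex, being sums of exponentials of linear functions of $X$, so their sublevel sets are convex (the same observation as in Theorems~\ref{thm:smooth-mu-bnd} and~\ref{thm:det-mu-bnd}). In item~1 each constraint $\det_W(X)\ge\det(\lat^*\cap W)^{-2}$ is convex by concavity of $X\mapsto\det_W(X)^{1/\dim(W)}$ (Lemma~\ref{lem:proj-det-props}). In items~3 and~4 each constraint has the form $\tr(XM)\ge c$ with $M\succeq0$ and $c$ fixed (namely $M=Y$, $c=\log\rho_Y(\lat^*)$, respectively $M=R^\T R$, $c=\mu(R\lat)^2$), hence is a closed half-space since the right-hand side does not depend on $X$. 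An intersection of convex sets is convex, which establishes the ``in particular'' clause.

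The only real obstacle is the bookkeeping of the first paragraph: getting the transformation laws for duals, lattice subspaces, projected determinants, and Gaussian masses under the change of variables $X^{\pm1/2}$ exactly right, and confirming that each displayed right-hand set forces $X\succ0$ so that the domains coincide. Once that is in place, items~1 and~5 are essentially free — they literally reproduce the feasible regions of the programs $\mu_{\rm det}$ and $\mu_{\rm sm}$ analyzed earlier — items~2--4 are routine reparametrizations and substitutions, and convexity follows from the structural remarks above.
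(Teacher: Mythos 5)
Your proof is correct and follows essentially the same route as the paper's: unfold each $\mathrm{sm}_{\tilde{\eta}}$ by the change of variables $\Lambda=X^{-1/2}\lat$, transport the inequality $\tilde{\eta}(\Lambda)\le 1$ through the transformation laws for duals, sublattice determinants (Claim~\ref{clm:detlattransform}), and Gaussian masses, and then read off convexity from the structure of the resulting constraints. The one point where you go beyond the paper is in checking that the displayed right-hand sides already force $X\succ 0$, so that the set-equality is literal; the paper simply restricts to positive definite $X$ without comment.

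That domain check is, however, imprecise for items~3 and~4. The constraints $\tr(XY)\ge\log\rho_Y(\lat^*)$ (for $Y\succ 0$) and $\tr(XR^\T R)\ge\mu(R\lat)^2$ (for non-singular $R$) do not contain any Gaussian sum over $\lat^*$ that depends on $X$: for each fixed admissible $Y$ the quantity $\rho_Y(\lat^*)$ is finite, and item~4 contains no Gaussian sum at all. So the phrase ``any singular $X$ makes the relevant Gaussian sum over $\lat^*$ diverge'' does not apply to these two items. What one should say instead is: if $X$ is singular with kernel $V\neq\{\vec 0\}$, then choosing $Y_\eps=\eps^{-2}\pi_V+\pi_{V^\perp}$ (resp.\ any $R_\eps$ with $R_\eps^\T R_\eps=\eps^{-2}\pi_V+\pi_{V^\perp}$) makes $\tr(XY_\eps)$ (resp.\ $\tr(XR_\eps^\T R_\eps)$) equal to the bounded quantity $\tr(X)$, while $\log\rho_{Y_\eps}(\lat^*)$ (resp.\ $\mu(R_\eps\lat)^2$) tends to infinity as $\eps\to 0$ — and it is in establishing this divergence that the observation about projections onto $\ran(X)$ actually enters. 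The claim itself is true, so this is a small fix rather than a gap, and the remainder of the argument is sound.
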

\begin{proof}
From the stated representations, the smooth covariance bodies above are either
the intersection of closed halfspaces or the intersection of sublevel /
superlevel sets of continuous convex / concave functions. Thus, all the smooth
covariance bodies listed above are convex. 

We now derive the required representations. Part 1 
follows from the definition of $\mu_{\textnormal{det}}(\lat)$ in~\eqref{eq:det-mu} and
Claim~\ref{clm:detlattransform} (and was already implicitly used 
in the proof of Theorem~\ref{thm:kl-sandwich}).
Part 5 follows immediately
from the definition of the smoothing parameter $\eta$.
It remains to prove parts 2,~3, and~4.

In the following, $X \in \PSD^n$ will denote a positive definite matrix and $W$
will denote a lattice subspace of $\lat^*$.

\noindent For 2, we have that
\begin{align*}
X \in {\rm sm}_{\eta_\rho}(\lat) 
   &\Leftrightarrow \eta_\rho(X^{-1/2} \lat) \leq 1 \\
   &\Leftrightarrow s^2 \log(\rho_{1/s^2}(X^{1/2}(\lat^* \cap W)))/\dim(W) \leq
1,~\forall W \text{ and } s > 0 \\
   &\Leftrightarrow \sum_{\vec{y} \in \lat^* \cap W} e^{-\pi s^2 \vec{y}^\T X
\vec{y}} \leq e^{\dim(W)/s^2},~\forall W \text{ and } s > 0 \text{ ,}
\end{align*}
as needed.

\noindent For 3, we have that
\begin{align*}
X \in {\rm sm}_{\eta_\rho^\ellipse}(\lat) 
   &\Leftrightarrow \eta_\rho^\ellipse(X^{-1/2} \lat) \leq 1 \\
   &\Leftrightarrow \log(\rho_Y(X^{1/2} \lat^*)) \leq \tr(Y),~\forall Y \succ 0 \\
   &\Leftrightarrow \log(\rho_Y(\lat^*)) \leq \tr(Y X),~\forall Y \succ 0 \text{ ,}
\end{align*}
as needed.

\noindent For 4, we have that
\begin{align*}
X \in {\rm sm}_{\eta_\mu^\ellipse}(\lat) 
   &\Leftrightarrow \eta_\mu^\ellipse(X^{-1/2} \lat) \leq 1 \\
   &\Leftrightarrow \mu(RX^{-1/2}\lat)^2 \leq \tr(R^\T R),~\forall R \text{ non-singular } \\
   &\Leftrightarrow  \mu(R\lat)^2 \leq \tr(X R^\T R),~\forall R \text{
non-singular } \text{ .}
\end{align*}
\end{proof}

Using the convexity of the above smooth covariance bodies, we can now give a
useful dual formulation of the containment factor.

\begin{lemma}
\label{lem:sandwich-dual} 
Let $\eta_1,\eta_2 \in \set{\eta_{\det}, \eta_\rho, \eta^{\ellipse}_\rho,
\eta^{\ellipse}_\mu, \eta}$. Then for $C > 0$ and $\lat \subset \R^n$ an
$n$-dimensional lattice,
\[
c({\rm sm}_{\eta_2}(\lat),{\rm sm}_{\eta_1}(\lat)) \leq C \Leftrightarrow
\min_{X \in {\rm sm}_{\eta_2}(\lat)} C \cdot \tr(X Z) \geq \min_{Y \in {\rm
sm}_{\eta_1}(\lat)} \tr(Y Z), ~ \forall Z \in \PSD^n, Z \succ 0 \text{ .}
\]
\end{lemma}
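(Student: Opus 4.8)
The plan is to read this as the standard separation-based characterization of containment between two unbounded closed convex subsets of $\PSD^n$ that are monotone under the Loewner order. Write $A = {\rm sm}_{\eta_2}(\lat)$ and $B = {\rm sm}_{\eta_1}(\lat)$, and for $Z \succ 0$ put $g_A(Z) = \min_{X \in A}\tr(XZ)$ and $g_B(Z) = \min_{Y \in B}\tr(YZ)$. First I would record that these minima are attained and that the statement reduces to a clean containment claim. Since $\eta_1,\eta_2 \neq \eta_\mu$, Lemma~\ref{lem:sm-body-desc} gives that $A$ and $B$ are convex, and Lemma~\ref{lem:smooth-body-basics} gives that they are closed, nonempty, do not contain the origin, are closed under scaling up, and satisfy $A + \PSD^n \subseteq A$ and $B + \PSD^n \subseteq B$. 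For $Z \succ 0$ the functional $\tr(\cdot\,Z)$ has $\PSD^n$-compact sublevel sets because $\tr(XZ) \ge \lambda_n(Z)\tr(X)$, so it attains its infimum over the closed nonempty set $A$, and likewise over $B$; thus $g_A$ and $g_B$ are well-defined as written. By Definition~\ref{def:cont-factor} and closedness under scaling up, $c(A,B) \le C \iff CA \subseteq B$, and trivially $g_{CA}(Z) = C\,g_A(Z)$; hence the lemma is equivalent to: $CA \subseteq B$ if and only if $C\,g_A(Z) \ge g_B(Z)$ for every $Z \succ 0$.

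The ``only if'' direction of this equivalence is immediate, since the infimum defining $g_{CA}(Z)$ is taken over the subset $CA$ of $B$, so $C\,g_A(Z) = g_{CA}(Z) \ge g_B(Z)$. For ``if'' I would argue the contrapositive. Suppose $X_0 \in CA \setminus B$. Strictly separating the point $X_0$ from the nonempty closed convex set $B$ in the space of symmetric matrices equipped with the trace inner product yields a symmetric matrix $Z$ and a scalar $\alpha$ with $\tr(X_0 Z) < \alpha \le \tr(YZ)$ for all $Y \in B$. Using $B + \PSD^n \subseteq B$: for any $Y \in B$, $\Delta \succeq 0$, and $t \ge 0$ we have $\tr(YZ) + t\,\tr(\Delta Z) = \tr((Y + t\Delta)Z) \ge \alpha$, and letting $t \to \infty$ forces $\tr(\Delta Z) \ge 0$; since $\Delta \succeq 0$ was arbitrary and $\PSD^n$ is self-dual, $Z \succeq 0$. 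Now set $Z_\eps = Z + \eps I \succ 0$ for small $\eps > 0$. Every $Y \in B$ is positive semidefinite, so $\tr(YZ_\eps) \ge \tr(YZ) \ge \alpha$, hence $g_B(Z_\eps) \ge \alpha$; on the other hand $\tr(X_0 Z_\eps) = \tr(X_0 Z) + \eps\,\tr(X_0)$, and $\tr(X_0) > 0$ because $X_0 \neq 0$ (as $0 \notin CA$), so picking $\eps$ with $\eps\,\tr(X_0) < \alpha - \tr(X_0 Z)$ gives $\tr(X_0 Z_\eps) < \alpha \le g_B(Z_\eps)$. Since $X_0 \in CA$, this yields $C\,g_A(Z_\eps) = g_{CA}(Z_\eps) \le \tr(X_0 Z_\eps) < g_B(Z_\eps)$, contradicting the hypothesis at $Z_\eps \succ 0$.

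The one delicate point, and the main (if minor) obstacle, is precisely this perturbation: separation only delivers a positive \emph{semi}definite direction $Z$, because $B$ is unbounded in every PSD direction, whereas the lemma quantifies only over $Z \succ 0$; passing to $Z + \eps I$ repairs this, and it is here that the property $0 \notin {\rm sm}_{\tilde\eta}(\lat)$ from Lemma~\ref{lem:smooth-body-basics} is used, to ensure $\tr(X_0) > 0$. Everything else --- attainment of the minima, the equivalence $c(A,B) \le C \iff CA \subseteq B$, and the homogeneity $g_{CA} = C\,g_A$ --- is routine.
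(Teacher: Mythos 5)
Your proposal is correct and takes essentially the same route as the paper: both arguments reduce to a containment claim, strictly separate a violating point from the closed convex target body, use the Loewner-monotonicity property (Lemma~\ref{lem:smooth-body-basics}, part~2) to force the separating direction into $\PSD^n$, and then perturb by $\eps I$ to make it positive definite, with the observation that $\tr(X_0)>0$ controls the perturbation. The only substantive difference is that you explicitly verify attainment of the minima (via $\tr(XZ)\ge \lambda_n(Z)\tr(X)$), a point the paper leaves implicit; otherwise the two proofs are interchangeable.
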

\begin{proof}
The $\Rightarrow$ direction follows trivially by homogeneity and the containment $C \cdot {\rm
sm}_{\eta_2}(\lat) \subseteq {\rm sm}_{\eta_1}(\lat)$. To prove
the $\Leftarrow$ direction, 
let $X \in {\rm sm}_{\eta_2}(\lat)$ be such that $C X \notin {\rm
sm}_{\eta_1}(\lat)$, and we will prove that the right-hand side does not
hold.
Since ${\rm sm}_{\eta_1}(\lat)$ is a closed convex
subset of symmetric matrices, by the separation theorem there exists a
non-zero symmetric matrix $Z \in \R^{n \times n}$ such that
\begin{equation}
\label{eq:sand-dual}
\tr(C X Z) = C \tr(X Z) < \min_{Y \in {\rm sm}_{\eta_1}(\lat)} \tr(Y
Z) \text{ .}
\end{equation}
We shall now show that $Z \in \PSD^n$ and later modify $Z$ to be positive
definite to complete the proof. If $Z \notin \PSD^n$, then there exists $\Delta
\in \PSD^n$ such that $\tr(Z \Delta) < 0$ (e.g.,~$\Delta = \vec{v} \vec{v}^\T$
where $\vec{v}$ is an eigenvector of $Z$ with negative eigenvalue).  Take
an arbitrary $Y \in
{\rm sm}_{\eta_1}(\lat)$. Since $Y + M \Delta \in {\rm sm}_{\eta_1}(\lat)$ for
all $M \geq 0$ (Lemma~\ref{lem:smooth-body-basics} part 2), we have that the minimum 
in~\eqref{eq:sand-dual} is at most $\inf_{M \geq 0} \tr(Z (Y + M \Delta)) =
-\infty$, a clear contradiction. Thus $Z \in \PSD^n$ as needed. 

To make $Z$ positive definite, we simply replace $Z$ by $Z' = Z + \eps I_n$ for
$\eps > 0$ small enough. Note that~\eqref{eq:sand-dual} has a strict
inequality, hence the gap between both sides is at least some $\eps' > 0$.  Since
${\rm sm}_{\eta_1}(\lat)$ contains only positive definite matrices, the right-hand 
side of~\ref{eq:sand-dual} can only increase when replacing $Z$ by $Z'$.
Furthermore, $\tr(C X Z') = \tr(C X) + C \eps \cdot \tr(X) < \tr(CX) + \eps'$,
for $\eps < \eps'/(C\tr(X))$. Hence, for $\eps > 0$ small 
enough,~\eqref{eq:sand-dual} holds with $Z$ replaced by $Z'$ as needed. 
\end{proof}

\subsection{Smooth \texorpdfstring{$\mu$}{mu} tightness and the weak conjecture}
\label{sec:musmoothtight}

Recall from Theorem~\ref{thm:smooth-mu-bnd} the convex program $\mu_{\rm{sm}}$ that provides an upper bound on $\mu$.

\begin{definition}\label{def:smoothmuconstant}(Smooth $\mu$ tightness)
Let $C^{(s\mu)}(n)>0$ be the smallest number such that for any $n$-dimensional
lattice $\lat \subset \R^n$,
\begin{align*}
\mu_{\rm{sm}}(\lat) \le C^{(s\mu)}(n) \mu(\lat)
\text{ .}
\end{align*}
\end{definition}

Intuitively, $C^{(s\mu)}(n)$ being small means that
it is always possible to smooth a lattice with a (not necessarily spherical) 
Gaussian whose expected norm is nearly as small as possible, namely, 
not much more than the covering radius. Obviously, one cannot smooth a 
lattice with a Gaussian of expected norm less than the covering radius --
this is precisely the content of Theorem~\ref{thm:smooth-mu-bnd}. 

Our goal in this section is to show that $C^{(s\mu)}(n) \approx
C^{(\mu,\ellipse)}_\eta(n)$. This implies that the
weak conjecture (Conjecture~\ref{con:covweaketa}) is equivalent to the statement
that $C^{(s\mu)}(n) \le \poly \log n$. We mention in passing that the proof of
Theorem~\ref{thm:mink-to-kl} combined with the easy reverse direction of the KL
conjecture (first inequality in Theorem~\ref{thm:klgeneral}) already implies
that for any lattice $\lat$, $\mu_{\rm{sm}}(\lat) \lesssim C_\eta(n) \mu(\lat)$,
i.e., that $C^{(s\mu)}(n) \lesssim C_\eta(n)$. 

At a high level, the plan is to show that $C^{(s\mu)}(n)$ is in essence the
``dual'' form of $C^{(\mu,\ellipse)}_\eta(n)$. 
We begin by proving the easier direction.

\begin{lemma} 
\label{lem:sm-mu-easy}
For any $n \in \N$, $C^{(\mu,\ellipse)}_\eta(n) \leq C^{(s\mu)}(n)$.
\end{lemma}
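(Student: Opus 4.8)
The plan is to realize $\eta$ as the ``$\etarelax^{\ellipse}_\mu$-style'' quantity built out of $\mu_{\rm sm}$ rather than out of $\mu$, i.e.\ to establish the identity
\begin{equation}\label{eq:etasupmusm}
\eta(\lat)^2 \;=\; \sup_{R\ \text{non-singular}}\ \frac{\mu_{\rm sm}(R\lat)^2}{\tr(R^\T R)}\,.
\end{equation}
Granting~\eqref{eq:etasupmusm}, the lemma is one line: by the definition of $C^{(s\mu)}(n)$ we have $\mu_{\rm sm}(R\lat)\le C^{(s\mu)}(n)\,\mu(R\lat)$ for every non-singular $R$, hence
\[
\eta(\lat)^2 \;=\; \sup_{R}\frac{\mu_{\rm sm}(R\lat)^2}{\tr(R^\T R)} \;\le\; C^{(s\mu)}(n)^2\sup_{R}\frac{\mu(R\lat)^2}{\tr(R^\T R)} \;=\; C^{(s\mu)}(n)^2\,\etarelax^{\ellipse}_\mu(\lat)^2\,,
\]
and the definition of $C^{(\mu,\ellipse)}_\eta(n)$ gives $C^{(\mu,\ellipse)}_\eta(n)\le C^{(s\mu)}(n)$.

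To prove~\eqref{eq:etasupmusm}, first rewrite $\mu_{\rm sm}(R\lat)$ in the language of smooth covariance bodies. Combining Theorem~\ref{thm:smooth-mu-bnd} with the description of ${\rm sm}(\lat)$ in Lemma~\ref{lem:sm-body-desc}(5) gives $\mu_{\rm sm}(R\lat)^2=\min_{A\in{\rm sm}(R\lat)}\tr(A)$, and the transformation rule of Lemma~\ref{lem:sandwich-basics}(2) gives ${\rm sm}(R\lat)=\{RBR^\T:B\in{\rm sm}(\lat)\}$; therefore $\mu_{\rm sm}(R\lat)^2=\min_{B\in{\rm sm}(\lat)}\tr(BR^\T R)$, and dividing by $\tr(R^\T R)$ shows that the right-hand side of~\eqref{eq:etasupmusm} equals $\sup\{\min_{B\in{\rm sm}(\lat)}\tr(BX):X\succ0,\ \tr(X)=1\}$. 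The inequality ``$\le$'' in~\eqref{eq:etasupmusm} is then routine: by Lemma~\ref{lem:sandwich-basics}(1) we have $c(I,{\rm sm}(\lat))=\eta(\lat)^2$, and since ${\rm sm}(\lat)$ is closed (Lemma~\ref{lem:smooth-body-basics}(1)) and closed under scaling up (Lemma~\ref{lem:smooth-body-basics}(1)), this yields $\eta(\lat)^2I\in{\rm sm}(\lat)$; hence $\min_{B\in{\rm sm}(\lat)}\tr(BX)\le\tr(\eta(\lat)^2 I\cdot X)=\eta(\lat)^2$ for every trace-one $X\succeq0$.

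The substantive step — and the only direction actually needed for the lemma — is the reverse inequality $\eta(\lat)^2\le\sup_R\mu_{\rm sm}(R\lat)^2/\tr(R^\T R)$, which I would obtain from a supporting-hyperplane argument on ${\rm sm}(\lat)$. The matrix $\eta(\lat)^2I$ lies on the boundary of the closed convex body ${\rm sm}(\lat)$: it belongs to ${\rm sm}(\lat)$, but $sI\notin{\rm sm}(\lat)$ for $s<\eta(\lat)^2$ (again by $c(I,{\rm sm}(\lat))=\eta(\lat)^2$). Hence there is a nonzero symmetric $M$ with $\tr(MB)\ge\eta(\lat)^2\tr(M)$ for all $B\in{\rm sm}(\lat)$. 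The key point, where the special geometry of ${\rm sm}(\lat)$ enters, is that $M$ must be positive semidefinite: since $B+\Delta\in{\rm sm}(\lat)$ for all $B\in{\rm sm}(\lat)$ and $\Delta\succeq0$ (Lemma~\ref{lem:smooth-body-basics}(2)), a negative eigenvalue of $M$ would drive $\tr(MB)$ to $-\infty$ along a ray contained in ${\rm sm}(\lat)$, a contradiction. Normalizing $\tr(M)=1$ (possible as $M\neq0$, $M\succeq0$) gives $\tr(MB)\ge\eta(\lat)^2$ for every $B\in{\rm sm}(\lat)$. Now take $R_\varepsilon=(M+\varepsilon I)^{1/2}$ for $\varepsilon>0$; then $\mu_{\rm sm}(R_\varepsilon\lat)^2=\min_{B\in{\rm sm}(\lat)}\tr\bigl(B(M+\varepsilon I)\bigr)\ge\min_{B}\tr(MB)\ge\eta(\lat)^2$, while $\tr(R_\varepsilon^\T R_\varepsilon)=\tr(M)+\varepsilon n=1+\varepsilon n$, so $\mu_{\rm sm}(R_\varepsilon\lat)^2/\tr(R_\varepsilon^\T R_\varepsilon)\ge\eta(\lat)^2/(1+\varepsilon n)$; letting $\varepsilon\to0^+$ yields $\sup_R\mu_{\rm sm}(R\lat)^2/\tr(R^\T R)\ge\eta(\lat)^2$, as desired.

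I do not expect a genuine obstacle in this plan; the only delicate points are the positive semidefiniteness of the separating functional $M$ (handled via closure of ${\rm sm}(\lat)$ under adding PSD matrices) and the fact that the optimal $M$ may be singular (handled by the $\varepsilon$-perturbation $R_\varepsilon$, which also sidesteps any question of whether the relevant infima are attained). The conceptual content is simply that $\eta$, $\mu_{\rm sm}$ and $\mu$ are linked by the common operation $\Lambda\mapsto\sup_R(\,\cdot\,)(R\Lambda)/\sqrt{\tr(R^\T R)}$, together with the pointwise bound $\mu\le\mu_{\rm sm}$ whose gap is, by definition, exactly what $C^{(s\mu)}(n)$ measures.
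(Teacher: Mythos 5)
Your proof is correct. It is conceptually close to the paper's but organized around a different pivot. The paper routes through Corollary~\ref{cor:equiv-sandwich}, which identifies $C^{(\mu,\ellipse)}_\eta(n)^2$ with the containment factor $c({\rm sm}_{\eta^\ellipse_\mu}(\Z^n),{\rm sm}(\Z^n))$, and then invokes the dual characterization Lemma~\ref{lem:sandwich-dual} to reduce everything to a family of trace inequalities; from there the easy inequality description $\mu(R\Z^n)^2\le\tr(XR^\T R)$ of ${\rm sm}_{\eta^\ellipse_\mu}$ and the transformation rule finish it. You bypass the body ${\rm sm}_{\eta^\ellipse_\mu}$ entirely and instead establish the clean variational identity $\eta(\lat)^2=\sup_R \mu_{\rm sm}(R\lat)^2/\tr(R^\T R)$, after which the lemma really is one line via the pointwise bound $\mu_{\rm sm}\le C^{(s\mu)}(n)\mu$. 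The substance --- a supporting-hyperplane argument at the boundary point $\eta(\lat)^2 I$ of ${\rm sm}(\lat)$, plus the observation (via closure under adding PSD matrices) that the separating functional must itself be PSD, plus the $\varepsilon$-perturbation to a non-singular $R$ --- is exactly the convex-duality content that the paper has packaged once and for all inside the proof of Lemma~\ref{lem:sandwich-dual}. So what you have done is inline that separation argument and expose a nice identity that the paper's abstraction hides: replacing $\mu$ by $\mu_{\rm sm}$ in the definition of $\eta^{\ellipse}_\mu$ recovers $\eta$ \emph{exactly}, not merely up to a constant. That identity is a genuinely informative reformulation; the paper's version, in exchange, reuses machinery that also serves the other containment-factor comparisons in Section~\ref{sec:sm-cov-body}.
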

\begin{proof}
By Corollary~\ref{cor:equiv-sandwich}, we recall that
$C^{(\mu,\ellipse)}_\eta(n)^2$ is equal to the containment factor 
$c({\rm sm}_{\eta^\ellipse_\mu}(\Z^n), {\rm sm}(\Z^n))$.
By Lemma~\ref{lem:sandwich-dual}, 
it suffices to show that $\forall Z
\succ 0$
\begin{equation}
\label{eq:sm-mu-easy}
C^{(s\mu)}(n)^2 \min \set{\tr(X Z): X \in {\rm sm}_{\eta^\ellipse_\mu}(\Z^n)}
\geq \min \set{\tr(Y Z): Y \in {\rm sm}(\Z^n)}  \text{ .}
\end{equation}
Given $Z$ as above, since $Z$ is positive definite we may write $Z = R^\T R$
for $R$ non-singular. Using the inequality representation for ${\rm
sm}_{\eta^\ellipse_\mu}(\Z^n)$ in Lemma~\ref{lem:sm-body-desc}, for any $X \in
{\rm sm}_{\eta^\ellipse_\mu}(\Z^n)$ we have that
\[
\mu(R\Z^n)^2 \leq \tr(X R^\T R) = \tr(X Z) \text{ ,}
\]
and hence the left hand side of~\eqref{eq:sm-mu-easy} is at least $C^{(s\mu)}(n)^2
\mu(R\Z^n)^2$. Thus, it suffices to prove that
\begin{align*}
C^{(s\mu)}(n)^2 \mu(R\Z^n)^2 &\geq \min \set{\tr(R Y R^\T): Y \in {\rm sm}(\Z^n)} \\
              &= \min \set{\tr(Y): Y \in {\rm sm}(R\Z^n)} \quad \left(\text{ by
Lemma~\ref{lem:sandwich-basics} part 2}\right) \\
              &= \mu_{{\rm sm}}(R\Z^n)^2 \text{ .}
\end{align*}
The desired inequality now follows directly from the definition of $C^{(s\mu)}(n)$.
\end{proof}

Note that the only reason the above proof does not immediately give the reverse
inequality $C^{(\mu,\ellipse)}_\eta(n) \gtrsim C^{(s\mu)}(n)$ is because we
currently only have a \emph{lower bound} of $\mu(R\Z^n)^2$ on the value of the
program $\min \set{\tr(X R^\T R): X \in {\rm sm}_{\eta^\ellipse_\mu}(\Z^n)}$.
Indeed, the main content of this section is to show that $O(\mu(R\Z^n)^2)$ is an
upper bound on the value of this program. Given the inequality description 
\[
{\rm sm}_{\eta^\ellipse_\mu}(\Z^n) = \set{X \in \PSD^n: \tr(X R^\T R) \geq
\mu(R\Z^n)^2, \forall R \text{ non-singular }} \text{ ,}
\] 
note that we must essentially show that all the above inequalities are
irredundant (up to scaling). 

To achieve the desired upper bounds, we will use another smooth approximation of
the covering radius as defined below.

\begin{definition}[Average $\mu$]
For an $n$-dimensional lattice $\lat \subset \R^n$, we define
\[
\bar{\mu}^2(\lat) = \E_{\vec{x} \leftarrow \R^n / \lat}[\dist(\vec{x},\lat)^2]
\]
where $\vec{x}$ is distributed uniformly on $\R^n / \lat$.  Letting $\mathcal{V}
= \mathcal{V}(\lat)$ denote the Voronoi cell of $\lat$, note that
\[
\bar{\mu}^2(\lat) = \E_{\vec{x} \leftarrow \mathcal{V}}[\|\vec{x}\|^2]
\]
where $\vec{x}$ is distributed uniformly on $\mathcal{V}$.
We define the $\eta$-type parameter 
\[
\eta_{\bar{\mu}}^\ellipse(\lat) = \sup_{R \text{ non-singular } }
\frac{\bar{\mu}(R\lat)}{\|R\|_F} \text{ ,}
\]
and the corresponding smooth covariance body
\[
{\rm sm}_{\eta^\ellipse_{\bar{\mu}}}(\lat) = \set{X \in \PSD^n: X \succ 0,
\eta^\ellipse_{\bar{\mu}}(X^{-1/2}\lat) \leq 1} \text{ .}
\]
\end{definition}

The following proposition gives the tight relationship between $\mu$ and $\bar{\mu}$. 
\begin{proposition} 
\label{prop:bar-mu}
For an $n$-dimensional lattice $\lat \subset \R^n$:
\begin{enumerate}
\item $\bar{\mu}(\lat) \leq \mu(\lat) \leq \sqrt{8} \bar{\mu}(\lat)$. In
particular, $\eta^\ellipse_{\bar{\mu}}(\lat) \leq \eta^\ellipse_\mu(\lat) \leq
\sqrt{8} \eta^\ellipse_{\bar{\mu}}(\lat)$.\footnote{We remark
that the constant $\sqrt{8}$ can be improved to $2$; see~\cite{HavivLR09}.}
\item ${\rm sm}_{\eta^\ellipse_{\bar{\mu}}}(\lat) = \set{X \in \PSD^n: \tr(X
R^\T R) \geq \bar{\mu}(R\lat)^2, ~ \forall R \text{ non-singular }}$.
\end{enumerate}
\end{proposition}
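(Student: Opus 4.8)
The plan is to establish the two parts more or less separately; both are short given the machinery already in place. Part~1 is an elementary averaging argument on the torus, and part~2 is exactly the change-of-variables identity used to derive the inequality descriptions in Lemma~\ref{lem:sm-body-desc}, carried out with $\bar{\mu}$ in place of $\mu$.

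For part~1, the inequality $\bar{\mu}(\lat)\le\mu(\lat)$ is immediate from the definition, since $\dist(\vec x,\lat)\le\mu(\lat)$ pointwise and hence $\bar{\mu}(\lat)^2=\E_{\vec x\leftarrow\calV}[\|\vec x\|^2]\le\mu(\lat)^2$. For the reverse inequality I would invoke Claim~\ref{clm:gmrcovering}, which says that for $\vec x$ uniform in ${\rm span}(\lat)/\lat$ one has $\Pr[\dist(\vec x,\lat)\ge\mu(\lat)/2]\ge 1/2$; therefore
\[
\bar{\mu}(\lat)^2=\E_{\vec x}\bracks[\big]{\dist(\vec x,\lat)^2}\ge \tfrac12\parens[\big]{\tfrac{\mu(\lat)}{2}}^2=\tfrac{\mu(\lat)^2}{8},
\]
i.e.\ $\mu(\lat)\le\sqrt 8\,\bar{\mu}(\lat)$. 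The ``in particular'' clause then follows by applying this two-sided bound to $R\lat$ for each non-singular $R$, dividing through by $\|R\|_F$, and taking the supremum over $R$; here one uses that $\bar{\mu}$ is positively homogeneous ($\bar{\mu}(\lambda\lat)=\lambda\bar{\mu}(\lat)$, immediate from $\calV(\lambda\lat)=\lambda\calV(\lat)$), so that $\eta^\ellipse_{\bar{\mu}}$ is positively homogeneous as well.

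For part~2, I would first record that, exactly as for $\eta^\ellipse_\mu$ in Lemma~\ref{lem:eta-continuity}, for invertible $T$ one has $\eta^\ellipse_{\bar{\mu}}(T\lat)^2=\sup_R\bar{\mu}(R\lat)^2/\tr(R^\T R(T^\T T)^{-1})$, so that $X\mapsto\eta^\ellipse_{\bar{\mu}}(X^{-1/2}\lat)$ is well defined and continuous on $\{X\in\PSD^n:X\succ 0\}$. Then, for $X\succ 0$,
\begin{align*}
X\in{\rm sm}_{\eta^\ellipse_{\bar{\mu}}}(\lat)
&\iff \eta^\ellipse_{\bar{\mu}}(X^{-1/2}\lat)\le 1 \\
&\iff \bar{\mu}(RX^{-1/2}\lat)^2\le\tr(R^\T R),\ \forall R\text{ non-singular} \\
&\iff \bar{\mu}(R\lat)^2\le\tr(XR^\T R),\ \forall R\text{ non-singular},
\end{align*}
where the last step is the substitution $R\mapsto RX^{1/2}$ (a bijection of the non-singular matrices since $X\succ 0$) together with $\tr\parens[\big]{(RX^{1/2})^\T RX^{1/2}}=\tr(XR^\T R)$. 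To match the stated set literally one finally checks that a singular $X\in\PSD^n$ cannot lie in the right-hand set: with $R=\sqrt\eps\,\pi_{(\ker X)^\perp}+\sqrt M\,\pi_{\ker X}$, which is non-singular for $\eps,M>0$, one gets $\tr(XR^\T R)=\eps\,\tr(X)$ independently of $M$, whereas $\det(R\lat)=\eps^{(n-k)/2}M^{k/2}\det(\lat)$ with $k=\dim\ker X\ge 1$; since $\calV(R\lat)\subseteq\mu(R\lat)\Ball_2^n$ has volume $\det(R\lat)$, this forces $\mu(R\lat)\to\infty$ as $M\to\infty$, hence by part~1 also $\bar{\mu}(R\lat)^2>\tr(XR^\T R)$ for $M$ large, a contradiction.

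I do not expect a serious obstacle here; the proposition is essentially routine. The only points requiring a little care are the homogeneity and continuity bookkeeping for $\eta^\ellipse_{\bar{\mu}}$ (which is not one of the six parameters covered by Lemma~\ref{lem:eta-continuity} but behaves identically), and the observation that the right-hand set in part~2 automatically consists of positive definite matrices — a point that the analogous Lemma~\ref{lem:sm-body-desc}(4) leaves implicit and which I would make explicit via the stretching argument above.
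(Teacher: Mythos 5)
Your proof is correct and follows the same route as the paper: part~1 uses the pointwise bound $\dist(\vec x,\lat)\le\mu(\lat)$ for the easy direction and Claim~\ref{clm:gmrcovering} with radius $\mu(\lat)/2$ for the reverse, and part~2 carries out exactly the change of variables $R\mapsto RX^{1/2}$ that the paper invokes by reference to the derivation of Lemma~\ref{lem:sm-body-desc}(4). The one place you go beyond the paper is the closing check that a singular $X\in\PSD^n$ cannot lie in the right-hand set; the paper leaves this implicit both here and in Lemma~\ref{lem:sm-body-desc}(4), and your stretching argument (sending $M\to\infty$ in the kernel directions so that $\det(R\lat)$ and hence $\mu(R\lat)$ blow up while $\tr(XR^\T R)=\eps\tr(X)$ stays fixed, then applying part~1 to pass to $\bar\mu$) is a clean way to discharge it.
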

\begin{proof}
We prove part 1. Firstly, we see that
\[
\bar{\mu}(\lat)^2 = \E_{\vec{x} \leftarrow \R^n / \lat}[\dist(\vec{x},\lat)^2]
\leq \max_{\vec{x} \in \R^n / \lat} \dist(\vec{x},\lat)^2 = \mu(\lat)^2 \text{ .}
\]
For the reverse inequality, by Claim~\ref{clm:gmrcovering}, we have that
\[
\E_{\vec{x} \leftarrow \R^n / \lat}[\dist(\vec{x},\lat)^2]
\geq (\mu(\lat)^2/4) \Pr_{\vec{x} \leftarrow \R^n / \lat}[\dist(\vec{x},\lat) \geq
\mu(\lat)/2] \geq \mu(\lat)^2/8 \text{ ,}
\]
as needed. For the in particular, it follows immediately from the above
inequalities and the definitions of $\eta^\ellipse_\mu$ and
$\eta^\ellipse_{\bar{\mu}}$. 

For part 2, we simply repeat the derivation of the inequality representation of ${\rm
sm}_{\eta^\ellipse_\mu}(\lat)$ in Lemma~\ref{lem:sm-body-desc}, replacing $\mu$
by $\bar{\mu}$.
\end{proof}

We now present the main technical lemma of this section which gives exact
optimal solutions \dnote{can we show that the optimal solution is unique?} and
values for the relevant minimization problems over ${\rm
sm}_{\eta^\ellipse_{\bar{\mu}}}(\Z^n)$.

\begin{lemma} 
\label{lem:bar-mu-opt}
Let $\lat \subset \R^n$ be an $n$-dimensional lattice. Then for $R \in \R^{n
\times n}$ non-singular, 
\[
\min \set{\tr(X R^\T R): X \in {\rm sm}_{\eta^\ellipse_{\bar{\mu}}}(\lat)}
= \bar{\mu}(R\lat)^2 \text{, }
\]
and an optimal solution to the above program is $R^{-1}\E_{\vec{x} \leftarrow
\mathcal{V}(R\lat)}[\vec{x} \vec{x}^\T] R^{-\T}$.   
\end{lemma}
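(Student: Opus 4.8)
The plan is to exhibit a candidate primal solution and a matching dual certificate, and then invoke Lemma~\ref{lem:sandwich-dual} together with Proposition~\ref{prop:bar-mu} part~2 to conclude optimality. By the change-of-variables identity ${\rm sm}_{\eta^\ellipse_{\bar{\mu}}}(R\lat) = \set{R X R^\T : X \in {\rm sm}_{\eta^\ellipse_{\bar{\mu}}}(\lat)}$ (Lemma~\ref{lem:sandwich-basics} part~2), we have $\min\set{\tr(X R^\T R): X \in {\rm sm}_{\eta^\ellipse_{\bar{\mu}}}(\lat)} = \min\set{\tr(Y): Y \in {\rm sm}_{\eta^\ellipse_{\bar{\mu}}}(R\lat)}$, so it suffices to treat the case $R = I$: show that $\min\set{\tr(Y): Y \in {\rm sm}_{\eta^\ellipse_{\bar\mu}}(\lat)} = \bar{\mu}(\lat)^2$ with optimal solution $Y^\star = \E_{\vec{x} \leftarrow \calV(\lat)}[\vec{x}\vec{x}^\T]$, and then transport back.

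\textbf{Feasibility of $Y^\star$.} First I would check that $Y^\star \in {\rm sm}_{\eta^\ellipse_{\bar\mu}}(\lat)$, i.e.\ that $\tr(Y^\star R^\T R) \ge \bar{\mu}(R\lat)^2$ for every non-singular $R$, using the inequality description in Proposition~\ref{prop:bar-mu} part~2. (One also needs $Y^\star \succ 0$, which holds since $\calV(\lat)$ is full-dimensional.) The left-hand side is $\tr(R^\T R\, Y^\star) = \E_{\vec{x} \leftarrow \calV(\lat)}\|R\vec{x}\|^2$. The key point is a comparison: $\bar{\mu}(R\lat)^2 = \E_{\vec{y}\leftarrow\calV(R\lat)}\|\vec{y}\|^2 = \min\set{\E_{\vec{y}\leftarrow R\lat+\vec{t}}\dots}$ — more precisely, $\bar\mu(R\lat)^2$ is the average over a fundamental domain of the \emph{minimum} distance to $R\lat$, whereas $\E_{\vec x\leftarrow\calV(\lat)}\|R\vec x\|^2$ averages $\|R\vec x\|^2$ where $\vec x$ ranges over the (not-necessarily-optimal-for-$R$) Voronoi cell of $\lat$. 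Since $R\calV(\lat)$ is a fundamental domain for $R\lat$, and for each coset $\vec y + R\lat$ the point of $R\calV(\lat)$ in that coset has norm at least $\dist(\vec y, R\lat)$, integrating over a fundamental domain gives $\E_{\vec x \leftarrow \calV(\lat)}\|R\vec x\|^2 = \E_{\vec z \leftarrow R\calV(\lat)}\|\vec z\|^2 \ge \E_{\vec y \leftarrow \calV(R\lat)}\dist(\vec y,R\lat)^2 = \bar\mu(R\lat)^2$. This establishes feasibility, and taking $R=I$ shows the objective value at $Y^\star$ is exactly $\tr(Y^\star) = \E_{\vec x\leftarrow\calV(\lat)}\|\vec x\|^2 = \bar\mu(\lat)^2$.

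\textbf{Optimality via the dual.} It remains to show no feasible $Y$ does strictly better, i.e.\ $\tr(Y) \ge \bar\mu(\lat)^2$ for all $Y \in {\rm sm}_{\eta^\ellipse_{\bar\mu}}(\lat)$. This is exactly the defining inequality of ${\rm sm}_{\eta^\ellipse_{\bar\mu}}(\lat)$ specialized to $R = I$: $\tr(Y \cdot I^\T I) \ge \bar\mu(I\lat)^2 = \bar\mu(\lat)^2$. So optimality is immediate from the inequality description in Proposition~\ref{prop:bar-mu} part~2 — the constraint indexed by $R = I$ is precisely the statement that $\tr(Y)\ge\bar\mu(\lat)^2$, and we have just exhibited $Y^\star$ achieving equality there. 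Finally, transporting through Lemma~\ref{lem:sandwich-basics} part~2 with the substitution $Y = R X R^\T$ gives that the minimizer of $\tr(X R^\T R)$ over $X \in {\rm sm}_{\eta^\ellipse_{\bar\mu}}(\lat)$ is $X^\star = R^{-1} Y^\star(R\lat) R^{-\T} = R^{-1}\E_{\vec x\leftarrow\calV(R\lat)}[\vec x\vec x^\T]R^{-\T}$ with value $\tr(Y^\star(R\lat)) = \bar\mu(R\lat)^2$, as claimed.

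\textbf{Main obstacle.} I expect the only real subtlety is the feasibility comparison $\E_{\vec x\leftarrow\calV(\lat)}\|R\vec x\|^2 \ge \bar\mu(R\lat)^2$: one must argue carefully that $R\calV(\lat)$, while a legitimate fundamental domain for $R\lat$, has average squared norm at least that of the \emph{Voronoi} fundamental domain of $R\lat$, which is the one minimizing average squared norm among all fundamental domains (this uses that the Voronoi cell assigns to each coset its minimum-norm representative, so any other fundamental domain, coset-by-coset, has representatives of norm at least as large). Everything else is bookkeeping with the change-of-variables identities already established in the excerpt.
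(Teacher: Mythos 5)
Your proposal is correct and follows essentially the same route as the paper's own proof: both reduce to $R=I$ via the change-of-variables identity from Lemma~\ref{lem:sandwich-basics} part~2, exhibit $Y^\star=\E_{\vec x\leftarrow\calV(\lat)}[\vec x\vec x^\T]$ as a candidate, verify feasibility by comparing the fundamental domain $R\calV(\lat)$ against the Voronoi cell of $R\lat$, and obtain the matching lower bound directly from the $R=I$ constraint in the inequality description of Proposition~\ref{prop:bar-mu} part~2. (The mention of Lemma~\ref{lem:sandwich-dual} in your plan is harmless but unnecessary; as you yourself observe, the optimality step needs no separation argument, only the single $R=I$ constraint.)
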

\begin{proof}
Noting that by Lemma~\ref{lem:sandwich-basics} part 2 (extended in the obvious way), 
\[
\min \set{\tr(X R^\T R): X \in {\rm sm}_{\eta^\ellipse_{\bar{\mu}}}(\lat)}
= \min \set{\tr(X): X \in {\rm sm}_{\eta^\ellipse_{\bar{\mu}}}(R\lat)} \text{, }
\]
it suffices to prove the lemma when $R = I_n$.

From here, by the inequality description from Proposition~\ref{prop:bar-mu},
\[
{\rm sm}_{\eta^\circ_{\bar{\mu}}}(\lat)
= \set{X \in \PSD^n: \tr(X R^\T R) \geq \bar{\mu}(R\lat)^2,~\forall R \text{
non-singular}} \text{ ,}
\]
it is clear that the minimum value above for $R = I_n$ is at least
$\bar{\mu}(\lat)^2$. Now for $X = \E_{\vec{x} \leftarrow
\mathcal{V}(\lat)}[\vec{x}\vec{x}^\T]$, we have that
\[
\tr(X) = \E_{\vec{x} \leftarrow \mathcal{V}(\lat)}[\|\vec{x}\|^2]
= \E_{\vec{x} \leftarrow \R^n / \lat}[\dist(\vec{x},\lat)^2] = \bar{\mu}(\lat)^2.
\]
Thus, it suffices to show that $X$ is feasible. Taking $R \in \R^{n \times n}$
non-singular, we have that
\[
\tr(X R^\T R) = \E_{\vec{x} \leftarrow \mathcal{V}(\lat)}[\|R\vec{x}\|^2] =
\E_{\vec{x} \leftarrow R(\mathcal{V}(\lat))}[\|\vec{x}\|^2] \geq \E_{\vec{x}
\leftarrow \R^n / R\lat}[\dist(\vec{x},R\lat)^2] = \bar{\mu}(R\lat)^2 \text{ ,}
\]
where the last inequality follows since $R\mathcal{V}(\lat)$ is a fundamental
domain for $R\lat$. Thus $X \in {\rm sm}_{\eta^\circ_{\bar{\mu}}}(\lat)$, as
needed.
\end{proof}

Using the above lemma, we can now readily prove the reverse inequality
$C^{(s\mu)}(n) \lesssim C^{(\mu,\ellipse)}_\eta(n)$.

\begin{lemma}
\label{lem:sm-mu-hard}
For any $n \in \N$, $C^{(s\mu)}(n) \leq \sqrt{8} C^{(\mu,\ellipse)}_\eta(n)$.
\end{lemma}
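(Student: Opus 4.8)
The plan is to produce, for an arbitrary $n$-dimensional lattice $\lat \subset \R^n$, a feasible point for the program defining $\mu_{\rm sm}(\lat)$ whose trace is at most $8\,C^{(\mu,\ellipse)}_\eta(n)^2\,\mu(\lat)^2$; taking square roots and then the supremum over lattices then gives $C^{(s\mu)}(n) \le \sqrt 8\, C^{(\mu,\ellipse)}_\eta(n)$. Write $C = C^{(\mu,\ellipse)}_\eta(n)$. By part 5 of Lemma~\ref{lem:sm-body-desc} (equivalently, by the definition in Theorem~\ref{thm:smooth-mu-bnd}), $\mu_{\rm sm}(\lat)^2 = \min\{\tr Y : Y \in {\rm sm}(\lat)\}$, so it suffices to exhibit $Y \in {\rm sm}(\lat)$ with $\tr Y \le 8 C^2 \mu(\lat)^2$. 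The natural choice is $Y = 8 C^2 X^*$ with $X^* := \E_{\vec x \leftarrow \calV(\lat)}[\vec x\vec x^\T]$, the (positive definite) second-moment matrix of the uniform distribution on the Voronoi cell, which by Lemma~\ref{lem:bar-mu-opt} (with $R = I_n$) satisfies $X^* \in {\rm sm}_{\eta^\ellipse_{\bar\mu}}(\lat)$ and $\tr X^* = \E_{\vec x \leftarrow \calV(\lat)}[\|\vec x\|^2] = \bar\mu(\lat)^2$.

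The only thing left is to verify $8C^2 X^* \in {\rm sm}(\lat)$, which I would do by chaining two containments: $8 \cdot {\rm sm}_{\eta^\ellipse_{\bar\mu}}(\lat) \subseteq {\rm sm}_{\eta^\ellipse_\mu}(\lat)$ and $C^2 \cdot {\rm sm}_{\eta^\ellipse_\mu}(\lat) \subseteq {\rm sm}(\lat)$. The second is precisely the statement $c({\rm sm}_{\eta^\ellipse_\mu}(\lat),{\rm sm}(\lat)) \le C^2$, which follows from Corollary~\ref{cor:equiv-sandwich} (giving $C^2 = c({\rm sm}_{\eta^\ellipse_\mu}(\Z^n),{\rm sm}(\Z^n))$) together with the Remark after Lemma~\ref{lem:sandwich-equiv} that this containment factor is the same for every $n$-dimensional lattice. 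For the first, if $X \in {\rm sm}_{\eta^\ellipse_{\bar\mu}}(\lat)$, i.e.\ $\eta^\ellipse_{\bar\mu}(X^{-1/2}\lat) \le 1$, then using $(8X)^{-1/2} = 8^{-1/2}X^{-1/2}$, positive homogeneity of the $\eta$-parameters (Lemma~\ref{lem:eta-continuity}), and the inequality $\eta^\ellipse_\mu \le \sqrt 8\,\eta^\ellipse_{\bar\mu}$ of part 1 of Proposition~\ref{prop:bar-mu}, we get $\eta^\ellipse_\mu((8X)^{-1/2}\lat) = 8^{-1/2}\eta^\ellipse_\mu(X^{-1/2}\lat) \le 8^{-1/2}\sqrt 8\,\eta^\ellipse_{\bar\mu}(X^{-1/2}\lat) \le 1$, so $8X \in {\rm sm}_{\eta^\ellipse_\mu}(\lat)$. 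Composing, $8C^2 X^* \in {\rm sm}(\lat)$, hence $\mu_{\rm sm}(\lat)^2 \le \tr(8C^2 X^*) = 8C^2\bar\mu(\lat)^2 \le 8C^2\mu(\lat)^2$, where the last step is again part 1 of Proposition~\ref{prop:bar-mu}. This gives $C^{(s\mu)}(n) \le \sqrt 8\, C^{(\mu,\ellipse)}_\eta(n)$.

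I expect no substantial obstacle: the lemma is essentially a bookkeeping composition of Corollary~\ref{cor:equiv-sandwich}, Proposition~\ref{prop:bar-mu}, and Lemma~\ref{lem:bar-mu-opt}. The one point that needs care is the direction of the homogeneous scaling between the bodies ${\rm sm}_{\eta^\ellipse_{\bar\mu}}(\lat)$ and ${\rm sm}_{\eta^\ellipse_\mu}(\lat)$ — it is easy to invert an inclusion or to lose a factor $\sqrt 8$ versus $8$ — so it is worth spelling out the identity $(\lambda X)^{-1/2} = \lambda^{-1/2}X^{-1/2}$ explicitly, as above. Equivalently, one may read the whole argument in dual form via Lemma~\ref{lem:sandwich-dual} with $Z = I_n$: the bound $c({\rm sm}_{\eta^\ellipse_\mu}(\lat),{\rm sm}(\lat)) \le C^2$ yields $C^2 \min_{X \in {\rm sm}_{\eta^\ellipse_\mu}(\lat)}\tr X \ge \mu_{\rm sm}(\lat)^2$, and the first inclusion above combined with Lemma~\ref{lem:bar-mu-opt} gives $\min_{X \in {\rm sm}_{\eta^\ellipse_\mu}(\lat)}\tr X \le 8\bar\mu(\lat)^2 \le 8\mu(\lat)^2$ — the same proof, just phrased through the minimization problems rather than the bodies.
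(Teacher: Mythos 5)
Your proof is correct and is essentially the same as the paper's: both reduce the claim to the containment $8\,{\rm sm}_{\eta^\ellipse_{\bar\mu}}(\lat)\subseteq {\rm sm}_{\eta^\ellipse_\mu}(\lat)$ (coming from $\eta^\ellipse_\mu\leq\sqrt{8}\,\eta^\ellipse_{\bar\mu}$, i.e.\ Proposition~\ref{prop:bar-mu}) together with the containment-factor bound $C^2\,{\rm sm}_{\eta^\ellipse_\mu}(\lat)\subseteq{\rm sm}(\lat)$ from Corollary~\ref{cor:equiv-sandwich}, and then apply Lemma~\ref{lem:bar-mu-opt} to evaluate the inner minimum at $\bar\mu(\lat)^2$. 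The paper phrases this as a chain of inequalities on the minimization values (via Lemma~\ref{lem:sandwich-dual} and Lemma~\ref{lem:sandwich-equiv}), while you construct the primal feasible point $8C^2 X^*$ explicitly; these are two readings of the same argument, as you yourself note in your final paragraph.
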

\begin{proof}
For an $n$-dimensional lattice $\lat$, we must show that
\[
\mu_{{\rm sm}}(\lat)^2 \leq 8 C^{(\mu,\ellipse)}_\eta(n)^2 \cdot \mu(\lat)^2 \text{ .}
\] 
By Corollary~\ref{cor:equiv-sandwich}, we recall that $c({\rm
sm}_{\eta^\ellipse_\mu}(\lat),{\rm sm}(\lat)) = C^{(\mu,\ellipse)}_\eta(n)^2$.
Thus, by Lemma~\ref{lem:sandwich-dual},
\begin{equation}
\label{lem:sm-mu-hard1}
\mu_{{\rm sm}}(\lat)^2 = \min \set{\tr(X): X \in {\rm sm}(\lat)} \leq
C^{(\mu,\ellipse)}_\eta(n)^2 \min \set{\tr(X): X \in {\rm
sm}_{\eta^{\ellipse}_\mu}(\lat)} \text{ .}
\end{equation}
Since 
by Proposition~\ref{prop:bar-mu} for any $n$-dimensional lattice $\lat \subset \R^n$, 
$\eta^\ellipse_\mu(\lat) \leq
\sqrt{8} \eta^\ellipse_{\bar{\mu}}(\lat)$,
we have by Lemma~\ref{lem:sandwich-equiv} that
\begin{equation}
\label{lem:sm-mu-hard2}
\min \set{\tr(X): X \in {\rm sm}_{\eta^{\ellipse}_\mu}(\lat)}
\leq 8 \cdot \min \set{\tr(X): X \in {\rm
sm}_{\eta^{\ellipse}_{\bar{\mu}}}(\lat)}
\text{ .}
\end{equation}
Lastly, by Lemma~\ref{lem:bar-mu-opt} and Proposition~\ref{prop:bar-mu}, we get
\begin{equation}
\label{lem:sm-mu-hard3}
\min \set{\tr(X): X \in {\rm sm}_{\eta^{\ellipse}_{\bar{\mu}}}(\lat)}
= \bar{\mu}(\lat)^2 \leq  \mu(\lat)^2 \text{ .}
\end{equation}
The result now follows by
combining~\eqref{lem:sm-mu-hard1},\eqref{lem:sm-mu-hard2},\eqref{lem:sm-mu-hard3}.
\end{proof}


\section{Mixing Time of Brownian Motion on the Torus}
\label{sec:mixingtime}

Given a full-rank lattice $\lat \subset \R^n$, consider the Brownian motion on $\R^n/\lat$ starting from the origin. The probability density function at time $t > 0$ is given by
$f_t : \R^n/\lat \to \R^+$,
\[
f_t(\vec x) = t^{-n/2} \rho_{t}(\lat + \vec x) \; .
\]
As $t$ goes to infinity, the Brownian motion converges to the uniform distribution. 
For $1 \le p < \infty$, the \emph{$L_p$ mixing time} is defined as
\[
\tau_p(\lat) = \inf \set[\Big]{t>0 \,:\, \parens[\Big]{\det(\lat)^{-1} \int_{\R^n/\lat} |\det(\lat) f_t(\vec x) - 1|^p d \vec x }^{1/p} < 1/4 } \; ,
\]
and extended to $p=\infty$ by
\[
\tau_\infty(\lat) = \inf \set[\big]{t>0 \,:\, \forall \vec x, ~ |\det(\lat) f_t(\vec x) - 1| < 1/4 } \; .
\]
We clearly have that 
$\tau_p(\lat) \le \tau_q(\lat)$ for any $1 \le p \le q \le \infty$. 
Also, the constant $1/4$ is arbitrary (see, e.g.,~\cite[Section 4.5]{LevinPW09}). 

Using the Poisson summation formula of Lemma~\ref{lem:coset-mass}, one sees that 
\[ 
  \sup_{\vec x} |\det(\lat) f_t(\vec x) - 1| = \rho_{t^{-1}}(\lat^*
\setminuszero ) \; ,
\]
and hence $\tau_\infty(\lat) = \eta_{1/4}(\lat)^2$. 
Moreover, using Parseval's identity, we get 
\begin{align*}
   \det(\lat)^{-1} \int_{\R^n/\lat} (\det(\lat) f_t(\vec x) - 1)^2 d \vec x  &= 
 \det(\lat) \int_{\R^n/\lat} f_t(\vec x)^2 d \vec x  - 1  \\
 &= \rho_{(2t)^{-1}}(\lat^* \setminuszero ) \; .
\end{align*}
It follows that $\tau_\infty(\lat) \le 2 \tau_2(\lat)$ (and we can even take $1/2$ instead of $1/4$ in the definition of $\tau_2$). 
This property is not unique to Brownian motions on the torus -- the $L_\infty$ mixing time of any reversible Markov chain is always
at most twice the $L_2$ mixing time (see, e.g., the appendix of~\cite{MontenegroTetali06}). 

What about the $L_1$ mixing time? How much smaller can it be than the $L_2$ (or $L_\infty$) mixing time? 
Analyzing the $L_1$ mixing time of Markov chains is generally quite hard. We note that there are examples of 
random walks on finite transitive graphs where the $L_1$ and $L_2$ mixing times differ greatly~\cite{PeresRevelle04}. Still, one can hope that $L_1$ and $L_2$ mixing times 
are close when considering Brownian motion on manifolds such as the torus. This
and related questions were considered by Saloff-Coste (see, e.g.,~\cite{SaloffCoste94,SaloffCoste04,BendikovSaloffCoste03}) who also asks it explicitly in a recent survey~\cite[Problem 11]{SaloffCoste10}.

We next prove that under the weak conjecture, $L_1$ mixing time is approximately the same as the $L_\infty$ mixing time. 
Intuitively, the proof proceeds as follows. If we are below the $L_\infty$ mixing time, 
then by the weak conjecture, there is a certificate for that in the 
form of a Euclidean structure (or equivalently, a linear transformation) under
which the Brownian motion does not reach the covering radius. 
But if this is the case, then surely it cannot even mix in the $L_1$ sense. 

\begin{theorem}
For any $n$-dimensional lattice $\lat$, 
$\tau_\infty(\lat) \lesssim C^{(\mu,\ellipse)}_\eta(n)^2 \,\tau_1(\lat)$.
\end{theorem}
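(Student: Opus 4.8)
The plan is to sandwich both mixing times between the smoothing parameter $\eta$ and the ellipsoidal covering-radius parameter $\eta^{\ellipse}_\mu$, and then invoke the definition of $C^{(\mu,\ellipse)}_\eta(n)$. For the upper side, recall from the discussion preceding the theorem that $\tau_\infty(\lat) = \eta_{1/4}(\lat)^2$. Applying Lemma~\ref{lem:mass-decrease}(1) with $A = \eta(\lat)^{-2} I_n$ and $t = \sqrt{2}$ gives $\rho_{A/2}(\lat^* \setminuszero) \le \rho_A(\lat^* \setminuszero)^2 = 1/4$, so $\eta_{1/4}(\lat) \le \sqrt{2}\,\eta(\lat)$ and hence $\tau_\infty(\lat) \le 2\,\eta(\lat)^2 \le 2\,C^{(\mu,\ellipse)}_\eta(n)^2\,\eta^{\ellipse}_\mu(\lat)^2$, the last step being the definition of $C^{(\mu,\ellipse)}_\eta(n)$ (Conjecture~\ref{con:covweaketa}). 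It therefore suffices to prove the lower bound $\tau_1(\lat) \gtrsim \eta^{\ellipse}_\mu(\lat)^2$.

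For the lower bound, the crucial observation is that total variation distance is invariant under the bijection $\vec x + \lat \mapsto R\vec x + R\lat$ of $\R^n/\lat$ onto $\R^n/R\lat$ (for any non-singular $R$), which carries the uniform distribution to the uniform distribution and carries the time-$t$ Brownian density $f_t$ on $\R^n/\lat$ to the law of $Z + R\lat$ with $Z \sim N(0, t\,RR^\T)$. Fixing $R$ non-singular and setting $t_0 = \mu(R\lat)^2 / (32\|R\|_F^2)$, we have $\E[\|Z\|_2^2] = t_0\,\tr(RR^\T) = \mu(R\lat)^2/32$, so Markov's inequality gives $\Pr[\dist(Z, R\lat) \ge \mu(R\lat)/2] \le \Pr[\|Z\|_2^2 \ge \mu(R\lat)^2/4] \le 1/8$, while Claim~\ref{clm:gmrcovering} shows a uniform point of $\R^n/R\lat$ is at distance $\ge \mu(R\lat)/2$ from $R\lat$ with probability $\ge 1/2$. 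Testing against $S = \{\vec x \in \R^n/R\lat : \dist(\vec x, R\lat) < \mu(R\lat)/2\}$ shows the two measures differ by at least $7/8 - 1/2 = 3/8$ in total variation at time $t_0$; equivalently, transporting back, the time-$t_0$ Brownian motion on $\R^n/\lat$ is $3/8$-far from uniform. Since the $L_1$ distance to uniform is non-increasing along the heat semigroup (a contraction on $L_1$), and $\tau_1(\lat)$ is, by the chosen normalization, the first time this distance drops below $1/8$, we conclude $\tau_1(\lat) \ge t_0 = \mu(R\lat)^2/(32\|R\|_F^2)$. Taking the supremum over non-singular $R$ gives $\tau_1(\lat) \ge \tfrac{1}{32}\,\eta^{\ellipse}_\mu(\lat)^2$, and combining with the first paragraph yields $\tau_\infty(\lat) \le 64\,C^{(\mu,\ellipse)}_\eta(n)^2\,\tau_1(\lat)$, as required.

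There is no serious obstacle here, since all the necessary machinery is already available; the points to handle carefully are (i) the monotonicity of the $L_1$ distance to stationarity along the heat flow, which is what licenses turning a single-time estimate at $t_0$ into a lower bound on $\tau_1$ (alternatively this monotonicity can be read off directly from the Poisson-summation expression for $f_t$), and (ii) the bookkeeping of absolute constants — the explicit $64$ is irrelevant since only a $\lesssim$-bound is claimed, and, as noted in the text, the thresholds $1/4$ in the definitions of $\tau_1$ and $\tau_\infty$ may be replaced by any other constants at the cost of further absolute factors.
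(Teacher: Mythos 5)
Your proof is correct, and at bottom it uses the same ingredients as the paper's: transport the Brownian motion by a non-singular $R$, lower-bound $\mu(R\lat)$ via Conjecture~\ref{con:covweaketa}, and upper-bound $\mu(R\lat)$ by the fact that a Gaussian with small expected squared norm that mixes modulo $R\lat$ must reach the covering radius (Markov plus Claim~\ref{clm:gmrcovering}, which is precisely the content of the remark after Theorem~\ref{thm:smooth-mu-bnd}). What differs is the organization: the paper runs a single contradiction at time $t_0 = \tau_\infty(\lat)/(16\,C^{(\mu,\ellipse)}_\eta(n)^2)$, whereas you factor the argument through the unconditional, conjecture-free intermediate inequality $\tau_1(\lat) \gtrsim \eta^{\ellipse}_\mu(\lat)^2$ and then sandwich $\tau_\infty \lesssim \eta(\lat)^2 \lesssim C^{(\mu,\ellipse)}_\eta(n)^2\,\eta^{\ellipse}_\mu(\lat)^2 \lesssim C^{(\mu,\ellipse)}_\eta(n)^2\,\tau_1(\lat)$. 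This is a cleaner decomposition and the intermediate statement is interesting on its own. Two points you treat more carefully than the paper does: you explicitly invoke $L_1$-contractivity of the heat semigroup to promote a single-time estimate to a lower bound on $\tau_1$ (the paper uses this silently), and you reconcile $\eta_{1/4}$ with $\eta=\eta_{1/2}$ via Lemma~\ref{lem:mass-decrease}, whereas the paper's line ``$\eta(\lat)=\tau_\infty(\lat)^{1/2}$'' is off by that constant factor (harmless here, but worth fixing).
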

\begin{proof}
Let $\lat$ be an $n$-dimensional lattice. 
Let $X \sim N(0,\frac{1}{2\pi}A)$ be a Gaussian random variable in
$\R^n$ where $A = \tau_\infty(\lat) / (16 C^{(\mu,\ellipse)}_\eta(n)^2) I$. Our goal is to show that 
$\Delta(X \bmod \lat, U) > 1/4$, i.e.,
that it is not mixed in the $L_1$ sense. Assume towards contradiction that 
$\Delta(X \bmod \lat, U) \le 1/4$.
Recalling that $\eta(\lat) = \tau_\infty(\lat)^{1/2}$, 
we get from the definition of $C^{(\mu,\ellipse)}_\eta$ in~\eqref{eq:covweaketagoal} that there exists
 a linear transformation $R$ for which
\begin{align}\label{eq:ellonemixing}
  \mu(R \lat) \ge \tau_\infty(\lat)^{1/2} \sqrt{\tr (R^\T R)} / C^{(\mu,\ellipse)}_\eta(n) \; .
\end{align}
Obviously,
\[
   \Delta(RX \bmod R\lat, U) = \Delta(X \bmod \lat, U) \le 1/4 \; ,
\]
where $U$ denotes the uniform distribution over the appropriate torus (namely, $\R^n/(R\lat)$ and $\R^n/\lat$, respectively). 
By the remark following Theorem~\ref{thm:smooth-mu-bnd}, 
\begin{align*}
    \mu(R\lat) &\le 4 \pi^{-1/2} \sqrt{\tr(R^\T R A)} \\
		&= 4 \pi^{-1/2} \sqrt{\tr(R^\T R)}  \tau_\infty(\lat)^{1/2} / (4 C^{(\mu,\ellipse)}_\eta(n))
		\; ,
\end{align*}
contradicting~\eqref{eq:ellonemixing}.
\end{proof}

\section{Computational complexity and cryptography}
\label{sec:complexity}

In this section, we present the complexity implications of the weak conjecture 
(Conjecture~\ref{con:covweaketa}) for approximating the smoothing parameter (Section~\ref{sec:gapspp}), and
of the $\ell_2$ KL conjecture (Conjecture~\ref{conj:kl-l2}) for approximating the covering radius
(Section~\ref{sec:gapcrp}).

\subsection{The Weak Conjecture and \texorpdfstring{$\GapSPP$}{GapSPP}}
\label{sec:gapspp}

We consider the following two computational problems. 
The first, $\GapSPP$, is the problem of approximating the smoothing parameter $\eta(\lat)$ 
of an input lattice $\lat$. 
The second, Discrete Gaussian Sampling (DGS), is the task of generating samples distributed
according to $D_{\lat+\vec{t},s}$ for parameters $s \geq \eta(\lat)$.
Here, the \emph{discrete Gaussian distribution} $D_{\lat+\vec{t},s}$ is the 
discrete distribution with support $\lat+\vec{t}$ whose probability mass function is proportional
to the restriction of the Gaussian density of standard deviation $s$ to $\lat+\vec{t}$.

The smoothing parameter and the discrete Gaussian distribution 
are closely related. For instance, one of the main important properties 
of the smoothing parameter is that for $s = \tilde{\Omega}(\eta(\lat))$, 
the discrete Gaussian distribution $D_{\lat+\vec{t},s}$ ``behaves like'' a
continuous Gaussian of standard deviation $s$ in terms of its global statistics
such as moments.
Also, both play a fundamental role in lattice-based cryptography,
in particular in the best known worst-case to average-case reductions for lattice problems
(e.g.,~\cite{MR04,R09,GPV08,KaiMinDLP13,MP13}). 
Finally, they recently featured in the fastest known provable algorithms for lattice
problems~\cite{AggarwalDRS15}.


Given the tight relationship between the smoothing parameter and the discrete Gaussian distribution, a
natural question is whether one can compute a good approximation of the 
smoothing parameter (that is, solve $\GapSPP$) using only oracle access to a discrete Gaussian sampler. 
The best known reduction is from $\tilde{O}(\sqrt{n})$-$\GapSPP$ to DGS sampling,
and is implicit in~\cite{MR04}.
The main goal of this section is 
to show that conditioned on the weak conjecture,
one obtains an exponential improvement in the approximation factor,
namely a reduction from $\poly \log n$-$\GapSPP$ to DGS.
The formal statement appears in Theorem~\ref{thm:gapspp-to-sgs}.






\paragraph{Connection to lattice-based cryptography.}


Together with prior work~\cite{MR04,
GPV08,MP13},
this reduction directly implies a worst case to average case reduction from
$\tilde{O}(\sqrt{n})$-$\GapSPP$ to the Shortest Integer Solution problem (SIS),  one of the base hard problems in lattice-based cryptography (see Definition~\ref{def:sis}).
The best unconditional approximation factor is $O(n)$. We will
describe this in more detail in Section~\ref{sec:lbc}.

For the
Learning with Errors (LWE) problem (see~\cite{R09}), the other and perhaps most
versatile base problem in lattice-based cryptography, it was shown in~\cite{KaiMinDLP13} that
$\tilde{O}(\sqrt{n}/\alpha)$-$\GapSPP$ reduces to LWE, where $\alpha$ is the LWE
error parameter. Interestingly, they also show that the reduction of
$\tilde{O}(n/\alpha)$-{\rm GapSVP} to LWE in~\cite{R09} can be recovered by running the
$\GapSPP$ reduction and using the known relations between the smoothing parameter and 
the shortest vector in the dual lattice (i.e., one can factor the reduction through
$\GapSPP$). 

Given the above results, it seems that $\GapSPP$ might be a good alternative
to the standard worst-case problems such as ${\rm GapSVP}$ or ${\rm SIVP}$ for
worst case to average case reductions. Indeed, the obtained approximation factor
is an $\tilde{O}(\sqrt{n})$ factor better when reducing to SIS (conditionally)
and LWE (unconditionally), though one may argue that it is perhaps somewhat
dubious to compare approximation factors with respect to different lattice
problems.  

On a concluding note, we remark that other than the results presented here and
those in~\cite{KaiMinDLP13}, very little work has been done to understand the
fine-grained complexity of $\GapSPP$. We hope here to have helped motivate
its further study.

\paragraph{Overview of the reduction.}
We first explain the known reduction from $\tilde{O}(\sqrt{n})$-$\GapSPP$ to DGS sampling
implicit in~\cite{MR04}.
Given as input a lattice $\lat$ and $s>0$, the reduction simply calls the DGS oracle with $\lat$, $s$, and $\vec{t}=0$,
and then runs a certain statistical test to check if the output ``looks like''
a discrete Gaussian distribution. Specifically, the
test checks that (1) all vectors are in $\lat$ and are of length $O(s \sqrt{n})$ and (2) they span $\R^n$.
It is obvious that this reduction can be implemented in polynomial time. 
Correctness follows by showing that (1) for any parameter $s =
\Omega(\eta(\lat))$, $\tilde{O}(n)$ DGS samples on an $n$-dimensional lattice
$\lat$ are likely to be of length $O(s \sqrt{n})$ and span $\R^n$,
and that (2) for $s = \tilde{O}(\eta(\lat)/\sqrt{n})$, any set of
lattice points of length $O(s \sqrt{n})$ will be contained in a proper subspace
of $\lat$. We note that in the second case, it is crucial that the test is 
guaranteed to fail regardless of the distribution of samples, since
the oracle can behave arbitrarily for $s$ below the smoothing parameter. 

Our improved reduction is from $\poly \log n$-$\GapSPP$ to DGS sampling,
and is conditioned on the weak conjecture. 
Let us assume that we need to distinguish between $\eta(\lat) \leq
1$ and $\eta(\lat) \geq \poly \log n$. We first pick a coset $\vec{t}$ of $\lat$
uniformly at random. We then ask the oracle to produce $O(n)$ DGS samples at
parameter $O(1)$ over $\lat+\vec{t}$. We then compute the empirical second
moment matrix $C$ over these samples, and accept if the largest eigenvalue of
$C$ is $O(1)$ and reject otherwise. The fact that this test succeeds in a yes
instance follows from standard concentration arguments for subgaussian random
variables.  However, for no instances, we require the weak conjecture to prove
soundness (against any distribution, not just DGS). At a technical level, we
will use the weak conjecture to deduce that with constant probability over
$\vec{t}$, for some linear map $R$, $\|R\|_F \leq 1$, the set $R(\lat+\vec{t})$
consists only vectors of length $\Omega(1)$. The mere existence of this matrix
$R$ will turn out to be enough to force the covariance matrix of \emph{any
distribution} on $\lat+\vec{t}$ to have largest eigenvalue of size $\Omega(1)$.
This concludes the reduction. 

As can be seen from the above description, our reduction (as well as that in~\cite{MR04}) actually do not require
true discrete Gaussian samples (i.e., samples from $D_{\lat+\vec{t},s}$) -- 
any subgaussian distribution on $\lat+\vec{t}$ would be equally good. We make this formal
in Definition~\ref{def:dsgs} below, where we define the computational problem 
of discrete subgaussian sampling (DSGS). This mild strengthening of
the reduction turns out to be quite useful for the connection to SIS, since
some of the known reductions to SIS only produce subgaussian samples but not
true discrete Gaussian samples.

\subsubsection{Reduction of GapSPP to Discrete Subgaussian Sampling}

We begin with the necessary preliminaries on subgaussian random variables.

\begin{definition}[Subgaussian Random Variable]
\label{def:subg}
We say that a random variable $X \in \R$ is $s$-subgaussian or subgaussian with
parameter $s$, for $s > 0$, if for all $t \geq 0$,
\[
\Pr[|X| \geq t] \leq 2 e^{-(t/s)^2/2} \text{ .}
\]
We note that the canonical example of a $1$-subgaussian distribution is $N(0,1)$
itself. For a vector-valued random variable $X \in \R^n$, we say that $X$ is
$s$-subgaussian if all its one-dimensional marginals are, i.e., if $\forall \theta \in S^{n-1}$, the random variable
$\pr{X}{\theta}$ is $s$-subgaussian.
\end{definition}

%

\begin{lemma}
\label{lem:subg-sec-mom-bnd}
Let $X \in \R^n$ be an $s$-subgaussian random vector. Then,
\[
\E[XX^\T] \preceq 4 s^2 I_n \text{.} 
\]
\end{lemma}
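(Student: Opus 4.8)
The plan is to prove the bound $\E[XX^\T] \preceq 4s^2 I_n$ by reducing it to the one-dimensional marginals. Since $\E[XX^\T]$ is a symmetric matrix, it suffices to bound the quadratic form $\theta^\T \E[XX^\T] \theta = \E[\pr{X}{\theta}^2]$ for every unit vector $\theta \in S^{n-1}$, and show it is at most $4s^2$. Indeed, $Y \preceq cI_n$ for a symmetric $Y$ is equivalent to $\theta^\T Y \theta \le c$ for all $\theta \in S^{n-1}$, so this reduction is immediate.

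First I would fix $\theta \in S^{n-1}$ and set $Z = \pr{X}{\theta}$, which by Definition~\ref{def:subg} is an $s$-subgaussian real random variable, i.e., $\Pr[|Z| \ge t] \le 2e^{-(t/s)^2/2}$ for all $t \ge 0$. Then I would bound the second moment via the layer-cake (tail integral) formula:
\[
\E[Z^2] = \int_0^\infty \Pr[Z^2 \ge u]\, du = \int_0^\infty \Pr[|Z| \ge \sqrt{u}]\, du \le \int_0^\infty 2 e^{-u/(2s^2)}\, du = 4s^2 \text{ .}
\]
This is a routine computation: substitute $u = 2s^2 v$ to get $\int_0^\infty 2e^{-u/(2s^2)} du = 4s^2 \int_0^\infty e^{-v} dv = 4s^2$. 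Since this holds for every $\theta \in S^{n-1}$, we conclude $\E[XX^\T] \preceq 4s^2 I_n$.

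There is no real obstacle here; the only minor point to be careful about is justifying the interchange in the layer-cake formula, which is valid since $Z^2 \ge 0$ (Tonelli's theorem, no integrability hypothesis needed), and noting that $\E[Z^2]$ is automatically finite by the computation itself. One could alternatively avoid even mentioning measure theory by noting the tail bound immediately gives finiteness of all moments. The constant $4$ is not optimal (a more careful argument gives a smaller constant), but $4s^2$ is all that is claimed and all that is needed downstream.
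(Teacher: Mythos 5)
Your proof is correct and follows essentially the same route as the paper: reduce to the one-dimensional marginals $\pr{X}{\theta}$, apply the layer-cake formula to $\E[\pr{X}{\theta}^2]$, and bound the tail using the subgaussian definition to get $\int_0^\infty 2e^{-t/(2s^2)}\,dt = 4s^2$. The extra remarks on Tonelli and finiteness are fine but not needed; the paper omits them.
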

\begin{proof}
For any $\theta \in S^{n-1}$, we have that
\[
\E[\pr{X}{\theta}^2] = \int_0^\infty \Pr[\pr{X}{\theta}^2 \geq t] dt
\leq \int_0^\infty 2e^{-t/(2s^2)} dt = 4 s^2 \text{, } 
\]
as needed.
\end{proof}

For a random vector $X \in \R^n$, we define $\E[XX^\T]$ to be its second moment
matrix. If $\E[X]=0$, the second moment matrix is also called the covariance matrix
of $X$.

\begin{lemma}[{\cite[Corollary 5.50]{Vershynin12}}]
\label{lem:second-moment-estimate}
Let $X_1, \ldots, X_N \in \R^n$ be i.i.d. $s$-subgaussian random vectors with
second moment matrix $\Sigma$. Then if $N \geq c n (t/\eps)^2$, $\eps \in
(0,1)$, $t \geq 1$, we have that  
\[
\Pr\bracks[\Big]{\length[\Big]{\parens[\Big]{\frac{1}{N}\sum_{i=1}^N X_i X_i^\T} - \Sigma } \geq \eps s^2 } \leq
2 e^{-t^2 n} \text{ ,} 
\]
where $c > 0$ is an absolute constant.
\end{lemma}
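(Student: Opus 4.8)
The statement is quoted from~\cite{Vershynin12}, and the proof I would give simply reproduces the standard argument there: a Bernstein-type bound in a fixed direction combined with an $\epsilon$-net over the sphere. First I would reduce to the case $s=1$ by rescaling $X_i \mapsto X_i/s$, which divides the quantity $\length{\frac1N\sum_i X_iX_i^\T - \Sigma}$ by $s^2$; after this each one-dimensional marginal $\pr{X_i}{\theta}$ is $1$-subgaussian and, by Lemma~\ref{lem:subg-sec-mom-bnd}, $\Sigma \preceq 4 I_n$. Write $M = \frac1N\sum_{i=1}^N X_iX_i^\T - \Sigma$. The deterministic first step is the net reduction: if $\mathcal N \subseteq S^{n-1}$ is a $\frac14$-net with $|\mathcal N| \le 9^n$, then $\length{M} \le 2\max_{\theta \in \mathcal N}|\theta^\T M \theta|$, so it suffices to control $\theta^\T M \theta = \frac1N\sum_i\bigl(\pr{X_i}{\theta}^2 - \E[\pr{X_i}{\theta}^2]\bigr)$ for each fixed $\theta$ and then take a union bound.

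The second step is the scalar concentration. For fixed $\theta \in S^{n-1}$ the variables $Z_i := \pr{X_i}{\theta}^2 - \E[\pr{X_i}{\theta}^2]$ are i.i.d., centered, and sub-exponential with $\psi_1$-norm $\lesssim \|\pr{X_i}{\theta}\|_{\psi_2}^2 \lesssim 1$ (the square of a subgaussian variable is sub-exponential, and centering changes the $\psi_1$-norm by at most a constant factor). Bernstein's inequality for sums of independent centered sub-exponential variables then yields, for $\delta \in (0,1)$,
\[
\Pr\Bigl[\bigl|\tfrac1N\textstyle\sum_{i=1}^N Z_i\bigr| \ge \delta\Bigr] \le 2\exp(-c_0 N \delta^2)
\]
for an absolute constant $c_0 > 0$; since $\delta < 1$ we are in the "sub-Gaussian" regime of Bernstein where the exponent scales like $N\delta^2$ rather than $N\delta$.

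For the third step I would combine these: taking $\delta = \epsilon/2$ and union bounding over $\mathcal N$,
\[
\Pr\bigl[\length{M} \ge \epsilon\bigr] \le \Pr\bigl[\max_{\theta\in\mathcal N}|\theta^\T M\theta| \ge \epsilon/2\bigr] \le 9^n \cdot 2\exp(-c_0 N \epsilon^2/4).
\]
Choosing the constant $c$ in the hypothesis $N \ge c\, n (t/\epsilon)^2$ large enough in terms of $c_0$ (concretely $c \ge 4(\ln 9 + 1)/c_0$) and using $t \ge 1$ makes the right-hand side at most $2\exp\bigl(n\ln 9 - (\ln 9 + 1)t^2 n\bigr) \le 2 e^{-t^2 n}$. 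Undoing the rescaling reinstates the factor $s^2$, giving the claimed bound. The only point that requires real care is the sub-exponential bookkeeping in the second step — verifying $\|\pr{X}{\theta}^2\|_{\psi_1} \lesssim \|\pr{X}{\theta}\|_{\psi_2}^2$, controlling the centered version, and invoking the correct two-regime form of Bernstein while confirming that $\epsilon \in (0,1)$ keeps us in the regime with exponent $\propto N\epsilon^2$; the net cardinality bound $|\mathcal N| \le 9^n$ and the factor-$2$ operator-norm-versus-net lemma are entirely standard.
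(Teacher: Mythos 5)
Your proposal is correct, and it reproduces the argument from the cited source: the paper itself provides no proof of this lemma, stating it as \cite[Corollary 5.50]{Vershynin12}, and your sketch is exactly the standard $\varepsilon$-net plus sub-exponential Bernstein argument given there. The bookkeeping checks out --- in particular, since $\varepsilon/2$ is below the (absolute) $\psi_1$-norm of the centered squared marginals, the $\min(\delta^2,\delta)$ in Bernstein resolves to $\delta^2$, and the choice $c \ge 4(\ln 9 + 1)/c_0$ together with $t \ge 1$ absorbs the $9^n$ net factor into $2e^{-t^2 n}$.
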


We now define the main lattice problems of
interest in this section.

\begin{definition}[Gap Smoothing Parameter Problem]
\label{def:gapspp}
For $\alpha = \alpha(n) \geq 1$, $\eps = \eps(n) \geq 0$,
$\alpha$-$\GapSPP_\eps$ (the Smoothing Parameter Problem) is defined as follows:
given a basis $B$ for a lattice $\lat \subset \R^n$ and $s > 0$, decide
whether $\eta_\eps(\lat) < s$ (YES instance) or $\eta_\eps(\lat) \geq \alpha s$
(NO instance). 
\end{definition}

\begin{definition}[Discrete Subgaussian Sampling Problem]
\label{def:dsgs}
For $\sigma$ a function that maps lattices to non-negative real numbers, and $m
= m(n) \in \N$, $\DSGS{m}{\sigma}$ (the Discrete Subgaussian Sampling Problem) is defined
as follows: given a basis $\vec{B}$ for a lattice $\lat \subset \R^n$, a shift
$\vec{t} \in \R^n$, and a parameter $s \geq \sigma(\lat)$, output a sequence of
$m$ independent and identically distributed $s$-subgaussian random vectors
supported on $\lat+\vec{t}$. We note that the function $\sigma$ need not be
efficiently computable, and that the output of the sampler is only guaranteed
when $s \geq \sigma(\lat)$.
\end{definition}

The main result of this section is the following.

\begin{theorem}
\label{thm:gapspp-to-sgs}
For any $\alpha = \alpha(n) \geq 1$, there exists a polynomial time reduction
from $O(\alpha C^{(\mu,\ellipse)}_\eta(n))$-$\GapSPP_{1/2}$ to
$\DSGS{O(n)}{\alpha \eta}$. 
\end{theorem}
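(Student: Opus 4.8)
The plan is to mimic and strengthen the folklore $\tilde O(\sqrt n)$-$\GapSPP$-to-DGS reduction implicit in~\cite{MR04}, using the weak conjecture (more precisely, only the \emph{definition} of $C^{(\mu,\ellipse)}_\eta(n)$, so that the theorem is unconditional) to replace the crude ``all samples lie in a proper subspace'' soundness argument by a certificate coming from $\eta^{\ellipse}_\mu$. Given a $\GapSPP_{1/2}$ instance $(\lat,s)$, rescale so that $s=1$: we must separate $\eta(\lat)<1$ (YES) from $\eta(\lat)\ge\gamma$ (NO), where $\gamma=\Theta\!\big(\alpha\, C^{(\mu,\ellipse)}_\eta(n)\big)$ will be the approximation factor. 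The reduction I would use: pick a uniformly random coset $\vec t$ of $\lat$ (to $\poly(n)$ bits of precision, a standard point I will suppress), query the $\DSGS{O(n)}{\alpha\eta}$ oracle on $(\lat,\vec t,\alpha)$ to get $X_1,\dots,X_N$ with $N=O(n)$, form $C=\tfrac1N\sum_i X_iX_i^\T$, and \emph{accept} iff (i) $X_i\in\lat+\vec t$ for all $i$ and (ii) $\lambda_1(C)\le 5\alpha^2$. Finally amplify by running this $n$ times with fresh cosets and rejecting if any run rejects. All steps are clearly polynomial time.

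For completeness, if $\eta(\lat)<1$ then $\alpha>\alpha\eta(\lat)=\sigma(\lat)$, so the query is valid and the oracle returns $N$ i.i.d.\ $\alpha$-subgaussian vectors on $\lat+\vec t$, so (i) passes deterministically. By Lemma~\ref{lem:subg-sec-mom-bnd} the common second moment matrix $\Sigma$ satisfies $\Sigma\preceq 4\alpha^2 I_n$, and by Lemma~\ref{lem:second-moment-estimate} (taking $\eps=1/4$, $t=1$, and $N$ a large enough constant times $n$) we get $\|C-\Sigma\|\le\alpha^2/4$ except with probability $2e^{-n}$, whence $\lambda_1(C)\le 17\alpha^2/4<5\alpha^2$ and (ii) passes; a union bound over the $n$ runs keeps the acceptance probability $\ge 1-2ne^{-n}$.

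For soundness, suppose $\eta(\lat)\ge\gamma$. By the definition of $C^{(\mu,\ellipse)}_\eta(n)$ in~\eqref{eq:covweaketagoal}, $\eta^{\ellipse}_\mu(\lat)\ge\eta(\lat)/C^{(\mu,\ellipse)}_\eta(n)\ge\gamma/C^{(\mu,\ellipse)}_\eta(n)$, which by choosing the constant in $\gamma$ large enough we may assume is $\ge 12\alpha$; hence there is a non-singular $R$ with $\|R\|_F\le 1$ and $\mu(R\lat)\ge 6\alpha$. Since $\vec t$ is uniform in $\R^n/\lat$ and $R$ is a bijection, $R\vec t$ is uniform in $\R^n/(R\lat)$, so by Claim~\ref{clm:gmrcovering} with probability $\ge 1/2$ we have $\dist(R\vec t,R\lat)\ge\mu(R\lat)/2\ge 3\alpha$, and $\dist(R\vec t,R\lat)=\min_{\vec z\in\lat+\vec t}\|R\vec z\|$ because $-\lat=\lat$. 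Conditioned on this event, either some returned $X_i\notin\lat+\vec t$ (reject by (i)), or all $X_i\in\lat+\vec t$, in which case $\|RX_i\|\ge 3\alpha$ for every $i$, so $9\alpha^2\le\tfrac1N\sum_i\|RX_i\|^2=\tr(RCR^\T)=\tr(CR^\T R)\le\lambda_1(C)\|R\|_F^2\le\lambda_1(C)$, so we reject by (ii). This good event depends on $\vec t$ alone, not on the oracle, so it is independent across the $n$ runs and the overall rejection probability is $\ge 1-2^{-n}$.

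The main subtlety — and the step I would be most careful about — is the order of quantifiers together with the adversarial nature of the oracle in a NO instance: the reduction cannot compute $R$, and for $s<\sigma(\lat)$ the oracle's output is entirely unconstrained, so soundness cannot use any distributional property of the samples. The resolution is exactly the combination above: the weak conjecture is invoked only to guarantee the \emph{existence} of $R$, and this is paired with the explicit on-lattice check (i) so that $\|RX_i\|\gtrsim\alpha$ holds \emph{pointwise} for each returned vector, which already forces $\lambda_1(C)$ to be large no matter how the oracle behaves; the concentration bound is needed only in the benign (YES) direction. One should also double-check the elementary facts that a uniformly random coset can be sampled in polynomial time to adequate precision and that $\dist(R\vec t,R\lat)=\min_{\vec z\in\lat+\vec t}\|R\vec z\|$, both routine. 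Substituting $C^{(\mu,\ellipse)}_\eta(n)\le\poly\log n$ (Conjecture~\ref{con:covweaketa}) then upgrades this to a $\poly\log n$-factor reduction, as claimed in the discussion preceding the theorem.
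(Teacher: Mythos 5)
Your proof matches the paper's almost verbatim: same random coset, same DSGS query at parameter $\alpha s$, same acceptance test on $\lambda_1$ of the empirical second-moment matrix, same use of Lemmas~\ref{lem:subg-sec-mom-bnd} and~\ref{lem:second-moment-estimate} for completeness, and the same use of Claim~\ref{clm:gmrcovering} together with the existence of a certifying $R$ from the definition of $C^{(\mu,\ellipse)}_\eta(n)$ for soundness (the paper bounds $\lambda_1$ by plugging $Z=R^\T R$ into the variational characterization, which is the same estimate as your $\tr(CR^\T R)\le\lambda_1(C)\|R\|_F^2$). The only inessential differences are constant choices ($\eps=1/4$ vs.\ $1/2$, $\gamma=12\alpha C$ vs.\ $8\alpha C$) and the explicit amplification step, which the paper leaves implicit.
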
 

\begin{proof}
Let $\gamma = 8 \cdot \alpha \cdot C_{\eta}^{(\mu,\ellipse)}(n)$. Let $\lat
\subset \R^n$ and $s > 0$ specify the $\gamma$-$\GapSPP_{1/2}$ instance. 
The reduction proceeds as follows. Sample a
uniformly random coset $\vec{t}$ in $\R^n/\lat$. Use the $\DSGS{N}{\alpha
\eta(\lat)}$ oracle to sample $X_1,\dots,X_{N}$ i.i.d.\ subgaussian random
vectors over $\lat+\vec{t}$ with subgaussian parameter $\alpha \cdot s$, where
$N = c n$ for $c$ to be chosen later. If the oracle fails to return $N$ 
samples in $\lat+\vec{t}$, reject.  Otherwise, if the largest eigenvalue of 
$\frac{1}{N} \sum_i X_i X_i^\T$ is at most $5 \alpha^2 s^2$ we
accept, and otherwise reject.

It is clear that the reduction runs in polynomial time, so we need only verify
its correctness. We will show that for YES instances, the
reduction correctly accepts with overwhelming probability, and that for NO
instances it rejects with probability at least $1/2$.  

Assume that $\eta(\lat) < s$. First, note that the sampling oracle succeeds
since we query it on parameter $\alpha \cdot s > \alpha \eta(\lat)$. 
We can choose $c$ large enough so that by Lemma~\ref{lem:second-moment-estimate} 
(with $\eps=1/2$, $t=1$), with probability at least $1-2e^{-n}$,
\begin{equation}
\label{eq:gapspp1}
\length[\Big]{\frac{1}{N}\parens[\Big]{\sum_{i=1}^N X_i X_i^\T}-\Sigma} \leq 
\alpha^2 s^2 \text{, }
\end{equation}
where $\Sigma = \E[X_1 X_1^\T]$ (recalling that the samples are i.i.d.).
By Lemma~\ref{lem:subg-sec-mom-bnd}, $\Sigma \preceq 4\alpha^2 s^2 I_n$, and
hence conditioning on~\eqref{eq:gapspp1}, we have that 
\[
\frac{1}{N} \sum_{i=1}^N X_i X_i^\T \preceq \Sigma + \alpha^2 s^2 I_n 
                       \preceq 5 \cdot \alpha^2 s^2 \cdot I_n \text{,}
\]
as needed.

Now assume that $\eta(\lat) \geq \gamma \cdot s = 8 \cdot \alpha
\cdot C_\eta^{(\mu,\ellipse)}(n) \cdot s$. Note that if the sample oracle fails,
we correctly reject, and so we may assume that the oracle returns samples
$X_1,\dots,X_N \in \lat + \vec{t}$.

By definition of $C_\eta^{(\mu,\ellipse)}(n)$, there exists a non-singular $R \in \R^{n \times
n}$, $\|R\|_F \leq 1$, such that $\eta(\lat) \leq C_\eta^{(\mu,\ellipse)}(n)
\mu(R\lat)$. 
Since $\vec{t}$ is uniform over $\R^n/\lat$, $R \vec{t}$ is uniform in
$\R^n/(R\lat)$. Thus, with probability at least $1/2$, the distance
between $R\vec{t}$ and $R\lat$ is at least $\mu(R\lat)/2$ (see Claim~\ref{clm:gmrcovering}).
Conditioning on this event, every point in $R(\lat+\vec{t})$ has $\ell_2$ norm
at least $\mu(R\lat)/2$, and in particular, $\|RX_i\|_2 \geq \mu(R\lat)/2$,
$\forall i \in [N]$. Letting $\lambda_1$ denote the largest eigenvalue of
$\frac{1}{N} \sum_{i=1}^N X_i X_i^\T$, we recall the characterization
\begin{align*}
\lambda_1 
&= \max_{Z \in \PSD^n, \tr(Z) \leq 1} 
  \tr\left(Z \cdot \frac{1}{N} \sum_{i=1}^N X_i X_i^\T\right) \\ 
&= \max_{Z \in \PSD^n, \tr(Z) \leq 1} \frac{1}{N} \sum_{i=1}^N X_i^\T Z X_i \text{.}
\end{align*}
Since $R^\T R \succeq 0$ and $\tr(R^\T R) = \|R\|_F^2 \leq 1$, we may plug in $Z
= R^\T R$ above, which yields
\[
\lambda_1 \ge \frac{1}{N} \sum_{i=1}^N \|R X_i\|^2_2 \geq \mu(R\lat)^2/4 \geq
\eta(\lat)^2/(4C_\eta^{(\mu,\ellipse)}(n)^2) \geq 16 \cdot \alpha^2 \cdot s^2 \text{ .} 
\]
Given the above, we see that with probability at least $1/2$ the largest eigenvalue 
is bigger $5 \alpha^2 s^2$. Hence, we reject with probability at least $1/2$,
as claimed.
\end{proof}


\subsubsection{The implications to lattice-based cryptography}
\label{sec:lbc}

Reductions from worst-case lattice problems to the average-case problem SIS 
are often stated as reductions from standard lattice problems such as ${\rm SIVP}$ or ${\rm GapSVP}$. 
Here we observe that they are in fact implicitly reductions from ${\rm DSGS}$. 
As such, they can be combined with the reduction from Theorem~\ref{thm:gapspp-to-sgs} to obtain a 
reduction from the worst case problem $\GapSPP$ to ${\rm SIS}$. 
For concreteness, we will follow here the reduction from~\cite{GPV08}, which
is a simplification of the reduction in~\cite{MR04}. 
A similar conclusion should apply to more recent and refined reductions, such 
as the one in~\cite{MP13}. 

We first define the average-case SIS problem. 
A possible setting of parameters is $q=\poly(n)$, $m= \lceil n \log q \rceil$, and $\beta = \sqrt{m}$. We note
that with this setting solutions are guaranteed to exist, and so the problem is not vacuous. 

\begin{definition}[Small Integer Solution Problem]
\label{def:sis}
For integers $q$, $m$, and $n$,
and a real number $\beta \ge 1$, the ${\rm SIS}_{q,m,\beta}$ problem asks to
find with non-negligible probability, given a matrix $\vec{A}$ chosen uniformly from $\Z_q^{n\times m}$, a nonzero $\vec{e} \in  \Z^m$ such that $\vec{A}\vec{e} = 0 \bmod q$ and
$\|\vec{e}\|_2 \le \beta$.
\end{definition}

We also need to define the following standard lattice problem. 

\begin{definition}[Shortest Independent Vectors Problem]
An input to ${\rm SIVP}_\gamma$ is a basis of an
$n$-dimensional lattice $\lat$. The goal is to output a set of 
$n$ linearly independent lattice vectors $\vec{S} = \{\vec{s}_1,\ldots,\vec{s}_n\} \subset \lat$
such that $\|\vec{S}\| \le \gamma(n) \lambda_n(\lat)$ where $\|\vec{S}\| := \max_i \|\vec{s}_i\|$.
\end{definition}

The following is the main worst-case to average-case reduction shown in~\cite{GPV08}.

\begin{theorem}[{\cite[Proposition 5.7]{GPV08}}] \label{thm:gpvsivp}
For any $m, \beta = \poly(n)$ and for any prime $q \ge \beta \cdot \omega(\sqrt{n \log n})$, 
there is a reduction from ${\rm SIVP}_\gamma$, where $\gamma = \beta \cdot \tilde{O}(\sqrt{n})$,
to ${\rm SIS}_{q,m,\beta}$. 
\end{theorem}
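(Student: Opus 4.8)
This is \cite[Proposition 5.7]{GPV08}, a simplification of the worst-case to average-case reduction of \cite{MR04}; I sketch the underlying argument. The plan is to use a ${\rm SIS}_{q,m,\beta}$ oracle to implement a \emph{discrete-Gaussian width-reduction} subroutine, bootstrap it from a crude sampler obtained from the input basis, and then read off a ${\rm SIVP}_\gamma$ solution from the resulting low-width samples. Given a basis $B$ of the target $n$-dimensional lattice $\lat$, I would first LLL-reduce it and invoke the GPV/Klein sampler to draw from a distribution within negligible statistical distance of $D_{\lat,r_0}$ for $r_0 = \|\gs{B}\|\cdot\wsln \le 2^n\lambda_n(\lat)\cdot\wsln$. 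Repeatedly applying the width-reduction step below drives the width geometrically down to $\Theta(q\,\eta_\eps(\lat))$, where $\eps = 1/\poly(n)$, and one further application gives samples from (a distribution close to) $D_{\lat,s}$ with $s \le O(\beta)\,\eta_\eps(\lat)$. Using the Micciancio--Regev bound $\eta_\eps(\lat)\le\sqrt{\ln(2n(1+1/\eps))/\pi}\cdot\lambda_n(\lat) = \tilde O(1)\cdot\lambda_n(\lat)$ (rather than the weaker $\eta_\eps(\lat)\lesssim\sqrt n/\lambda_1(\lat^*)\le\sqrt n\,\lambda_n(\lat)$ — this is exactly where the $\sqrt n$ is saved), this is $s\le\tilde O(\beta)\,\lambda_n(\lat)$. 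Finally, draw $O(n)$ samples at width $s$: by Banaszczyk's tail bound \cite{Bana93} each has norm $\le s\sqrt n = \beta\cdot\tilde O(\sqrt n)\cdot\lambda_n(\lat)$, and a standard argument shows that $O(n)$ of them are linearly independent with overwhelming probability, giving a ${\rm SIVP}_\gamma$ solution with $\gamma = \beta\cdot\tilde O(\sqrt n)$.

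The width-reduction step takes a sampler for (something close to) $D_{\lat,r}$ with $r\ge\sqrt 2\,q\,\eta_\eps(\lat)$ and produces a sampler for (something close to) a discrete Gaussian of width $\le\beta r/q$. Draw $m$ i.i.d.\ samples $\vec x_1,\dots,\vec x_m\sim D_{\lat,r}$, fix once and for all an isomorphism $\lat/q\lat\cong\Z_q^n$, and let $\vec A\in\Z_q^{n\times m}$ have columns $\vec x_i\bmod q\lat$. Since $r\ge\eta_\eps(q\lat) = q\,\eta_\eps(\lat)$, Lemma~\ref{lem:coset-mass} shows $\vec A$ is within total variation distance $m\eps$ of uniform, so with non-negligible probability the ${\rm SIS}$ oracle returns $\vec e\in\Z^m\setminuszero$ with $\|\vec e\|_2\le\beta$ and $\vec A\vec e\equiv\vec 0\imod q$, i.e.\ $\vec y := \tfrac1q\sum_i e_i\vec x_i \in \lat$; one may assume $\gcd(e_i)=1$, as otherwise dividing out the gcd gives a strictly shorter ${\rm SIS}$ solution. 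The crux is a \emph{discrete Gaussian convolution lemma}: conditioned on $\vec e$, the combination $\sum_i e_i\vec x_i$ is, up to $(1\pm\eps)$ multiplicative error on its mass function, distributed as $D_{\lat,\,r\|\vec e\|_2}$, so conditioning further on lying in $q\lat$ and dividing by $q$ makes $\vec y$ close to $D_{\lat,\,r\|\vec e\|_2/q}$, of width $\le\beta r/q\le r/\sqrt 2$ since $q\ge\sqrt 2\,\beta$. This is proved by writing the conditional mass over the fiber as $\rho_{r^2 I}(\Lambda_0+\vec u)$ for the sublattice $\Lambda_0 = \{(\vec v_1,\dots,\vec v_m)\in\lat^m:\sum_i e_i\vec v_i=\vec 0\}$ and a particular preimage $\vec u$, and applying Lemma~\ref{lem:coset-mass} once more; $r\ge\sqrt 2\,q\,\eta_\eps(\lat)$ is what makes it dominate $\eta_\eps(\Lambda_0)$. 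Iterating the step a polynomial number of times reaches width $\Theta(q\,\eta_\eps(\lat))$, a sample-recycling argument as in \cite{MR04} keeps the total number of oracle calls and base samples polynomial across all rounds, and the accumulated statistical error, a polynomial multiple of $\eps$, is absorbed by taking $\eps=1/\poly(n)$.

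The main obstacle is the convolution lemma together with its accounting. One must (i) verify that conditioning independent discrete Gaussians on an integer combination landing in a sublattice yields, near-exactly, a discrete Gaussian, correctly tracking its width $r\|\vec e\|_2$ and coset (the coset is trivial here once $\gcd(e_i)=1$, but the intermediate Fourier/Poisson-summation computation must be done with care), and check that $r\ge\sqrt 2\,q\,\eta_\eps(\lat)$ really suffices for every smoothing condition invoked, including the one on $\Lambda_0$; and (ii) organize the iteration so that it neither demands a width below $\eta_\eps(\lat)$ nor causes the sample pool to blow up exponentially with the number of rounds — this is precisely what makes the reduction nontrivial rather than a one-line use of ${\rm SIS}$. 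Everything else — the GPV/Klein sampler, Banaszczyk's tail bound, the smoothing estimates, and the final spanning argument — is off the shelf. (In this paper the theorem is used only as a black box: since the reduction above in fact only consumes \emph{subgaussian} samples, combining it with Theorem~\ref{thm:gapspp-to-sgs} and Definition~\ref{def:dsgs} gives the claimed worst-case hardness of ${\rm SIS}$ based on $\GapSPP$.)
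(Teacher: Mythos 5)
The paper does not prove this statement; it cites \cite{GPV08} directly and treats it as a black box, so there is no in-paper proof to compare against. Your sketch is a correct reconstruction of the GPV08/MR04 argument---iterative discrete-Gaussian width reduction driven by the SIS oracle, the convolution/fiber lemma controlling the width of $\sum_i e_i \vec{x}_i$, and the Micciancio--Regev smoothing estimate $\eta_\eps(\lat) \lesssim \sqrt{\log n}\cdot\lambda_n(\lat)$ that saves the $\sqrt{n}$---and you correctly flag the genuinely delicate points (the exact conditioning in the convolution lemma, and keeping the sample pool and error accumulation polynomial across rounds).
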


The following is a very slight modification of~\cite[Definition 9.1]{GPV08}
(where it was assumed that $s_{\max} = \|\vec{S}\|$; the modification will be convenient
for us). 

\begin{definition}[Incremental Independent Vectors Decoding]
 An input to ${\rm IncIVD}^{\phi}_{\gamma,g}$
is a tuple $(\vec{B},\vec{S},\vec{t},s_{\max})$ ,
where $\vec{B}$ is a basis for a full-rank lattice $\lat \subset \R^n$, $\vec{S} \subset \lat$ is a full-rank set of lattice vectors such that
$\|\vec{S}\| \le s_{\max}$, $\vec{t} \in \R^n$ is a target point, and $s_{\max} \ge \gamma(n) \phi(\lat)$. 
The goal is to output a lattice vector $\vec{v} \in \lat$ such that $\|\vec{t} - \vec{v}\| \le s_{\max} / g$. 
\end{definition}

Finally, we can state the main technical reduction in~\cite{GPV08}.
\begin{theorem}[{\cite[Theorem 9.2]{GPV08}}]\label{thm:gpvmainred}
For any $g(n) > 1$, $m(n),\beta(n) = \poly(n)$, $\gamma(n) = g(n) \cdot \beta(n) \cdot \sqrt{n}$, $q(n) \ge \gamma(n) \cdot \omega(\sqrt{\log n})$, and negligible $\epsilon(n)$, 
there is a probabilistic poly-time reduction from 
${\rm IncIVD}^{\eta_\eps}_{\gamma,g}$ 
to ${\rm SIS}_{q,m,\beta}$. 
\end{theorem}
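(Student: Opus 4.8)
The statement is \cite[Theorem 9.2]{GPV08}, so the plan is to reproduce the GPV reduction; I outline its structure. Fix a $\mathrm{SIS}_{q,m,\beta}$ oracle and an instance $(\vec B,\vec S,\vec t,s_{\max})$ of ${\rm IncIVD}^{\eta_\eps}_{\gamma,g}$, so $\lat=\lat(\vec B)$, $\|\vec S\|\le s_{\max}$, and $s_{\max}\ge\gamma(n)\eta_\eps(\lat)=g(n)\beta(n)\sqrt n\,\eta_\eps(\lat)$. First I would convert $\vec S$ into a basis $\vec B'$ of $\lat$ with Gram--Schmidt norm $\|\gs{\vec B'}\|\le\|\vec S\|\le s_{\max}$ (the \textsc{ToBasis} routine of \cite{GPV08}), and run the GPV discrete Gaussian sampler on $(\vec B',s)$ for a width $s$ chosen large enough for \emph{both} of the following: (i) $s\gtrsim\|\gs{\vec B'}\|\cdot\omega(\sqrt{\log n})$, so the sampler outputs (up to negligible total-variation error) $\vec x_1,\dots,\vec x_m\sim D_{\lat,s}$, and (ii) $s$ exceeds the smoothing parameter of the modulus sublattice used in the next step. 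The relations $q\ge\gamma(n)\omega(\sqrt{\log n})$, $s_{\max}\ge\gamma(n)\eta_\eps(\lat)$ and $\gamma(n)=g(n)\beta(n)\sqrt n$ are exactly what is tailored to make such an $s$ (of size roughly $s_{\max}\cdot\mathrm{polylog}(n)$) exist.

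Next I would reduce each $\vec x_i$ modulo the chosen index-$q$ sublattice of $\lat$ (so that the quotient is $\Z_q$, respectively $\Z_q^n$ if one reduces modulo $q\lat$), obtaining $\vec a_i\in\Z_q^n$; by the regularity/smoothing phenomenon --- a direct consequence of the Poisson-summation identity of Lemma~\ref{lem:coset-mass} together with Claim~\ref{clm:smoothing} applied at level $\eps$ --- the matrix $\vec A=[\vec a_1\mid\cdots\mid\vec a_m]$ is within negligible statistical distance of uniform over $\Z_q^{n\times m}$. Augment $\vec A$ with one extra column $\vec a_0$ encoding the target (obtained by crudely decoding $\vec t$ to a lattice point with $\vec B'$ via nearest-plane and reducing its coordinates modulo $q$), feed the result to the $\mathrm{SIS}$ oracle, and --- since the input is statistically close to the uniform distribution the oracle expects --- obtain with non-negligible probability a nonzero $\vec e=(e_0,\dots,e_m)$ with $\sum_i e_i\vec a_i\equiv\vec 0\pmod q$ and $\|\vec e\|\le\beta$. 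A \emph{primitivity} argument (rerunning the oracle on $O(\log n)$ fresh, independently randomized instances, and/or randomizing which column plays the role of the target) ensures that the target coefficient $e_0$ is a unit; unwinding the congruence then produces a lattice vector $\vec v\in\lat$ with
\[
\|\vec t-\vec v\|\ \lesssim\ \frac{\beta\cdot\max_i\|\vec x_i\|}{q}\ \lesssim\ \frac{\beta\sqrt n\, s}{q}\ \lesssim\ \frac{\beta\sqrt n\,s_{\max}}{\gamma(n)}\ =\ \frac{s_{\max}}{g(n)}\,,
\]
which is exactly the required ${\rm IncIVD}$ guarantee, and every step runs in polynomial time.

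The main obstacle is threading the two quantitative constraints on the width $s$: it must exceed $\|\gs{\vec B'}\|\cdot\omega(\sqrt{\log n})$ for the sampler to function \emph{and} the smoothing parameter of the modulus lattice for $\vec A$ to be near-uniform, while simultaneously being small enough that $\beta\sqrt n\, s/q\le s_{\max}/g(n)$; this is precisely why $\gamma(n)=g(n)\beta(n)\sqrt n$ and $q\ge\gamma(n)\omega(\sqrt{\log n})$ are the stated hypotheses, and it is the place where the exact choice of the index-$q$ modulus sublattice (chosen so that its smoothing parameter is comparable to $\eta_\eps(\lat)$) matters. A secondary but delicate point is the bookkeeping of the three error sources --- the sampler's deviation from $D_{\lat,s}$, the regularity deviation of $\vec A$ from uniform, and the $\mathrm{SIS}$ oracle's success probability --- so that, after amplification by independent repetitions, the reduction succeeds with overwhelming probability. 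All of this is carried out in detail in \cite{GPV08}, which I would follow.
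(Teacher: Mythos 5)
The paper does not prove this theorem itself — it cites \cite[Theorem 9.2]{GPV08} as a black box (up to the harmless relaxation from $\|\vec{S}\|=s_{\max}$ to $\|\vec{S}\|\le s_{\max}$ noted just before the statement), so your proposal has to be measured against GPV08's own argument. Your overall skeleton — \textsc{ToBasis}, discrete Gaussian sampling at a width threaded between the sampler's Gram--Schmidt constraint and a smoothing constraint, near-uniformity of the resulting ${\rm SIS}$ instance, an oracle call, unwinding — is the right one and does match GPV08. But the mechanism you propose for incorporating the target $\vec{t}$ is wrong, and the claimed bound $\|\vec{t}-\vec{v}\|\lesssim\beta\sqrt{n}\,s/q$ does not follow from what you set up.

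You put the target into an extra ${\rm SIS}$ column $\vec{a}_0=\vec{B}'^{-1}\vec{u}\bmod q$, where $\vec{u}\in\lat$ is a nearest-plane decoding of $\vec{t}$, and unwind $e_0\vec{a}_0+\sum_{i\ge1}e_i\vec{a}_i\equiv\vec{0}\pmod q$. That relation only gives $e_0\vec{u}+\sum_ie_i\vec{x}_i\in q\lat$, and none of $\vec{u}$, $\vec{u}+\sum_ie_i\vec{x}_i$, or $(e_0\vec{u}+\sum_ie_i\vec{x}_i)/q$ lands within $s_{\max}/g$ of $\vec{t}$: the rounding error $\vec{t}-\vec{u}$, of size up to $\tfrac{\sqrt n}{2}s_{\max}\gg s_{\max}/g$, was discarded the moment you reduced $\vec{u}$ modulo $q$ and never comes back; and $\sum_ie_i\vec{x}_i\in q\lat$ has norm $\approx\beta\sqrt{n}\,s$, so dividing by $q$ produces a \emph{short lattice vector}, not a lattice vector near $\vec{t}$. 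The $1/q$ in your bound therefore has no source, because your samples live in $\lat$, not in $\tfrac1q\lat$ or in cosets that carry $\vec{t}-\vec{u}$. This is precisely where GPV08 (following \cite{MR04}) does something different: the discrete Gaussians are drawn over cosets whose offsets encode the $\gs{\vec{B}'}$-decomposition of $\vec{t}$, so that after the ${\rm SIS}$ step $\vec{t}-\vec{v}$ is \emph{exactly} an $\vec{e}$-weighted combination of those coset Gaussians — that is what the paper's own remark right after the theorem (``$\vec{t}-\vec{v}$ is a linear combination of independent samples from discrete Gaussian distributions (on various cosets of the lattice) of parameter $s$'') refers to, and it is what makes the subgaussianity observation, the whole reason the paper restates this theorem, go through. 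Two secondary inaccuracies in the same vein: reducing $\vec{x}_i\sim D_{\lat,s}$ modulo $q\lat$ requires $s\gtrsim\eta_\eps(q\lat)=q\,\eta_\eps(\lat)$, which is not ``comparable to $\eta_\eps(\lat)$'' as your parenthetical asserts; and ${\rm SIS}$ provides no guarantee that the target coefficient $e_0$ is nonzero, let alone a unit — GPV08 avoids the issue altogether by never placing a target column into the ${\rm SIS}$ matrix.
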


Our main observation is that the proof of the theorem above in~\cite{GPV08} actually gives
more: the output $\vec{v}$ of the ${\rm IncIVD}$ solution 
is such that $\vec{t} - \vec{v}$ is subgaussian with parameter at most $2 s_{\max} / (g \sqrt{n})$. 
This follows from (and is implicitly used in) the proof of \cite[Claim 9.5]{GPV08}. Indeed, 
in that proof, $\vec{t} - \vec{v}$ is a linear combination of independent samples from 
discrete Gaussian distributions (on various cosets of the lattice) of parameter $s$. Since $s$ is above smoothing, it follows from~\cite[Lemma 2.8]{MP12} 
that the discrete Gaussian distribution is subgaussian.  As explained in~\cite[Section 2.4]{MP12},
the sum of independent subgaussian variables is again subgaussian, and we obtain that 
$\vec{t} - \vec{v}$ is subgaussian with parameter $s_{\max} / (g \sqrt{n})$. 
Finally, the definition of subgaussianity in~\cite{MP12} is slightly different from ours, and the extra factor of $2$ 
is enough to compensate for this. 

Equipped with this observation, we can prove the main theorem of this section. 

\begin{theorem}
For any $m(n),\beta(n) = \poly(n)$, and prime $q(n) = n \cdot \beta(n) \cdot \omega(\sqrt{\log n})$, 
there is a poly-time reduction from ${\rm DSGS}^{\sigma}$ for $\sigma(\lat) = \tilde{O}(\beta(n) \eta(\lat))$ (with any desired polynomial number of samples) to 
solving ${\rm SIS}_{q,m,\beta}$ on the average with non-negligible probability. 
\end{theorem}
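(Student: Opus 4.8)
The plan is to realize a discrete subgaussian sampler for $\lat+\vec t$ directly from the ${\rm SIS}$ oracle, by running the ${\rm SIS}\to{\rm IncIVD}$ reduction of~\cite{GPV08} (Theorem~\ref{thm:gpvmainred}) and exploiting its strengthened guarantee recalled just above: the vector $\vec v\in\lat$ it returns satisfies not merely $\|\vec t-\vec v\|\le s_{\max}/g$ but, more precisely, $\vec t-\vec v$ is $\big(2s_{\max}/(g\sqrt n)\big)$-subgaussian. Since $\vec v\in\lat$ and $-\lat=\lat$, the vector $\vec x:=\vec t-\vec v$ is supported on $\lat+\vec t$, so one call of (the ${\rm SIS}$-based realization of) ${\rm IncIVD}$ with target $\vec t$ already produces a single ${\rm DSGS}$ sample, and $m$ independent calls with fresh randomness produce $m$ i.i.d.\ samples. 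I fix $g(n)=\sqrt n$, so that Theorem~\ref{thm:gpvmainred} applies with $\gamma(n)=g(n)\beta(n)\sqrt n=n\beta(n)$ and the modulus condition $q(n)=n\beta(n)\,\omega(\sqrt{\log n})\ge\gamma(n)\,\omega(\sqrt{\log n})$ is met; the ${\rm IncIVD}$ output is then $(2s_{\max}/n)$-subgaussian. Given a ${\rm DSGS}$ instance $(\vec B,\vec t,s)$ with $s\ge\sigma(\lat)$, I always invoke ${\rm IncIVD}$ with $s_{\max}:=ns/2$, so the sample is exactly $s$-subgaussian; the oracle's hypothesis $s_{\max}\ge\gamma(n)\eta_\eps(\lat)$ (for the negligible $\eps$ that~\cite{GPV08} requires) then reads $s\ge 2\beta(n)\eta_\eps(\lat)$, which I take as the definition of $\sigma$. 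By the bounds on $\eta_\eps$ in Theorem~\ref{thm:smoothing-bnd} this is $\sigma(\lat)=\tilde{O}(\beta(n)\eta(\lat))$ as stated, reading $\eta$ at the negligible accuracy $\eps$ that the reduction needs.

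Before any samples can be drawn one must supply ${\rm IncIVD}$ with a valid input, which also needs a full-rank set $\vec S\subseteq\lat$ with $\|\vec S\|\le s_{\max}=ns/2$. I obtain it by the incremental procedure underlying the ${\rm SIVP}$ reduction of~\cite{GPV08} (Theorem~\ref{thm:gpvsivp}): start from an LLL-reduced basis $\vec S^{(0)}\subseteq\lat$, which has $\|\vec S^{(0)}\|\le 2^{O(n)}\lambda_n(\lat)$; then, while $r_i:=\|\vec S^{(i)}\|>s_{\max}$, call ${\rm IncIVD}$ with the set $\vec S^{(i)}$, parameter $r_i$, and a target along the longest vector of $\vec S^{(i)}$, to replace that vector by an independent lattice vector of norm at most $r_i/2$, obtaining $\vec S^{(i+1)}$ with $\|\vec S^{(i+1)}\|\le r_i/2$. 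Each such call meets the precondition of ${\rm IncIVD}$ because $r_i>s_{\max}\ge\gamma(n)\eta_\eps(\lat)$. Using the transference bound $\lambda_1(\lat^*)\lambda_n(\lat)\le n$ together with the lower bound on $\eta_\eps$ in Theorem~\ref{thm:smoothing-bnd}, one has $\lambda_n(\lat)\lesssim n\,\eta_\eps(\lat)\lesssim (n/\beta(n))\,s\le ns$, so $\|\vec S^{(0)}\|\le 2^{O(n)}s$ and the loop halts after $O(n)$ iterations; the whole preprocessing runs in polynomial time given the ${\rm SIS}$ oracle.

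With such a $\vec S$ fixed, for $j=1,\dots,m$ run, independently and with fresh internal randomness, the ${\rm SIS}$-based ${\rm IncIVD}$ procedure on input $(\vec B,\vec S,\vec t,s_{\max})$, obtaining $\vec v_j\in\lat$, and output $\vec x_j:=\vec t-\vec v_j$. By Theorem~\ref{thm:gpvmainred} and the subgaussianity refinement, each run succeeds with non-negligible probability, and conditioned on success $\vec x_j$ is an $s$-subgaussian vector supported on $\lat+\vec t$ whose distribution depends only on the input. Since the $m$ runs are independent and $m$ is a fixed polynomial, all of them succeed with non-negligible probability; conditioned on that event the $\vec x_1,\dots,\vec x_m$ are i.i.d.\ $s$-subgaussian vectors on $\lat+\vec t$, i.e.\ a valid ${\rm DSGS}^\sigma$ output. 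Note that the reduction never needs to know or estimate $\eta(\lat)$: all thresholds are computed from the input parameter $s$, which is promised to be $\ge\sigma(\lat)$, and the oracle's guarantee is triggered purely by the inequality $s_{\max}\ge\gamma(n)\eta_\eps(\lat)$.

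The hard part is not conceptual but in making the hypotheses of Theorem~\ref{thm:gpvmainred} line up: the choice of $g(n)$ so that $q(n)=n\beta(n)\,\omega(\sqrt{\log n})$ suffices; the choice of $s_{\max}$ so that the resulting subgaussian parameter is exactly the given $s$ while still dominating $\gamma(n)\eta_\eps(\lat)$; and controlling $\eta_\eps(\lat)$, for the negligible $\eps$ that~\cite{GPV08} needs so the ${\rm SIS}$ instance is statistically close to uniform, by $\eta(\lat)$. The one genuinely nontrivial external input is the strengthened conclusion of the ${\rm IncIVD}$ reduction recalled above — that $\vec t-\vec v$ is subgaussian with parameter $2s_{\max}/(g\sqrt n)$, not merely norm-bounded; as explained there, this holds because in that reduction $\vec t-\vec v$ is an explicit sum of independent discrete Gaussians at a parameter above smoothing, each of which is subgaussian by~\cite[Lemma~2.8]{MP12}, and subgaussianity is preserved under summation.
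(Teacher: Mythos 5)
Your proof follows essentially the same route as the paper's: fix $g=\sqrt{n}$ so that $\gamma(n) = n\beta(n)$ matches the modulus hypothesis of Theorem~\ref{thm:gpvmainred}, set $s_{\max}=ns/2$ so the strengthened IncIVD guarantee produces exactly $s$-subgaussian samples on $\lat+\vec{t}$, bootstrap a short full-rank set $\vec{S}\subset\lat$ from the SIS oracle, and chain independent calls to produce $m$ samples. The one stylistic difference is that you rebuild $\vec{S}$ from an LLL-reduced basis by iterated IncIVD calls, rather than invoking Theorem~\ref{thm:gpvsivp} as a black box as the paper does; both work, though you are essentially re-deriving that theorem and glossing over the independence-preservation details it encapsulates.

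There is, however, one concrete slip. To conclude $\sigma(\lat) = \tilde{O}(\beta(n)\eta(\lat))$ from your definition $\sigma(\lat)=2\beta(n)\eta_\eps(\lat)$, you need $\eta_\eps(\lat) \le \polylog(n)\cdot\eta(\lat)$ for a negligible $\eps$, and you cite Theorem~\ref{thm:smoothing-bnd} for this. But that theorem's upper bound on $\eta_\eps(\lat)$ has a $\sqrt{n/(2\pi)}/\lambda_1(\lat^*)$ term, while its lower bound on $\eta(\lat)=\eta_{1/2}(\lat)$ is only $\sqrt{\log(4)/\pi}/\lambda_1(\lat^*)$; dividing gives only $\eta_\eps(\lat) \lesssim \sqrt{n}\cdot\eta(\lat)$, i.e.\ $\sigma(\lat) = O\bigl(\sqrt{n}\,\beta(n)\eta(\lat)\bigr)$, which is polynomially, not polylogarithmically, off. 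The bound you actually need comes from Lemma~\ref{lem:mass-decrease} (part 1): applying $\rho_{A/t^2}(\lat\setminuszero)\le\rho_A(\lat\setminuszero)^{t^2}$ with $A = 1/\eta(\lat)^2 \cdot I$, $t=\log_2 n$, and $\eps = n^{-\log_2 n}$ shows $\eta_\eps(\lat)\le(\log_2 n)\,\eta(\lat)$, which is exactly how the paper closes this step.
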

\begin{proof}
Let the input to the ${\rm DSGS}$ problem be a basis $\vec{B}$ for a lattice $\lat \subset \R^n$, a shift
$\vec{t} \in \R^n$, and a parameter $s \geq \sigma(\lat)$.
Let $g = \sqrt{n}$ and $s_{\max} = s\cdot g\cdot \sqrt{n} / 2$.

We start by applying Theorem~\ref{thm:gpvsivp} to obtain a set $\vec{S} \subset \lat$ 
of full-rank vectors of length 
\[
\|\vec{S}\| \le \beta \cdot \tilde{O}(\sqrt{n}) \lambda_n(\lat)
\le \beta \cdot \tilde{O}(n) \eta(\lat)
\le s_{\max}
\; ,
\]
where the second inequality follows from, e.g.,~\cite[Corollary 3.16]{R09}.

Next, choose a negligible $\eps$, say $\eps = n^{-\log_2 n}$. 
By Lemma~\ref{lem:mass-decrease}, 
for any lattice $\lat$, we have that 
$\eta(\lat) \le \eta_{\eps}(\lat) \leq (\log_2 n) \eta(\lat)$.
Therefore, we have $s_{\max} \ge \eta_\eps(\lat)\cdot g \cdot \beta \cdot \sqrt{n}$.
Applying Theorem~\ref{thm:gpvmainred}, and using the observation above, 
we obtain a subgaussian sample from $\lat + \vec{t}$
with parameter $2 s_{\max} / (g \sqrt{n}) = s$, as desired. 
\end{proof}

Combined with Theorem~\ref{thm:gapspp-to-sgs}, this yields a reduction from
$\tilde{O}(\beta(n) C^{(\mu,\ellipse)}_\eta(n))$-$\GapSPP_{1/2}$ to
${\rm SIS}_{q,m,\beta}$. 
In particular, with the setting of parameters mentioned above Definition~\ref{def:sis}, 
and assuming the weak conjecture, we obtain a reduction from  
$\tilde{O}(\sqrt{n})$-$\GapSPP_{1/2}$ to 
${\rm SIS}_{q,m,\beta}$, an improvement upon the best known
reduction by an $\tilde{O}(\sqrt{n})$ factor.


\subsection{KL and the Covering Radius Problem}
\label{sec:gapcrp}

The $\ell_2$ KL conjecture has an easy application to the computational 
complexity of lattice problems. Although this application is not directly 
related to the main topic of this paper, we record it here for future 
reference and for further motivation. 

Consider the decision version of the $\gamma$-approximate covering radius problem, denoted $\GapCRP_\gamma$,
where $\gamma=\gamma(n)>0$ is an approximation factor.
Here, we are given a lattice $\lat$ and a number $r>0$ and the goal is to decide if
the covering radius $\mu(\lat)$ of $\lat$ is at most $r$ (YES instances) or 
more than $\gamma \cdot r$ (NO instances). 
This problem was considered by Guruswami et al.~\cite{GuruswamiMR05} who showed that 
$\GapCRP_{\sqrt{n}} \in \NP$ and that $\GapCRP_2 \in \AM$. They also showed
that $\GapCRP_{\sqrt{n}} \in \coNP$ and that $\GapCRP_{\sqrt{n/\log n}} \in \coAM$.
Here we observe that the $\ell_2$ KL conjecture implies an exponential improvement
on the two latter results. Namely, we have that 
$\GapCRP_{C_{KL}(n)} \in \coNP$. Indeed, this is easy to prove. 
Construct a verifier that is given as proof a lattice subspace $W$ of $\lat^*$ and verifies
that $\det(\lat^* \cap W) \le d^{d/2}$. If $\mu(\lat) \ge C_{KL}(n)$
then such a subspace exists by definition. If, on the other hand, 
$\mu(\lat) \le 1$, then the easy reverse direction of Eq.~\eqref{eq:conjkll2}
shows that no such subspace can exist.

\section{Limits for Strong Reverse Minkowski Inequalities}
\label{sec:morepreciserevminko}

In this somewhat more speculative section, we discuss a possible stronger form
of the main conjecture that, if true, would truly deserve the name ``reverse
Minkowski.'' We recall that the main conjecture bounds from above the number of lattice
points at any radius $r > 0$ by a function of the form $e^{(\poly\log n) r^2}$
when all sublattice determinants are at least $1$. It is a priori unclear why
assuming such a uniform upper bound on all radii is the ``natural'' thing to ask for
(hopefully, we have at least demonstrated its usefulness), and one may be tempted
to ask: given a radius $r > 0$, what is the tightest bound on
the number of lattice points at this radius in terms of sublattice determinants? 

To arrive at a plausible candidate for such a bound, let us recall Minkowski's
first theorem, Theorem~\ref{thm:mink-first}, 
which provides volumetric \emph{lower} bounds on lattice point counts. The lower
bound is stated in terms of the determinant of the full lattice, but obviously,
having a sublattice of small determinant would also suffice, as we can invoke
Minkowski's theorem inside the subspace spanned by that sublattice. More
precisely, it follows from Minkowski's theorem that for any lattice $\lat$, 
\begin{align*}
| \lat \cap r \Ball_2^n| \geq M((r/2),\lat) \text{\; ,}
\end{align*}
where 
\begin{equation}
\label{def:mink-bnd}
M(r,\lat) = \max_{\substack{W \textnormal{ lattice subspace
of } \lat \\  0 \leq d = \dim(W) \leq n}} \vol_d(r\Ball_2^d)/\det(\lat \cap W) \text{\; .}
\end{equation}
By convention, the
quotient is $1$ for $d=0$, and hence we note that $M(r,\lat) \geq 1$ always. 

It now becomes natural to ask whether the volumetric bound
$M(r,\lat)$ is far from being tight. 
To this end, we introduce the following definition. 

\begin{definition}
Let $C_M(n)$ denote the smallest number such that for any $r>0$ and any $n$-dimensional lattice
$\lat$, 
\[
|r\Ball_2^n \cap \lat| \leq M(C_M(n)r,\lat) \; .
\]
\end{definition}

We may speculate that $C_M(n) \le \poly \log n$. 
We note that while this would be amazing if true, a
counterexample would possibly be just as (or more) instructive and
yield useful insights into the structure of lattice points.
We also note that this would imply the main conjecture, Conjecture~\ref{con:maineta}, as
shown in the following proposition. 

\begin{proposition}\label{prop:strongrevminkimpliesmain}
For any $n$, $C_\eta(n) = O(C_M(n))$.
\end{proposition}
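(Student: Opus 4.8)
The goal is to deduce the smoothing bound from the point-counting bound, so the natural route passes through Lemma~\ref{lem:pointcountingeta}, which already tells us that $\eta(\lat)$ is controlled up to a constant by $\max_{r>0} \sqrt{\log|\lat^* \cap r\Ball_2^n|/\pi}\,/\,r$. Thus it suffices to bound this quantity by $O(C_M(n))$ times $\etarelax_{\det}(\lat^*{}^*) = \etarelax_{\det}(\lat)$ evaluated on $\lat^*$; recall $\etarelax_{\det}(\lat) = \max_{W} \det(\lat^* \cap W)^{-1/\dim W}$ ranges over lattice subspaces of $\lat^*$. So the real content is: assuming $|r\Ball_2^n \cap M| \le M(C_M(n)r, M)$ for all lattices $M$ and all $r$, applied to $M = \lat^*$, translate the right-hand side $M(C_M(n)r,\lat^*)$ into something comparable to $e^{\pi(C_\eta(n) r \cdot \etarelax_{\det})^2}$.

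First I would fix $\lat$, write $\lat^* = M$, and abbreviate $c = C_M(n)$. Applying the definition of $C_M$ to $M$ at radius $r$ gives
\[
|r\Ball_2^n \cap M| \le M(cr, M) = \max_{\substack{W \text{ lattice subspace of } M \\ 0 \le d = \dim(W) \le n}} \frac{\vol_d(cr \Ball_2^d)}{\det(M \cap W)}.
\]
Using $\vol_d(\Ball_2^d) \le (C/\sqrt{d})^d$ for a universal constant $C$ (or more precisely $\vol_d(\Ball_2^d)^{1/d} = \Theta(1/\sqrt d)$), the $d$-dimensional term is at most $\big(C' c r / (\sqrt{d}\,\det(M\cap W)^{1/d})\big)^d$. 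Now set $\nu = \etarelax_{\det}(\lat) = \max_W \det(M \cap W)^{-1/\dim W}$, so that each term is bounded by $(C' c r \nu / \sqrt d)^d$. Maximizing $(a/\sqrt d)^d$ over $d \ge 0$ (treating $d$ as a positive real and differentiating $d\log a - (d/2)\log d$) yields the maximum value $e^{a^2/(2e)}$ at $d = a^2/e$, where $a = C' c r \nu$. Hence $|r\Ball_2^n \cap M| \le \exp\big((C' c r \nu)^2/(2e)\big)$, i.e. $\log|r\Ball_2^n \cap \lat^*| \le (C'cr\nu)^2/(2e)$ for every $r > 0$.

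Feeding this into Lemma~\ref{lem:pointcountingeta}: $\eta(\lat) \le \max_{r>0} \sqrt{\log(|\lat^* \cap r\Ball_2^n|)/\pi}\,/\,r \le \max_{r>0} \sqrt{(C'cr\nu)^2/(2e\pi)}\,/\,r = C'c\nu/\sqrt{2e\pi}$. Since $\nu = \etarelax_{\det}(\lat)$ is exactly the quantity on the right-hand side of Eq.~\eqref{eq:finalgoal} defining $C_\eta(n)$, this shows $\eta(\lat) \le O(c) \cdot \etarelax_{\det}(\lat) = O(C_M(n)) \cdot \etarelax_{\det}(\lat)$, and taking the best constant gives $C_\eta(n) = O(C_M(n))$. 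I expect no serious obstacle here — the only mild care needed is (i) being careful that the optimization over $d$ is over nonnegative \emph{integers} but relaxing to reals only loses a constant, and (ii) keeping track of the universal constant in $\vol_d(\Ball_2^d)^{1/d} \asymp d^{-1/2}$; the one genuinely non-cosmetic input is the equivalence in Lemma~\ref{lem:pointcountingeta}, which is already proved in the excerpt and which is what lets us pass from a bound of the form $e^{(sr)^2}$ on point counts to a bound $s\sqrt{3}$ on $\eta$.
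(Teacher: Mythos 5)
Your argument is correct and follows essentially the same route as the paper's proof: pass through Lemma~\ref{lem:pointcountingeta}, apply the definition of $C_M$ to bound the point count $|r\Ball_2^n \cap \lat^*|$ in terms of sublattice determinants, use $\vol_d(\Ball_2^d)^{1/d} \asymp d^{-1/2}$, and maximize $(a/\sqrt{x})^x$ over real $x\ge 0$ to get the Gaussian-type bound $e^{a^2/(2e)}$. The only cosmetic difference is bookkeeping: the paper first normalizes so that all sublattices of $\lat$ have determinant at least $1$ and then bounds $\eta(\lat^*)$ via $|\lat\cap r\Ball_2^n|$, whereas you carry $\nu=\etarelax_{\det}(\lat)$ explicitly and bound $\eta(\lat)$ directly via $|\lat^*\cap r\Ball_2^n|$; these are equivalent by the homogeneity noted after Conjecture~\ref{con:maineta}.
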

\begin{proof}
Let $\lat \subseteq \R^n$ be any lattice for which
all sublattices have determinant at least $1$. 
By scaling, it suffices to show that $\eta(\lat^*) \le O(C_M(n))$. 
For any $r > 0$,
\begin{align*}
|r\Ball_2^n \cap \lat| 
  &\leq M(C_M(n)r,\lat) \leq \max_{0 \leq d \leq n} \vol_d(B_2^d) (C_M(n)r)^d \\
  &\leq \max_{0 \leq d \leq n} \left(\frac{c C_M(n) r}{\sqrt{d}}\right)^d 
  \leq \max_{x \geq 0} \left(\frac{c C_M(n) r}{\sqrt{x}}\right)^x \\
  &= \max_{x \geq 0} e^{x \ln(c C_M(n) r) - (1/2) x \ln x} 
  = e^{(c C_M(n) r)^2/(2e)} \text{ ,} 
\end{align*}
where $c > 0$ is an absolute constant. Here, the last equality follows from the
fact that the function $x \ln a - 1/2 x \ln x$ is concave and is maximized
(derivative is $0$) at $x = a^2/e$. 
By Lemma~\ref{lem:pointcountingeta}, $\eta(\lat^*) \le O(C_M(n))$, as desired.
\end{proof}

The best upper bound we can prove is $C_M(n) = O(\sqrt{n})$, as
shown in Theorem~\ref{thm:weak-mink} below. We note that by 
Proposition~\ref{prop:strongrevminkimpliesmain}, this recovers the best known bound $O(\sqrt{n})$ on $C_\eta(n)$
(a direct proof of which is shown in Theorem~\ref{thm:best-conj-bnds}).

\paragraph{General convex bodies.}
One can take the above discussion a step further and wonder why we should restrict
ourselves to scalings of the Euclidean ball, since Minkowski's theorem holds
more generally for any symmetric convex body. Indeed, slightly overloading
notation, for any symmetric convex body $K \subseteq \R^n$, Minkowski's theorem
implies that $|\lat \cap K| \geq M((K/2),\lat)$, where $M(K,\lat)$ is defined as
in~\eqref{def:mink-bnd} with $rB_2^d$ replaced by $K \cap W$. 

\begin{definition}
Let $C'_M(n)$ denote the smallest number such that for any $n$-dimensional lattice
$\lat$, and any symmetric convex body $K \subseteq \R^n$,
\[
|\lat \cap K| \leq M(C'_M(n)K,\lat) \; .
\]
\end{definition}

Clearly $C'_M(n) \ge C_M(n)$.
Theorem~\ref{thm:weak-mink} below gives the best upper bound 
we can prove, $C'_M(n) \le O(n)$. 
Given the foregoing discussion, one might be tempted to ask whether 
the upper bound on $C'_M(n)$ can be improved, say to $\poly \log n$. 
Unfortunately, we can show that this is false: in Theorem~\ref{thm:rev-mink-lb}
we prove that $C'_M(n) \ge \Omega(n^{1/4-\eps})$.
The proof crucially uses a convex body $K$ that is \emph{very far}
from a Euclidean ball, hence we may still hope that a better inequality is
possible for the Euclidean ball, namely, that $C_M(n) \le \poly \log n$. 

\subsection{Best known upper bounds}

\begin{theorem}[Weak Reverse Minkowski] 
\label{thm:weak-mink}
For a symmetric convex body $K$ and $n$-dimensional lattice $\lat$ in $\R^n$,
$|K \cap \lat| \leq M(3nK, \lat)$. Furthermore, for any radius $r >
0$, $|r\Ball_2^n \cap \lat| \leq M(6\sqrt{n}r, \lat)$. 
\end{theorem}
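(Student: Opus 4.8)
The plan is to prove the general-body statement $|K \cap \lat| \le M(3nK,\lat)$ first, and then derive the Euclidean refinement by a separate (and tighter) argument. For the general statement, the natural approach is induction on the rank $n$ via the successive minima. Let $\lambda_1 = \lambda_1(K,\lat), \ldots, \lambda_n = \lambda_n(K,\lat)$ be the successive minima of $\lat$ with respect to $K$. If $\lambda_1 > 1$ then $K \cap \lat = \{\vec 0\}$ (or just the sublattice $\{\vec 0\}$), so $|K \cap \lat| = 1 \le M(3nK,\lat)$ trivially since $M \ge 1$ always. So assume $\lambda_1 \le 1$. Let $k$ be the largest index with $\lambda_k \le 1$, and let $W = \mathrm{span}$ of linearly independent lattice vectors $\vec v_1,\ldots,\vec v_k$ witnessing $\lambda_1,\ldots,\lambda_k$; this is a $k$-dimensional lattice subspace of $\lat$, and $K \cap \lat \subseteq W$ (any lattice point in $K$ lies in $\mathrm{span}$ of vectors of $K$-norm $\le 1$, which is contained in $W$ by the definition of successive minima). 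Hence we may assume $W = \mathrm{span}(\lat)$, i.e.\ reduce to the case where all successive minima are $\le 1$.

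In that case, the first step is to bound $|K \cap \lat|$ using Henk's bound, Theorem~\ref{thm:henk-bnd}: taking $t = 1$ and $i=n$, $|K \cap \lat| \le 2^{n-1}\lfloor 1 + 2/\lambda_n \rfloor \le 2^{n-1} \cdot 3/\lambda_n$ (using $\lambda_n \le 1$), and more usefully we can iterate Henk over all indices or simply bound $|K \cap \lat| \le \prod_{i=1}^n \lfloor 1 + 2/\lambda_i \rfloor \le \prod_{i=1}^n (3/\lambda_i)$ — actually the clean route is Henk applied once gives $2^{n-1}\lfloor 1 + 2/\lambda_n\rfloor$, but to get something expressible through a determinant we instead want $|K \cap \lat| \le \prod_i 3/\lambda_i$, which follows from the standard fact that lattice points in $K$ have unique representations with bounded coordinates in the basis $\vec v_1,\ldots,\vec v_n$ (via the greedy/successive-minima basis argument, e.g.\ as in Henk's proof). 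The second step: by Minkowski's second theorem, Theorem~\ref{thm:mink-sec}, with $K=\Ball_2^n$ replaced by general $K$, $\prod_{i=1}^n \lambda_i \ge \frac{2^n}{n!\,\vol_n(K)}\det(\lat)$, hence $\prod_i (3/\lambda_i) \le 3^n n! \vol_n(K) / (2^n \det(\lat)) = \vol_n(3nK/?)/\det(\lat)$ after absorbing constants — precisely, $\vol_n(cK) = c^n \vol_n(K)$, and $3^n n!/2^n \le (3n/2)^n \cdot$ (Stirling-type factor); one checks $3^n n! \le (3n)^n$, giving $\prod_i(3/\lambda_i) \le (3n)^n \vol_n(K)/\det(\lat) = \vol_n(3nK)/\det(\lat) \le M(3nK,\lat)$, since $W=\mathrm{span}(\lat)$ is itself an admissible lattice subspace in the maximum defining $M$. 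Combining the two steps closes the general case.

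For the Euclidean refinement $|r\Ball_2^n \cap \lat| \le M(6\sqrt n r,\lat)$, I would run the identical argument but use the sharper right-hand side of Minkowski's second theorem available for $K = \Ball_2^n$, namely $\prod_i \lambda_i(\lat) \ge (2n/(\pi e))^{-n/2}(1+o(1))\det(\lat) \cdot \vol_n(\Ball_2^n)^{-1}\cdot 2^{-n}$ — concretely $\prod_i \lambda_i \ge \det(\lat)/\vol_n(\sqrt{?}\Ball_2^n)$-type estimate — together with the $\ell_2$ version of the greedy counting bound, which gives $|r\Ball_2^n \cap \lat| \le \prod_i \lfloor 1 + 2r/\lambda_i\rfloor \le \prod_i (3r/\lambda_i)$ (again assuming all $\lambda_i \le r$ after restricting to the appropriate lattice subspace as above). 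Plugging in the $\ell_2$-sharpened bound $\prod_i \lambda_i \ge \det(\lat)\cdot\vol_n(\Ball_2^n)\cdot(\pi e/(2n))^{n/2}\cdot$const and using $\vol_n(\Ball_2^n) \approx (2\pi e/n)^{n/2}/\sqrt{\pi n}$, the product $\prod_i(3r/\lambda_i)$ is bounded by $(c\sqrt n r)^n \vol_n(\Ball_2^n)/\det(\lat) = \vol_n(6\sqrt n r\, \Ball_2^n)/\det(\lat) \le M(6\sqrt n r,\lat)$ for a suitable absolute constant absorbed into the $6$. The main obstacle — and the step deserving the most care — is getting the counting inequality $|cK \cap \lat| \le \prod_i \lfloor 1 + 2c/\lambda_i\rfloor$ cleanly in the form that matches the determinant after applying Minkowski II; one must be careful that the successive-minima basis argument gives the product form (not just the single-index Henk bound) and that the restriction-to-lattice-subspace reduction at the start is done correctly so that $W$ appears in the $M(\cdot,\lat)$ maximum. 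Everything else is bookkeeping with Stirling's estimate for $n!$ and the volume of the unit ball, which I would not grind through here.
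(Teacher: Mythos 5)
Your proposal is correct and follows the paper's argument exactly: reduce to the $d$-dimensional lattice subspace $W=\mathrm{span}(K\cap\lat)$, bound the point count by Henk's inequality in terms of the successive minima, convert that product to a determinant bound via Minkowski's second theorem, and use the sharper $\ell_2$ form of Minkowski~II to obtain the $6\sqrt{n}r$ refinement. Your hesitation around Henk's bound is unnecessary: Henk's theorem is precisely $|tK\cap\lat| \le 2^{n-1}\prod_{i=1}^n\bigl\lfloor 1 + 2t/\lambda_i(K,\lat)\bigr\rfloor$ (the statement of Theorem~\ref{thm:henk-bnd} in the paper has a typographical omission of the product over $i$), so no separate greedy-basis counting argument is needed, and the $2^{n-1}$ factor you dropped is immaterial since the constant in the $3n$ (resp.\ $6\sqrt{n}$) scaling has ample slack to absorb it.
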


The above inequality is in fact relatively simple to prove and so we find it
somewhat surprising that it was not discovered earlier. We now give a short proof
based on (the easy direction of) Minkowski's second theorem and a bound of
Henk~\cite{Henk02} on the number of lattice points in a symmetric convex body in
terms of the successive minima.

\begin{proof}
We first prove the bound for general $K$ and specialize to $\Ball_2^n$
afterwards. 
If $\lambda_1(K,\lat)> 1$ then $K \cap \lat =
\set{\vec{0}}$, and hence the desired inequality becomes trivial since
$M(K,\lat) \geq 1$ always.  Thus, we may assume that $\lambda_1(K,\lat)\le 1$.
Let $d \in [n]$ denote largest index such that $\lambda_d(K,\lat)
\leq 1$.

Let $W = {\rm span}(\lat \cap K)$. By construction, $\lat \cap K = (\lat \cap W)
\cap (K \cap W)$, $\dim(\lat \cap W) = d$, and $\lambda_i(K \cap W,\lat \cap W)
= \lambda_i(K,\lat) \leq 1$ $~\forall~i \in [d]$. Now by Henk's bound (see
Lemma~\ref{thm:henk-bnd}), 
\begin{align*}
|K \cap \lat| 
 &\leq 2^{d-1} \prod_{i=1}^d \floor[\Big]{ 1 + \frac{2}{\lambda_i(K \cap W,\lat \cap W)} } \\
 &\leq 2^{d-1} \prod_{i=1}^d \frac{3}{\lambda_i(K \cap W,\lat \cap W)} \quad
\left(\text{ the successive minima are at most $1$ }\right) \\ 
 &\leq 6^d \prod_{i=1}^d \frac{1}{\lambda_i(K \cap W,\lat \cap W)} \\
 &\leq (3^d d!) \frac{\vol_d(K \cap W)}{\det(\lat \cap W)} \quad \left(\text{ by
Minkowski's Second Theorem, Theorem~\ref{thm:mink-sec} }\right)  \\
 &\leq \frac{\vol_d(3d (K \cap W) )}{\det(\lat \cap W)} \leq M(3d K,
\lat) \text{ .} \quad \left(\text{ by homogeneity of volume }\right)
\end{align*}
To obtain the improvement for $K=\Ball_2^n$ (by scaling $\lat$ it suffices to
prove the ``furthermore'' part for $r=1$), we note that the $d!$ term above obtained from
Minkowski's second theorem can be improved to $2^d \vol_d(\Ball_2^d)^{-1} \leq
2^d d^{d/2}$. 
\end{proof}

\subsection{Lower bound for general convex bodies}

\begin{theorem}
\label{thm:rev-mink-lb}
For any $\eps > 0$ there exists a constant $C > 0$ such that for any $n$ large
enough, there exists an $n$-dimensional symmetric convex body $K$ and lattice
$\lat$ such that $M(C n^{1/4-\eps} K, \lat) \leq |\lat \cap K|$. 
\end{theorem}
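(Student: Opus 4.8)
The plan is to exhibit, for suitable parameters, an $n$-dimensional lattice $\lat$ together with a symmetric convex body $K$ that is \emph{very far from a Euclidean ball} and satisfies $|\lat \cap K| \ge M(Cn^{1/4-\eps}K,\lat)$. A preliminary simplification: both sides of this inequality are invariant under applying a common invertible linear map $T$, i.e. $|T\lat \cap TK| = |\lat \cap K|$ and $M(rTK,T\lat) = M(rK,\lat)$ for all $r>0$. Indeed, the lattice subspaces of $T\lat$ are exactly the images $TW$ of lattice subspaces $W$ of $\lat$, and the Jacobian of $T|_W\colon W\to TW$ cancels between $\vol_{\dim W}((rTK)\cap TW) = |\det(T|_W)|\,\vol_{\dim W}((rK)\cap W)$ and $\det(T\lat \cap TW) = |\det(T|_W)|\,\det(\lat \cap W)$. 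Hence we may normalize to $\lat = \Z^n$ and must produce one highly anisotropic symmetric convex body $K\subseteq\R^n$ with $|\Z^n\cap K|\ge M(Cn^{1/4-\eps}K,\Z^n)$. This already forces $|\Z^n\cap K|$ to be superpolynomially large and $K$ to be thin: if $d$ linearly independent lattice points $\vec{v}_1,\dots,\vec{v}_d\in\Z^n\cap K$ span a subspace $W$, then $rK\cap W$ contains the cross-polytope $\mathrm{conv}\{\vec 0,\pm r\vec{v}_i\}$, so $\vol_d(rK\cap W)\ge (2r)^d\det(\Z^n\cap W)/d!$ and therefore $M(rK,\Z^n)\ge \max_{1\le d\le d_0}(2r)^d/d!$, where $d_0$ is the dimension spanned by all lattice points of $K$; for $d_0\gtrsim r$ this is $\ge e^{\Omega(r)}$. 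Thus with $r=Cn^{1/4-\eps}$ we will need $|\Z^n\cap K|\ge e^{\Omega(n^{1/4-\eps})}$, and $\vol_n(K)$ will be forced to be much smaller than $|\Z^n\cap K|$.

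The body $K$ will be a carefully scaled anisotropic polytope adapted to $\Z^n$ — morally, a lattice-adapted ``spike'' body (engineered to pick up many lattice points spread across all $n$ coordinates, not confined to a proper subspace) intersected with enough additional half-spaces to cut down the sections in the skew directions where it would otherwise be too fat, with a free ``thinness'' scale parameter whose optimal choice yields the exponent $1/4-\eps$ (some auxiliary randomness in the defining inequalities may be convenient). Granting the construction, the proof then splits into two independent estimates. First, a lower bound $|\Z^n\cap K|\ge N$ for the target value $N=e^{\Theta(n^{1/4-\eps})}$, obtained either by exhibiting $N$ explicit integer points of $K$, or, if $K$ involves randomness, by a first/second-moment argument showing $\mathbb{E}\,|\Z^n\cap K|$ and its variance behave so that $|\Z^n\cap K|\ge N$ with positive probability. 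Second, the upper bound $M(Cn^{1/4-\eps}K,\Z^n)\le N$, i.e.
\[
\vol_d(K\cap W)\le N\,(Cn^{1/4-\eps})^{-d}\,\det(\Z^n\cap W)\qquad\text{for every lattice subspace }W,\ d=\dim W.
\]
For the ``low-complexity'' subspaces (coordinate subspaces, and more generally those with $\det(\Z^n\cap W)$ close to $1$), this forces $K$ to be genuinely thinner than a Euclidean ball; here I would use Minkowski's second theorem together with Henk's bound, in the spirit of Theorem~\ref{thm:weak-mink}, to pass between $\vol_d(K\cap W)$, the successive minima of $\Z^n\cap W$ relative to $K\cap W$, and the count $|\Z^n\cap K\cap W|\le|\Z^n\cap K|=N$.

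The main obstacle is the second estimate \emph{uniformly over all rational subspaces} $W$, and in particular for the skew ones. A symmetric body far from Euclidean can have $d$-dimensional sections exponentially (in $d$) larger than its coordinate sections — the cube, whose section perpendicular to $\spn\{\vec{e}_1-\vec{e}_2,\vec{e}_3-\vec{e}_4,\dots\}$ has volume $2^{3d/2}$ rather than $2^d$, is the canonical example — and these are precisely the sections that could blow up $M(rK,\Z^n)$. The construction must therefore be arranged so that every rational $W$ in which $K$ has an unexpectedly large section automatically carries a compensatingly large sublattice determinant $\det(\Z^n\cap W)$, and proving this correlation holds for the (infinitely many, discretely varying) lattice subspaces at once is the crux. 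I expect the route is to combine (i) a deterministic upper bound on $\vol_d(K\cap W)$ in terms of how far $K$ extends in $d$ orthogonal directions inside $W$ (a ``lattice-width profile'' of $K$ relative to $W$), with (ii) the elementary consequence of Minkowski's second theorem that $\det(\Z^n\cap W)\gtrsim (c/d)^{d/2}\prod_{i=1}^d\lambda_i(\Z^n\cap W)$, so that a subspace hosting long $K$-directions is forced to contain either many or long lattice vectors, hence to have large determinant; balancing these two effects against the free parameter in the construction is what produces $n^{1/4-\eps}$ and, presumably, prevents a larger exponent.
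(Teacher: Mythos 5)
Your opening reduction to $\lat=\Z^n$ is algebraically valid, but it is the wrong move: it \emph{erases} the feature the paper's proof exploits, namely that the lattice should have a favorable sublattice-determinant profile. After normalizing to $\Z^n$, every coordinate subspace $W$ has $\det(\Z^n\cap W)=1$, so the target inequality
$\vol_d(K\cap W)\le N\,(Cn^{1/4-\eps})^{-d}$
must hold with no help at all from the determinant factor on any of these subspaces. This is also why your proposed tool~(ii) is inert: Minkowski's second theorem applied to $\Z^n\cap W$ only yields $\det(\Z^n\cap W)\gtrsim (c/d)^{d/2}\prod\lambda_i\ge (c/d)^{d/2}$, which is \emph{weaker} than the trivial $\det(\Z^n\cap W)\ge 1$. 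It cannot supply the compensating determinant growth you need in skew directions. The paper instead works directly with a \emph{random} lattice $\lat$ of determinant $1$ and invokes the Shapira--Weiss/Thunder estimate (Theorem~\ref{thm:det-lb-haar}): with high probability every $d$-dimensional sublattice of $\lat$ has $\det(\lat\cap W)\ge(\alpha n^{(n-d)/(2n)})^d$. That uniform lower bound across \emph{all} sublattices is precisely what is missing in the $\Z^n$ normalization, and it is the engine of the argument.

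The second gap is that you never actually build $K$; the "spike body cut by extra half-spaces, possibly random" is a gesture, not a construction, and the balancing you expect to perform is left unperformed. The paper's body is concrete and simple: $K=\mathrm{conv}(\pm\vec v_1,\dots,\pm\vec v_N)$, the symmetric convex hull of the $N=\lceil 2^{n^{1/2+\eps}}\rceil$ shortest nonzero vectors of the random $\lat$. Because $\det(\lat)=1$, Minkowski's first theorem already puts at least $2^n\ge N$ lattice points at radius $O(\sqrt n)$, so all $N$ chosen vertices have Euclidean length $O(\sqrt n)$. The uniform control of $\vol_d(K\cap W)^{1/d}$ over \emph{all} $d$-dimensional subspaces $W$ is then not a Minkowski/Henk estimate but a convex-geometric one: bound $\vol_d(K\cap W)\le\vol_d(\pi_W(K))$, apply Urysohn's mean-width inequality to $\pi_W(K)$, and bound the Gaussian mean width of a polytope with $N$ vertices of length $O(\sqrt n)$ by a union bound, giving $\vol_d(K\cap W)^{1/d}\lesssim \sqrt{\log N}\cdot\sqrt n/d = n^{3/4+\eps/2}/d$. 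The exponent $1/4-\eps$ then drops out of combining this section bound with the random-lattice determinant profile and maximizing over $d$. Both of the decisive ingredients — the Siegel-measure determinant lower bounds and the Urysohn/union-bound section estimate — are absent from your outline, and the tools you do propose (Minkowski's second theorem, Henk, lattice-width profiles) do not produce the required inequalities once the lattice is normalized to $\Z^n$.
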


For our construction, we will require the following lower bound on the
determinants of sublattices of a random lattice. The formulation below is due
to~\cite{ShapiraW14}, which in turn is based on the estimates
of~\cite{Thunder98}.  

\begin{theorem}[{\cite[Proposition 3]{ShapiraW14}}]
\label{thm:det-lb-haar}
Let $\lat$ be a random $n$-dimensional lattice of determinant $1$. 
Then there exists an absolute constant $\alpha >
0$, such that with probability tending to $1$ as $n \rightarrow \infty$, the
following holds: 
$\forall d \in [n]$, $\forall W$ lattice subspace of $\lat$,
$\dim(W) = d$, $\det(\lat \cap W) \geq (\alpha n^{(n-d)/(2n)})^d$.  
\end{theorem}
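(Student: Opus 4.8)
The plan is to prove this by a first--moment (Rogers--Siegel mean value) argument, following Thunder and Shapira--Weiss. For an $n$-dimensional lattice $\lat$ and $1 \le d \le n$, let $N_d(\lat,T)$ denote the number of primitive rank-$d$ sublattices $M \le \lat$ with $\det(M) \le T$; these are exactly the lattices $\lat \cap W$ for $W$ a $d$-dimensional lattice subspace of $\lat$, so the desired conclusion is precisely that $N_d(\lat,T_d)=0$ for every $d \in [n]$, where $T_d := (\alpha n^{(n-d)/(2n)})^d$ and $\alpha \in (0,1)$ is an absolute constant to be fixed. By the union bound over $d$ and Markov's inequality it suffices to show $\sum_{d=1}^{n} \E_\lat\!\left[N_d(\lat,T_d)\right] \to 0$ as $n \to \infty$, the expectation being over a Haar-random unimodular $\lat$. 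The top case $d=n$ is immediate: the only rank-$n$ sublattice spanning $\R^n$ is $\lat$ itself, with $\det(\lat)=1 \ge \alpha^n = T_n$. So the content is in the range $1 \le d \le n-1$.

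First I would establish a mean value identity for $\E_\lat[N_d(\lat,T)]$. The primitive rank-$d$ sublattices of $\lat = g\,\Z^n$ are $\{g\gamma(\Z^d\times 0^{n-d}) : \gamma \in \Gamma\}$, $\Gamma = \mathrm{SL}_n(\Z)$, in bijection with $\Gamma/(\Gamma\cap H)$ for $H = \mathrm{Stab}_{\mathrm{SL}_n(\R)}(\Z^d\times 0^{n-d})$; one checks $H$ is unimodular, that $\Gamma\cap H$ has finite covolume in $H$ by reduction theory, and that $\det\!\big(g(\Z^d\times 0^{n-d})\big)$ descends to a function on $\mathrm{SL}_n(\R)/H$. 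Unfolding the sum over $\Gamma/(\Gamma\cap H)$ against the Haar probability measure on the space of unimodular lattices then gives
\[
\E_\lat[N_d(\lat,T)] \;=\; \int_{\mathrm{SL}_n(\R)/H} \mathbf 1\!\left[\det\!\big(g(\Z^d\times 0^{n-d})\big) \le T\right] d\bar\mu(gH)
\]
for the invariant measure $\bar\mu$ on $\mathrm{SL}_n(\R)/H$ normalized by the unfolding (the case $d=1$ is Siegel's formula for primitive vectors, $\E_\lat[N_1(\lat,T)] = \tfrac12\,\zeta(n)^{-1}\vol_n(B_2^n)\,T^n$, and finiteness for general $d$ is a standard Rogers-type fact). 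Identifying $gH$ with the pair $(W,\Lambda)$, $W = g(\R^d\times 0^{n-d}) \in \mathrm{Gr}(d,n)$ and $\Lambda = g(\Z^d\times 0^{n-d})$ a lattice in $W$, one has $\det(g(\Z^d\times 0^{n-d})) = \mathrm{covol}_W(\Lambda)$, and $\bar\mu$ disintegrates over $\mathrm{Gr}(d,n)$ with conditional measure on the fibre (lattices in $W\cong\R^d$) of the form (Haar on $\mathrm{SL}_d(\R)/\mathrm{SL}_d(\Z)$) $\times\ v^{\,n-1}\,dv$ in the covolume variable $v$; the power $n-1$ (rather than $-1$) is forced by the fact that the dilation of $\Lambda$ inside $W$ is implemented by an $\mathrm{SL}_n(\R)$ element that simultaneously contracts the transverse (Grassmannian) directions. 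Integrating, the right-hand side equals $c_{n,d}\,T^{n}$, where $c_{n,d}$ is (up to the normalizing constants and the bounded factor $\prod_{j=2}^{d}\zeta(j) \le \prod_{j=2}^{\infty}\zeta(j) = O(1)$) essentially $\tfrac1n$ times the volume of $\mathrm{Gr}(d,n)$ in the natural normalization.

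With the identity in hand the conclusion is a computation. Using $\vol_k(B_2^k) \asymp (2\pi e/k)^{k/2}$ and the standard expression of $\vol(\mathrm{Gr}(d,n))$ as a ratio of sphere volumes, one checks that $c_{n,d} \le (C'')^{dn}\, n^{-(n-d)d/2}$ for an absolute constant $C''$; equivalently, $c_{n,d}\, n^{(n-d)d/2} \le (C'')^{dn}$ — the point being that the factor $n^{(n-d)d/2}$ exactly cancels the $n^{-nd/2}$ coming from the Grassmannian volume, leaving $n^{-d^2/2}\le 1$ times a benign $(2\pi e)^{nd/2}$. (Sanity check at $d=1$: $c_{n,1} = \tfrac12\zeta(n)^{-1}\vol_n(B_2^n) \asymp \tfrac12(2\pi e/n)^{n/2}$, so $c_{n,1}\,n^{(n-1)/2} \asymp \tfrac12 n^{-1/2}(2\pi e)^{n/2} \le ((2\pi e)^{1/2})^{n}$.) Now fix $\alpha = 1/(2C'')$. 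Then for every $1 \le d \le n-1$,
\[
\E_\lat[N_d(\lat,T_d)] \;=\; c_{n,d}\,T_d^{\,n} \;=\; \alpha^{dn}\cdot c_{n,d}\,n^{(n-d)d/2} \;\le\; (\alpha C'')^{dn} \;=\; 2^{-dn} \;\le\; 2^{-n},
\]
so $\sum_{d=1}^{n}\E_\lat[N_d(\lat,T_d)] \le (n-1)\,2^{-n} \to 0$, and by Markov, with probability tending to $1$ every $N_d(\lat,T_d)$ vanishes, which is the assertion of the theorem.

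The main obstacle is the middle of the second paragraph together with the bound on $c_{n,d}$: pinning down the disintegration of $\bar\mu$ on $\mathrm{SL}_n(\R)/H$ precisely enough — in particular the covolume power $v^{n-1}$, the $\prod_{j=2}^d\zeta(j)$ normalization, and the Grassmannian-volume factor, and their interaction — so that the exponent $n^{(n-d)/(2n)}$ (hence the cancellation above) emerges exactly. This is exactly the content of Thunder's asymptotic estimate, and is why Shapira--Weiss invoke it rather than reprove it; I would model the computation on Schmidt's count of rational subspaces of bounded height, which is the $\lat=\Z^n$ specialization. I note in passing that a more elementary route is available but lossy: one can count primitive sublattices via distinguished (Minkowski-reduced) bases and apply only the $d$-tuple version of Siegel's formula, Theorem~\ref{thm:haar}(3), together with Minkowski's Second Theorem~\ref{thm:mink-sec}; this goes through, but the tuple count over-weights very skew unimodular lattices and yields only a uniform bound of the shape $\det(\lat\cap W) \gtrsim (\sqrt n/d)^{d}$ with high probability — enough for the $O(\sqrt n)$ upper bounds on $C_\eta(n)$ and $C_M(n)$ elsewhere in the paper (Theorems~\ref{thm:best-conj-bnds} and~\ref{thm:weak-mink}), but not for the sharp dimension dependence claimed here.
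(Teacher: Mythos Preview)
The paper does not prove this theorem; it is quoted without proof from \cite{ShapiraW14}, which in turn relies on Thunder's estimates \cite{Thunder98}. So there is no ``paper's own proof'' to compare against.

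That said, your sketch is the right one and is exactly the approach of the cited references: a first-moment (Siegel--Rogers type) computation of $\E_\lat[N_d(\lat,T)]$, which by unfolding and the Schmidt--Thunder height-zeta computation equals $c_{n,d}\,T^n$, followed by the estimate $c_{n,d} \le (C'')^{dn}\,n^{-d(n-d)/2}$ and a union bound over $d$. Your sanity check at $d=1$ is correct, and the cancellation $T_d^{\,n} = \alpha^{dn} n^{d(n-d)/2}$ against the $n^{-d(n-d)/2}$ in $c_{n,d}$ is precisely the mechanism that produces the exponent $(n-d)/(2n)$ in the statement. You also correctly identify the one substantive step --- pinning down $c_{n,d}$ with its zeta factors and Grassmannian volume --- as the content of Thunder's theorem; this is indeed why the paper (and Shapira--Weiss) cite it rather than reprove it. Your closing remark about the lossy alternative via Theorem~\ref{thm:haar}(3) and reduced bases is also accurate: that route gives only a $(\sqrt{n}/d)^d$-type bound, which suffices for Theorems~\ref{thm:best-conj-bnds} and~\ref{thm:weak-mink} but not for Theorem~\ref{thm:rev-mink-lb}, where the sharp $n^{(n-d)/(2n)}$ exponent is used.
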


We will also need Urysohn's mean width inequality. 

\begin{theorem}[\cite{Urysohn24}] Let $K \subseteq \R^n$ be a symmetric 
convex body. For $X \sim N(0,I_n)$, the following holds:
\[
\E[\max_{\vec{y} \in K} |\pr{X}{\vec{y}}|] \geq \E[\|X\|_2]
\left(\frac{\vol_n(K)}{\vol_n(\Ball_2^n)}\right)^{1/n} \gtrsim n \vol_n(K)^{1/n}  \text{.}
\]
\end{theorem}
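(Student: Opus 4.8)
The plan is to reduce the inequality to a statement about the \emph{mean width} of $K$ and then invoke the Brunn--Minkowski inequality. Write $h_K(\vec z) = \max_{\vec y \in K} \pr{\vec z}{\vec y}$ for the support function of $K$; since $K=-K$ we have $\max_{\vec y \in K}|\pr{\vec z}{\vec y}| = h_K(\vec z)$. First I would decompose $X \sim N(0,I_n)$ as $X = \|X\|_2\, \Theta$, where $\Theta = X/\|X\|_2$ is uniform on $S^{n-1}$ and independent of $\|X\|_2$. By positive homogeneity of $h_K$ this gives $\E[h_K(X)] = \E[\|X\|_2]\cdot \E_{\Theta}[h_K(\Theta)]$, so it suffices to prove the dimensionless bound $\E_{\Theta}[h_K(\Theta)] \ge (\vol_n(K)/\vol_n(\Ball_2^n))^{1/n}$.

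To prove this, I would form the rotational Minkowski average $\bar K = \int_{SO(n)} U K \, dU$, the integral taken with respect to Haar probability measure on $SO(n)$. Using the linearity of support functions under Minkowski addition together with a discretization/limiting argument, $h_{\bar K}(\vec z) = \int_{SO(n)} h_{UK}(\vec z)\, dU = \int_{SO(n)} h_K(U^{-1}\vec z)\, dU$. For $\vec z \in S^{n-1}$ the vector $U^{-1}\vec z$ is uniform on $S^{n-1}$, so $h_{\bar K}$ is constant on $S^{n-1}$, equal to $\E_{\Theta}[h_K(\Theta)]$; hence $\bar K = R\,\Ball_2^n$ with $R = \E_{\Theta}[h_K(\Theta)]$. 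Now apply the Minkowski-average form of the Brunn--Minkowski inequality, $\vol_n\!\big(\int K_t\, dt\big)^{1/n} \ge \int \vol_n(K_t)^{1/n}\, dt$, which follows from the two-set inequality $\vol_n(A+B)^{1/n} \ge \vol_n(A)^{1/n} + \vol_n(B)^{1/n}$ by iteration and approximation. Since $\vol_n$ is rotation-invariant, $\vol_n(UK)=\vol_n(K)$ for every $U$, so $\vol_n(\bar K)^{1/n} \ge \vol_n(K)^{1/n}$, i.e. $R\,\vol_n(\Ball_2^n)^{1/n} \ge \vol_n(K)^{1/n}$, which rearranges to exactly the claimed inequality.

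Finally, for the asymptotic consequence $\gtrsim n\,\vol_n(K)^{1/n}$, I would plug in the standard estimates $\E[\|X\|_2] = \sqrt{2}\,\Gamma(\tfrac{n+1}{2})/\Gamma(\tfrac n2) = (1+o(1))\sqrt{n}$ and, via Stirling, $\vol_n(\Ball_2^n)^{1/n} = \pi^{1/2}/\Gamma(\tfrac n2+1)^{1/n} = (1+o(1))\sqrt{2\pi e/n}$, so that $\E[\|X\|_2]/\vol_n(\Ball_2^n)^{1/n} = (1+o(1))\,n/\sqrt{2\pi e} \gtrsim n$, with the universal constant absorbed by $\gtrsim$.

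The only genuinely delicate point is the passage from the two-set Brunn--Minkowski inequality to its integrated (Minkowski-average) form, together with the verification that the support function of $\bar K$ equals the average of the rotated support functions; both are classical but require a short approximation argument (discretize the Haar integral as a finite weighted Minkowski combination, apply Brunn--Minkowski repeatedly, and pass to the limit). Everything else is routine bookkeeping with Gaussian moments and the volume of the Euclidean ball.
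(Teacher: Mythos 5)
Your proof is correct, but there is no proof in the paper to compare it against: the theorem is stated as a citation to Urysohn (1924) and used as a black box in the proof of Theorem~\ref{thm:rev-mink-lb}. What you have written is the classical symmetrization argument for Urysohn's inequality. The decomposition $X = \|X\|_2\,\Theta$ with $\Theta$ independent and uniform on $S^{n-1}$ is valid, the identification of the rotational Minkowski average $\bar K$ with a Euclidean ball of radius $\E_\Theta[h_K(\Theta)]$ is correct, and the iterated Brunn--Minkowski step giving $\vol_n(\bar K)^{1/n}\ge\vol_n(K)^{1/n}$ is the standard argument; you correctly flag the only technical subtlety (passing from the two-set inequality to the integrated form). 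The final asymptotics, $\E[\|X\|_2]=(1+o(1))\sqrt{n}$ and $\vol_n(\Ball_2^n)^{1/n}=(1+o(1))\sqrt{2\pi e/n}$, give the $\gtrsim n\,\vol_n(K)^{1/n}$ consequence as stated.
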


\begin{proof}[Proof of Theorem~\ref{thm:rev-mink-lb}] 
Let $\lat$ denote a random lattice of determinant $1$ in $\R^n$ satisfying
the estimates of Theorem~\ref{thm:det-lb-haar}. Let $K = {\rm conv}(\pm
\vec{v}_1,\dots, \pm \vec{v}_N)$ be the symmetric convex hull of the $N =
\ceil{2^{n^{1/2+\eps}}}$ shortest points in
$\lat$, where $\eps$ is in $(0,1/4)$. 
We will show that for $n$ large enough, $K$ and $\lat$ satisfy the
conditions of theorem.  

First note that by construction $|K \cap \lat| \geq N$. Moreover, since
$\det(\lat) = 1$, by Minkowski's first theorem there are at least $2^n$ points
at radius $4\vol_n(\Ball_2^n)^{-1/n} = O(\sqrt{n})$. Therefore, all the points
in $K$ also have length $O(\sqrt{n})$. Now for a subspace $W$ of dimension $d$, we have that
\begin{align*}
\vol_d(K \cap W)^{1/d} 
  &\leq \vol(\pi_W(K))^{1/d} \\
  &\lesssim \frac{1}{d} 
        \E_{X \sim N(0,I_n)}[\max_{i \in [N]} |\pr{X}{\pi_W(\vec{v}_i)}|]
\quad \left(\text{ by Urysohn's inequality }\right) \\
  &\lesssim \frac{1}{d} \sqrt{\log N} \max_{i \in [N]} \|\vec{v}_i\|_2 
  \quad \left(\text{ by the union bound }\right) \\
  &\lesssim \frac{n^{3/4+\eps/2}}{d}  \text{ .}
\end{align*}
In particular, there exists a universal constant $\gamma > 0$, such that
\begin{equation}
\label{eq:vol-lb-bad-mink}
\vol_d(K \cap W) \leq (\gamma n^{3/4+\eps/2}/d)^d \text{ .}
\end{equation}

\dnote{Note for poterity: It seems that the weakness of the current argument is that we don't say
anything about how the vectors must shrink as the dimension of $W$ decrease}


Putting~\eqref{eq:vol-lb-bad-mink} together with the estimates from
Theorem~\ref{thm:det-lb-haar}, we get that for a $d$-dimensional lattice subspace $W$ of $\lat$ and $t \geq 0$, 
\begin{equation}
\label{eq:vol-bnds}
\frac{\vol_d(t K \cap W)}{\det(\lat \cap W)} 
\leq \parens[\Big]{t \cdot \frac{\gamma}{\alpha}
\cdot n^{1/4+(\eps + d/n)/2}/d}^d \text{.} 
\end{equation}
Take $\eps' \in (0,1/4)$, and set $C = \alpha \eps'/\gamma$
and $t = C n^{1/4-(\eps+\eps')/2}$. 
Then the right-hand side of~\eqref{eq:vol-bnds} is at most
\begin{equation}
\label{eq:vol-bnds-two}
\parens[\Big]{\eps' 
\cdot n^{1/2+d/(2n)-\eps'/2}/d}^d \text{.} 
\end{equation}
In the rest of the proof we will show that for large enough $n$, 
\eqref{eq:vol-bnds-two} is bounded from above by $N$ for all $1\le d \le n$. This 
implies that $M(tK,\lat) \leq N$ and completes the proof. 

We consider three cases. First, if $\eps'n \leq d \leq n$, then~\eqref{eq:vol-bnds-two}
is at most 
\[
\parens[\big]{\eps' 
\cdot n^{1-\eps'/2}/(\eps' n)}^d \le 1 \le N\text{.} 
\]
If $\sqrt{n} \leq d \leq \eps' n$, \eqref{eq:vol-bnds-two}
is at most 
\[
\parens[\Big]{\eps' 
\cdot n^{1/2+\eps'/2-\eps'/2}/\sqrt{n}}^d \le 1 \le N\text{.} 
\]
Finally, if $1 \le d \le \sqrt{n}$, \eqref{eq:vol-bnds-two}
is at most 
\[
\parens[\Big]{n^{1/2+\sqrt{n}/(2n)-\eps'/2}}^{\sqrt{n}}
= 
2^{(1/2+\sqrt{n}/(2n)-\eps'/2)\sqrt{n} \log_2 n} \text{,} 
\]
which for large enough $n$ is less than $N$ since the exponent 
is less than $n^{1/2+\eps}$.
\end{proof}

\section{A continuous relaxation}
\label{sec:pisier}

As evidence towards our main conjecture, we will show that a reasonable
``continuous relaxation'' of it holds. To state the continuous relaxation, we
will need the following alternate characterization of the smoothing parameter
in terms of the Voronoi cell of the dual lattice (see Definition~\ref{def:voronoi-cell}).

\begin{definition}[$K$-norm] 
\label{def:k-norm}
Let $K \subseteq \R^n$ be a centrally symmetric
convex body. We define $\|\vec{x}\|_K = \min \set{s \geq 0: \vec{x} \in sK}$,
$\forall \vec{x} \in \R^n$, to be the norm induced by $K$. 
\end{definition}

\begin{theorem}[\cite{thesis/D12}]
\label{thm:eta-gaus-process}
Let $\lat \subseteq \R^n$ be an $n$-dimensional lattice, and let $\calV^* =
\calV(\lat^*)$. Then for $X \sim N(0,I_n)$, we have that
\[
\E[\|X\|_{\calV^*}] = \Theta(1) \eta(\lat) \text{.}
\]
\end{theorem}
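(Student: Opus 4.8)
\textbf{Proof plan for Theorem~\ref{thm:eta-gaus-process}.} The key identity to exploit is the Poisson-summation characterization of the smoothing parameter combined with the fact that $\eta(\lat)$ is, up to constants, the scale at which a Gaussian ``reaches'' the Voronoi cell of $\lat^*$. Concretely, I would first record that $\rho_{1/s^2}(\lat^*) = 1 + 1/2$ precisely when $s = \eta(\lat)$, and that by Banaszczyk's bound (Theorem~\ref{thm:smoothing-bnd}) and Lemma~\ref{lem:mass-decrease} the Gaussian mass $\rho_{1/s^2}(\lat^*\setminuszero)$ transitions from being very small to very large as $s$ passes through a window of width $O(1)$ around $\eta(\lat)$. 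The quantity $\E[\|X\|_{\calV^*}]$ with $X \sim N(0,I_n)$ is exactly $\E_{X}[\dist_{\|\cdot\|_{\calV^*}}(X,0)]$, and since $\calV^* = \calV(\lat^*)$ tiles $\spn(\lat^*)$ under $\lat^*$, a point $\vec x$ has $\|\vec x\|_{\calV^*} \le 1$ iff $\vec x$ lies in the Voronoi cell, i.e.\ iff $\vec 0$ is the closest dual lattice point to $\vec x$. So the plan splits into an upper bound $\E[\|X\|_{\calV^*}] \lesssim \eta(\lat)$ and a lower bound $\E[\|X\|_{\calV^*}] \gtrsim \eta(\lat)$.

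For the upper bound, I would argue that a continuous Gaussian of covariance $\approx \eta(\lat)^2 I_n$ smooths $\lat$ (this is essentially Claim~\ref{clm:smoothing} with $A = c\,\eta(\lat)^2 I$ for a suitable constant $c$, using $\rho_{1/(c\eta^2)}(\lat^*\setminuszero) \le $ small by Lemma~\ref{lem:mass-decrease} part~1). Once $X/\eta(\lat)$ is ``smooth mod $\lat$'' in the sense that its reduction is within statistical distance $1/4$ of uniform, one can transfer tail bounds on $\dist(\cdot,\lat)$ for the \emph{uniform} distribution on the torus to the Gaussian; by Banaszczyk's transference / the definition of the Voronoi cell, the uniform distribution on $\R^n/\lat$ is supported within $\mu(\lat) \le \sqrt n\,\eta(\lat)$ of $\lat$, but more relevantly its $\|\cdot\|_{\calV(\lat)}$-value is bounded in expectation. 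Actually the cleanest route is the dual statement: I would instead apply the tail bound of Banaszczyk, $\rho_{1/s^2}(\lat^* \setminus s'\calV^*) \le \exp(-\Omega(\dots))$ type estimates, to control $\Pr[\|X\|_{\calV^*} \ge t]$ directly, integrating over $t$. The point is that $X \in t\,\calV^*$ fails only when some dual vector $\vec y$ has $\pr{X}{\vec y} > \tfrac t2 \pr{\vec y}{\vec y}$, and a union bound over $\lat^*$ with Gaussian concentration gives $\Pr[\|X\|_{\calV^*} > t] \le \sum_{\vec y \in \lat^*\setminuszero} \exp(-\pi t^2 \|\vec y\|^2 / (2\cdot\text{something}))$, which is controlled by $\rho$ of $\lat^*$ scaled by $t$; choosing $t = C\eta(\lat)$ makes this $o(1)$, and integrating yields $\E[\|X\|_{\calV^*}] \lesssim \eta(\lat)$.

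For the lower bound, I would show that if $\E[\|X\|_{\calV^*}]$ were much smaller than $\eta(\lat)$, then $X/\eta'$ for a slightly smaller $\eta'$ would already be concentrated inside $\calV^*$, forcing $\rho_{1/(\eta')^2}(\lat^*\setminuszero)$ to be tiny (since mass in the interior of the Voronoi cell corresponds, via Poisson summation and Lemma~\ref{lem:coset-mass}, to the dual Gaussian sum being small) — contradicting the definition of $\eta(\lat)$ as the scale where this sum equals $1/2$. Equivalently, by Markov's inequality a small $\E[\|X\|_{\calV^*}]$ forces $\Pr[X \in t\calV^*]$ close to $1$ for $t$ just above the mean, and then a Poisson-summation / Parseval argument (as in the $L_2$ mixing time computation in Section~\ref{sec:mixingtime}, where $\rho_{(2t)^{-1}}(\lat^*\setminuszero)$ equals the $L_2$ distance-squared of the heat kernel from uniform) bounds $\rho$ from below, giving the needed contradiction. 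The main obstacle, and the place where the non-trivial Gaussian-process input of~\cite{thesis/D12} is really used, is making the \emph{two-sided} comparison tight with only constant loss: controlling $\E[\|X\|_{\calV^*}]$ from below genuinely requires understanding the geometry of $\calV^*$ (a complicated polytope), not just a single moment of $\rho$, so one likely needs a comparison inequality for the Gaussian process $\vec y \mapsto \pr{X}{\vec y}$ indexed by the (finitely many relevant) Voronoi-relevant vectors of $\lat^*$, combined with the Sudakov-type minoration that relates $\E\sup$ to covering numbers and hence to lattice point counts of $\lat^*$ — which in turn are pinned down by $\eta(\lat)$ through Lemma~\ref{lem:pointcountingeta}. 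I expect the bookkeeping of constants across the Poisson-summation and concentration steps to be the routine-but-fiddly part, while the conceptual crux is the identification of $\E\sup_{\vec y} \pr{X}{\vec y}$-type quantities with $\eta(\lat)$.
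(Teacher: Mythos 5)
The paper imports this theorem from~\cite{thesis/D12} without proof, so your sketch has to be judged on its own terms.

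Your upper bound is correct and is essentially the whole argument: writing $\|\vec x\|_{\calV^*} = \sup_{\vec y \in \lat^*\setminuszero} 2|\pr{\vec x}{\vec y}|/\|\vec y\|_2^2$, a union bound with the Gaussian tail gives $\Pr[\|X\|_{\calV^*} > t] \le 2\sum_{\vec y \in \lat^*\setminuszero} e^{-t^2\|\vec y\|_2^2/8} = 2\rho_{8\pi/t^2}(\lat^*\setminuszero)$, which is below $1$ once $t\gtrsim\eta(\lat)$ (definition of $\eta$) and then decays super-exponentially in $t/\eta(\lat)$ by Lemma~\ref{lem:mass-decrease}; integrating over $t$ yields $\E[\|X\|_{\calV^*}]\lesssim\eta(\lat)$.

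Your lower bound has one dead end and one real gap. The dead end is the Markov/Poisson-summation contradiction: neither Lemma~\ref{lem:coset-mass} nor the $L_2$ mixing-time identity relates the Gaussian measure of $t\calV^*$ to $\rho_{1/t^2}(\lat^*\setminuszero)$, and there is no one-line implication from ``$X$ is concentrated in $t\calV^*$'' to ``$\rho_{1/t^2}(\lat^*\setminuszero)$ is small.'' The cheap single-vector bound gives only $\E[\|X\|_{\calV^*}]\gtrsim 1/\lambda_1(\lat^*)\ge\eta(\lat)/\sqrt n$, a $\sqrt n$ factor short, and Poisson summation alone does not close this. You then correctly pivot to Sudakov minoration, which is the right idea, but your sketch leaves the decisive combinatorial step implicit. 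Sudakov needs a large $\varepsilon$-\emph{separated} family in the canonical metric, not just many dual lattice points. Two ingredients make this work: first, the identity $\bigl\|\vec y_1/\|\vec y_1\|_2^2 - \vec y_2/\|\vec y_2\|_2^2\bigr\|_2 = \|\vec y_1-\vec y_2\|_2/(\|\vec y_1\|_2\|\vec y_2\|_2)$ shows the canonical metric of the process $\vec y\mapsto 2\pr{X}{\vec y}/\|\vec y\|_2^2$ equals $2\|\vec y_1-\vec y_2\|_2/(\|\vec y_1\|_2\|\vec y_2\|_2)$, so an $\Omega(r)$-separated set of $N$ dual vectors of norm at most $r$ yields $\E[\|X\|_{\calV^*}]\gtrsim\sqrt{\log N}/r$; second, one must convert the point count of Lemma~\ref{lem:pointcountingeta} into a \emph{packing} bound. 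Letting $M(r)$ denote the size of a maximal $r/2$-separated subset of $\lat^*\cap r\Ball_2^n$, the covering inequality $|\lat^*\cap r\Ball_2^n|\le\prod_{j\ge0}M(r/2^j)$, together with $\sum_{j\ge 0}(r/2^j)^2 = \tfrac{4}{3} r^2$, forces some scale $r$ to satisfy $M(r)\ge e^{c(\eta(\lat)r)^2}$ for a small absolute $c>0$, since otherwise the product would be too small to match the lower bound $|\lat^*\cap r^*\Ball_2^n|\ge e^{\pi(\eta(\lat) r^*)^2/3}$ furnished by Lemma~\ref{lem:pointcountingeta}. This dyadic telescoping is the non-routine step; without it the Sudakov invocation is incomplete, so as written your proposal has a genuine gap in the lower bound, even though the conceptual route you eventually settle on is the correct one.
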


The above theorem characterizes the smoothing parameter as a Gaussian norm
expectation, which allows us to make useful connections with the study of
Gaussian processes and convex geometry~\cite{Pisier89}. 
Recalling that the reverse direction of the main conjecture is easy, 
and taking duals, we have that the main conjecture
is equivalent to saying that for any 
lattice $\lat \subset \R^n$ with Voronoi cell $\calV$, 
\begin{align}\label{eq:finalgoalequiv}
   1 \lesssim
	\E[\|X\|_{\calV}]
	\min_{\substack{W \textnormal{ lattice subspace of } \lat \\ d = \dim(W) \in
[n]}} (\det(\lat \cap W))^{1/d}
    \lesssim \poly \log n \;.
\end{align}

We now make the easy observation that for any lattice $\lat$ with Voronoi cell $\calV$,
and any lattice subspace $W$ of $\lat$,
\begin{align}\label{eq:dettovol}
\det(\lat \cap W) \geq \vol_d(\calV \cap W) \; .	
\end{align}
To see this, notice that since the Voronoi cell $\calV$ tiles space with respect to $\lat$, 
$\calV \cap W$ yields a \emph{packing} with respect to $\lat \cap W$. 
That is, the translates of $\calV \cap W$ induced by $\lat \cap W$ are all interior disjoint,
implying~\eqref{eq:dettovol}. 
Therefore,
\begin{align*}
\min_{\substack{W \textnormal{ lattice subspace of } \lat \\ d=\dim(W) \in
[n]}} \det(\lat \cap W)^{1/d} 
&\geq 
\min_{\substack{W \textnormal{ lattice subspace of } \lat \\ d=\dim(W) \in
[n]}} \vol_d(\calV \cap W)^{1/d} \\
&\geq 
\min_{\substack{W \subseteq \R^n \\ d=\dim(W) \in
[n]}} \vol_d(\calV \cap W)^{1/d} 
\text{ .}
\end{align*}
Using this, we can now relax the upper bound in~\eqref{eq:finalgoalequiv} and
ask whether for any symmetric convex body $K$ which is the Voronoi cell of some
lattice \dnote{why care about whether its a Voronoi cell?},
\begin{align}\label{eq:finalgoalequivcontinuous}
   1 \lesssim
	\E[\|X\|_{K}]
	\min_{\substack{W \subseteq \R^n \\ d=\dim(W) \in
[n]}} \vol_d(K \cap W)^{1/d} 
    \lesssim \poly \log n \;.
\end{align}
We view this as a natural continuous relaxation of our main conjecture. 

Amazingly, Eq.~\eqref{eq:finalgoalequivcontinuous} is \emph{known to be true},
and is in fact true for any 
symmetric convex body $K$. As always, the lower bound is standard, and we
include a proof in Section~\ref{sec:vollowerbound} below for completeness.  The
surprising thing is that the upper bound is true.  This is a direct consequence
of a theorem of Milman and Pisier~\cite{MP87} known as the \emph{volume number
theorem} (see also~\cite[Chapter 9]{Pisier89}). Precisely, the volume number
theorem implies\footnote{We remark that one of the logarithmic factors can be
replaced by the so-called K-convexity constant of the Banach space
$(\R^n,\|\cdot\|_K)$, however this makes no essential difference in the current
application.}
\begin{equation}
\label{thm:pisier-milman}
\E[\|X\|_K] 
\min_{\substack{W \subseteq \R^n \\ d=\dim(W) \in [n]}}
\vol_d(K \cap W)^{1/d}
\lesssim \log(n+1)^2  \text{.}
\end{equation}

The Milman-Pisier theorem thus gives us another point of evidence that the main
conjecture might be correct. It is the most non-trivial implication of the main 
conjecture that we know is true. 
Furthermore, one may even attempt to emulate the proof
of the volume number theorem in a way that respects the discrete structure of
the lattice (indeed, one needs to find a \emph{lattice subspace}), though we
have had limited success on this front. 

Moreover, there is unfortunately 
still a convex geometric ``obstruction'' to this approach. 
To see it, we start by observing that~\eqref{eq:dettovol} can be 
strengthened, and in fact for any $d$-dimensional lattice subspace $W$,
\begin{align*}
\det(\lat \cap W) &= \vol_d(\calV(\lat \cap W)) \\
                    &= \vol_d\parens[\Big]{\set[\Big]{\vec{x} \in W: \pr{\vec{x}}{\vec{y}} \leq
\frac{1}{2} \|\vec{y}\|_2^2,~\forall \vec{y} \in \lat \cap W \setminuszero}} \\ 
                    &\geq \vol_d(\pi_W(\calV))
\text{,}
\end{align*}
where the inequality follows from the fact that the constraints of
$\calV(\lat \cap W)$ (except for the constraint that $\calV(\lat \cap W)
\subseteq W$) form a subset of the constraints of $\calV$. 
This is stronger than~\eqref{eq:dettovol} since obviously $\vol_d(\pi_W(\calV)) \geq \vol_d(\calV \cap W)$.
Thus, one might argue that instead of~\eqref{eq:finalgoalequivcontinuous},
the ``correct'' continuous relaxation of the main conjecture should say that
for any symmetric convex body $K$ which is the Voronoi cell of some lattice,
\begin{align*}
   1 \lesssim
	\E[\|X\|_{K}]
	\min_{\substack{W \subseteq \R^n \\ d=\dim(W) \in
[n]}} \vol_d(\pi_W(K))^{1/d} 
    \lesssim \poly \log n \;.
\end{align*}
Being weaker than that in~\eqref{eq:finalgoalequivcontinuous},
the lower bound clearly still holds for all symmetric convex bodies. 
Unfortunately, we do not know whether the upper bound holds, even
though it seems plausible that it does hold for all symmetric convex bodies $K$.
Interestingly, 
if that were the case, it would have non-trivial implications in convex geometry. In
particular, it was communicated to us by E. Milman~\cite{Milman15} (see also~\cite{GiannopoulosM14,Milman15IMRN}), that such a
result would yield a new proof of the best known bound for the slicing
conjecture for convex bodies (up to polylogarithmic factors). 

\subsection{The volumetric lower bound}
\label{sec:vollowerbound}
For completeness, we include the following standard
lower bound (see for example~\cite{Pisier89}).

\begin{lemma}
\label{lem:gauss-vol-lb}
Let $K \subseteq \R^n$ be a symmetric convex body and let $\|\cdot\|_K$ the
induced norm. Then for $X \sim N(0,I_n)$, the following holds:
\[
\E[\|X\|_K] \gtrsim \max_{\substack{W \subseteq \R^n \\ d=\dim(W)  \in [n]}}
\frac{1}{\vol_d(K \cap W)^{1/d}} \text{.}
\]
\end{lemma}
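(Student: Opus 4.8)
The plan is to reduce to a single subspace $W$ of dimension $d$ and show that $\E[\|X\|_K] \gtrsim \vol_d(K \cap W)^{-1/d}$, since the maximum over all choices of $W$ then follows immediately. First I would fix an arbitrary subspace $W$ with $d = \dim(W) \in [n]$ and let $X_W = \pi_W(X)$ denote the orthogonal projection of the standard Gaussian onto $W$. A basic fact is that $X_W$ is distributed as a $d$-dimensional standard Gaussian inside $W$, and that $\|X\|_K \geq \|\pi_W(X)\|_{K \cap W}$ whenever we measure the $K$-norm of $X$ against the body $K$ and the $(K\cap W)$-norm of $X_W$ against the slice; this is because $\vec{x} \in sK$ implies $\pi_W(\vec{x}) \in s(K \cap W)$ is \emph{not} quite right — rather one uses that for $\vec{y} \in W$, $\|\vec{y}\|_{K} \le \|\vec{y}\|_{K \cap W}$ together with the fact that the $K$-norm of $X$ dominates the $K$-norm of any ``shadow''. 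The cleanest route is: the linear functional interpretation. Writing $\|\vec x\|_K = \sup_{\vec z \in K^\circ} \langle \vec z, \vec x\rangle$ where $K^\circ$ is the polar body, restricting $\vec z$ to lie in $(K\cap W)^\circ \cap W$ (equivalently projecting), one gets $\E[\|X\|_K] \ge \E[\|X_W\|_{K\cap W}]$ by Jensen, since the extra coordinates of $X$ only increase the supremum in expectation. So it suffices to prove the statement in the full-rank case $W = \R^n$, $d = n$.

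In the full-rank case I would use the standard isoperimetric/volumetric bound: for any symmetric convex body $K \subseteq \R^n$, $\E_{X \sim N(0,I_n)}[\|X\|_K] \gtrsim \vol_n(K)^{-1/n}$. The quickest self-contained argument is a union-bound-free averaging: by definition $\E[\|X\|_K] = \int_0^\infty \Pr[X \notin tK]\,dt$, and a change of variables plus the bound $\Pr[X \in tK] = \gamma_n(tK) \le \vol_n(tK)/\vol_n(\sqrt{2\pi}\,r_0 B_2^n)$-type estimate is awkward; instead I would invoke the cleaner fact that the Gaussian measure of $tK$ satisfies $\gamma_n(tK) \le (c t \vol_n(K)^{1/n}/\sqrt n)^n$ for a universal $c$ (comparing to the ball of the same volume via the Gaussian isoperimetric inequality, which says balls minimize Gaussian measure among convex bodies of a given volume is false — balls \emph{maximize} among sets of given volume). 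The correct statement: among symmetric convex bodies of a fixed volume, the Euclidean ball has the largest Gaussian measure (this is a consequence of the fact that $\gamma_n$ restricted to $tK$ is Schur-concave-type, or simply of the classical inequality $\gamma_n(A) \le \gamma_n(rB_2^n)$ when $\vol_n(A) = \vol_n(rB_2^n)$ for $A$ symmetric convex — which follows from rearrangement). Given this, $\Pr[\|X\|_K \le t] = \gamma_n(tK) \le \gamma_n(t \vol_n(K)^{1/n}\vol_n(B_2^n)^{-1/n} B_2^n)$, and since $\vol_n(B_2^n)^{-1/n} \asymp \sqrt n$, the right-hand side is at most $\exp(-c' n)$ whenever $t \vol_n(K)^{1/n} \le c'' \sqrt n \cdot \sqrt n / \sqrt n = c''\sqrt n \cdot (\text{const})$; unwinding, $\E[\|X\|_K] \gtrsim \sqrt n \cdot \vol_n(K)^{-1/n}\cdot \vol_n(B_2^n)^{1/n} \gtrsim \vol_n(K)^{-1/n}$, which is even stronger than needed (the $d=n$ case of the claim only asks for $\gtrsim \vol_n(K)^{-1/n}$).

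Putting the two steps together: for a general $W$ of dimension $d$, restrict $X$ to $W$ to get a $d$-dimensional standard Gaussian, apply the full-rank bound inside $W$ to the body $K \cap W$, obtaining $\E[\|X\|_K] \ge \E[\|X_W\|_{K \cap W}] \gtrsim \vol_d(K\cap W)^{-1/d}$, and then take the maximum over $W$. The main obstacle — really the only nontrivial point — is justifying the inequality $\E[\|X\|_K] \ge \E[\|\pi_W(X)\|_{K \cap W}]$ cleanly; I would handle it via the polar duality formula $\|\vec x\|_K = \max_{\vec z \in K^\circ}\langle \vec z,\vec x\rangle$, noting $(K\cap W)^\circ \cap W = \pi_W(K^\circ)$ so that $\|\pi_W(\vec x)\|_{K\cap W} = \max_{\vec z \in K^\circ \cap W}\langle \vec z, \vec x\rangle \le \max_{\vec z \in K^\circ}\langle \vec z, \vec x\rangle = \|\vec x\|_K$ pointwise, after which taking expectations is trivial and no rotational-invariance argument about the marginal of $X$ is even required. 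Everything else is the classical reverse-Urysohn-type volume bound, which I would cite to~\cite{Pisier89}.
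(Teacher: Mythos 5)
Your overall strategy (reduce to a single subspace $W$, handle $W=\R^n$ via a volumetric bound, then handle general $W$ by restricting $X$ to $W$) matches the paper's, and your $W=\R^n$ argument, though more roundabout than the paper's Jensen-plus-polar-coordinates computation, does land on a correct statement: balls of a given volume maximize Gaussian measure, which follows from the layer-cake decomposition since the level sets of the Gaussian density are balls. But the reduction step contains a real error. You claim the pointwise inequality $\|\pi_W(\vec{x})\|_{K\cap W} \le \|\vec{x}\|_K$, justified by writing $\|\pi_W(\vec{x})\|_{K\cap W}=\max_{\vec{z}\in K^\circ\cap W}\langle \vec{z},\vec{x}\rangle$. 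This identity is wrong: the polar of $K\cap W$ inside $W$ is $\pi_W(K^\circ)$, which \emph{strictly contains} $K^\circ\cap W$ in general, so the sup defining $\|\pi_W(\vec{x})\|_{K\cap W}$ runs over a \emph{larger} set than $K^\circ$ allows, and the desired $\le$ does not follow. The quantity $\max_{\vec{z}\in K^\circ\cap W}\langle \vec{z},\vec{x}\rangle$ actually equals $\|\pi_W(\vec{x})\|_{\pi_W(K)}$ (the polar of $K^\circ\cap W$ in $W$ is $\pi_W(K)$, not $K\cap W$), which gives a bound in terms of $\vol_d(\pi_W(K))\ge\vol_d(K\cap W)$ — the wrong direction. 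And the pointwise inequality itself is genuinely false: take $K=\{(x,y):|x-y|\le\eps,\,|x+y|\le 1\}$ and $W$ the $x$-axis, so $K\cap W=[-\eps,\eps]$. For $\vec{x}=(1/2,1/2)$ one has $\|\vec{x}\|_K=1$ but $\|\pi_W(\vec{x})\|_{K\cap W}=1/(2\eps)\to\infty$.

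The fix is exactly the step you dismissed as unnecessary: you must use convexity of $\|\cdot\|_K$ together with the mean-zero property of the orthogonal complement marginal. Writing $X=\pi_W(X)+\pi_{W^\perp}(X)$ with these two components independent and $\E[\pi_{W^\perp}(X)]=0$, Jensen's inequality in the $\pi_{W^\perp}(X)$ variable gives $\E[\|X\|_K]\ge\E[\|\pi_W(X)\|_K]$, and for $\vec{y}\in W$ one has $\|\vec{y}\|_K=\|\vec{y}\|_{K\cap W}$ simply because $\vec{y}\in sK\Leftrightarrow\vec{y}\in s(K\cap W)$. This is the paper's argument; it is an inequality in expectation only, not pointwise, and the Gaussian structure (independence of the two projections, zero mean of the complement) is exactly what is being used.
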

\begin{proof}
We first prove the case $W = \R^n$. 
We integrating in polar coordinates and use Jensen's inequality,
\begin{align*}
\E[\|X\|_K] 
          &= \E[\|X\|_2] \int_{S^{n-1}} \|\theta\|_K {\rm d}\theta \\   
          &\geq \E[\|X\|_2] \left(\int_{S^{n-1}} \|\theta\|_K^{-n} 
                     {\rm d}\theta\right)^{-1/n} 
                    \quad \left(\text{ by Jensen }\right) \\ 
          &= \E[\|X\|_2] \left(\frac{\vol_n(K)}{\vol_n(\Ball_2^n)}\right)^{-1/n} 
           \gtrsim \frac{1}{\vol_n(K)^{1/n}} \text{.}
\end{align*}
To prove it for all $W$,
note that
\[
\E[\|X\|_K] = \E[\|\pi_W(X) + \pi_{W^\perp}(X)\|_K] 
            \geq \E[\|\pi_W(X)\|_{K \cap W}] \text{, }
\]
by Jensen, since $\pi_W(X)$ and $\pi_{W^\perp}(X)$ are independent and
$\E[\pi_{W^\perp}(X)] = 0$. We recover the desired lower bound applying the full
dimensional inequality to $\E[\|\pi_W(X)\|_{K \cap W}]$.
\end{proof}

\section{Sanity checks}
\label{sec:sanity}

We begin by giving the best known bounds on both the main conjecture and the
$\ell_2$ Kannan-Lov{\'a}sz conjecture, which are by now classical. We give
a proof here for completeness.

\begin{theorem} 
\label{thm:best-conj-bnds}
For $n \in \N$, $C_{\eta}(n) = O(\sqrt{n})$ and $C_{KL}(n) = O(\sqrt{n})$. 
\end{theorem}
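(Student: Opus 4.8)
\textbf{Proof plan for Theorem~\ref{thm:best-conj-bnds}.}
The plan is to prove the two bounds essentially independently, each by exhibiting an appropriate witness on the right-hand side of the relevant inequality and invoking classical geometry-of-numbers facts already recorded in the preliminaries. For $C_\eta(n) = O(\sqrt{n})$ I would use $W = \mathrm{span}(\lat^*)$ itself, i.e.\ the full lattice subspace. Then the right-hand side of~\eqref{eq:finalgoal} is at least $(\det(\lat^*))^{-1/n} = (\det(\lat))^{1/n}$. It therefore suffices to show $\eta(\lat) \lesssim \sqrt{n}\,(\det(\lat))^{1/n}$. This follows by combining the upper bound in Theorem~\ref{thm:smoothing-bnd}, which gives $\eta(\lat) = \eta_{1/2}(\lat) \lesssim \sqrt{n}/\lambda_1(\lat^*)$, with a Minkowski-type lower bound $\lambda_1(\lat^*) \gtrsim (\det(\lat^*))^{1/n} = (\det(\lat))^{-1/n}$, which is immediate from Minkowski's First Theorem (Theorem~\ref{thm:mink-first}) applied to a ball: if a ball of radius $r$ contains no nonzero point of $\lat^*$ then $2^{-n}\vol_n(rB_2^n)/\det(\lat^*) < 1$, forcing $r \lesssim \sqrt{n}(\det(\lat^*))^{1/n}$. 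Multiplying these together yields $\eta(\lat) \lesssim \sqrt{n}(\det(\lat))^{1/n}$, hence $C_\eta(n) = O(\sqrt{n})$.

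For $C_{KL}(n) = O(\sqrt{n})$ I would similarly take the single subspace $W = \mathrm{span}(\lat^*)$, so the maximum in~\eqref{eq:conjkll2} is at least $\sqrt{n}\,\det(\lat^*)^{-1/n} = \sqrt{n}\,\det(\lat)^{1/n}$, and it remains to prove the classical bound $\mu(\lat) \lesssim \sqrt{n}\,\det(\lat)^{1/n}$. The cleanest route uses the already-proved chain $\mu(\lat) \le 4\pi^{-1/2}\mu_{\mathrm{sm}}(\lat)$ (Theorem~\ref{thm:smooth-mu-bnd}) together with $\mu_{\mathrm{sm}}(\lat)^2 \le \tr(\eta(\lat)^2 I_n) = n\,\eta(\lat)^2$, since $\eta(\lat)^2 I_n$ is feasible for $\mu_{\mathrm{sm}}$; then apply the bound $\eta(\lat) \lesssim \sqrt{n}(\det(\lat))^{1/n}$ just established. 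This gives $\mu(\lat) \lesssim n\,(\det(\lat))^{1/n}$, which is weaker than desired by a factor $\sqrt{n}$. To get the sharp $\sqrt{n}$ I would instead argue via Minkowski's Second Theorem (Theorem~\ref{thm:mink-sec}): one has the standard inequality $\mu(\lat) \le \tfrac12\sum_{i=1}^n \lambda_i(\lat) \le \tfrac{\sqrt n}{2}\big(\sum_i \lambda_i(\lat)^2\big)^{1/2}$, but more simply $\mu(\lat) \le \tfrac12 \sqrt{n}\,\lambda_n(\lat)$, and by the $\ell_2$ form of Theorem~\ref{thm:mink-sec}, $\prod_{i=1}^n \lambda_i(\lat) \le (C n)^{n/2}\det(\lat)$ while $\lambda_n(\lat)^n / \prod_{i<n}\lambda_i(\lat) \le \lambda_n(\lat)^n \lambda_1(\lat)^{-(n-1)}$; combined with $\lambda_1(\lat)\gtrsim\cdots$, or more robustly using $\mu(\lat)\le \tfrac12\sum_i\lambda_i$ and the AM–GM consequence of Theorem~\ref{thm:mink-sec} that the $\lambda_i$ cannot all be large, one extracts $\mu(\lat) \lesssim \sqrt{n}\,\det(\lat)^{1/n}$.

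I expect the main obstacle to be purely bookkeeping: getting the $\sqrt{n}$ rather than $n$ in the covering-radius bound requires being a little careful about which transference-type inequality to invoke, since the naive route through $\mu_{\mathrm{sm}}$ loses a $\sqrt{n}$. The clean fix is the elementary bound $\mu(\lat) \le \tfrac12\sum_{i=1}^n \lambda_i(\lat)$ (each point of $\R^n$ can be rounded coordinate-by-coordinate in a basis achieving the successive minima, up to a standard $\poly$ loss that does not affect the $O(\sqrt n)$ claim — or one cites the Khinchine/Banaszczyk bound $\mu(\lat)\lesssim \sqrt n\,\lambda_n(\lat)$ and then $\lambda_n(\lat)\lesssim \sqrt n\,\det(\lat)^{1/n}$ via Theorem~\ref{thm:mink-sec}, giving $n$, which again needs one more step). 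Since the paper only asserts $O(\sqrt n)$, I would present whichever of these gives $\sqrt n$ most transparently, namely: bound $\mu(\lat)^2 \le \tfrac14\sum_i \lambda_i(\lat)^2$, then observe $\sum_i \lambda_i(\lat)^2 \le n\,\lambda_n(\lat)^2$ and $\lambda_n(\lat) \le \big(\prod_i \lambda_i(\lat)\big)^{1/n}\cdot\big(\lambda_n/\text{geom.\ mean}\big)$, controlling the last factor by $\lambda_n(\lat)\lesssim \sqrt n/\lambda_1(\lat^*)\cdot(\det\lat)^{?}$ — i.e.\ I will reconcile this with Theorem~\ref{thm:mink-sec} to land exactly at $\mu(\lat)\lesssim\sqrt n\,\det(\lat)^{1/n}$. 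Everything else is a direct substitution into the definitions of $C_\eta(n)$ and $C_{KL}(n)$.
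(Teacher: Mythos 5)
Your proposed argument has a fundamental gap already in the first step: taking $W = \mathrm{span}(\lat^*)$ as the witness in~\eqref{eq:finalgoal} (or in~\eqref{eq:conjkll2}) is not adequate, because the inequality $\eta(\lat) \lesssim \sqrt{n}\,\det(\lat)^{1/n}$ that you then try to prove is simply \emph{false} in general. Take $\lat = M\Z \oplus \Z^{n-1}$ with $M$ large: then $\eta(\lat) \approx M$ while $\sqrt{n}\,\det(\lat)^{1/n} = \sqrt{n}\,M^{1/n}$, so the ratio is unbounded. The same lattice kills $\mu(\lat) \lesssim \sqrt{n}\,\det(\lat)^{1/n}$. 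This is precisely the phenomenon the conjectures are designed to capture, and it is why one must be allowed to choose a proper subspace $W$ rather than the full space.

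There is also a second, localized error: the inequality $\lambda_1(\lat^*) \gtrsim (\det(\lat^*))^{1/n}$ that you call a ``Minkowski-type lower bound'' does not exist. Minkowski's First Theorem gives only the \emph{upper} bound $\lambda_1(\lat^*) \lesssim \sqrt{n}\,(\det(\lat^*))^{1/n}$, which is exactly the conclusion your parenthetical ``forcing $r \lesssim \sqrt{n}(\det(\lat^*))^{1/n}$'' establishes; you then quote this as if it gave the reverse inequality. Again, $M^{-1}\Z \oplus \Z^{n-1}$ witnesses that no such lower bound can hold: its determinant is $M^{-1}$ and $(\det)^{1/n} = M^{-1/n}$, but $\lambda_1 = M^{-1} \ll M^{-1/n}$.

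The paper's proof goes differently and avoids both problems. For $C_\eta(n)$, it takes $W$ to be \emph{one-dimensional}, spanned by a shortest nonzero vector of $\lat^*$, so that $\det(\lat^* \cap W)^{-1/\dim W} = 1/\lambda_1(\lat^*)$; the claim then follows directly from the Banaszczyk bound $\eta(\lat) \leq \sqrt{n}/\lambda_1(\lat^*)$ of Theorem~\ref{thm:smoothing-bnd}, with no Minkowski lower bound required. For $C_{KL}(n)$, it takes an HKZ basis $(\vec{b}_1,\dots,\vec{b}_n)$ of $\lat$, uses Babai's bound $\mu(\lat)^2 \leq \tfrac14 \sum_i \|\gs{\vec{b}}_i\|_2^2$ to pick an index $i$ with $\mu(\lat) \leq (\sqrt{n}/2)\|\gs{\vec{b}}_i\|_2$, sets $W = \mathrm{span}(\vec{b}_1,\dots,\vec{b}_{i-1})^\perp$ (a lattice subspace of $\lat^*$ of dimension $n-i+1$), and applies Minkowski's First Theorem \emph{inside} the projected lattice $\pi_W(\lat)$ to bound $\|\gs{\vec{b}}_i\|_2 = \lambda_1(\pi_W(\lat)) \lesssim \sqrt{\dim W}\,\det(\pi_W(\lat))^{1/\dim W}$, finishing via $\det(\pi_W(\lat)) = 1/\det(\lat^* \cap W)$. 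The takeaway is that the ``right'' witness $W$ depends on the lattice, and picking the full space throws away exactly the information the theorem is about.
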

\begin{proof}
Let $\lat$ be an $n$-dimensional lattice. The bound of $C_{\eta}(n) =
O(\sqrt{n})$, follows directly from the classical inequality (see
Theorem~\ref{thm:smoothing-bnd}) 
\[
\eta(\lat) \leq \frac{\sqrt{n}}{\lambda_1(\lat^*)} \text{, }
\]
noting that the right-hand side corresponds to lattice subspaces of $\lat^*$ of
dimension $1$.

We now bound $C_{KL}(n)$. For this purpose, let $B=(\vec{b}_1,\dots,\vec{b}_n)$
denote an Hermite-Korkin-Zolotarev (HKZ) basis of $\lat$.  That is, a basis for which $\|\gs{\vec{b}_i}\|_2 =
\lambda_1(\pi_i(\lat))$, where $\pi_i = \pi_{{\rm
span}(\vec{b}_1,\dots,\vec{b}_{i-1})^\perp}$. It now follows from Babai's
nearest plane algorithm~\cite{Bab86} that
\[
\mu(\lat)^2 \leq \frac{1}{4} \sum_{i=1}^n \|\gs{\vec{b}_i}\|_2^2 \text{ ,}
\]
Thus there exists $i \in [n]$ such that $\mu(\lat) \leq
(\sqrt{n}/2)\|\gs{\vec{b}_i}\|_2$. Now let $W = {\rm
span}(\vec{b}_1,\dots,\vec{b}_{i-1})^\perp$ and note that $\dim(W) = n-i+1$. By
definition of an HKZ basis, we have that
\begin{align*}
\mu(\lat) &\leq (\sqrt{n}/2) \|\gs{\vec{b}_i}\|_2 
          = (\sqrt{n}/2) \lambda_1(\pi_W(\lat)) \\
          &\leq (\sqrt{n}/2) \sqrt{n-i+1} \det(\pi_W(\lat))^{1/(n-i+1)} \quad
\left(\text{ by Minkowski's first theorem }\right) \\
          &= (\sqrt{n}/2) \frac{\sqrt{n-i+1}}{\det(\lat^* \cap W)^{1/(n-i+1)}}
           \text{ ,}
\end{align*}
as needed.
\end{proof}

\subsection{Conjectures under direct sums}
\label{sec:cross-product}

The main goal of this section is to prove that both the main conjecture and
the KL conjecture behave well under direct sums of lower dimensional lattices. In
particular, they cannot be disproved by direct sum constructions.

To formalize this, define $C_\eta(k,n)$ and $C_{KL}(k,n)$ to be the least numbers such
that for all lattices of the form $\lat = \lat_1 \oplus \cdots \oplus \lat_n$, where
$\lat_i$, $i \in[n]$, is a lattice of dimension at most $k$, we have that
\[
\eta(\lat) \leq C_\eta(k,n) \max_{W \text{ lattice subspace of } \lat^*}
\frac{1}{\det(\lat^* \cap W)^{1/\dim(W)}} \text{ ,}
\]
and
\[
\mu(\lat) \leq C_{KL}(k,n) \max_{W \text{ lattice subspace of } \lat^*}
\frac{\sqrt{\dim(W)}}{\det(\lat^* \cap W)^{1/\dim(W)}} \text{ .}
\]

We now show that the direct sum version of $C_\eta$ is easily bounded.
\begin{lemma} 
\label{lem:main-cross}
For all $k,n \in \N$, $C_\eta(k,n) \leq \sqrt{\log_2(n/\ln(3/2))} C_\eta(k)$.
\end{lemma}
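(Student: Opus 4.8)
The plan is to exploit two structural facts about direct sums. First, the dual and the Gaussian mass both factor: $\lat^* = \lat_1^* \oplus \cdots \oplus \lat_n^*$ inside $\spn(\lat)$, and since the blocks $\spn(\lat_i)$ are mutually orthogonal, $\rho_{1/s^2}(\lat^*) = \prod_{i=1}^n \rho_{1/s^2}(\lat_i^*)$ for every $s>0$. Second, every lattice subspace $W_i$ of an individual $\lat_i^*$ is also a lattice subspace of $\lat^*$, with $\lat^* \cap W_i = \lat_i^* \cap W_i$. Write $M = \max_{W}\det(\lat^* \cap W)^{-1/\dim W}$ for the quantity on the right-hand side of the definition of $C_\eta(k,n)$. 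The second fact gives $\det(\lat_i^* \cap W_i)^{-1/\dim W_i} \le M$ for every lattice subspace $W_i$ of $\lat_i^*$, and hence, applying the definition of $C_\eta$ to the (at most $k$-dimensional) lattice $\lat_i$ (interpreting $C_\eta(k)=\max_{j\le k}C_\eta(j)$ if needed), we obtain $\eta(\lat_i) \le C_\eta(k)\, M$ for every $i\in[n]$.

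Next I would set $s_0 = C_\eta(k)\, M$. Since $s_0 \ge \eta(\lat_i)$ and $s \mapsto \rho_{1/s^2}(\lat_i^*)=\sum_{\vec y\in\lat_i^*}e^{-\pi s^2\|\vec y\|_2^2}$ is non-increasing in $s$, we get $\rho_{1/s_0^2}(\lat_i^* \setminuszero) \le \rho_{1/\eta(\lat_i)^2}(\lat_i^* \setminuszero) = 1/2$ for each $i$. Now apply Lemma~\ref{lem:mass-decrease}(1) to each $\lat_i^*$ with $A = s_0^{-2} I$ and $t = \sqrt{\log_2(n/\ln(3/2))}$ (which satisfies $t\ge 1$ for all $n\ge 1$, since $n/\ln(3/2)>2$): this yields $\rho_{1/(s_0 t)^2}(\lat_i^* \setminuszero) \le \rho_{1/s_0^2}(\lat_i^*\setminuszero)^{t^2}\le (1/2)^{t^2} = \ln(3/2)/n$. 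Multiplying over $i$ and using the factorization of $\rho$ together with $(1+x/n)^n\le e^x$,
\[
\rho_{1/(s_0 t)^2}(\lat^*) = \prod_{i=1}^n \bigl(1 + \rho_{1/(s_0 t)^2}(\lat_i^* \setminuszero)\bigr) \le \bigl(1 + \tfrac{\ln(3/2)}{n}\bigr)^n \le e^{\ln(3/2)} = \tfrac{3}{2} \,,
\]
so $\rho_{1/(s_0 t)^2}(\lat^* \setminuszero) \le 1/2$, and therefore $\eta(\lat) \le s_0 t = \sqrt{\log_2(n/\ln(3/2))}\, C_\eta(k)\, M$. Since $\lat$ was an arbitrary lattice of the prescribed form, this gives exactly $C_\eta(k,n) \le \sqrt{\log_2(n/\ln(3/2))}\, C_\eta(k)$.

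There is no substantive obstacle here; the argument is a ``union bound in the exponent'' driven by the subgaussian decay of the Gaussian mass encoded in Lemma~\ref{lem:mass-decrease}. The only point requiring care is the tensorization step: one must apply Lemma~\ref{lem:mass-decrease}(1) to each factor $\lat_i^*$ at the \emph{common} scale $A=s_0^{-2}I$, so that the individual bounds $\rho\le 1/2$ live at one scale before the product is formed, and then verify that the choice $2^{-t^2}=\ln(3/2)/n$ is precisely what is needed to pull the product $\prod_i(1+2^{-t^2})$ back down to $3/2$ — this is what pins down the exact logarithmic constant in the statement.
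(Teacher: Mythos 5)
Your proof is correct and follows essentially the same route as the paper's: both exploit the factorization $\rho_{1/s^2}(\lat^*) = \prod_i (1+\rho_{1/s^2}(\lat_i^*\setminuszero))$, apply Lemma~\ref{lem:mass-decrease}(1) to each factor to push the per-block mass down to $\ln(3/2)/n$, and then use $(1+x/n)^n\le e^x$ to conclude $\eta(\lat)\le s_0 t$. The only cosmetic difference is that you front-load the bound $\eta(\lat_i)\le C_\eta(k)\,M$ and carry $s_0=C_\eta(k)M$ through, whereas the paper works with $\max_i\eta(\lat_i)$ and converts at the very end; your version makes the monotonicity-in-$s$ step explicit, which the paper leaves implicit.
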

\begin{proof}
Take $\lat = \lat_1 \oplus \dots \oplus \lat_n$, where $\dim(\lat_i) \leq
k$, $i \in [n]$. Let $s = \sqrt{\log_2(c n)}
\max_{i \in [n]} \eta(\lat_i)$, where $c = 1/\ln(3/2)$. We first show that
$\eta(\lat) \leq s$. In particular, it suffices to show that
$\rho_{1/s^2}(\lat^*) \leq 3/2$. By Lemma~\ref{lem:mass-decrease},
\begin{align*}
\rho_{1/s^2}(\lat^*) &= \prod_{i=1}^n \rho_{1/s^2}(\lat^*_i) 
                     = \prod_{i=1}^n (1+\rho_{1/s^2}(\lat^*_i \setminuszero)) \\
                     &\leq \prod_{i=1}^n (1 + (1/2)^{\log_2 (c n)})
                     =(1 + 1/(cn))^n \leq e^{1/c} = 3/2 \text{.}  
\end{align*}
Therefore, by definition of $C_\eta(k)$, we have that
\[
\eta(\lat) \leq \sqrt{\log_2(c n)} \max_{i \in [n]} \eta_i(\lat_i)
           \leq \sqrt{\log_2(c n)} C_k(\eta) \max_{i \in [n]} \max_{W \text{
lattice subspace of } \lat^*_i} \frac{1}{\det(\lat^*_i \cap W)^{1/\dim(W)}}
\text{ .}
\]
To finish the proof, we note that lattice subspaces of the $\lat_i$s are
naturally identified with subspaces of $\lat$.
\end{proof}

Next, we show the corresponding result for the direct sum version of $C_{KL}$.
\begin{lemma}
\label{lem:kl-cross}
For all $k,n \in \N$, $C_{KL}(k,n) \leq 2\sqrt{\log_2(kn) +1} C_{KL}(k)$.
\end{lemma}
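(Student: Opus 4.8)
The plan is to mimic the proof of Lemma~\ref{lem:main-cross} as closely as possible, replacing the Gaussian-mass estimate with a bound on the covering radius of a direct sum. The key structural fact is that for a direct sum $\lat = \lat_1 \oplus \cdots \oplus \lat_n$ with $\dim(\lat_i) \le k$, the squared covering radius adds up: $\mu(\lat)^2 = \sum_{i=1}^n \mu(\lat_i)^2$. Indeed, the Voronoi cell (or equivalently, the farthest-point distance) of an orthogonal direct sum decomposes coordinatewise, so a point realizing the covering radius of $\lat$ is obtained by concatenating points realizing the covering radii of the $\lat_i$'s. Hence $\mu(\lat) \le \sqrt{n} \max_i \mu(\lat_i)$.

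First I would apply the $\ell_2$ KL conjecture (in the form defining $C_{KL}(k)$) to each summand $\lat_i$ to obtain a lattice subspace $W_i$ of $\lat_i^*$ with $\mu(\lat_i) \le C_{KL}(k) \sqrt{\dim(W_i)} \det(\lat_i^* \cap W_i)^{-1/\dim(W_i)}$. Let $q_i$ denote the right-hand quantity $\sqrt{d_i}\det(\lat_i^* \cap W_i)^{-1/d_i}$ where $d_i = \dim(W_i)$, so that $\mu(\lat)^2 \le C_{KL}(k)^2 \sum_i q_i^2$. To extract a single good subspace of $\lat^*$, I would use a bucketing argument on the magnitudes of the $q_i$: the $q_i$ span a range, and by pigeonholing into $O(\log(kn))$ dyadic buckets, one bucket $S$ carries at least a $1/O(\log(kn))$ fraction of $\sum_i q_i^2$, and within that bucket all the $q_i$ are comparable up to a factor of $2$. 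The point of making the $q_i$ comparable is that then the subspace $W = \bigoplus_{i \in S} W_i$ of $\lat^*$ (using the natural identification of subspaces of the $\lat_i^*$ with subspaces of $\lat^*$, and $\det(\lat^* \cap W) = \prod_{i\in S}\det(\lat_i^* \cap W_i)$) has $\det(\lat^* \cap W)^{-1/\dim(W)}$ essentially equal to the geometric-mean-over-$S$ of the individual $\det(\lat_i^* \cap W_i)^{-1/d_i}$, which (because the $q_i$ and hence, modulo the $\sqrt{d_i}$ factors which are all in $[1,\sqrt k]$, these quantities are comparable within the bucket) is within a constant factor of the maximum, giving $\sqrt{\dim(W)}\det(\lat^* \cap W)^{-1/\dim(W)} \gtrsim \sqrt{\sum_{i\in S} d_i}\cdot(\text{typical value}) \gtrsim \sqrt{\sum_{i\in S} q_i^2}$, up to constants. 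Chaining the inequalities yields $\mu(\lat) \le 2\sqrt{\log_2(kn)+1}\, C_{KL}(k) \max_W \sqrt{\dim W}\det(\lat^*\cap W)^{-1/\dim W}$ as desired.

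The main obstacle I anticipate is getting the bucketing clean when the summands have different dimensions: the extra $\sqrt{d_i}$ weights mean that even within a bucket where the $q_i$ are comparable, the determinantal quantities $\det(\lat_i^*\cap W_i)^{-1/d_i}$ can differ by up to $\sqrt k$, and one must check that combining them via the direct-sum formula for $\det$ doesn't lose more than a constant. The way to handle this is to bucket on $q_i$ (not on the determinantal quantity), so that the weighted sum $\sum_{i\in S} d_i \cdot (\det(\lat_i^*\cap W_i)^{-1/d_i})^2 = \sum_{i\in S} q_i^2$ is exactly what appears, and then use that for the combined subspace $W$, by AM--GM on the logarithms, $\det(\lat^*\cap W)^{-2/\dim W} \ge (\prod_{i\in S}(\det(\lat_i^*\cap W_i)^{-2/d_i})^{d_i})^{1/\dim W} \ge \min_{i\in S}\det(\lat_i^*\cap W_i)^{-2/d_i}$; combined with comparability of the $q_i$ in the bucket and $\dim W = \sum_{i\in S} d_i$, this gives $\dim W \cdot \det(\lat^*\cap W)^{-2/\dim W} \gtrsim \sum_{i\in S} q_i^2$ up to the constant coming from the factor-$2$ slack in the bucket. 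This is exactly the same ``careful bucketing'' philosophy used in Theorem~\ref{thm:subspace-rounding}, only much simpler because the subspaces here are already orthogonal and form a direct sum rather than needing to be uncrossed.
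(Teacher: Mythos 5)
Your overall plan matches the paper's: write $\mu(\lat)^2 = \sum_i \mu(\lat_i)^2$, apply the hypothesis to each summand to produce subspaces $W_i$ and quantities $q_i^2 = d_i \det(\lat_i^* \cap W_i)^{-2/d_i}$ with $\mu(\lat)^2 \le C_{KL}(k)^2 \sum_i q_i^2$, then select one good bucket and form the direct-sum subspace. But the crucial detail — \emph{what} to bucket on — is wrong, and it is not a cosmetic issue.

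You bucket on $q_i^2 = d_i a_i$ where $a_i = \det(\lat_i^* \cap W_i)^{-2/d_i}$. Inside such a bucket the $q_i^2$ are within a factor $2$, but since $d_i$ ranges over $[1,k]$, the $a_i = q_i^2/d_i$ can still differ by a factor of roughly $2k$. The quantity you need to control for the combined subspace $W = \bigoplus_{i\in S} W_i$ is
$\dim W \cdot \det(\lat^*\cap W)^{-2/\dim W} = d_S \cdot \bigl(\prod_{i\in S} a_i^{d_i}\bigr)^{1/d_S}$,
i.e.\ $d_S$ times the weighted \emph{geometric mean} of the $a_i$, whereas what you are trying to dominate is $\sum_{i\in S} d_i a_i = d_S$ times the weighted \emph{arithmetic mean}. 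With the $a_i$ spread over a factor $k$, the weighted AM/GM ratio is \emph{not} $O(1)$. A concrete counterexample: take $|S|=k$, $d_1 = k$, $d_2 = \cdots = d_k = 1$, and $q_i^2 \approx t$ for all $i$, so $a_1 \approx t/k$ and $a_i \approx t$ for $i\ge 2$. Then $\sum_{i\in S} q_i^2 \approx kt$, $d_S = 2k-1$, and $\det(\lat^*\cap W)^{-2/d_S} = \bigl(\prod a_i^{d_i}\bigr)^{1/d_S} \approx t/\sqrt{k}$, so $d_S \cdot \det(\lat^*\cap W)^{-2/d_S} \approx 2\sqrt{k}\,t$. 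This undershoots the target $kt$ by a factor $\Theta(\sqrt{k})$, so your final bound would be $C_{KL}(k,n) \lesssim k^{1/4}\sqrt{\log(kn)}\,C_{KL}(k)$, not the claimed $O(\sqrt{\log(kn)})\,C_{KL}(k)$. The ``factor-$2$ slack'' you mention does not account for this loss. (Your fallback bound GM $\ge \min_{i\in S} a_i$ is even lossier, by up to a factor $k$.)

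The fix is exactly what the paper does: bucket on the $a_i$ themselves with multiplicities $d_i$ — this is the content of the Reverse AM--GM lemma (Lemma~\ref{lem:rev-am-gm}). When the $a_i$ are confined to a dyadic window $[s,2s]$, the weighted GM is at least $s$ and the weighted AM is at most $2s$, so the gap is a genuine factor of $2$, and the $\sqrt{d_i}$ weights never enter. The lemma also handles the unbounded-range issue you glossed over (``the $q_i$ span a range, and by pigeonholing...''): it shows the top $\lceil\log_2(2d_{[m]})\rceil$ buckets already carry at least half the sum, so restricting to $O(\log(kn))$ buckets is justified. Once you invoke Lemma~\ref{lem:rev-am-gm} on the $a_i$ with multiplicities $d_i$, and then identify each $S$ with the subspace $\bigoplus_{i\in S} W_i$ (using $\dim = d_S$ and $\det(\lat^*\cap W)=\prod_{i\in S}\det(\lat_i^*\cap W_i)$), the argument closes immediately — this is precisely the paper's proof.
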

\begin{proof}
Take $\lat = \lat_1 \oplus \dots \oplus \lat_n$, where $\dim(\lat_i) \leq k$, $i
\in [n]$. Since $\lat$ is a direct sum, $\mu(\lat)^2 = \sum_{i=1}^n
\mu(\lat_i)^2$. Since each $\lat_i$, $i \in [n]$, has dimension at most $k$,
there exists lattice subspaces $W_1,\dots,W_n$ of $\lat_1^*,\dots,\lat_n^*$, with
$d_i = \dim(W_i) \leq k$, $i \in [n]$, satisfying
\begin{equation}
\label{eq:kl-cross1}
\mu(\lat)^2 \leq C_{KL}(k)^2 \sum_{i=1}^n \frac{d_i}{\det(\lat_i^* \cap
W_i)^{2/d_i}} \text{ .}
\end{equation}
From here, applying Lemma~\ref{lem:rev-am-gm}, we have that
\begin{equation}
\label{eq:kl-cross2}
\sum_{i=1}^n \frac{d_i}{\det(\lat_i^* \cap W_i)^{2/d_i}} \leq 4\ceil{\log_2 d_{[m]}}
\max_{S \subseteq [n]} d_S\left(\prod_{i \in S} \frac{1}{\det(\lat_i^* \cap
W_i)^2}\right)^{1/d_S} \text{.}
\end{equation}
Now for each $S \subseteq [m]$, we can associate the lattice subspace $Z =
Z_1 \oplus \dots \oplus Z_m$ of $\lat^*$, where $Z_i = W_i$ if $i \in S$ and
$Z_i = \set{\vec{0}}$ otherwise, satisfying $\dim(Z) = d_S$ and $\det(\lat^*
\cap Z) = \prod_{i \in S} \det(\lat_i^* \cap W_i)$. The lemma now follows
by combining~\eqref{eq:kl-cross1} and~\eqref{eq:kl-cross2}, and noting that $4\ceil{\log_2
d_{[m]}} \leq 4(\log_2(kn) + 1)$.
\end{proof}

\subsection{Conjectures for random lattices}
\label{sec:randomsanity}
In this section, we show that the conjectures are easy for ``random
lattices''. For the general Kannan-Lov{\'a}sz conjecture, as mentioned earlier,
this already follows from a classical theorem of Rogers~\cite[Theorem 2]{Rogers1958}.
For completeness, we give a simple proof for the $\ell_2$ case. We note
that random lattices are the main class of worst-case examples for essentially
all known transference theorems, in particular, for Khinchine's flatness
theorem. Thus we find it interesting that they are essentially the easiest
lattices for the conjectures considered here.

We now show that for random lattices, getting determinantal bounds on the $\ell_2$
covering radius and smoothing parameter is easy, verifying both conjectures for
this class of lattices.

\begin{lemma}
Let $\lat$ be a random $n$-dimensional lattice. Then $\eta(\lat) = O(1/\det(\lat^*)^{1/n})$ and $\mu(\lat) = O(\sqrt{n}
/\det(\lat^*)^{1/n})$ with overwhelming probability.
\end{lemma}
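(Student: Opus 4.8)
The plan is to reduce both estimates to a single high-probability lower bound on $\lambda_1(\lat^*)$, namely $\lambda_1(\lat^*) \gtrsim \sqrt{n}$. Since $\eta$, $\mu$ and $1/\det(\lat^*)^{1/n}$ are all positively homogeneous of degree $1$, we may assume $\lat$ is drawn from the Siegel measure on determinant-$1$ lattices, so $\det(\lat)=\det(\lat^*)=1$, and it suffices to show $\eta(\lat)=O(1)$ and $\mu(\lat)=O(\sqrt n)$ with overwhelming probability.

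First I would establish the bound on $\lambda_1(\lat^*)$. By item~2 of Theorem~\ref{thm:haar}, $\lat^*$ is also Siegel-distributed, and by item~3 with $d=1$ (equivalently~\eqref{eq:siegelforfunctions}), $\E[|(\lat^*\setminuszero)\cap r\Ball_2^n|]=\vol_n(r\Ball_2^n)=r^n\vol_n(\Ball_2^n)$. Using the standard estimate $\vol_n(\Ball_2^n)\le (C/\sqrt n)^n$ for an absolute constant $C$, one can fix a small $c>0$ (e.g.\ $c<1/(2C)$) so that with $r=c\sqrt n$ the right-hand side is at most $2^{-n}$. Markov's inequality then gives $\Pr[\lambda_1(\lat^*)\le c\sqrt n]=\Pr[|(\lat^*\setminuszero)\cap c\sqrt n\,\Ball_2^n|\ge 1]\le 2^{-n}$, so $\lambda_1(\lat^*)>c\sqrt n$ with overwhelming probability.

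Conditioning on this event, the bound on $\eta$ is immediate from Theorem~\ref{thm:smoothing-bnd}: $\eta(\lat)\le\eta_{2^{-n}}(\lat)\le\sqrt n/\lambda_1(\lat^*)<1/c=O(1)$. For the covering radius I would use Theorem~\ref{thm:smooth-mu-bnd}: the matrix $A=\eta(\lat)^2 I_n$ is a feasible point of the program defining $\mu_{\rm sm}(\lat)$, since $\sum_{\vec y\in\lat^*\setminuszero}e^{-\pi\vec y^\T A\vec y}=\rho_{1/\eta(\lat)^2}(\lat^*\setminuszero)=1/2$ by definition of the smoothing parameter. Hence $\mu_{\rm sm}(\lat)^2\le\tr(\eta(\lat)^2 I_n)=n\,\eta(\lat)^2$, and Theorem~\ref{thm:smooth-mu-bnd} yields $\mu(\lat)\le 4\pi^{-1/2}\mu_{\rm sm}(\lat)\le 4\pi^{-1/2}\sqrt n\,\eta(\lat)=O(\sqrt n)$ on the same event.

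There is no real obstacle here; the only step needing a moment of care is the volume estimate $r^n\vol_n(\Ball_2^n)\le 2^{-n}$ for $r=c\sqrt n$, which is precisely where the $\sqrt n$ scale for $\lambda_1(\lat^*)$ (and hence the $O(1)$ bound for $\eta$) comes from. Everything else is a direct invocation of Theorems~\ref{thm:haar},~\ref{thm:smoothing-bnd} and~\ref{thm:smooth-mu-bnd}, together with Markov's inequality.
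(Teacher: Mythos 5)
Your proof is correct. The overall strategy coincides with the paper's (pass to the Siegel measure on determinant-one lattices, compute an expectation via the Siegel integral formula \eqref{eq:siegelforfunctions}, apply Markov, then transfer to $\mu$ via Theorem~\ref{thm:smooth-mu-bnd}), but the quantity you bound in expectation is different. The paper applies the Siegel formula directly to the Gaussian sum, giving
\[
\E[\rho_{1/4}(\lat^* \setminuszero)] = \int_{\R^n} e^{-\pi\|2\vec y\|^2}\,d\vec y = 2^{-n},
\]
and Markov then yields $\rho_{1/4}(\lat^*\setminuszero)\le 1/2$, i.e.\ $\eta(\lat)\le 2$, with probability at least $1-2^{-n+1}$; no intermediate geometric estimate is needed. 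You instead apply the Siegel formula to the indicator of a ball to get the high-probability lower bound $\lambda_1(\lat^*)\gtrsim\sqrt n$, and then invoke the Banaszczyk-type bound $\eta(\lat)\le\sqrt n/\lambda_1(\lat^*)$ from Theorem~\ref{thm:smoothing-bnd}. Both computations are about equally short, but the paper's choice of the Gaussian as the test function makes the arithmetic exact (the integral is literally $2^{-n}$) and avoids both the ball-volume estimate and the appeal to Theorem~\ref{thm:smoothing-bnd}. Your route has the mild virtue of producing the geometric statement $\lambda_1(\lat^*)\gtrsim\sqrt n$ as an explicit by-product. For the covering radius, the two arguments are the same; the paper simply cites Theorem~\ref{thm:smooth-mu-bnd} as $\mu(\lat)\lesssim\sqrt n\,\eta(\lat)$ without writing down the feasible point $A=\eta(\lat)^2 I_n$ that you spell out.
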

\begin{proof}
Note that since $\det(\lat) = \det(\lat^*) = 1$ by definition, it suffices to
show that $\eta(\lat) = O(1)$ and $\mu(\lat) = O(\sqrt{n})$. By
Theorem~\ref{thm:smooth-mu-bnd}, we already have that $\mu(\lat) \leq
O(\sqrt{n})\eta(\lat)$, thus it suffices to prove that $\eta(\lat) = O(1)$.

We now show that $\eta(\lat) \leq 2$ with probability at least $2^{-n+1}$.  For
this purpose, we show that $\rho_{1/2^2}(\lat^* \setminuszero) \leq 1/2$
with probability at least $2^{-n+1}$. By Theorem~\ref{thm:haar}, recall that 
$\lat^*$ and $\lat$ are identically distributed. From here, we have that by~\eqref{eq:siegelforfunctions},
\begin{align*}
\E[\rho_{1/4}(\lat^* \setminuszero)] 
&= \E[\sum_{\vec{y} \in \lat^* \setminuszero} e^{-\pi \|2\vec{y}\|^2}] \\
&= \int_{\R^n} e^{-\pi \|2\vec{y}\|^2} dy = 2^{-n} \text{.} 
\end{align*}
Hence, by Markov's inequality $\Pr[\rho_{1/4}(\lat^* \setminuszero) \geq 1/2]
\leq 2^{-n+1}$, as needed.
\end{proof}


\bibliographystyle{alphaabbrvprelim}
\bibliography{reverse_minkowski}

\end{document}